\newcommand{\inZ}{\in \mathbb{Z}}
\newcommand{\inN}{\in \mathbb{N}}
\newcommand{\inR}{\in \mathbb{R}}
\newcommand{\inC}{\in \mathbb{C}}
\newcommand{\Z}{\mathbb{Z}}
\newcommand{\N}{\mathbb{N}}
\newcommand{\R}{\mathbb{R}}
\newcommand{\E}{\mathbb{E}}
\newcommand{\setofplanetrees}{\mathfrak{T}}
\newcommand{\treecount}{\mathcal{T}}
\newcommand{\setofcomp}{{\mathcal{C}}_k}
\newcommand{\setoftuples}{{\mathcal{V}}_{k}^*}
\newcommand{\comp}{\mathcal{C}}
\newcommand{\setoftreecomppairs}{{\mathcal{F}}_k^*}
\newcommand{\skele}{\mathcal{S}}
\newcommand{\normConst}{\mathcal{Z}}
\theoremstyle{plain}
\newtheorem{theorem}{Theorem}[subsection]
\newtheorem{corollary}[theorem]{Corollary}
\newtheorem{lemma}[theorem]{Lemma}
\newtheorem{observation}[theorem]{Observation}
\newtheorem{remark}[theorem]{Remark}
\theoremstyle{definition}
\newtheorem{example}[theorem]{Example}
\newcommand\lvl{p} 
\newcommand\lvi{q} 
\newcommand\lvr{y} 
\newcommand\gvl{\overline{p}} 
\newcommand\gvi{\overline{q}} 
\title{On the Asymptotic Distributions of Classes of Subtree Additive Properties of Plane Trees under the Nearest Neighbor Thermodynamic Model}
\author{Anna Kirkpatrick and Chidozie Onyeze}
\begin{document}
\maketitle
\begin{abstract}
We define a class of properties on random plane trees, which we call subtree additive properties, inspired by the combinatorics of certain biologically-interesting properties in a plane tree model of RNA secondary structure. The class of subtree additive properties includes the Wiener index and path length (total ladder distance and total ladder contact distance, respectively, in the biological context). We then investigate the asymptotic distribution of these subtree additive properties on a random plane tree distributed according to a Gibbs distribution arising from the Nearest Neighbor Thermodynamic Model for RNA secondary structure. We show that for any property in the class considered, there is a constant that translates the uniformly weighted random variable to the Gibbs distribution weighted random variable (and we provide the constant). 
We also relate the asymptotic distribution of another class of properties, which we call simple subtree additive properties, to the asymptotic distribution of the path length, both in the uniformly weighted case. 
The primary proof techniques in this paper come from analytic combinatorics, and most of our results follow from relating the moments of known and unknown distributions and showing that this is sufficient for convergence. 

\end{abstract}

\section{Background and Introduction}
This manuscript uses techniques from analytic combinatorics to explore probability distributions arising from questions in molecular biology. Specifically, the questions explored are inspired by the problem of RNA secondary structure prediction. 

\subsection{Overview of results}
This paper examines a model of RNA secondary structure in which secondary structures are modeled by plane trees. As defined more rigorously in Section~\ref{sec:preliminaries}, we consider the set of all plane trees with \(n\) edges under a Gibbs distribution, where the energy of each tree depends on its degree sequence and root degree. The energy function is also determined by 3 thermodynamic parameters, which we treat as fixed: \((\alpha , \beta , \gamma )\). In this paper, we treat these parameters as arbitrary real numbers. However, specific values of interest do arise from the thermodynamic models used in molecular biology; the biological motivation is discussed further in Section \ref{bioback}.  For a wide class of properties of plane trees, we show a relationship in their asymptotic distributions under different values of \((\alpha , \beta , \gamma )\).

One such property is the path length. The path length of a plane tree is the sum of the distances from each vertex of the tree to the root. It is known that the path length of all plane trees with \(n\) edges is Airy distributed asymptotically as \(n\rightarrow \infty \) (see Theorem \ref{contactdistance - thm}). This case, with the plane trees being uniformly distributed, corresponds to the thermodynamic parameters  \((0, 0, 0)\). Applying Theorem \ref{maintheorem3} to the path length property allows us to relate the asymptotic distribution of path length under arbitrary thermodynamic parameters \((\alpha , \beta , \gamma )\) to this known result. As shown in Corollary \ref{full dist corr CD}, we can explicitly state the asymptotic distribution of path length under arbitrary thermodynamic parameters.

For some properties, such as the path length, the asymptotic distribution in the uniform case is known. For many other properties, such as the sum of the distances from each leaf to the root, this distribution is not known. Therefore in addition to our main result relating asymptotic distributions of plane tree properties under different thermodynamic parameters, we also develop some tools which are useful for determining the asymptotic distributions of certain properties in the uniform case. Combining these tools with the main result discussed above allows us to obtain explicit forms for asymptotic distributions for a wide variety of plane tree properties under arbitrary thermodynamic parameters. More specifically, in Theorem \ref{mainSimpleAdditveTheorem1}, we relate the asymptotic distribution of a class of plane tree properties,  which we call simple subtree additive properties, to the asymptotic distribution of path length, both in the uniform case. As shown in Corollary \ref{corollary P^LRD and P^IRD dist}, this theorem allows us to deduce that the total leaf to root distance is also distributed asymptotically as an Airy random variable.

\subsection{Biological background}
\label{bioback}
We now proceed with some exposition on RNA secondary structure and prior work in this area. The reader interested only in the mathematics may skip this section.

\subsubsection{RNA secondary structure}
RNA is an important biological polymer with roles including information transfer, regulation of gene expression, and catalysis of chemical reactions.
The \emph{primary structure} of an RNA molecule is the sequence of nucleotides in the polymer.
RNA nucleotides are adenine, cytosine, guanine, and urasil, which we frequently abbreviate as A, C, G, and U, respectively.
The primary structure, therefore, may simply be understood as a string of A's, C's, G's, and U's.
Because RNA is single-stranded, it has the capacity to form nucleotide-nucleotide bonds with itself.
The set of such bonds is the \emph{secondary structure} of an RNA molecule.
The bonds A-U, C-G, and G-U are considered canonical, and our the only bonds considered by the model presented here.
The \emph{tertiary structure} of an RNA molecule is its three-dimensional shape.
Though tertiary structure ultimately is most relevant to the determination of function, it is also very difficult to deduce with current laboratory techniques.
Therefore, secondary structure is often used as a first step in the process of predicting tertiary structure \cite{Doudna2000structural, Tinoco99how}.
In fact, secondary structure is often an input to tertiary structure prediction algorithms \cite{massire1998manip,seetin2000automated, yunjie2011improvements, zhao2012automated}.

One of the main computational tools for predicting RNA secondary structures is thermodynamic free energy minimization using Nearest Neighbor Thermodynamics Modeling (NNTM) \cite{Turner_89, mathews1999, mathews2004}. Under the NNTM, the free energy of a structure is computed as the sum of the free energy of its various substructures. This free energy is in turn used in algorithms to predict secondary structure given an RNA sequence, see, e.g., \cite{Ding_Chan_Lawrence_04_Sfold,Hofacker94_fastFolding, mathuriya09_gtfold}. 
Though such algorithms perform reasonably well on short RNA sequences, performance rapidly degrades once sequence length exceeds a few hundred nucleotides.

\subsubsection{Multiloops and branching}

This paper investigates an aspect of RNA secondary structure that becomes more significant as sequence length grows: multiloops.
A \emph{multiloop} is a place where 3 or more helices meet in an RNA secondary structure. 
Multiloops are not predicted well by the current NNTM energy assignments \cite{Doshi_04}.
The number and type of multiloops determines the branching behavior of an RNA secondary structure.

We study a model for RNA secondary structure first presented by Hower and Heitsch \cite{hower2011}.
This model isolates the multiloops and branching properties of secondary structure, allowing their study without the necessity to consider the identity of individual base pairs.
Under the model, secondary structures are placed in bijection with plane trees. 
The minimum energy structures under the model were characterized by Hower and Heitsch in the original paper, but this leaves open the question of the full Gibbs distribution of possible structures, as well as the question of characterizing asymptotic behavior of the  distribution. 
Kirkpatrick et al. \cite{Kirkpatrick_et_al_20} have shown the existence of a polynomial-time Markov chain-based algorithm for sampling from the Gibbs distribution on structures of a fixed size.
Bakhtin and Heitsch~\cite{Bakhtin_Heitsch_09} analyzed a simplification of the model and determined degree sequence properties of the distribution of plane trees asymptotically.

Several properties are used to describe the overall branching behavior of an RNA secondary structure.
In particular, ladder distance and contact distance are used to characterize various aspects of a molecule's shape, size, and branching structure (see, e.g. \cite{borodavka2016sizes, Yoffe_08}).
As discussed in Section \ref{sec:examples}, ladder distance and contact distance correspond to Wiener index and path length, respectively, of plane trees.

We will examine the distribution of several plane tree properties asymptotically under the full version of the Hower and Heitsch model.
Under an assumption that the root degree is bounded, the theorems developed will allow us to characterize the asymptotic distribution of many properties of RNA secondary structures under the NNTM.
We will further show that altering the parameters \((\alpha, \beta, \gamma)\) only changes the property distribution by a constant multiple.
When the assumption that the root degree is bounded is removed, we will still obtain analogous results for parameter sets of the form \((\alpha , \beta , 0)\).

\subsection{Structure of this paper}
In Section~\ref{sec:preliminaries}, we give an overview the necessary mathematical preliminaries (including relevant known results). In Section \ref{Gen function count section}, we construct generating functions counting plane trees. In Section \ref{Distributions of Simple Subtree Additive Properties}, we relate the moments (and hence the distribution) of a class of subtree additive properties we call simple. In Section \ref{Dist of Subtree Additive Properties NNTM}, we apply the generating functions in Section \ref{Gen function count section} to relate the moments of a class of subtree additive properties under the uniformly weighted distribution on the plane trees to the same properties under the non-uniformly weighted distribution on the plane trees arising from the Nearest Neighbor Thermodynamic Model. We, hence, show that there exists a constant that translates the asymptotic random variable in the uniformly weighted case to the asymptotic random variable in the non-uniformly weighted case. In Section \ref{Counting trees - enumerations}, we provide some miscellaneous enumeration results on plane trees.

\section{Mathematical Preliminaries}
\label{sec:preliminaries}
\subsection{Plane Trees and their Properties}

A \emph{plane tree} is a rooted ordered tree. Let $\setofplanetrees_n$ denote the set of plane trees on $n$ edges. Let $\setofplanetrees_{\leq k} = \cup_{n\leq k} \setofplanetrees_{n}$. It is well-known that $|\setofplanetrees_n|$ is given by the $n$th Catalan number, $C_n  = \frac{1}{n+1}\binom{2n}{n}$. We define the \emph{down degree} of a vertex to be the degree of the vertex when considering the root and one less than the degree of the vertex for all other vertices. We define a \emph{leaf} to be a non-root vertex with down degree 0 and an \emph{internal node} to be a non-root vertex with down degree 1. For a plane tree $T$, let $v(T)$, $n(T)$, ${d_0}(T)$, ${d_1}(T)$ and $r(T)$ be the number of vertices, edges, leaves, internal nodes and root degree of $T$, respectively. Let $\mathcal{V}(T)$ be the vertex set of $T$ and let $\overline{\mathcal{V}}(T)$ be the vertex set excluding the root vertex. For $v \in \mathcal{V}(T)$, let $T_v$ be the subtree of $T$ that contains all descendant of $v$ (including $v$).

For plane trees $T_1 \in \setofplanetrees_n$ and $T_2 \in \setofplanetrees_m$ for some $m,n \geq 0$, we define the join of $T_1$ and $T_2$, $T_1 \ltimes T_2$, to be the tree formed by adding a new edges to leftmost side of the root of $T_1$ and attaching $T_2$ to this new edge. Note that $T_1 \ltimes T_2 \in \setofplanetrees_{n+m+1}$. Notice that for a tree $T \in \setofplanetrees_{> 0}$, there is unique $T_1, T_2 \in \setofplanetrees_{\geq 0}$ such that $T = T_1 \ltimes T_2$.

We define a \emph{property} of a plane tree to be a function $\mathcal{P} : \setofplanetrees_{\geq 0} \rightarrow \R$. We define the 2 types of properties we will consider from this point forward: \emph{additive properties} and \emph{subtree additive properties}. A property $\mathcal{P}$ is \emph{additive} if, for $T \in \setofplanetrees_{\geq 1}$ such that $T = T_1 \ltimes T_2$ for some $T_1, T_2$, 
$$\mathcal{P}(T) = \mathcal{P}(T_1) + \mathcal{P}(T_2) + f(T_2),$$ 
where $f(T): \setofplanetrees_{\geq 0} \rightarrow \R_{\geq 0}$. For a tree $T$, let $v_1, \cdots, v_d$ be the child vertices of the root vertex of $T$. By repeated use of the above definition, we see that 
\begin{equation} \label{eq additive prop def}
\mathcal{P}(T) = \mathcal{P}(T^*) + \sum_{i=1}^d f(T_{v_i}) + \sum_{i=1}^d\mathcal{P}(T_{v_i}),
\end{equation} 
where $d$ is the degree of the root of $T$ and $T^*$ is the tree on 1 vertex.

This is similar to the notion of an additive functional as described by Janson \cite{janson2016asymptotic}. An additive functional is a function $F: \setofplanetrees_{\geq 0} \rightarrow \R$ such that 
$$F(T) = f(T) + \sum_{i=1}^dF(T_{v_i}),$$ 
where $f:\setofplanetrees_{\geq 0} \rightarrow \R$ is known as the toll function. Due to $(\ref{eq additive prop def})$, we note that an additive property (as we have defined it) is an additive functional with toll function $f$ where there exist a function $f^*: \setofplanetrees_{\geq 0} \rightarrow \R_{\geq 0}$ such that $f(T) = c + \sum_{i=1}^df^*(T_{v_i})$ where $c = \mathcal{P}(T^*)$. We will borrow the terminology of additive functionals and call the tuple $(f, c)$ the toll of the additive property. It should be clear that a subtree additive property is uniquely determined by its toll. Thus, we will denote the additive property with a given toll by $\mathcal{P}^{(f,c)}$. We will call $f$, in the toll, the toll function of the subtree additive property. We will call an additive property non-negative integer valued if the co-domain of the toll function is a subset of $\Z_{\geq 0}$ (and $c \in \Z_{\geq 0}$).

It can also be shown inductively that, for $c, c_1, c_2 \inR$, $f_1, f_2: \setofplanetrees_{\geq 0} \rightarrow \R$ and $T \in \setofplanetrees_{n}$, 
\begin{equation} \label{additve prop linearity}
\mathcal{P}^{(c_1 \cdot f_1 + c_2 \cdot f_2, c)}(T) =  c_1 \cdot \mathcal{P}^{(f_1, 0)}(T) + c_2 \cdot \mathcal{P}^{(f_2, 0)}(T) + c \cdot \mathcal{P}^{(0, 1)}(T).
\end{equation}

A property $\mathcal{P}$ is \emph{subtree additive} if, for $T \in \setofplanetrees_{\geq 1}$, 
$$\mathcal{P}(T) = \sum_{v \in \overline{\mathcal{V}}(T)} f(T_v, T).$$ 
where $f: \setofplanetrees_{\geq 0} \times \setofplanetrees_{\geq 0} \rightarrow \R$, and $\mathcal{P}(T^*) =0$. We will call such a property simple if 
$$\mathcal{P}(T) = \sum_{v \in \overline{\mathcal{V}}(T)} \mathcal{P}'(T_v)$$ 
where $\mathcal{P}'$ is a non-negative integer valued additive property. In this case, we call $\mathcal{P}$ the subtree additive property induced by $\mathcal{P}'$.

\bigskip

For a plane tree $T$, the \emph{energy} of the tree is given by 
\begin{equation}
E(T) = \alpha d_0(T) + \beta d_1(T) + \gamma r(T)
\label{energydef}
\end{equation} where $\alpha, \beta, \gamma \inR$ are parameters of the energy function.

For fixed $n \inN$ and parameters $\alpha, \beta, \gamma \inR$ and property $\mathcal{P}$, we define the random variable $\mathcal{P}_{(\alpha, \beta, \gamma)}(\setofplanetrees_n)$ to be $\mathcal{P}(T)$ for a plane tree $T \in \setofplanetrees_n$ selected at random with probability 
$\frac{e^{-E(T)}}{\normConst_{(n,\alpha,\beta,\gamma)}}$
where $\normConst_{(n,\alpha,\beta,\gamma)}$ is a normalizing constant given by 
$\normConst_{(n,\alpha,\beta,\gamma)} = \sum_{T \in \setofplanetrees_n}e^{-E(T)}.$ For convenience, we will denote $\mathcal{P}_{(0,0,0)}(\setofplanetrees_n)$ simply as $\mathcal{P}(\setofplanetrees_n)$.

Let $\mathcal{P}$ and $\overline{\mathcal{P}}$ be properties and let $\alpha_1,\alpha_2, \beta_1, \beta_2, \gamma_1, \gamma_2 \inR$ be parameters. We use $$\mathcal{P}_{(\alpha_1, \beta_1, \gamma_1)}(\setofplanetrees_n)  \overset{d}{\longleftrightarrow}\overline{\mathcal{P}}_{(\alpha_2, \beta_2, \gamma_2)}(\setofplanetrees_n)$$ to imply that there exist a random variable $W$ such that as $n \rightarrow \infty$, $$\mathcal{P}_{(\alpha_1, \beta_1, \gamma_1)}(\setofplanetrees_n)  \overset{d}{\to} W \quad \quad \quad \quad \text{and} \quad \quad\quad \quad \overline{\mathcal{P}}_{(\alpha_2, \beta_2, \gamma_2)}(\setofplanetrees_n) \overset{d}{\to} W,$$ where $\overset{d}{\to}$ denotes convergence in distribution. In this case, we will say that $\mathcal{P}_{(\alpha_1, \beta_1, \gamma_1)}(\setofplanetrees_n)$ and $\mathcal{P}_{(\alpha_2, \beta_2, \gamma_2)}(\setofplanetrees_n)$ are equivalent in distribution.

We now state the well-known Carleman's condition \cite{carleman1926fonctions} which tells us that, under certain conditions (which the random variables we will consider satisfy), to show equivalence in distribution, it is sufficient to show equality is asymptotic moments.

\begin{theorem}[Carleman's condition] \label{Carleman's condition}
    Let $X$ be a random real-valued variable and let $m_k = \E[|X|^k] < \infty$ for all $k \geq 0$. If $$\sum_{k=1}^\infty |m_{2k}|^{-\frac{1}{2k}} = \infty,$$ then there exists a unique distribution with moments $m_k$. 
\end{theorem}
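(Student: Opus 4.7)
The classical strategy here is proof by contradiction via Fourier--Stieltjes transforms together with the Denjoy--Carleman theorem on quasi-analytic classes. First I would suppose that two distinct probability measures $\mu_1, \mu_2$ on $\R$ both admit $m_k$ as their $k$th moment, and form the signed measure $\nu = \mu_1 - \mu_2$; then $\nu$ has finite total variation and $\int x^k\, d\nu = 0$ for every $k \geq 0$. The goal is to show that the characteristic function $\widehat\nu(t) := \int e^{itx}\,d\nu(x)$ vanishes identically on $\R$, which by the Fourier uniqueness theorem for finite signed measures contradicts $\mu_1 \neq \mu_2$ and thereby yields determinacy.

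The first technical step is to differentiate under the integral sign and bound the derivatives of $\widehat\nu$ by even moments. By Cauchy--Schwarz and the fact that both $\mu_j$ are probability measures,
$$\big|\widehat\nu^{(n)}(t)\big| \;=\; \left|\int (ix)^n e^{itx}\, d\nu(x)\right| \;\leq\; \int |x|^n\, d|\nu|(x) \;\leq\; 2\sqrt{m_{2n}}$$
uniformly in $t \in \R$, while moment-matching gives $\widehat\nu^{(n)}(0) = i^n\int x^n\, d\nu = 0$ for every $n \geq 0$. Setting $M_n := \sqrt{m_{2n}}$, the displayed bound places $\widehat\nu$ in the Denjoy--Carleman class $C\{M_n\}$ of smooth functions whose derivatives are controlled by $A\, C^n M_n$. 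The hypothesis $\sum_k m_{2k}^{-1/(2k)} = \infty$ is exactly $\sum_n M_n^{-1/n} = \infty$, which is the standard Denjoy--Carleman criterion for $C\{M_n\}$ to be a quasi-analytic class. Quasi-analyticity combined with the vanishing of every derivative of $\widehat\nu$ at the origin then forces $\widehat\nu \equiv 0$, completing the contradiction.

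The main obstacle is the Denjoy--Carleman theorem itself, which is the nontrivial analytic backbone of the argument and is almost always invoked as a black box; a self-contained proof would require either an Ahlfors-type harmonic-measure estimate or a delicate majorant-series construction for entire functions with prescribed Taylor coefficient growth, either of which is a substantial digression. A minor subtlety is that Denjoy--Carleman is typically stated in terms of the log-convex regularization $M_n^*$ of the bounding sequence, so one must check that Carleman's divergence condition survives regularization; for sequences of the form $\sqrt{m_{2n}}$ this is automatic, since Cauchy--Schwarz applied to $\mu_1 + \mu_2$ already implies log-convexity of $n \mapsto m_{2n}$, hence of $n \mapsto \sqrt{m_{2n}}$. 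I would, as the authors do, cite \cite{carleman1926fonctions} rather than re-derive the quasi-analyticity theorem, and present the reduction above as the substantive mathematical content of the proof.
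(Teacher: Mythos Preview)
The paper does not prove this theorem at all; it is stated as a classical result with a citation to \cite{carleman1926fonctions} and then used as a black box throughout Sections~\ref{Distributions of Simple Subtree Additive Properties} and~\ref{Dist of Subtree Additive Properties NNTM}. So there is no ``paper's own proof'' to compare against.

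That said, your outline is the standard and correct reduction: form the signed measure $\nu=\mu_1-\mu_2$, bound $|\widehat\nu^{(n)}(t)|$ by $2\sqrt{m_{2n}}$, observe that moment-matching kills every derivative of $\widehat\nu$ at the origin, and invoke Denjoy--Carleman quasi-analyticity for the class $C\{\sqrt{m_{2n}}\}$ to force $\widehat\nu\equiv 0$. Your remark that log-convexity of $n\mapsto m_{2n}$ (hence of $\sqrt{m_{2n}}$) follows from Cauchy--Schwarz is exactly the point needed to make the Denjoy--Carleman hypothesis match Carleman's divergence condition without passing to a regularization. The only thing left implicit is the Denjoy--Carleman theorem itself, which you rightly flag as the analytic core one would cite rather than re-prove. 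Since the paper only needs the \emph{statement} of Carleman's condition to justify moment-determinacy of the limiting distributions (see Lemma~\ref{ssap are uniquely detrmined by moments} and the surrounding arguments), your write-up already goes beyond what the authors require.
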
 

\subsection{Examples}\label{sec:examples}
\begin{example} \label{Example number of leaves and edges} It can be shown inductively that the number of edges in a tree is given by the additive property with toll function $(f, 0)$ where $f(T) = 1$ for all $T \in\setofplanetrees_{\geq 0}$. We will denote this by $\mathcal{P}^{e}$. Similarly, the number of vertices in a tree is given by the additive property with toll function $(0, 1)$ where $0$ is the zero function. We will denote this by $\mathcal{P}^{v}$. 

\end{example}

\begin{example} 
For $f(T) = 1$ when $T = T^*$ and $f(T) = 0$ otherwise, we observe that $\mathcal{P}^{(f,0)}(T)$ represents the number of leaves in $T$. We will denote this by $\mathcal{P}^{d_0}$. Similarly, for $f(T) = 1$ when $T$ has root degree 1, and $f(T) = 0$ otherwise, $\mathcal{P}^{(f,0)}(T)$ represents the number of internal nodes in $T$. We will denote this by $\mathcal{P}^{d_1}$. Note that for $T \in \setofplanetrees_n$, $\mathcal{P}^{d_0}(T) \leq n$ and $\mathcal{P}^{d_1}(T) <  n$.
\end{example}

\begin{example}
We define the \emph{path length} of $T$, $\mathcal{P}^{PL}(T)$, to be the sum of the edge distances from each vertex of $T$ to the root. In the biological context, this quantity is also known as the \emph{total contact distance}. We can observe that $\mathcal{P}^{PL}(T)$ is a simple subtree additive property since 
$$\mathcal{P}^{PL}(T)= \sum_{v \in \overline{\mathcal{V}}(T)} \mathcal{P}^v(T_v) = \sum_{v \in \overline{\mathcal{V}}(T)} \mathcal{P}^e(T_v) + n$$ where $n = \left|\overline{\mathcal{V}}\right|$ is the number of edges in $T$. This holds because the number of paths from vertices to the root that utilize an edge $e$ is the number of vertices in the subtree directly below $e$.
\end{example}

It can be shown from the work of Tak{\'a}cs \cite{takacs1991bernoulli} or more directly from the work of Janson \cite{janson2003wiener} that the following result about the distribution of the path length holds.
\begin{theorem} 
As $n \rightarrow \infty$,
	\label{contactdistance - thm}
	$$\frac{\mathcal{P}^{PL}(\setofplanetrees_n)}{\sqrt{2n^3}}  \overset{d}{\to} \int_0^1e(t) \: dt$$ where $e(t)$ is a normalized Brownian excursion on $[0, 1]$. Thus, it is Airy Distributed. Furthermore,
	$$\lim_{n \rightarrow \infty}\E\left[\left(\frac{\mathcal{P}^{PL}(\setofplanetrees_n)}{\sqrt{2n^3}}\right)^k\right] \: \sim \: \frac{6k}{\sqrt{2}}\left(\frac{k}{12e}\right)^{\frac{k}{2}}$$ as $k \rightarrow \infty$.
\end{theorem}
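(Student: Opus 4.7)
The plan is to apply the method of moments. By Carleman's condition (Theorem~\ref{Carleman's condition}), it suffices to establish, for every fixed $k \geq 1$,
$$\E\!\left[\left(\frac{\mathcal{P}^{PL}(\setofplanetrees_n)}{\sqrt{2n^3}}\right)^{\!k}\right] \longrightarrow \mu_k := \E\!\left[\left(\int_0^1 e(t)\,dt\right)^{\!k}\right] \quad \text{as } n \to \infty,$$
and then separately to verify the asymptotic formula for $\mu_k$ as $k \to \infty$. Carleman's condition applies to $\int_0^1 e(t)\,dt$ because its moments grow only like $(k/e)^{k/2}$, so the limiting distribution is unique.

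For the moment convergence, I would begin with a bivariate generating function setup. Using the recursive decomposition of a plane tree into the ordered sequence of subtrees hanging off its root, together with the identity
$$\mathcal{P}^{PL}(T) \;=\; \sum_{i=1}^{r(T)} \bigl(v(T_{v_i}) + \mathcal{P}^{PL}(T_{v_i})\bigr),$$
one obtains a functional equation for the bivariate generating function $T(z,u) = \sum_{T} z^{n(T)} u^{\mathcal{P}^{PL}(T)}$ that links $T(z,u)$ to $T(zu,u)$. The univariate generating function $T(z,1)$ has the familiar square-root singularity at $z=1/4$, and differentiating the functional equation $k$ times at $u=1$ produces a recursion whose solutions are rational expressions in $T(z,1)$ and $\sqrt{1-4z}$. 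Each successive derivative intensifies the singularity by a factor of $(1-4z)^{-3/2}$, so standard singularity analysis (in the style of Flajolet--Sedgewick) yields an $n^{3k/2}$ scaling after normalizing by $C_n \sim 4^n/(\sqrt{\pi}\,n^{3/2})$. This confirms the scaling $\sqrt{2n^3}$ and produces an explicit recursion for the limit constants.

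The second task is to identify the limit constants with the Airy moments $\mu_k$. I would do this by appealing to Tak\'acs's recursion for the moments of the Brownian excursion area, which has the same structure as the recursion produced by the generating function calculation; equivalently, one may recognize $\int_0^1 e(t)\,dt$ as the scaling limit of the path length by invoking Aldous's theorem that the rescaled contour function of a uniform plane tree in $\setofplanetrees_n$ converges weakly to $\sqrt{2}\, e(t)$ on $[0,1]$, then applying the continuous mapping theorem after writing $\mathcal{P}^{PL}(T)/\sqrt{2n^3}$ as a Riemann sum. The uniform integrability needed to promote weak convergence to moment convergence would be supplied by the generating function moment bounds from the previous step. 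Finally, the asymptotic $\mu_k \sim (6k/\sqrt{2})(k/(12e))^{k/2}$ is a classical estimate following from Laplace's method applied to the explicit integral representation of $\mu_k$ due to Louchard.

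The main obstacle, as I see it, is not the generating function machinery itself but the bookkeeping required to show that the recursion extracted from singularity analysis matches the Tak\'acs recursion for Brownian excursion area. Both recursions involve convolution-type sums with combinatorial coefficients, and a clean matching requires careful tracking of constants through every stage of the computation. Bypassing this via Aldous's theorem trades that bookkeeping for a separate verification of uniform integrability, but the latter is in any case immediate once one has the moment bounds.
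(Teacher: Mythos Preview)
The paper does not prove this theorem. It is stated in the Examples subsection as a known result, with the sentence ``It can be shown from the work of Tak\'acs \cite{takacs1991bernoulli} or more directly from the work of Janson \cite{janson2003wiener} that the following result about the distribution of the path length holds.'' No argument is given; the theorem is used as an input to later results (e.g.\ Lemma~\ref{ssap are uniquely detrmined by moments}, Corollary~\ref{corollary P^LRD and P^IRD dist}, Corollary~\ref{full dist corr CD}).

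Your proposal is therefore not comparable to any proof in the paper, but it is a fair outline of the methods in the cited references. The generating-function/singularity-analysis route you sketch is essentially Tak\'acs's approach, and the alternative via Aldous's contour-process limit plus the continuous mapping theorem is the Janson-style argument. Your identification of the main technical point---matching the combinatorial recursion from singularity analysis to the Tak\'acs recursion for the Brownian excursion area moments, or alternatively establishing uniform integrability---is accurate, and the moment asymptotic via Louchard's representation is indeed how that part is typically handled. As a sketch of a proof from the literature, nothing you wrote is wrong; it simply goes beyond what the paper itself supplies.
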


\begin{example}
We define the \emph{Wiener index} of $T$, $\mathcal{P}^{WI}(T)$, to be the sum of the edge distances between any 2 vertices. In the biological context, this quantity is also known as the \emph{total ladder distance}. We can observe that $\mathcal{P}^{WI}(T)$ is a subtree additive property since 
$$\mathcal{P}^{WI}(T)= \sum_{v \in \overline{\mathcal{V}}(T)} \mathcal{P}^v(T_v)(\mathcal{P}^v(T) - \mathcal{P}^v(T_v)).$$ This holds because the number of paths between vertices that utilize an edge $e$ is the number of unordered pairs of vertices, one from the subtree below $e$ and the other not from that subtree.
\end{example}

From the work of Janson \cite{janson2003wiener}, the following result about the distribution of the Wiener index holds.
\begin{theorem} As $n \rightarrow \infty$,
	\label{ladderdistance - thm}
	$$\frac{\mathcal{P}^{WI}(\setofplanetrees_n)}{\sqrt{2n^5}}  \overset{d}{\to} \int \int_{0 < s < t < 1} (e(s) + e(t) - 2\min_{s \leq u \leq t}e(u)) \: ds \: dt$$ where $e(t)$ is a normalized Brownian excursion on $[0, 1]$. 
\end{theorem}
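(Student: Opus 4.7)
The plan is to deduce this theorem from the convergence of the rescaled contour function of a uniform random plane tree to a normalized Brownian excursion, together with a continuous-mapping argument applied to the Wiener-index functional. The starting identity is
$$\mathcal{P}^{WI}(T) = \frac{1}{2}\sum_{v, w \in \mathcal{V}(T)} d_T(v,w),$$
obtained by the same edge-counting argument used just before the statement (each edge $e$ contributes to a pair's distance iff $e$ separates the pair).

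First I would introduce the contour function $C_n(t)$ on $t \in [0, 2n]$ of a uniformly random $T \in \setofplanetrees_n$, recording the depth of the vertex visited at continuous time $t$ in a depth-first traversal. A standard identity says that if times $s, t$ correspond to vertices $u_s, u_t$, then
$$d_T(u_s, u_t) = C_n(s) + C_n(t) - 2 \min_{s \wedge t \leq r \leq s \vee t} C_n(r),$$
since the minimum records the depth of the lowest common ancestor. Rewriting the Wiener index as a sum over contour-time pairs (with a correction accounting for multiple visits to each vertex) produces, after normalization, a Riemann sum approximating
$$F(f) := \iint_{0 < s < t < 1} \bigl(f(s) + f(t) - 2 \min_{s \leq u \leq t} f(u)\bigr) \, ds \, dt.$$

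The key external input is Aldous's theorem: for a uniform plane tree on $n$ edges (which is a critical Galton-Watson tree with geometric$(1/2)$ offspring distribution, hence offspring variance $\sigma^2 = 2$), the rescaled contour process $\bigl((2n)^{-1/2} C_n(2n\,\cdot)\bigr)$ converges in distribution in $C[0,1]$ to the normalized Brownian excursion $e$. The change of variables $s = 2ns'$, $t = 2nt'$ contributes a factor of $(2n)^2$ from the area element and a factor of $\sqrt{2n}$ from the rescaling of $C_n$, yielding the prefactor $\sqrt{2n^5}$ (up to the constant $\sqrt{2}$, which matches the displayed normalization). Applying the continuous functional $F$ then gives $F(e)$ as the limit law.

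The main technical hurdle is justifying the continuous-mapping step carefully: one must check that $F$ is continuous on $C[0,1]$ at almost every excursion path. This follows because, for fixed $s < t$, the map $f \mapsto \min_{s \leq u \leq t} f(u)$ is continuous in the uniform norm and the integrand is bounded by $4\|f\|_\infty$, so dominated convergence yields continuity of $F$. Two secondary issues are (i) controlling the discretization error between the exact sum over vertex pairs and the double-integral approximation, which is handled by the uniform modulus of continuity of $C_n$, and (ii) if one wants convergence of moments (consistent with the moment-based framework elsewhere in this paper), uniformly bounding $\E\bigl[\mathcal{P}^{WI}(\setofplanetrees_n)^k\bigr] / n^{5k/2}$ in $n$ for each $k$ via generating-function asymptotics, after which Carleman's condition (Theorem~\ref{Carleman's condition}) identifies the limit uniquely.
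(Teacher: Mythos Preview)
The paper does not give its own proof of this theorem: it is quoted as a known result ``From the work of Janson \cite{janson2003wiener}'' and used as input elsewhere. So there is nothing in the paper to compare against line by line.

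Your sketch is the standard Aldous-style route to Janson's result and is sound in outline: convergence of the rescaled contour (Dyck path) of a uniform plane tree to the normalized Brownian excursion, followed by the continuous mapping theorem applied to the functional $F(f)=\iint_{0<s<t<1}(f(s)+f(t)-2\min_{[s,t]}f)\,ds\,dt$, which is indeed uniformly continuous on $C[0,1]$. Two points deserve more care than you give them. First, the passage from $\sum_{\{v,w\}} d_T(v,w)$ to the contour double integral is not just a ``correction'': in the contour, vertex $v$ is visited $\deg(v)$ times, so the naive time integral weights each pair $(v,w)$ by $\deg(v)\deg(w)$ rather than $1$. One must show this weighting is asymptotically negligible after normalization, which is usually done either via a second-moment bound on degrees or (as in Janson's paper) by working with the height process / depth-first walk on vertices rather than the contour on edges. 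Second, your bookkeeping of the constant is loose: $(2n)^2\cdot\sqrt{2n}=(2n)^{5/2}$ is not $\sqrt{2n^5}$, and the missing factor of $4$ is exactly accounted for by the average multiplicity $\approx 2$ of each vertex in the contour together with the ordered-versus-unordered pair convention; you should track this explicitly rather than assert it ``matches.'' With those two points filled in, the argument is complete.
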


\begin{example}
We define the \emph{total leaf to root distance} of $T$, $\mathcal{P}^{LR}(T)$, and the \emph{total internal node to root distance} of $T$, $\mathcal{P}^{IR}(T)$, to be the sum of the edge distances from every leaf vertex to the root and the sum of the edge distances from every internal node to the root, respectively. We can observe that $\mathcal{P}^{LR}(T)$ and $\mathcal{P}^{IR}(T)$ can be described in terms of a simple subtree additive properties as follows.

$$\mathcal{P}^{LR}(T)= \mathcal{P}^{d_0}(T) + \sum_{v \in \overline{\mathcal{V}}(T)} \mathcal{P}^{d_0}(T_v) = \sum_{v \in \overline{\mathcal{V}}(T)} \mathcal{P}^{d_0}(T_v) + O(n)$$ 
and 
$$\mathcal{P}^{IR}(T)= \mathcal{P}^{d_1}(T) + \sum_{v \in \overline{\mathcal{V}}(T)} \mathcal{P}^{d_1}(T_v) = \sum_{v \in \overline{\mathcal{V}}(T)} \mathcal{P}^{d_1}(T_v) + O(n).$$ We see this as follows. The total leaf to root distance is the sum over all edges of the number of path that use that edge which is the number of leaves in the subtree below the edge, which is $T_v$, where $v$ is the vertex below the edge. We, however, notice that by our definition of a leaf, if $v$ is a leaf in $T$, it will not be counted as a leaf in $T_v$. Thus, overall, we under count by the number of leaves in $T$. A similar argument holds for the total internal node to root distance.

In Section \ref{Distributions of Simple Subtree Additive Properties}, we will show that $\frac{\mathcal{P}^{LR}(\setofplanetrees_n)}{\sqrt{n^3}}$ and $\frac{\mathcal{P}^{IR}(\setofplanetrees_n)}{\sqrt{n^3}}$ both converge weakly to an Airy random variable.
\end{example}

\subsection{Generating Functions and Analytic Combinatorics}
For a sequence $(a(n))$ for $n \inZ_{\geq 0}$, we define the \emph{generating function} corresponding to the sequence to be the formal power series, $$F(x) = \sum_{n \geq 0} a(n) x^n.$$ Similarly, for a $k$-dimensional sequence, $(a(n_1, \cdots, n_k))$ for $n_1, \cdots, n_k \inZ_{\geq 0}$, we define the \emph{multivariate generating function} corresponding to the sequence to be $$F(x_1, \cdots, x_k) = \sum_{n_1 \geq 0} \cdots \sum_{n_k \geq 0} a(n_1, \cdots, n_k) x_1^{n_1} \cdots x_k^{n_k}.$$

Conversely, we will use $\left[x_1^{n_1}\cdots x_k^{n_k}\right]F(x_1, \cdots, x_k)$ to denote the coefficient of $x_1^{n_1}\cdots x_k^{n_k}$ in the generating function $F(x_1, \cdots, x_k)$, namely $a(n_1, \cdots, n_k)$.

Fix $(a(n))$ for $n \inZ_{\geq 0}$ and let $F(z)$ be the associated generating function. We can treat $F(z)$ as a function over the complex plane. We say $F(z)$ is \emph{analytic} at a point $z_0 \inC$ if  there exist a region around $z_0$ such that $F(z)$ is differentiable. We say $F(z)$ is analytic on a domain if it is analytic at all points in the domain.

A point, $z_0 \inC$, is a \emph{singularity} of $F(z)$ if $F(z)$ is not analytic at $z_0$. Furthermore, that singularity is \emph{isolated} if there exist $\epsilon > 0$ such that $F(z)$ is analytic on the domain $\{z \inC : 0<|z-z_0| < \epsilon \}$. Let $F(z)$ be such that all its singularities of are isolated. We define a \emph{dominant singularity} of $F(z)$ to be an isolated singularity with minimal distance from the origin. 

From the work of Flajolet and Sedgewick \cite{flajolet2009analytic}, we now state a slightly simplified form of a so called Transfer Theorem that will allow us to deduce asymptotic information about $(a(n))$ using $F(z)$. For some $R>1$ and $0<\phi<\frac{\pi}{2}$, we define a $\Delta$-domain at $1$ to be the domain $$\Delta(\phi, R) = \{z\inC:|z|<R, z\neq 1, |\text{arg}(z-1)|>\phi\}.$$

\begin{theorem}[Transfer Theorem] \label{transfer lemma}
Let $(a(n))$ be a sequence with associated generating functions $F(z)$. Let $F(z)$ be a function analytic at $0$ with a unique dominant singularity at $1$ and let $F(z)$ be analytic in a $\Delta$-domain at $1$, $\Delta_0$. Assume there exist $\sigma, \tau$ such that $\sigma$ is a finite linear combination of terms of the form $(1-z)^{-\alpha}$ for $\alpha \inC$ and $\tau$ is a term of the form $(1-z)^{-\beta}$ for $\beta \inC$ such that, for $z \in \Delta_0$, 
$$F(z) = \sigma\left(z\right) + O\left(\tau\left(z\right)\right) \quad\quad \text{as} \quad\quad z \rightarrow 1.$$
Then, the following asymptotic estimation holds.
$$a(n) = [z^n]F(z) = [z^n]\sigma(z) + O(n^{\beta - 1})\,.$$
\end{theorem}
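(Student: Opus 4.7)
The plan is to apply Cauchy's integral formula and deform the contour to a Hankel-type contour that hugs the singularity at $z=1$. Begin with
$$a(n) = [z^n]F(z) = \frac{1}{2\pi i}\oint_{\gamma_0}\frac{F(z)}{z^{n+1}}\,dz,$$
where $\gamma_0$ is a small positively-oriented circle about the origin. Since $F$ is analytic throughout $\Delta_0 = \Delta(\phi, R)$, one can deform $\gamma_0$ outward to a Hankel-type contour $\mathcal{H}_n \subset \Delta_0$ consisting of (i) an inner arc of radius $1/n$ around $z=1$, (ii) two straight segments leaving that arc at angles $\pm\psi$ (for some fixed $\phi < \psi < \pi/2$) extending out to radius $R' \in (1, R)$, and (iii) a large outer arc of radius $R'$ closing the contour around the origin. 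By Cauchy's theorem, $a(n) = \frac{1}{2\pi i}\int_{\mathcal{H}_n}F(z)/z^{n+1}\,dz$.

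Next, use the split $F(z) = \sigma(z) + G(z)$, where $G(z) = O((1-z)^{-\beta})$ uniformly on $\Delta_0$ by hypothesis. Each summand $(1-z)^{-\alpha}$ of $\sigma$ is analytic on the cut plane $\mathbb{C}\setminus [1,\infty)$ (principal branch), so exactly the same contour argument applied to $\sigma$ yields $[z^n]\sigma(z) = \frac{1}{2\pi i}\int_{\mathcal{H}_n}\sigma(z)/z^{n+1}\,dz$. Subtracting,
$$a(n) - [z^n]\sigma(z) = \frac{1}{2\pi i}\int_{\mathcal{H}_n}\frac{G(z)}{z^{n+1}}\,dz,$$
so it remains to bound the right-hand side by $O(n^{\beta - 1})$.

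The remaining work is the classical Hankel estimate. Parametrize the straight portions of $\mathcal{H}_n$ by $z = 1 + (t/n)e^{\pm i\psi}$ for $t \in [1, n(R'-1)]$ and the inner arc by $z = 1 + e^{i\theta}/n$ with $|\theta|\leq \psi$. Then $|1-z| = t/n$ on the segments and $1/n$ on the arc, so $|G(z)| = O(|1-z|^{-\operatorname{Re}\beta})$. A Taylor expansion of $\log z$ about $z=1$, together with the choice $\psi < \pi/2$, yields $|z^{-(n+1)}| = O(\exp(-c\,t))$ on the segments for some $c > 0$. Combined with $|dz| = dt/n$, the straight-segment contribution is
$$O\!\left(n^{\operatorname{Re}\beta - 1}\int_1^\infty t^{-\operatorname{Re}\beta} e^{-c\,t}\,dt\right) = O(n^{\operatorname{Re}\beta - 1}),$$
the inner arc contributes $O(n^{\operatorname{Re}\beta - 1})$ by the same scaling, and the outer arc at radius $R'$ contributes the exponentially smaller $O(R'^{-n})$. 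Hence the total error is $O(n^{\beta - 1})$, as claimed.

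The main obstacle is making the Hankel estimate rigorous in this generality: verifying $\mathcal{H}_n \subset \Delta_0$ for all sufficiently large $n$ (which relies on $\phi < \psi$), establishing the exponential bound $|z^{-(n+1)}| = O(e^{-c\,t})$ uniformly along the straight segments from the Taylor expansion of $\log z$ and the choice $\psi < \pi/2$, and tracking that a complex exponent $\beta$ contributes only the bounded factor $e^{\pi|\operatorname{Im}\beta|}$ coming from $|(1-z)^{-\beta}| \leq |1-z|^{-\operatorname{Re}\beta} e^{\pi|\operatorname{Im}\beta|}$, so that the bound depends on $\beta$ only through its real part. With these ingredients in place, the theorem follows.
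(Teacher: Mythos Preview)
Your proposal is essentially correct and follows the classical Flajolet--Sedgewick Hankel contour argument. However, note that the paper does not actually prove this theorem: it is stated as a known result and attributed to Flajolet and Sedgewick, \emph{Analytic Combinatorics}, with no proof given in the paper itself. Your sketch reproduces the standard proof from that reference, so there is nothing to compare against in the paper proper.
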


A basic application of the above result that we will be useful in Section \ref{Resuls Section} is as follows.
\begin{corollary} \label{analytic combo corollary 1} Let $(a(n))$ be a sequence with associated generating functions $F(z)$. Let $\frac{1}{\zeta} \inC$ be the unique dominant singularity of $F(z)$ and assume $F(z) = (1 - \zeta z)^{-\alpha}g(z)$ where $\alpha \inC$ and $g(z)$ is a complex-valued function that is analytic in the region $R = \left\{z \inC: |z| \leq \left|\frac{1}{\zeta}\right| \right\}$. Then, the following asymptotic estimation holds.
$$a(n) = \frac{\zeta^n g\left(\frac{1}{\zeta}\right)}{\Gamma(\alpha)} n^{\alpha - 1} + O\left(\zeta^n n^{\alpha - 2}\right)\,.$$
\end{corollary}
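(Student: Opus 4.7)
The plan is to reduce directly to the Transfer Theorem (Theorem~\ref{transfer lemma}) by a change of variable that moves the dominant singularity from $1/\zeta$ to $1$, and then to expand $g$ locally at that singularity so that the hypothesis of Theorem~\ref{transfer lemma} holds in a clean form.

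First, I would introduce $\tilde{F}(w) = F(w/\zeta)$ and $\tilde{g}(w) = g(w/\zeta)$, so that
\[
\tilde{F}(w) \;=\; (1-w)^{-\alpha}\,\tilde{g}(w).
\]
The assumption that $g$ is analytic on the closed disk $\{|z| \leq |1/\zeta|\}$ means $g$ is analytic on some strictly larger open disk, so $\tilde{g}$ is analytic on an open disk strictly containing $\{|w| \leq 1\}$. In particular $\tilde{F}$ is analytic in some $\Delta$-domain $\Delta(\phi, R)$ at $1$ with $R > 1$, has $1$ as its unique dominant singularity (inherited from the hypothesis on $F$), and is analytic at $0$.

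Next, I would expand $\tilde{g}$ around $w = 1$. Since $\tilde{g}$ is analytic at $w=1$, Taylor's theorem gives $\tilde{g}(w) = \tilde{g}(1) + O(1-w) = g(1/\zeta) + O(1-w)$ as $w \to 1$ in $\Delta(\phi, R)$. Multiplying by $(1-w)^{-\alpha}$ yields
\[
\tilde{F}(w) \;=\; g(1/\zeta)\,(1-w)^{-\alpha} \;+\; O\bigl((1-w)^{-\alpha+1}\bigr), \qquad w \to 1 \text{ in } \Delta(\phi, R).
\]
This is exactly the hypothesis of Theorem~\ref{transfer lemma} with $\sigma(w) = g(1/\zeta)(1-w)^{-\alpha}$ and $\tau(w) = (1-w)^{-\alpha+1}$. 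Applying the theorem gives
\[
[w^n]\tilde{F}(w) \;=\; g(1/\zeta)\,[w^n](1-w)^{-\alpha} \;+\; O\bigl(n^{\alpha - 2}\bigr).
\]
Using the standard identity $[w^n](1-w)^{-\alpha} = \binom{n+\alpha-1}{n} = \frac{n^{\alpha-1}}{\Gamma(\alpha)}\bigl(1 + O(1/n)\bigr)$ and the relation $a(n) = [z^n]F(z) = \zeta^n\,[w^n]\tilde{F}(w)$ (which follows from the substitution $z = w/\zeta$) then yields the claimed estimate
\[
a(n) \;=\; \frac{\zeta^n g(1/\zeta)}{\Gamma(\alpha)}\, n^{\alpha-1} + O\bigl(\zeta^n n^{\alpha-2}\bigr).
\]

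This proof is essentially a packaging exercise: no step is deep, and the only mild subtlety is ensuring $\tilde{F}$ really satisfies the $\Delta$-analyticity hypothesis of Theorem~\ref{transfer lemma}, which comes for free from $g$ being analytic on a closed disk containing $1/\zeta$. The hardest bookkeeping point to get right is the bound $O((1-w)^{-\alpha+1})$ for the remainder after pulling out the leading term of $\tilde{g}$; one should verify it holds uniformly in the appropriate $\Delta$-domain, but this is immediate from the Taylor remainder since $\tilde{g}$ is analytic across $w=1$.
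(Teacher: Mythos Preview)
Your proof is correct and follows essentially the same approach as the paper: rescale to move the singularity to $1$, Taylor-expand $g$ there, and apply the Transfer Theorem. You spell out the $\Delta$-analyticity verification and the binomial asymptotics more explicitly than the paper does, but the argument is the same.
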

\begin{proof}
We expand $g(z)$ about $z = \frac{1}{\zeta}$ using Taylor's Theorem to get $g(z) = g\left(\frac{1}{\zeta}\right) + O\left(1 - \zeta z\right).$ Thus $$F\left(\frac{z}{\zeta}\right) = g\left(\frac{1}{\zeta}\right)(1 -  z)^{-\alpha} + O\left((1 - z)^{1-\alpha}\right) \quad\quad \text{as} \quad\quad z \rightarrow 1.$$ We then apply Theorem \ref{transfer lemma} and notice that $[z^n] F\left(\frac{z}{\zeta}\right) = \frac{1}{\zeta^n}[z^n]F(z)$ to achieve the desired result.

\end{proof}


\section{Results} \label{Resuls Section}
\subsection{Generating functions for counting trees by leaves, internal nodes and root degree}
\label{Gen function count section}
Let $\mathcal{G}_n(d_0, d_1, r)$ be the set of plane trees on $n$ edges with $d_0$ leaves, $d_1$ internal nodes and root degree $r$. Let 

$$G(x,a,b) = \sum_{n = 0}^\infty\sum_{d_0 = 0}^\infty\sum_{d_1 = 0}^\infty\sum_{r = 0}^\infty |\mathcal{G}_n(d_0, d_1, r)|x^na^{d_0}b^{d_1},$$  
$$G_r(x,a,b) = \sum_{n = 0}^\infty\sum_{d_0 = 0}^\infty\sum_{d_1 = 0}^\infty |\mathcal{G}_n(d_0, d_1, r)|x^na^{d_0}b^{d_1}$$ and $$G(x,a,b,c) = \sum_{n = 0}^\infty\sum_{d_0 = 0}^\infty\sum_{d_1 = 0}^\infty \sum_{r = 0}^\infty |\mathcal{G}_n(d_0, d_1, r)|x^na^{d_0}b^{d_1}c^r.$$ As we will use it often from this point on, let $G^*(x,a,b) = G_1(x,a,b)$.

\begin{theorem} \label {G recurrence}
	The following recurrences hold. 
	\begin{equation} \label{primary recurrence G}
	G(x,a,b) = 1 + xG(x,a,b)^2 + (a-1)xG(x,a,b) + (b-1)xG(x,a,b)G^*(x,a,b),
	\end{equation}
	\begin{equation}\label{primary recurrence G with root}
	G(x,a,b,c) = \frac{1}{1-cG^*(x,a,b)},
	\end{equation}
	and
	\begin{equation}\label{primary recurrence G*}
	G^*(x,a,b) = 1- \frac{1}{G(x,a,b)}.
	\end{equation}
\end{theorem}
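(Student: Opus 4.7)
The plan is to derive all three recurrences from two standard combinatorial decompositions of plane trees: the unique join factorization $T = T_1 \ltimes T_2$ recorded in Section~\ref{sec:preliminaries}, and the decomposition of a tree of root degree $r$ as an ordered $r$-tuple of child subtrees. Before applying either, I would first track the effect of the join on the relevant statistics. Clearly $n(T) = n(T_1) + n(T_2) + 1$ and $r(T) = r(T_1) + 1$. For the leaf and internal-node counts, every vertex of $T_1$ other than its root, and every non-root vertex of $T_2$, keeps its down degree, so the only vertex that can change status is the root of $T_2$, which becomes a non-root vertex of $T$ with down degree equal to its degree in $T_2$, namely $r(T_2)$. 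This gives $d_0(T) = d_0(T_1) + d_0(T_2) + [T_2 = T^*]$ and $d_1(T) = d_1(T_1) + d_1(T_2) + [r(T_2) = 1]$.

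To prove $(\ref{primary recurrence G})$, I would split $G = 1 + \sum_{T \in \setofplanetrees_{\geq 1}} x^{n(T)} a^{d_0(T)} b^{d_1(T)}$ and reindex the nontrivial sum as a sum over pairs $(T_1, T_2)$. Using the formulas above, the sum factors as $xG \cdot H$, where $H := \sum_{T_2} x^{n(T_2)} a^{d_0(T_2) + [T_2 = T^*]} b^{d_1(T_2) + [r(T_2) = 1]}$. Partitioning the sum defining $H$ into the three disjoint cases $T_2 = T^*$, $r(T_2) = 1$, and $r(T_2) \geq 2$ yields $H = a + bG^* + (G - 1 - G^*) = G + (a-1) + (b-1)G^*$, and substituting back gives $(\ref{primary recurrence G})$.

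For $(\ref{primary recurrence G with root})$ and $(\ref{primary recurrence G*})$, I would exploit the second decomposition: a tree with root degree $r$ is an ordered $r$-tuple of child subtrees, and by the same bookkeeping each child subtree contributes the factor $xH$ to the tree's monomial. Since $G^* = G_1$ is, by definition, exactly this single-subtree contribution, $xH = G^*$, and therefore $G_r(x,a,b) = (G^*)^r$ for every $r \geq 0$. Marking the root degree with $c$ and summing the resulting geometric series yields $G(x,a,b,c) = \sum_{r \geq 0} (cG^*)^r = 1/(1 - cG^*)$, which is $(\ref{primary recurrence G with root})$. Setting $c = 1$ and solving for $G^*$ then gives $(\ref{primary recurrence G*})$.

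The only real subtlety, though not deep, is the case analysis used to compute $H$: correctly handling the change in down-degree status of the root of the child subtree as it is absorbed into the parent, which is what produces the correction terms $(a-1)$ and $(b-1)G^*$ in the first recurrence.
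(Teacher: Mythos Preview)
Your proposal is correct and follows essentially the same approach as the paper: the join decomposition $T = T_1 \ltimes T_2$ with the case analysis on $T_2$ for $(\ref{primary recurrence G})$, and the root-degree decomposition plus geometric series for $(\ref{primary recurrence G with root})$ and $(\ref{primary recurrence G*})$. Your explicit derivation that $xH = G^*$ is a nice touch---the paper phrases the same fact as ``a tree with root degree $r$ is a sequence of $r$ trees with root degree $1$''---but the underlying combinatorics is identical.
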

\begin{proof}
    To achieve (\ref{primary recurrence G}), we apply the decomposition of a plane tree into 2 subtrees given by $T = T_1 \ltimes T_2$. This adds a new edge. The only tree not accounted for by this process is $T^*$. Note that this process of adjoining trees never creates any new leaves or internal nodes in $T_1$. Notice that when $T_2 = T^*$, $T_2$ has an extra leaf which is not accounted for (since it would be at the root of $T_2$). Thus we re-weight these trees. (Hence $(a-1)xG(x,a,b)$.) Also notice that when $T_2$ has root degree 1, $T_2$ has an extra internal node which is not accounted for (since it would be at the root of $T_2$). Thus we re-weight these trees. (Hence $(b-1)xG(x,a,b)G^*(x,a,b)$.)
    
    To achieve (\ref{primary recurrence G with root}), we notice that a tree with root degree $r$ is equivalent to a sequence of $r$ trees with root degree 1 where we identify all the root vertices. This identification does not add or remove any edges, leaves or internal nodes. Thus the generating function for trees with root degree $r$, where we weight root degree, is $c^rG^*(x,a,b)^r$. Summing over all possible values for $r$, we get the desired expression.
    
    To achieve (\ref{primary recurrence G*}), we set $c$ to 1 in (\ref{primary recurrence G with root}) to ignore root degree, and rearrange the expression.
    
\end{proof}

\begin{corollary}\label{eval G func}
The following generating functions hold.
	\begin{equation}\label{G equation}
	G(x,a,b) = \frac{1 + (2-a-b)x - \sqrt{(1+(2-a-b)x)^2 - 4x(1 - (b-1)x)}}{2x} \end{equation} and 
	\begin{equation}\label{G^* equation}
	G^*(x,a,b) = \frac{1 + (a-b)x - \sqrt{(1+(2-a-b)x)^2 - 4x(1 - (b-1)x)}}{2(1-(b-1)x)}. \end{equation}
\end{corollary}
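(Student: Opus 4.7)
The plan is to eliminate $G^*$ from the recurrences in Theorem \ref{G recurrence} to get a quadratic in $G$, solve it, and then recover $G^*$ algebraically. Writing $G = G(x,a,b)$ and $G^* = G^*(x,a,b)$ for brevity, recurrence (\ref{primary recurrence G*}) gives $G \cdot G^* = G - 1$. Substituting this into (\ref{primary recurrence G}) eliminates $G^*$ entirely and, after collecting terms, produces
\begin{equation*}
x G^2 + \bigl((a+b-2)x - 1\bigr) G + \bigl(1 - (b-1)x\bigr) = 0.
\end{equation*}

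The next step is to apply the quadratic formula in $G$, giving
\begin{equation*}
G = \frac{1 + (2-a-b)x \pm \sqrt{(1+(2-a-b)x)^2 - 4x(1-(b-1)x)}}{2x}.
\end{equation*}
To pick the correct branch, I expand the discriminant to first order in $x$ and observe it equals $1 - 2(a+b)x + O(x^2)$, so its square root is $1 - (a+b)x + O(x^2)$. Only the minus sign makes the numerator vanish at $x=0$ at the same rate as the denominator $2x$, and in fact yields $G(0,a,b) = 1$, consistent with $G$ being a generating function counting plane trees (whose $n=0$ coefficient is $1$). This selection establishes (\ref{G equation}).

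For $G^*$, I use (\ref{primary recurrence G*}) in the form $G^* = 1 - 1/G$. Computing $1/G$ directly is easiest by rationalizing: multiplying numerator and denominator by the conjugate $1 + (2-a-b)x + \sqrt{\cdots}$ turns the denominator into $(1+(2-a-b)x)^2 - \bigl((1+(2-a-b)x)^2 - 4x(1-(b-1)x)\bigr) = 4x(1-(b-1)x)$, which cancels the factor of $2x$ in the numerator and yields
\begin{equation*}
\frac{1}{G} = \frac{1 + (2-a-b)x + \sqrt{(1+(2-a-b)x)^2 - 4x(1-(b-1)x)}}{2(1-(b-1)x)}.
\end{equation*}
Subtracting this from $1$ and simplifying the numerator via $2(1-(b-1)x) - 1 - (2-a-b)x = 1 + (a-b)x$ produces exactly (\ref{G^* equation}).

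The only real subtlety is the branch choice of the square root, so the main step worth presenting carefully in the write-up is the series check at $x=0$; the rest of the derivation is algebraic manipulation that follows automatically from the three recurrences in Theorem \ref{G recurrence}.
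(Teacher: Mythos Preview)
Your proof is correct and follows essentially the same approach as the paper, which simply states that the result follows by solving recurrences (\ref{primary recurrence G}) and (\ref{primary recurrence G*}) simultaneously. Your version is more detailed, and in particular your explicit branch-selection argument via the series expansion at $x=0$ is a useful addition that the paper omits.
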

\begin{proof}
The result follows immediately by solving (\ref{primary recurrence G}) and (\ref{primary recurrence G*}) simultaneously.

\end{proof}

\begin{lemma}\label{dom sig lemma} For all $a, b \inR_{> 0}$, the dominant singularity of $G(x,a,b)$ and $G^*(x,a,b)$ occurs at $\rho = \rho(a, b) = a + b + 2\sqrt{a}$.
\end{lemma}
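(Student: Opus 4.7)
The plan is to work from the explicit closed forms for $G$ and $G^*$ given in Corollary~\ref{eval G func}. For $G$, the candidate singularities are the apparent pole at $0$ from the factor $1/(2x)$, which I would dispatch immediately by noting that a Taylor expansion of the numerator yields $G(0,a,b)=1$, and the branch points of the radical $\sqrt{D(x)}$, where $D(x) := (1+(2-a-b)x)^2 - 4x(1-(b-1)x)$ is the expression under the square root. For $G^*$ there is in addition a potential pole at $x^\star := 1/(b-1)$ coming from the denominator $1-(b-1)x$, which I will need to show is either removable or strictly farther from the origin than the dominant branch point.

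The key computation is to factor $D(x)$. Expanding gives
\begin{equation*}
D(x) = 1 - 2(a+b)x + \bigl((a+b)^2 - 4a\bigr) x^2 = (1 - \rho_+ x)(1 - \rho_- x), \qquad \rho_\pm := (a+b) \pm 2\sqrt{a},
\end{equation*}
so the branch points of $\sqrt{D(x)}$ sit at $x = 1/\rho_+$ and $x = 1/\rho_-$. For $a,b > 0$ I would check that $|\rho_+| - |\rho_-| = 2(a+b) > 0$ regardless of whether $\rho_-$ is positive, zero, or negative, and conclude $|1/\rho_+| < |1/\rho_-|$, so the branch point at $1/\rho_+$ is the dominant one. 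That already gives the lemma for $G$.

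For $G^*$, it remains to control the candidate pole at $x^\star$. Substituting gives the clean identity $D(x^\star) = \bigl((1-a)/(b-1)\bigr)^2$, so $\sqrt{D(x^\star)} = \pm(1-a)/(b-1)$, and a direct evaluation of the numerator $1 + (a-b)x - \sqrt{D(x)}$ at $x^\star$ shows it vanishes exactly in the ``same sign'' cases where $a-1$ and $b-1$ share a sign, rendering the apparent pole removable. In the remaining opposite-sign cases I would verify $\rho_+ > |b-1|$ directly by writing $\rho_+ = (\sqrt{a}+1)^2 + (b-1)$: when $b \geq 1$ this reduces to $(\sqrt{a}+1)^2 > 0$, and for $a > 1$, $b < 1$ the crude estimate $\rho_+ > 1 + 2 = 3 > 1-b$ suffices. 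Thus $|x^\star| \geq 1/\rho_+$ in every case, and $1/\rho_+$ remains dominant for $G^*$ as well.

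The main obstacle is the sign bookkeeping for $\sqrt{D(x^\star)}$ under the analytic continuation of the principal branch along a path from $0$ to $x^\star$, which is what confirms the numerator cancellation in the removable cases; once that branch-tracking is pinned down the rest is routine algebra.
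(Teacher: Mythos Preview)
Your approach is essentially the same as the paper's: factor the discriminant, identify $1/\rho_+$ as the nearest branch point, dispatch the removable singularity at $x=0$ for $G$, and handle the potential pole at $1/(b-1)$ for $G^*$ by a same-sign/opposite-sign case split. One small slip: the identity $|\rho_+| - |\rho_-| = 2(a+b)$ only holds when $\rho_- \le 0$ (for $\rho_- > 0$ the difference is $4\sqrt{a}$), but the conclusion $|\rho_+| > |\rho_-|$ is correct in either case, and your flagged branch-tracking concern is resolved by noting that for $b<1$ no zero of $D$ lies between $0$ and $x^\star$, while for $b>1$ one has $|x^\star| > 1/\rho_+$ outright.
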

\begin{proof}
Fix $a, b > 0$. Let $\Psi = (1+(2-a-b)x)^2 - 4x(1 - (b-1)x)$ and let $\overline{\rho} = \overline{\rho}(a,b) = a + b - 2\sqrt{a}$. The roots of $\Psi$ are  $\frac{1}{\rho}$ and $\frac{1}{\overline{\rho}}$. Clearly, $0 < \frac{1}{\rho} < \frac{1}{\overline{\rho}}$. Notice that the only singularity caused by the numerator of (\ref{G^* equation}) and (\ref{G equation}) occurs when $\Psi = 0$. Thus the most significant of such singularities is $\frac{1}{\rho}$.

Notice that if there exists another singularity of $G(x,a,b)$, it must occur at $x = 0$ (caused by the denominator). We notice that when $x = 0$, the numerator of $(\ref{G equation})$ goes to 0. Thus, taking a Laurent expansion of $G(z,a,b)$ at $z = 0$, we get $$G(z,a,b) = \frac{1}{2z}\sum_{n \geq 1} d_nz^n = \sum_{n \geq 0} \frac{d_{n + 1}}{2}z^{n},$$ where $d_i$ are constants. Thus $G(z,a,b)$ is analytic at $z = 0$. Thus, the dominant singularity of $G(x,a,b)$ is $\frac{1}{\rho}$.

For $b = 1$, the denominator of $(\ref{G^* equation})$ is a constant, thus cannot cause another singularity. Assume $b \neq 1$. Notice that if there exists another singularity of $G^*(x,a,b)$, it must occur at $x = \frac{1}{b-1}$ (caused by the denominator). We notice that when $x = \frac{1}{b-1}$, the numerator of $(\ref{G^* equation})$ goes to $d'_0 = \frac{a - 1}{b-1} - \left| \frac{a-1}{b-1}\right|$. Notice that when the sign of $a-1$ and $b-1$ are the same (or $a = 1$), $d'_0 = 0$. Thus, taking a Laurent expansion of $G^*(z,a,b)$ at $z = \frac{1}{b-1}$, we get $$G^*(z,a,b) = \frac{1}{2(1 - (b-1)z)}\sum_{n \geq 0} d'_n\left(z - \frac{1}{b-1}\right)^n = \sum_{n \geq -1} \frac{d'_{n+1}}{2(1-b)}\left(z - \frac{1}{b-1}\right)^{n},$$ where $d'_i$ are constants. Notice that when $d'_0 = 0$, $G^*(z,a,b)$ is analytic at $z = \frac{1}{b-1}$. Thus, the dominant singularity of $G^*(x,a,b)$ is $\frac{1}{\rho}$. When $d'_0 \neq 0$, $G^*(z,a,b)$ has a singularity at $z = \frac{1}{b-1}$. For this to be the case, we must have that $a - 1$ and $b-1$ have the different signs. Notice that for $z = \frac{1}{b-1}$ to be the dominant singularity, $\rho^2 < (b-1)^2$, which implies that $a + 2b + 2\sqrt{a} < 1$. Since $a - 1$ and $b-1$ have different signs, one of $a$ and $b$ is at least 1, thus the inequality cannot hold. We thus conclude that the dominant singularity of $G^*(x,a,b)$ in this case is also $\frac{1}{\rho}$.

\end{proof}

\begin{corollary} \label{asymp normalizing const lemma}
Fix $\alpha, \beta \inR$. Let $\rho = e^{-\alpha} + e^{-\beta} + 2e^{-\frac{\alpha}{2}}$. The following estimate holds.
$$\normConst_{(n,\alpha,\beta,0)} = \frac{\sqrt{ e^{-\frac{\alpha}{2}} \rho }}{2\sqrt{\pi}} \cdot \rho^n \cdot n^{-\frac{3}{2}} + O\left(n^{-\frac{5}{2}}\right) $$
\end{corollary}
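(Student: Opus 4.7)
The plan is to identify the normalizing constant with an extractable coefficient of a generating function and then apply singularity analysis. First, by the definition of the energy function and the generating function $G$,
$$\normConst_{(n,\alpha,\beta,0)} = \sum_{T\in\setofplanetrees_n} e^{-\alpha d_0(T)-\beta d_1(T)} = [x^n]G(x,e^{-\alpha},e^{-\beta}),$$
since $G(x,a,b)$ already sums over root degree $r$. Setting $a = e^{-\alpha}$ and $b = e^{-\beta}$, the task reduces to extracting $[x^n]G(x,a,b)$ asymptotically.

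Next, using the closed form in Corollary \ref{eval G func}, a direct expansion shows that the discriminant factors as
$$(1+(2-a-b)x)^2 - 4x(1-(b-1)x) = (1-\rho x)(1-\bar\rho x),$$
where $\bar\rho = a+b-2\sqrt{a}$. Writing the square root as $\sqrt{1-\rho x}\,\sqrt{1-\bar\rho x}$, and using Lemma \ref{dom sig lemma} together with the inequality $|\bar\rho|<\rho$ (so $\sqrt{1-\bar\rho x}$ is analytic in a neighborhood of $1/\rho$), I decompose
$$G(x,a,b) \;=\; A(x) \;-\; B(x)\sqrt{1-\rho x},$$
with $A(x)=\tfrac{1+(2-a-b)x}{2x}$ and $B(x)=\tfrac{\sqrt{1-\bar\rho x}}{2x}$, both analytic in a $\Delta$-domain at $x=1/\rho$. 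A short evaluation using the identity $\rho-\bar\rho=4\sqrt{a}$ gives $B(1/\rho)=a^{1/4}\sqrt{\rho}$.

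With this local decomposition I am set up to apply the Transfer Theorem. After the rescaling $z=\rho x$, Taylor-expanding $A$ and $B$ around $x=1/\rho$ yields a singular expansion of the form
$$G(z/\rho,\,a,\,b) \;=\; C_0 \;+\; C_{1/2}(1-z)^{1/2} \;+\; C_1(1-z) \;+\; O\!\bigl((1-z)^{3/2}\bigr)$$
as $z\to 1$, with $C_{1/2}=-a^{1/4}\sqrt{\rho}$. Theorem \ref{transfer lemma}, together with $[z^n](1-z)^{1/2}\sim n^{-3/2}/\Gamma(-1/2)=-n^{-3/2}/(2\sqrt{\pi})$ and the fact that the integer-power terms contribute zero for all sufficiently large $n$, transfers this to
$$[z^n]G(z/\rho,a,b) \;=\; \frac{a^{1/4}\sqrt{\rho}}{2\sqrt{\pi}}\,n^{-3/2} \;+\; O(n^{-5/2}).$$
Multiplying by $\rho^n$ to undo the rescaling and rewriting $a^{1/4}\sqrt{\rho} = e^{-\alpha/4}\sqrt{\rho} = \sqrt{e^{-\alpha/2}\rho}$ yields the claim.

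The main obstacle is the bookkeeping around the error term: to reach the advertised $n^{-5/2}$ remainder I must keep the local expansion through the next singular order $(1-z)^{3/2}$, since the Transfer Theorem gives error $O(n^{\beta-1})$ for a remainder $O((1-z)^{-\beta})$. The factorization of the discriminant and the evaluation of $B(1/\rho)$ are routine once the identity $\rho-\bar\rho=4\sqrt{a}$ is noticed, and uniqueness of the dominant singularity is already supplied by Lemma \ref{dom sig lemma}, so the remainder is a standard application of the singularity-analysis machinery in the paper.
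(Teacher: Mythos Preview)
Your proof is correct and follows essentially the same approach as the paper: identify $\normConst_{(n,\alpha,\beta,0)}=[x^n]G(x,e^{-\alpha},e^{-\beta})$, factor the discriminant as $(1-\rho x)(1-\bar\rho x)$, and apply singularity analysis at $x=1/\rho$. The only cosmetic difference is that the paper multiplies through by $x$ (so as to invoke Corollary~\ref{analytic combo corollary 1} with $g(z)=-\tfrac{1}{2}\sqrt{1-\bar\rho z}$ analytic on the closed disk) and then shifts the index back at the end, whereas you keep the $1/(2x)$ prefactors and Taylor-expand $A,B$ locally near $1/\rho$ before applying Theorem~\ref{transfer lemma} directly; both routes produce the same leading constant $a^{1/4}\sqrt{\rho}=\sqrt{e^{-\alpha/2}\rho}$ and the same $O(n^{-5/2})$ remainder.
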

\begin{proof}
Let $\overline{\rho} = e^{-\alpha} + e^{-\beta} - 2e^{-\frac{\alpha}{2}}$. By definition, it should be clear that $\normConst_{(n,\alpha,\beta,0)} = [x^n]G(x,e^{-\alpha}, e^{-\beta})$. Thus, from Corollary \ref{eval G func}, for $n \geq 2$, $$\normConst_{(n-1,\alpha,\beta,0)} = [x^n]\left(-\frac{\sqrt{1 - \overline{\rho} x}}{2} \cdot \sqrt{1 - \rho x}\right).$$ By Lemma \ref{dom sig lemma}, for all $\alpha, \beta \inR$, $\frac{1}{\rho} < \frac{1}{\overline{\rho}}$, thus $\sqrt{1 - \overline{\rho} x}$ is analytic on the disk $R = \left\{z \inC: |z| \leq \frac{1}{\rho}\right\}$. We thus apply Corollary \ref{analytic combo corollary 1}, to see that 
\begin{eqnarray}
\normConst_{(n-1,\alpha,\beta,0)} &=& -\frac{\rho^n}{2\Gamma\left(-\frac{1}{2}\right)} \cdot \left( \sqrt{\frac{ 4e^{-\frac{\alpha}{2}}}{\rho}} \right)n^{-\frac{3}{2}} + O\left(n^{-\frac{5}{2}}\right) \nonumber\\
&=& \frac{\rho^n}{2\sqrt{\pi}} \cdot \sqrt{ \frac{ e^{-\frac{\alpha}{2}}}{\rho}}  \cdot n^{-\frac{3}{2}} + O\left(n^{-\frac{5}{2}}\right).
\end{eqnarray} 

We now set $n$ to $n + 1$ to get the desired result.

\end{proof}

\bigskip 

We will now extract the generating function $G_n(a,b)$ defined by $$G_n(a,b) = \sum_{d_0 = 0}^\infty\sum_{d_1 = 0}^\infty\sum_{r = 0}^\infty |\mathcal{G}_n(d_0, d_1, r)|a^{d_0}b^{d_1}$$ using the following technical lemma. We will defer the proof of the lemma. 

For a continuously differentiable function $F(x_1, \cdots, x_k)$ and $V$, a finite multiset  with elements $v_1, \cdots, v_m \in \{x_1, \cdots, x_k\}$, define $$\frac{\partial F(x_1, \cdots, x_k)}{\partial V} = \frac{\partial^m F(x_1, \cdots, x_k)}{\partial v_1 \cdots \partial v_m}.$$ For a set $V$, let $\text{Part}(V)$ be the set of unordered partitions on $V$, $(V_i)$. For a multiset, $V$, we define $\text{Part}(V)$ by distinguishing all elements of $V$, taking the partitions of the induced set, then removing the distinction from each of the repeated elements. Note that $\text{Part}(V)$ is itself a multiset. For example, $\text{Part}(\{x_1, x_1\}) = \{\{\{x_1\}, \{x_1\}\}, \{\{x_1, x_1\}\}\}$.

\begin{lemma} \label{deriv lemma}
	Let $F$ be a continuously differentiable function in $x$, $\Delta$ be a continuously differentiable function in $x_1, \cdots, x_k$ and $V$ be a non-empty finite set or multiset of the elements $x_1, \cdots, x_k$ with elements $v_1, \cdots, v_m$.
	
	\begin{equation}
	\frac{\partial F(\Delta)}{\partial V} = \sum_{(V_i) \in \text{Part}(V)} \frac{\partial \Delta}{\partial V_1} \cdots \frac{\partial \Delta}{\partial V_p} \cdot \left.\frac{\partial^p F(x)}{\partial x^p}\right|_{x = \Delta}
	\end{equation}
\end{lemma}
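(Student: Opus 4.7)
The statement is a multivariate Faà di Bruno formula (combined with the chain rule), so the plan is to prove it by induction on $m = |V|$, the cardinality of the multiset. Throughout, I will first treat $V$ as a set (all elements distinct) and then transfer to multisets by the distinguish/undistinguish device the authors have already built into the definition of $\text{Part}(V)$.

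For the base case $m=1$, the only partition in $\text{Part}(V)$ is the singleton $\{\{v_1\}\}$, so the right-hand side reduces to $\frac{\partial \Delta}{\partial v_1}\cdot F'(\Delta)$, which is exactly the ordinary chain rule. For the inductive step, suppose the claim holds for every set $V'$ of size $m-1$, and write $V = V'\cup\{v\}$. Then
\begin{equation}
\frac{\partial F(\Delta)}{\partial V} \;=\; \frac{\partial}{\partial v}\left(\frac{\partial F(\Delta)}{\partial V'}\right)
\;=\; \frac{\partial}{\partial v}\sum_{(V'_i)\in\text{Part}(V')} \frac{\partial \Delta}{\partial V'_1}\cdots \frac{\partial \Delta}{\partial V'_p}\cdot \left.\frac{\partial^p F(x)}{\partial x^p}\right|_{x=\Delta}.
\end{equation}
Applying $\frac{\partial}{\partial v}$ term-by-term via the product rule to a fixed partition $(V'_1,\ldots,V'_p)$ produces exactly $p+1$ summands: for each $i\in\{1,\ldots,p\}$ the derivative lands on the factor $\frac{\partial \Delta}{\partial V'_i}$, replacing it with $\frac{\partial \Delta}{\partial (V'_i\cup\{v\})}$ and leaving the other factors and $F^{(p)}(\Delta)$ alone, while in the remaining summand the derivative lands on $F^{(p)}(\Delta)$, producing a new factor $\frac{\partial \Delta}{\partial v}$ alongside $F^{(p+1)}(\Delta)$.

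The key combinatorial observation is that this $p+1$-fold expansion is precisely the bijection between (a) partitions of $V'$ together with a choice of either some existing block to enlarge by $v$ or the option of introducing $\{v\}$ as a new singleton block, and (b) partitions of $V = V'\cup\{v\}$. Every partition of $V$ arises uniquely in this way: simply delete $v$ from its block; if the block becomes empty, the singleton case was used, otherwise the enlargement case was used. So summing over $\text{Part}(V')$ and then over the $p+1$ product-rule summands yields exactly one copy of every term indexed by $\text{Part}(V)$, completing the induction in the set case.

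To pass to multisets, I would relabel the repeated elements of $V$ with distinct formal symbols to obtain a set $\widetilde V$, apply the already-proved identity (iterated partial derivatives are insensitive to the relabeling since equality of partials $\partial/\partial x_i$ does not depend on symbolic labels), and then erase the labels on the right-hand side: by the authors' definition of $\text{Part}$ on multisets, this erasure carries $\text{Part}(\widetilde V)$ exactly onto $\text{Part}(V)$ with the correct multiplicities, and similarly collapses the notation $\partial \Delta/\partial V_i$ to its multiset meaning. The main obstacle is not the analysis (everything is just product rule) but bookkeeping: one must verify carefully that the partition-extension bijection in the inductive step survives unlabeling, i.e. that distinct labeled partitions which become equal after unlabeling contribute equal analytic terms and are counted with exactly the multiplicity prescribed by $\text{Part}(V)$. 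This is immediate from the definition given in the paper, so the proof ends there.
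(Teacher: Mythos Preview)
Your proof is correct and is the standard induction argument for the multivariate Fa\`a di Bruno formula. Note that the paper announces it will ``defer the proof of the lemma'' but in fact never supplies one (the Appendix contains only the proofs of Lemma~\ref{lemma expansion} and Lemma~\ref{tech lemma 1}), so there is nothing to compare your argument against; your induction on $|V|$ with the block-extension bijection, followed by the distinguish/undistinguish passage to multisets already encoded in the paper's definition of $\text{Part}(V)$, is exactly the expected proof.
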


We now achieve the following expression for $G_n(a,b)$. 
We also compute the coefficients of the above expression explicitly via a combinatorial argument in Corollary \ref{count trees by leaves and internal}.

\begin{corollary} For $n \geq 2$,
	$$G_n(a,b) = \frac{1}{2^{n+1}} \sum_{0 \leq k \leq \frac{n+1}{2}} C_{n-k}\binom{n-k+1}{k} \cdot  \left(a+b\right)^{n-2k+1} \cdot \left(4a - (a+b)^2\right)^k.$$
\end{corollary}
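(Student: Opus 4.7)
My plan is to start by simplifying the discriminant under the radical in the closed form of $G(x,a,b)$ from Corollary~\ref{eval G func}. A direct expansion yields
\[
(1+(2-a-b)x)^2 - 4x(1-(b-1)x) = 1 - 2(a+b)x + ((a+b)^2-4a)x^2,
\]
which suggests setting $u = \tfrac{a+b}{2}$ and $v = \tfrac{4a-(a+b)^2}{4}$, so that the expression under the square root becomes $1 - 4y$ where $y = ux + vx^2$. This is exactly the substitution needed to reveal the ordinary Catalan generating function $C(z) = \frac{1-\sqrt{1-4z}}{2z} = \sum_{m\ge 0} C_m z^m$ hiding inside $G$.

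Next, I would rewrite $G$ in terms of $C(y)$. Since $\sqrt{1-4y} = 1 - 2yC(y)$, the numerator of $G$ simplifies and one obtains
\[
G(x,a,b) = (1-u) + \frac{yC(y)}{x} = (1-u) + (u + vx)\,C(y).
\]
For $n \ge 1$, the constant term $1-u$ contributes nothing to $[x^n]G$, so the task reduces to extracting $[x^n](u+vx)\sum_{m\ge 0} C_m (ux+vx^2)^m$. Expanding $(ux+vx^2)^m = \sum_j \binom{m}{j} u^{m-j} v^j x^{m+j}$ via the binomial theorem, the coefficient of $x^n$ in $u\sum_m C_m(ux+vx^2)^m$, after re-indexing with $k = n-m$, is
\[
\sum_{k=0}^{\lfloor n/2\rfloor} C_{n-k}\binom{n-k}{k} u^{n-2k+1} v^k,
\]
and the analogous contribution from $vx\sum_m C_m(ux+vx^2)^m$, after the substitution $k = n-m$, is
\[
\sum_{k=1}^{\lfloor (n+1)/2\rfloor} C_{n-k}\binom{n-k}{k-1} u^{n-2k+1} v^k.
\]

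I would then merge these two sums into a single sum on $0 \le k \le \lfloor (n+1)/2\rfloor$ and apply Pascal's rule
\[
\binom{n-k}{k} + \binom{n-k}{k-1} = \binom{n-k+1}{k}
\]
(with the convention $\binom{n-k}{-1}=0$ at $k=0$) to combine the binomial coefficients. The final bookkeeping step is to substitute back $u=(a+b)/2$ and $v=(4a-(a+b)^2)/4$: the combined power of $2$ in the denominator is $2^{n-2k+1} \cdot 2^{2k} = 2^{n+1}$, independent of $k$, which lets me pull the overall factor $\tfrac{1}{2^{n+1}}$ out of the sum and recover the claimed expression exactly.

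The calculation is largely mechanical once the substitution $y = ux + vx^2$ is made; the main thing to watch out for is an index-bookkeeping hazard when re-indexing the two sums (their ranges in $k$ differ by one) and ensuring the $k=0$ endpoint behaves correctly under Pascal's rule. A brief sanity check at $n=1$, where the formula collapses to $u^2 + v = a$ (consistent with the unique one-edge plane tree having a single leaf), confirms the arithmetic; the statement excludes $n=0$ because the $(1-u)$ constant contributes there and spoils the pattern.
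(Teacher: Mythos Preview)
Your proof is correct, but it follows a genuinely different route from the paper. The paper's argument computes the $n$th derivative of $xG(x,a,b)$ at $x=0$ by applying the Fa\`a di Bruno--type Lemma~\ref{deriv lemma} to $-\tfrac{1}{2}\sqrt{\Delta}$; since $\Delta$ is quadratic in $x$, only set partitions into blocks of size at most $2$ survive, and counting those partitions produces the stated sum directly. Your approach instead recognises that the substitution $y = ux + vx^2$ (with $u=\tfrac{a+b}{2}$, $v=\tfrac{4a-(a+b)^2}{4}$) turns the radical into $\sqrt{1-4y}$, whence $G = (1-u) + (u+vx)C(y)$ with $C$ the Catalan generating function; the coefficient extraction is then a routine binomial expansion plus Pascal's rule. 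Your argument is more elementary and self-contained --- it avoids the auxiliary derivative lemma entirely --- while the paper's version amortises that lemma, which is stated for later use anyway. As a minor bonus, your method makes it transparent that the formula already holds for $n\ge 1$ (not just $n\ge 2$), the restriction in the paper's proof arising only because the linear part of $xG$ spoils the derivative identity at low order.
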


\begin{proof}
We consider $xG(x,a,b)$. Notice that for $n \geq 2$, $$\frac{\partial^n xG(x,a,b)}{\partial x^n} = -\frac{1}{2} \cdot \frac{\partial \sqrt{\Delta}}{\partial V}$$ where $V$ is the multiset containing $n$ copies of $x$ and $\Delta = (1+(2-a-b)x)^2 - 4x(1 - (b-1)x)$. Thus we apply Lemma \ref{deriv lemma}. We notice that if $|V_i| > 2$, $\frac{\partial \Delta}{\partial V_i} = 0$. The number of elements in $\text{Part}(V)$ in which each part has size is at most 2 and there are $l$ parts of size $1$ and $k$ parts of size 2 is $\frac{n!}{2^k \cdot k! 	\cdot l!}.$ Thus, for $n \geq 2$,

\begin{eqnarray}\left.\frac{\partial^n xG(x,a,b)}{\partial x^n}\right|_{x =0} &=& \left.\frac{1}{2} \sum_{2k+l = n} \frac{n!}{2^k \cdot k! \cdot l!} \cdot  \left(\frac{\partial \Delta}{\partial x}\right)^l \cdot \left(\frac{\partial^2 \Delta}{\partial x^2}\right)^k \cdot \frac{\left(-\frac{1}{2}\right)^{k+l}\cdot (2(k+l)-3)!!}{\Delta^\frac{2(k+l) - 1}{2}}\right|_{x =0} \nonumber\\
&=& \frac{n!}{2^n} \sum_{0 \leq k \leq \frac{n}{2}} C_{n-k-1}\binom{n-k}{k} \cdot  \left(a+b\right)^{n-2k} \cdot \left(4a - (a+b)^2\right)^k.
\end{eqnarray}

Finally, notice that \begin{eqnarray}
\left.\frac{\partial^n xG(x,a,b)}{\partial x^n}\right|_{x =0} &=& n!\sum_{d_0 = 0}^\infty\sum_{d_1 = 0}^\infty\sum_{r = 0}^\infty |\mathcal{G}_{n-1}(d_0, d_1, r)|a^{d_0}b^{d_1}.
\end{eqnarray}

\end{proof}

\subsection{The Distributions of Simple Subtree Additive Properties}

\label{Distributions of Simple Subtree Additive Properties}

In this section, we will consider various additive properties. We will assume all the tolls in this section are of the form $(f, 0)$ since, from Example \ref{Example number of leaves and edges} and (\ref{additve prop linearity}), we see that for any $c \inZ_{\geq 0}$ and $T \in \setofplanetrees_{n}$, 
\begin{equation}
\mathcal{P}^{(f, c)}(T) = \mathcal{P}^{(f, 0)}(T) + c \cdot  \left(\mathcal{P}^{e}(T) + 1\right) = \mathcal{P}^{(f + c, 0)}(T) + c,
\end{equation} where $(f+c)(T) = f(T) + c$.

Let $\mathcal{P}$ be a non-negative integer valued additive property of plane trees. Let $\mathcal{P}^*$ be the subtree additive property induced by $\mathcal{P}$. Such a subtree additive property is simple as we have defined. The main result of this section is that if the toll function of $\mathcal{P}$ is bounded, the limiting distribution of $\mathcal{P^*}$ is determined by the limiting distribution of $\mathcal{P}$. Our primary result is stated as follows. 

\begin{theorem} \label{mainSimpleAdditveTheorem1}
	Let $\mathcal{P}_1$ and $\mathcal{P}_2$ be non-negative integer valued additive properties of plane trees with toll functions $f_1$ and $f_2$, respectively. Let the subtree additive properties induced by $\mathcal{P}_1$ and $\mathcal{P}_2$ be $\mathcal{P}^*_1$ and $\mathcal{P}^*_2$, respectively. Further assume that there exists $\zeta \inN$ such that for all $T \in \setofplanetrees_{\geq 0}$, $f_1(T) \leq  \zeta$ and $f_2(T) \leq  \zeta$. If, for all $m,n \inZ$, \begin{equation} \label{main condition on theorem sec 2}
	    \sum_{T \in \setofplanetrees_n} \mathcal{P}_1(T)^m =  \mu^m \cdot \sum_{T \in \setofplanetrees_n} \mathcal{P}_2(T)^m + O\left(n^{\frac{2m-4}{2}}4^n\right)
	\end{equation} where $\mu \inR$ is a constant, then as $n \rightarrow  \infty$, $$\frac{\mathcal{P}^*_1(\setofplanetrees_n)}{\sqrt{n^3}} \overset{d}{\longleftrightarrow} \mu \cdot \frac{\mathcal{P}^*_2(\setofplanetrees_n)}{\sqrt{n^3}}.$$
\end{theorem}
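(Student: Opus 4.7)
The strategy is the method of moments combined with Carleman's condition (Theorem~\ref{Carleman's condition}). It suffices to show that for every $m \inN$,
\[
M_{1,m}(n) \;:=\; \sum_{T \in \setofplanetrees_n} \mathcal{P}^*_1(T)^m \;=\; \mu^m\, M_{2,m}(n) \,+\, o\!\left(n^{(3m-3)/2}\, 4^n\right),
\]
together with a main-term asymptotic $M_{j,m}(n) \sim c_{j,m}\, n^{(3m-3)/2}\, 4^n$. Since $C_n \sim 4^n/\sqrt{\pi n^3}$, this yields $\E\bigl[(\mathcal{P}^*_j(\setofplanetrees_n)/\sqrt{n^3})^m\bigr] \to \sqrt{\pi}\, c_{j,m}$ with $c_{1,m} = \mu^m c_{2,m}$; the growth rate $c_{j,m}^{1/m} = O(\sqrt{m})$ read off from the singularity analysis (as in the Airy case of Theorem~\ref{contactdistance - thm}) ensures Carleman's condition is met.

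Fix $\mathcal{P} \in \{\mathcal{P}_1, \mathcal{P}_2\}$ with toll $f \leq \zeta$, and expand
\[
M_m(n) \;=\; \sum_{T \in \setofplanetrees_n}\; \sum_{v_1,\ldots,v_m \in \overline{\mathcal{V}}(T)}\; \prod_{i=1}^m \mathcal{P}(T_{v_i}).
\]
I would group the inner tuples by their combinatorial \emph{skeleton} $\sigma$: the partition of $\{1,\ldots,m\}$ indicating which $v_i$'s coincide, together with the ancestor partial order on the distinct $v$'s in $T$. For each skeleton a shell-and-subtrees decomposition rewrites the restricted sum as a multi-convolution of a shell generating function (obtained by differentiating $G(x,y,1)$ of Corollary~\ref{eval G func} in the leaf-marker $y$) against moment generating functions $S_k(x) = \sum_n \left(\sum_{T \in \setofplanetrees_n} \mathcal{P}(T)^k\right) x^n$ for the orders $k \leq m$ dictated by the multiplicities of $\sigma$. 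The dominant \emph{spread} skeleton (all $v_i$ distinct and pairwise incomparable) contributes
\[
M_m^{\mathrm{spread}}(n) \;=\; [x^n]\, \Lambda_m(x)\, S_1(x)^m,
\]
with $\Lambda_m(x)$ enumerating plane trees with $m$ ordered distinct marked leaves.

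Singularity analysis at $x = 1/4$ via the Transfer Theorem (Theorem~\ref{transfer lemma}) gives $\Lambda_m(x) \sim \kappa_m (1-4x)^{-(2m-1)/2}$ and, using the crude bound $\mathcal{P}(T) \leq \zeta n$, $S_1(x) \sim \kappa (1-4x)^{-1/2}$, so $M_m^{\mathrm{spread}}(n) \sim c_m\, n^{(3m-3)/2}\, 4^n$. For every subordinate skeleton, each $k$-fold coincidence replaces $k$ factors of $S_1$ by one factor of $S_k$ while dropping $k-1$ markers from $\Lambda$, and each ancestral nesting replaces an independent subtree by a nested-shell generating function; a direct count shows that either modification raises (makes less negative) the total singularity exponent by a positive half-integer, so each subordinate contribution is $o(n^{(3m-3)/2}\, 4^n)$.

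Applying the hypothesis (\ref{main condition on theorem sec 2}) at $k=1$ gives $S_{1,1}(x) = \mu\, S_{2,1}(x) + E(x)$ with $E$ of strictly milder (at worst logarithmic) singularity at $x = 1/4$; binomial expansion of $S_{1,1}(x)^m$ followed by convolution with $\Lambda_m(x)$ then yields $M_{1,m}^{\mathrm{spread}}(n) = \mu^m M_{2,m}^{\mathrm{spread}}(n) + o(n^{(3m-3)/2}\, 4^n)$. For subordinate skeletons, the same argument applied at each order $k \leq m$ present in the skeleton -- precisely why the hypothesis (\ref{main condition on theorem sec 2}) is imposed at all $m$ -- identifies each $\mathcal{P}_1$-contribution with $\mu^m$ times the $\mathcal{P}_2$-contribution, up to the same lower-order error. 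Summing over skeletons and invoking Carleman finishes the proof. The main obstacle is the last step's bookkeeping: the number of skeletons grows with $m$ (like a Bell number), and for each I must verify both that the singularity exponent really drops by a full positive half-integer per identification and that the $O(n^{k-2}\,4^n)$ errors from the hypothesis do not accumulate under the multi-convolutions -- particularly in nested cases, where shell-within-shell generating functions can admit subtle logarithmic enhancements that must be controlled uniformly in $m$.
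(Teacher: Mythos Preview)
Your high-level strategy (method of moments plus Carleman) matches the paper, and your skeleton decomposition is close in spirit to the construction the paper uses later in Section~\ref{Dist of Subtree Additive Properties NNTM}. But there is a genuine gap: your claim that \emph{ancestral nesting raises the singularity exponent by a positive half-integer} is false. Consider $m=2$ with $v_1$ a strict ancestor of $v_2$. Since $\mathcal{P}$ has bounded toll, $\mathcal{P}(T_{v_i})$ is of order $n(T_{v_i})$, so the nested contribution behaves like $\sum_T\sum_{v_1>v_2} n(T_{v_1})\,n(T_{v_2})$. Writing $n(T_{v_1})=n_{\mathrm{shell}}+n(T_{v_2})$ and expanding, both terms have singularity $(1-4x)^{-5/2}$, i.e.\ coefficient $\Theta(n^{3/2}4^n)$ --- the \emph{same} order as your spread skeleton. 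In fact \emph{every} skeleton with $m$ distinct marked vertices (any $T_s\in\setofplanetrees_m$) contributes at the leading order $n^{(3m-3)/2}4^n$; only genuine coincidences $v_i=v_j$ drop the order. So your ``dominant spread skeleton'' argument does not isolate a single leading term.

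The deeper obstruction is that for a nested pair $v_1>v_2$, the factor $\mathcal{P}(T_{v_1})$ depends on the subtree $T_{v_2}$ as well, so the contribution is \emph{not} a product $S_1(x)^2$ against a shell GF; there is no clean factorisation into independent $S_k$'s. The paper handles exactly this dependency by introducing a bivariate generating function $M_k(x,p)$ that tracks $\mathcal{P}(T)$ as an auxiliary parameter, deriving a recurrence (equation~(\ref{equation 10})) for $\mathcal{D}_p^m M_k(x)$ that mixes the indices $k$ and $m$. The inductive Lemmas~\ref{bound M lemma} and~\ref{c^m+k lemma} then establish $M^{(m)}_{k,n}(\mathcal{P}_1^*)=\mu^{k+m}M^{(m)}_{k,n}(\mathcal{P}_2^*)+O(n^{(2m+3k-4)/2}4^n)$ jointly in $(k,m)$, which is precisely what is needed to absorb the nested cross-terms. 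Your skeleton approach could in principle be made to work, but you would need to expand each nested $\mathcal{P}(T_{v_i})$ via additivity and track all the resulting cross-products --- this is essentially equivalent to the paper's bivariate recurrence, and is the ``bookkeeping obstacle'' you flag at the end, except that it is not a matter of controlling lower-order errors but of correctly identifying the leading term itself.
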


\bigskip

Let $\mathcal{P}$ be a non-negative integer valued property of plane trees that is additive with toll function $f$. Let $\mathcal{F}(n, m)$ be the set of trees, $T$, on $n$ edges such that $\mathcal{P}(T) = m$. Let $$F(x,p) = \sum_{n \geq 0} \sum_{m \geq 0} |\mathcal{F}(n, m)| x^np^m.$$ We may also refer to $F(x,p)$ by $F_{\mathcal{P}}(x,p)$ where the property we are referring to is unclear.
We now let $\mathcal{H}^v(n, m)$ be the set of trees, $T$, on $n$ edges such that $\mathcal{P}(T) = m$ and $f(T) = v$. Let 
$$H^v(x,p) = \sum_{n \geq 0} \sum_{m \geq 0} |\mathcal{H}^v(n, m)| x^np^m.$$ 

\begin{lemma} \label{F(x,p) recurrence}
For any fixed non-negative integer valued additive property of plane trees, the following recurrence holds.

\begin{equation}
F(x,p) = 1 + xF(x,p)\sum_{v\geq 0} p^v H^v(x,p)
\end{equation} 
\end{lemma}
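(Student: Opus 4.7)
The plan is a standard symbolic-method argument based on the unique decomposition $T = T_1 \ltimes T_2$ for any $T \in \setofplanetrees_{\geq 1}$ introduced in Section~\ref{sec:preliminaries}. The constant term $1$ in the claimed identity should correspond to the unique tree $T^*$ on $0$ edges (which contributes $x^0 p^0 = 1$ to $F(x,p)$ since $\mathcal{P}(T^*) = 0$), and the remaining term should enumerate all trees with at least one edge by summing over the decomposition $T = T_1 \ltimes T_2$.

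First I would check bookkeeping for the decomposition: the number of edges satisfies $n(T) = n(T_1) + n(T_2) + 1$ (giving the factor $x$), and by the defining property of an additive toll we have
\[
\mathcal{P}(T) = \mathcal{P}(T_1) + \mathcal{P}(T_2) + f(T_2),
\]
so the variable $p$ receives exponent $\mathcal{P}(T_1) + \mathcal{P}(T_2) + f(T_2)$. Because the decomposition is a bijection between $\setofplanetrees_{\geq 1}$ and $\setofplanetrees_{\geq 0} \times \setofplanetrees_{\geq 0}$, the generating function for trees with at least one edge factors as
\[
x \cdot \Bigl(\sum_{T_1 \in \setofplanetrees_{\geq 0}} x^{n(T_1)} p^{\mathcal{P}(T_1)}\Bigr) \cdot \Bigl(\sum_{T_2 \in \setofplanetrees_{\geq 0}} x^{n(T_2)} p^{\mathcal{P}(T_2) + f(T_2)}\Bigr).
\]
The first parenthesized factor is precisely $F(x,p)$.

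For the second factor, I would partition the sum over $T_2$ according to the value $v = f(T_2)$ of the toll function at the root: trees with $f(T_2) = v$ contribute a factor $p^v$ (from the extra $f(T_2)$ in the exponent), leaving the residual generating function $\sum_{T_2 : f(T_2) = v} x^{n(T_2)} p^{\mathcal{P}(T_2)}$, which by definition equals $H^v(x,p)$. Summing over $v \geq 0$ (valid because $f$ is non-negative integer valued) yields the factor $\sum_{v \geq 0} p^v H^v(x,p)$, and combining everything gives the claimed recurrence. There is no real obstacle here; the only subtlety is the reindexing step that pulls the $f(T_2)$ exponent of $p$ out of the $T_2$ sum as $p^v$, and one should verify that the double sum defining $F$ is treated as a formal power series so that splitting by the value of $f$ is valid term by term.
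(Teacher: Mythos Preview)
Your proposal is correct and follows essentially the same approach as the paper: both use the unique decomposition $T = T_1 \ltimes T_2$, account for the extra edge with the factor $x$, use the additive relation $\mathcal{P}(T) = \mathcal{P}(T_1) + \mathcal{P}(T_2) + f(T_2)$ to track the $p$-exponent, and then partition the $T_2$-sum according to the value $v = f(T_2)$ to obtain the factor $\sum_{v\geq 0} p^v H^v(x,p)$. Your write-up is more detailed than the paper's terse version, but the argument is the same.
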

\begin{proof}
We apply the decomposition of a plane tree into 2 subtrees given by $T = T_1 \ltimes T_2$. This adds a new edge. Recall that $\mathcal{P}(T) = \mathcal{P}(T_1) + \mathcal{P}(T_2) + f(T_2)$. Thus, the tree $T$ gains an extra $p^{f(T_2)}$ in the weighting. When $T_2 \in \mathcal{H}^v(n, m)$, $T$ gains an extra $p^{v}$ in the weighting. The only tree not accounted for by this process is $T^*$.

\end{proof}

\begin{lemma} \label{bound on F(x,p) lemma}
    Let $\mathcal{P}$ be a non-zero non-negative integer valued additive property with toll function $f$ such that $f(T) \leq \zeta$ for all $T \in \setofplanetrees_{\geq 0}$ where $f$ achieve $\zeta$. For all $n, m \geq 0$, $$\sum_{T \in \setofplanetrees_n} \mathcal{P}(T)^m = \Theta\left(n^{\frac{2m-3}{2}}4^n\right).$$ 
\end{lemma}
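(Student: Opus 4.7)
The plan is to prove matching upper and lower bounds of order $n^{(2m-3)/2} 4^n$ by separate combinatorial arguments. Under the section-wide convention that the toll has the form $(f,0)$, a straightforward induction on the join decomposition $T = T_1 \ltimes T_2$ yields the closed form
$$\mathcal{P}(T) = \sum_{v \in \overline{\mathcal{V}}(T)} f(T_v),$$
which will serve as the starting point for both directions.

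For the upper bound, the pointwise estimate $\mathcal{P}(T) \leq \zeta n$ (from $f \leq \zeta$ and $|\overline{\mathcal{V}}(T)| = n$) combines with the Catalan asymptotic $|\setofplanetrees_n| = C_n = \Theta(n^{-3/2} 4^n)$ to give $\sum_T \mathcal{P}(T)^m \leq (\zeta n)^m C_n = O(n^{(2m-3)/2} 4^n)$. For the lower bound, I would fix some $T_0 \in \setofplanetrees_{n_0}$ with $f(T_0) = \zeta$ (which exists by hypothesis) and let $N_{T_0}(T)$ denote the number of non-root vertices $v$ with $T_v = T_0$; nonnegativity of $f$ then gives $\mathcal{P}(T) \geq \zeta N_{T_0}(T)$, so it suffices to lower-bound $\sum_T N_{T_0}(T)^m$. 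Expanding the $m$th power as a count of ordered $m$-tuples $(v_1, \dots, v_m)$ with each $T_{v_i} = T_0$, I would restrict to pairwise non-ancestral tuples, so that the subtrees $T_{v_i}$ are disjoint. Contracting each such $T_{v_i}$ to the single vertex $v_i$ gives a bijection between these restricted tuples and pairs $(S, (w_1, \dots, w_m))$, where $S \in \setofplanetrees_{n - m n_0}$ and the $w_i$ are $m$ distinct non-root leaves of $S$.

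This reduces matters to showing $\sum_{S \in \setofplanetrees_{n'}} L(S)(L(S)-1)\cdots(L(S)-m+1) = \Omega((n')^m\, C_{n'})$, where $L(S)$ is the number of non-root leaves of $S$ and $n' = n - m n_0$. The main obstacle will be this last estimate, which requires a concentration statement of the form ``a constant fraction of $S \in \setofplanetrees_{n'}$ satisfy $L(S) \geq c n'$ for some absolute constant $c > 0$.'' I would extract this either from classical facts about the Narayana distribution (mean of order $n'$ and variance of order $n'$), or directly from the bivariate generating function $G(x,a,b)$ derived in Section~\ref{Gen function count section} by reading off the asymptotics of $\partial_a G(x,a,1)|_{a=1}$ and $\partial_a^2 G(x,a,1)|_{a=1}$ and applying Chebyshev. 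Once that is in hand, combining with $C_{n - m n_0} = \Theta(n^{-3/2} 4^n)$ for fixed $m$ closes the lower bound and completes the $\Theta$ estimate.
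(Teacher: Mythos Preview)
Your argument is correct and genuinely different from the paper's. The paper proceeds entirely via generating functions: it sandwiches $\mathcal{P}$ between the additive properties $\mathcal{P}_1$ (toll identically $\zeta$) and $\mathcal{P}_2$ (toll equal to $\zeta$ on a single fixed tree $T'\in\setofplanetrees_N$ and zero elsewhere), writes down the explicit bivariate generating functions
\[
F_{\mathcal{P}_1}(x,p)=\frac{1-\sqrt{1-4xp^{\zeta}}}{2xp^{\zeta}},\qquad
F_{\mathcal{P}_2}(x,p)=\frac{1-\sqrt{1-4x(1+(p^{\zeta}-1)x^{N})}}{2x},
\]
differentiates $m$ times in $p$ at $p=1$, and applies the Transfer Theorem to read off matching $\Theta(n^{(2m-3)/2}4^n)$ asymptotics for the factorial moments.

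Your upper bound via the pointwise estimate $\mathcal{P}(T)\le\zeta n$ is strictly simpler than the paper's analytic route. Your lower bound is a combinatorial substitute for the paper's $\mathcal{P}_2$ computation: rather than encoding ``number of fringe copies of $T_0$'' into a generating function and doing singularity analysis, you bijectively identify ordered $m$-tuples of such fringe copies with ordered $m$-tuples of marked leaves in a smaller plane tree, and then invoke Narayana concentration. (Note that distinct $v_i$ with $T_{v_i}=T_0$ are automatically pairwise incomparable, so your ``non-ancestral'' restriction is really just distinctness, giving the falling factorial as a lower bound for the power.) What you gain is an argument that avoids singularity analysis entirely; what the paper's approach gains is uniformity with the analytic-combinatorics machinery used throughout Section~\ref{Distributions of Simple Subtree Additive Properties}, where the same $F(x,p)$ framework and Transfer Theorem drive all the later moment estimates.
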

\begin{proof} 
Fix $m \geq 0$. Assume $\zeta > 0$ (otherwise $\mathcal{P}(T) = 0$ for all $T \in \setofplanetrees_{\geq 0}$). We first note that
\begin{eqnarray}
\frac{\partial F_{\mathcal{P}}(x,1)}{\partial p^m} &=& \sum_{n \geq 0}x^n \sum_{T \in \setofplanetrees_n} \mathcal{P}(T)(\mathcal{P}(T) - 1) \cdots (\mathcal{P}(T) - m + 1) \nonumber \\
&=& \sum_{n \geq 0}x^n \sum_{T \in \setofplanetrees_n} \mathcal{P}(T)^m +  O\left(\mathcal{P}(T)^{m-1}\right) \label{eq 10}
\end{eqnarray}

Consider $\mathcal{P}_1$, the additive property with toll function $f_1(T) = \zeta$ for all $T \in \setofplanetrees_{\geq 0}$. Let $T' \in \setofplanetrees_N$ be a tree such that $f(T') = \zeta$. Consider $\mathcal{P}_2$, the additive property with toll function $f_2(T') = \zeta$ and $f_2(T) = 0$ for all other $T \in \setofplanetrees_{\geq 0}$.
It should be clear that for any tree $T \in \setofplanetrees_{\geq 0}$, 
\begin{equation} \label{eq 11}
0 \leq \mathcal{P}_2(T) \leq \mathcal{P}(T) \leq \mathcal{P}_1(T).
\end{equation} 

We now see that $F_{\mathcal{P}_1}(x,p) = 1 + xp^\zeta F_{\mathcal{P}_1}(x,p)^2$ and $F_{\mathcal{P}_2}(x,p) = 1 + xF_{\mathcal{P}_2}(x,p)^2 + (p^\zeta - 1)x^N$, thus $$F_{\mathcal{P}_1}(x,p) = \frac{1 - \sqrt{1 - 4xp^\zeta}}{2xp^\zeta} \quad\quad \text{and} \quad\quad F_{\mathcal{P}_2}(x,p) = \frac{1 - \sqrt{1 - 4x(1 +(p^\zeta - 1)x^N) }}{2x}.$$ 
Thus, for $m \geq 1$, $$\frac{\partial F_{\mathcal{P}_1}(x,1)}{\partial p^m} = c_1 \cdot (1 - 4x)^{\frac{1-2m}{2}} + O\left((1 - 4x)^{\frac{3 - 2m}{2}}\right)$$
and
$$\frac{\partial F_{\mathcal{P}_2}(x,1)}{\partial p^m} = c_2 \cdot (1 - 4x)^{\frac{1 - 2m}{2}} + O\left((1 - 4x)^{\frac{3 - 2m}{2}}\right),$$ where $c_1, c_2$ are constants (that depend on $m$). We now apply Corollary \ref{analytic combo corollary 1} to see that $$[x^n]\frac{\partial F_{\mathcal{P}_1}(x,1)}{\partial p^m} = c'_1 \cdot n^{\frac{2m - 3}{2}}  4^n (1 + o(1))\quad\quad \text{and} \quad\quad [x^n]\frac{\partial F_{\mathcal{P}_2}(x,1)}{\partial p^m} = c'_2 \cdot n^{\frac{2m - 3}{2}}  4^n (1 + o(1))$$ where $c'_1, c'_2$ are constants. From (\ref{eq 10}) and (\ref{eq 11}),  we thus see that  for all $n \geq 0$, $$c'_2 \cdot n^{\frac{2m - 3}{2}}  4^n (1 + o(1)) = \sum_{T \in \setofplanetrees_n} \mathcal{P}_2(T)^m \leq \sum_{T \in \setofplanetrees_n} \mathcal{P}(T)^m \leq \sum_{T \in \setofplanetrees_n} \mathcal{P}_1(T)^m = c'_1 \cdot n^{\frac{2m - 3}{2}}  4^n (1 + o(1)).$$

\end{proof}

\begin{lemma}
    Let $\mathcal{P}$ be a non-zero non-negative integer valued additive property with toll function $f$ such that $f(T) \leq \zeta$ for all $T \in \setofplanetrees_{\geq 0}$ where $f$ achieve $\zeta$. As $n \rightarrow \infty$, the limiting distribution of $\frac{\mathcal{P}(\setofplanetrees_n)}{n}$ is uniquely determined by its moments.
\end{lemma}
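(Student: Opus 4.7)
The plan is to verify Carleman's condition (Theorem \ref{Carleman's condition}) for the moment sequence of the limiting distribution; this reduces to the observation that $\mathcal{P}(\setofplanetrees_n)/n$ is uniformly bounded in both $n$ and $T$.

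First I would revisit the comparison property used in the preceding lemma: the additive property $\mathcal{P}_1$ with constant toll function $f_1 \equiv \zeta$. By the linearity identity (\ref{additve prop linearity}) applied to $\mathcal{P}^e$ (Example \ref{Example number of leaves and edges}), we have $\mathcal{P}_1 = \zeta \cdot \mathcal{P}^e$, and hence $\mathcal{P}_1(T) = \zeta n$ for every $T \in \setofplanetrees_n$. Since the previous lemma already established the pointwise domination $\mathcal{P}(T) \leq \mathcal{P}_1(T)$ (together with $\mathcal{P}(T) \geq 0$), this yields the deterministic bound
\[ 0 \;\leq\; \frac{\mathcal{P}(\setofplanetrees_n)}{n} \;\leq\; \zeta \]
for every $n \geq 1$.

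Consequently, every moment $m_k^{(n)} := \E\!\left[\left(\mathcal{P}(\setofplanetrees_n)/n\right)^k\right]$ satisfies $m_k^{(n)} \leq \zeta^k$, and therefore so does each moment $m_k$ of the limiting distribution. Hence $|m_{2k}|^{-1/(2k)} \geq \zeta^{-1}$ for every $k \geq 1$, so $\sum_{k=1}^\infty |m_{2k}|^{-1/(2k)} = \infty$, and Carleman's condition gives that the limiting distribution is uniquely determined by its moments.

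No substantial obstacle arises in this argument: it is essentially the observation that boundedness of the toll forces $\mathcal{P}(T)/n$ to take values in the compact interval $[0,\zeta]$, which makes determinacy by moments automatic via Carleman's criterion. The only mildly delicate point is to identify $\mathcal{P}_1$ with $\zeta \cdot \mathcal{P}^e$ through the linearity of additive properties; once this identification is made, the rest of the proof is immediate.
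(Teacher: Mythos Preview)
Your proposal is correct and follows essentially the same argument as the paper: compare $\mathcal{P}$ with the additive property $\mathcal{P}_1$ having constant toll $\zeta$, identify $\mathcal{P}_1(T)=\zeta n$ via Example~\ref{Example number of leaves and edges} and (\ref{additve prop linearity}), deduce the uniform moment bound $m_k\le\zeta^k$, and invoke Carleman's condition. The only cosmetic difference is that you phrase the identification through $\mathcal{P}^e$ rather than stating $\mathcal{P}_1(T)=\zeta n$ directly.
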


\begin{proof}
    Consider $\mathcal{P}_1$, the additive property with toll function $f$ such that $f(T) = \zeta$ for all $T \in\setofplanetrees_{\geq 0}$. We also see that for all $T \in\setofplanetrees_{\geq 0}$, $\mathcal{P}(T) \leq \mathcal{P}_1(T)$. From Example \ref{Example number of leaves and edges} and (\ref{additve prop linearity}), for $T \in\setofplanetrees_{n}$, $\mathcal{P}_1(T) = \zeta n$. Thus $\E\left[\left(\frac{\mathcal{P}(\setofplanetrees_n)}{n}\right)^k\right] \leq \E\left[\left(\frac{\mathcal{P}_1(\setofplanetrees_n)}{n}\right)^k\right] = \zeta^k$. The result follows immediately by the Carleman's condition (Theorem \ref{Carleman's condition}).
    
\end{proof}

\bigskip

The above result tells us that the condition in (\ref{main condition on theorem sec 2}) implies that $\frac{\mathcal{P}_1(\setofplanetrees_n)}{n} \overset{d}{\longleftrightarrow} \mu \cdot \frac{\mathcal{P}_2(\setofplanetrees_n)}{n}$. Thus Theorem \ref{mainSimpleAdditveTheorem1} is equivalent to the following corollary.

\begin{corollary} \label{mainSimpleAdditveTheorem2}
	For non-negative integer valued additive properties, $\mathcal{P}_1$ and $\mathcal{P}_2$, with toll functions $f_1$ and $f_2$, respectively, such that there exists $\zeta \inN$ such that for any $T \in \setofplanetrees_{\geq 0}$, $f_1(T) \leq  \zeta$ and $f_2(T) \leq  \zeta$, 
	$$\frac{\mathcal{P}_1(\setofplanetrees_n)}{n} \overset{d}{\longleftrightarrow} \mu \cdot\frac{\mathcal{P}_2(\setofplanetrees_n)}{n} \Rightarrow \frac{\mathcal{P}^*_1(\setofplanetrees_n)}{\sqrt{n^3}} \overset{d}{\longleftrightarrow} \mu \cdot\frac{\mathcal{P}^*_2(\setofplanetrees_n)}{\sqrt{n^3}}$$ where $\mu \inR$ is constant and $\mathcal{P}^*_1$ and $\mathcal{P}^*_2$ are the induced subtree additive properties.\\
\end{corollary}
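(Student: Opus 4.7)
The plan is to derive this corollary from Theorem \ref{mainSimpleAdditveTheorem1} by reinterpreting its distributional hypothesis as the moment hypothesis (\ref{main condition on theorem sec 2}), using the preceding lemma together with the fact that weak convergence of uniformly bounded random variables is equivalent to convergence of every moment.

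First, I would observe that the bound $f_i \leq \zeta$, combined with (\ref{additve prop linearity}) and the easy induction $\mathcal{P}^{(f,0)}(T) = \sum_{v\in\overline{\mathcal{V}}(T)} f(T_v)$, implies $\mathcal{P}_i(T) = O(n(T))$, so the sequence $\mathcal{P}_i(\setofplanetrees_n)/n$ is uniformly bounded. The preceding lemma then applies: the limiting distribution of each such sequence is determined by its moments. Using the equivalence of weak and moment convergence for uniformly bounded variables, the corollary's hypothesis becomes
\[
\lim_{n\to\infty}\E\!\left[\left(\tfrac{\mathcal{P}_1(\setofplanetrees_n)}{n}\right)^{\!m}\right] \;=\; \mu^m\,\lim_{n\to\infty}\E\!\left[\left(\tfrac{\mathcal{P}_2(\setofplanetrees_n)}{n}\right)^{\!m}\right]
\qquad (m \geq 0).
\]
Multiplying through by $n^m\,|\setofplanetrees_n|$ and using $|\setofplanetrees_n|=C_n\sim 4^n/(n^{3/2}\sqrt{\pi})$, this is equivalent to
\[
\sum_{T\in\setofplanetrees_n}\mathcal{P}_1(T)^m - \mu^m\!\!\sum_{T\in\setofplanetrees_n}\mathcal{P}_2(T)^m \;=\; o\!\left(n^{m-3/2}\,4^n\right).
\]

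Having this, I would invoke Theorem \ref{mainSimpleAdditveTheorem1} to obtain the desired convergence of $\mathcal{P}^*_i(\setofplanetrees_n)/\sqrt{n^3}$. The main obstacle is the gap between the $o(n^{m-3/2}\,4^n)$ bound I produce and the strictly stronger $O(n^{(2m-4)/2}\,4^n) = O(n^{m-2}\,4^n)$ bound that (\ref{main condition on theorem sec 2}) formally demands. To close this gap I would revisit the proof of the theorem and verify that only leading-order agreement of the moment sums is actually used: the conclusion is a distributional equivalence, so by Carleman's condition it only requires the limiting moments of $\mathcal{P}_i^*(\setofplanetrees_n)/\sqrt{n^3}$ to coincide up to the factor $\mu^m$, and the analytic-combinatorial mechanism (e.g.\ Corollary \ref{analytic combo corollary 1}) transports leading-order agreement of the moment generating functions of $\mathcal{P}_i$ into leading-order agreement of those of $\mathcal{P}_i^*$. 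In short, the $O$ in (\ref{main condition on theorem sec 2}) is a convenient but nonessential strengthening; the $o$ bound of the same leading order---which the corollary's distributional hypothesis supplies---is what in fact drives the argument.
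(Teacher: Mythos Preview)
Your approach is the same as the paper's: the paper's entire justification for this corollary is the sentence immediately preceding it, which observes that the moment condition~(\ref{main condition on theorem sec 2}) implies the distributional hypothesis (via the preceding lemma on bounded tolls), and then declares the theorem ``equivalent'' to the corollary. You reproduce this reduction, correctly invoking uniform boundedness of $\mathcal{P}_i(\setofplanetrees_n)/n$ to pass between weak convergence and moment convergence.

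You go further than the paper in one respect: you correctly flag that the distributional hypothesis yields only $o(n^{m-3/2}4^n)$ agreement of the moment sums, whereas~(\ref{main condition on theorem sec 2}) demands the sharper $O(n^{m-2}4^n)$. The paper does not address this discrepancy at all; it simply asserts equivalence. Your proposed fix---that the machinery behind Theorem~\ref{mainSimpleAdditveTheorem1} only needs leading-order agreement---is the right idea, but your justification is hand-wavy. The actual work is in Lemma~\ref{c^m+k lemma}, whose induction propagates the error bound through convolutions $\sum_{n_1+n_2+n_3=n-1}(\cdots)$ governed by Lemma~\ref{tech lemma 1}. Replacing the inductive $O$ by an $o$ requires checking that these convolutions preserve $o$-of-leading-order, which in turn needs a uniformity argument (splitting the sum according to whether $n_i$ is small or comparable to $n$). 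This is routine but not automatic, and invoking Corollary~\ref{analytic combo corollary 1} here is not apt, since the induction operates at the coefficient level rather than through singularity analysis.
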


\bigskip

Let $\mathcal{P^*}$ be the subtree additive property induced by $\mathcal{P}$. Let $\setofplanetrees_ {n, m}$ be the set of plane trees, $T$, on $n$ edges such that $\mathcal{P}(T) = m$. We define 
\begin{eqnarray} 
M_{k,n, m} &=& \sum_{T \in \setofplanetrees_{n, m}} \mathcal{P^*}(T)^k = \sum_{T \in \setofplanetrees_{n, m}}\left(\sum_{v \in \overline{\mathcal{V}}(T)} \mathcal{P}(T_v)\right)^k \nonumber \\
&=& \sum_{T \in \setofplanetrees_{n, m}}\sum_{(v_1, \cdots, v_k) \in \overline{\mathcal{V}}(T)^k} \mathcal{P}(T_{v_1}) \cdots \mathcal{P}(T_{v_k}) \label{M_{k,n, m} sum}
\end{eqnarray} 
and let 
\begin{equation} 
M_{k}(x, p) = \sum_{n, m \geq 0} M_{k,n, m}x^np^m
\end{equation} 
and $M_k(x) = M_{k}(x, 1)$. Note that $[x^n]M_k(x) = \sum_{T \in \setofplanetrees_n }\mathcal{P^*}(T)^k = \E\left[\mathcal{P^*}(\setofplanetrees_n)^k\right] \cdot C_n$, where $C_n$ is the $n$th Catalan number. \\

Fix $t_1, \cdots, t_k \inZ_{\geq 0}$ and $n,m \geq 0$. We consider tuples of $(v_1, \cdots, v_k) \in \overline{\mathcal{V}}(T)$ for some $T \in \setofplanetrees_{n,m}$ where $\mathcal{P}(T_{v_i}) = t_i$. Notice that the contribution to $M_{k,n, m}$ of any such tuples is $\prod_{i=1}^kt_i$. Also notice that the number of such tuples is $$\sum_{T \in \setofplanetrees_{n, m}}\prod_{i=1}^k W(T, t_i)$$
where $W(T, t) = \left|\left\{v \in \overline{\mathcal{V}}(T): \mathcal{P}(T_v) = t \right\}\right|$. Let $\mathcal{F}_k(n, m, \vec{t}, \vec{s})$ be the set of trees, $T$, on $n$ edges such that $\mathcal{P}(T) = m$ and $W(T, t_i) = s_i$ where $\vec{s} = (s_1, \cdots, s_k) \in \Z_{\geq 0}^k$ and $\vec{t} = (t_1, \cdots, t_k) \in \Z_{\geq 0}^k$. We now let 
$$F_k(\vec{t}|x, \vec{y}|m) = \sum_{n \geq 0} \sum_{s_1 \geq 0} \cdots \sum_{s_k \geq 0}|\mathcal{F}_k(n, m, \vec{t}, \vec{s})| x^n\prod_{i=1}^k y_i^{s_i}$$  and 
$$F_k(\vec{t}|x, p, \vec{y}) = \sum_{m \geq 0} F_k(\vec{t}|x, \vec{y}|m) p^m.$$ 

Note that $$\frac{\partial F_k(\vec{t}|x, p, \vec{1})}{\partial y_1 \cdots \partial y_k} = \sum_{n \geq 0} \sum_{m \geq 0} \sum_{s_1 \geq 0} \cdots \sum_{s_k \geq 0}|\mathcal{F}_k(n, m, \vec{t}, \vec{s})| x^np^m\prod_{i=1}^k {s_i}$$ where $\vec{1}$ the vector of appropriate size for the context consisting of all 1s. 

For $V = \{v_1, \cdots, v_l\} \subset \N$ and $F$, a function, we define $$\frac{\partial F}{\partial y(V)} = \frac{\partial F}{\partial y_{v_1} \cdots \partial y_{v_l}}.$$ We define $\frac{\partial F}{\partial z(V)}$ similarly. We also denote the set of integers from 1 to $k$ by $[k]$. We now show the following lemma that will be integral to the rest of our analysis. 

\begin{lemma} For $k \inN$ and $V = \{v_1, \cdots, v_l\} \subset [k]$ such that $|V| = l \leq k$, the following holds.
\begin{equation}
M_l(x, p) = \left.\frac{\partial}{\partial z(V)}\left(\sum_{t_{v_1} \geq 0} \cdots \sum_{t_{v_l} \geq 0}\frac{\partial F_k(\vec{t}|x, p, \vec{1})}{\partial y(V)} \prod_{i=1}^lz^{t_{v_i}}_{v_i}\right)\right|_{\vec{z} =\vec{1}}.
\end{equation}
\label{M_l(x, p) lemma} 
\end{lemma}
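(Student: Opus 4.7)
The plan is to unpack the right-hand side layer by layer, converting the derivatives into sums and showing that the final expression matches the formula \eqref{M_{k,n, m} sum} for $M_l(x,p)$.

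First, I would apply the partial derivative $\partial/\partial y(V)$ and substitute $\vec{y}=\vec{1}$ in the definition of $F_k(\vec{t}\mid x, p, \vec{y})$. Differentiating $y_i^{s_i}$ and setting $y_i=1$ pulls down a factor of $s_i$ for each $i\in V$, while for $j\in [k]\setminus V$ the factor $\sum_{s_j\ge 0} y_j^{s_j}\big|_{y_j=1}$ simply removes the constraint on $s_j$ by summing it out. One observes that after this summation the expression no longer depends on $t_j$ for $j\notin V$ either, because those components of $\vec{t}$ only constrained the now-unconstrained $s_j$'s. This yields the identity
\begin{equation*}
\frac{\partial F_k(\vec{t}\mid x,p,\vec{1})}{\partial y(V)} \;=\; \sum_{n,m\ge 0} x^n p^m \sum_{T\in \setofplanetrees_{n,m}} \prod_{i\in V} W(T,t_i).
\end{equation*}

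Next, I would multiply by $\prod_{i\in V} z_{i}^{t_{i}}$, swap the (absolutely convergent in the formal power series sense) sums over $t_i$ inside the product over $i\in V$, and invoke the definition $W(T,t) = \bigl|\{v\in \overline{\mathcal{V}}(T): \mathcal{P}(T_v)=t\}\bigr|$ to rewrite
\begin{equation*}
\sum_{t_i\ge 0} W(T,t_i)\, z_i^{t_i} \;=\; \sum_{v\in \overline{\mathcal{V}}(T)} z_i^{\mathcal{P}(T_v)}.
\end{equation*}
The right-hand side of the lemma thus becomes $\sum_{n,m} x^n p^m \sum_{T\in \setofplanetrees_{n,m}} \prod_{i\in V}\bigl(\sum_{v\in \overline{\mathcal{V}}(T)} z_i^{\mathcal{P}(T_v)}\bigr)$, upon which the outer operator $\partial/\partial z(V)\big|_{\vec{z}=\vec{1}}$ acts. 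Each factor satisfies $\frac{\partial}{\partial z_i}\bigl(\sum_{v} z_i^{\mathcal{P}(T_v)}\bigr)\big|_{z_i=1} = \sum_{v\in \overline{\mathcal{V}}(T)} \mathcal{P}(T_v) = \mathcal{P}^*(T)$, so the product evaluates to $\mathcal{P}^*(T)^{|V|}=\mathcal{P}^*(T)^l$.

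Collecting the pieces yields $\sum_{n,m\ge 0} x^n p^m \sum_{T\in \setofplanetrees_{n,m}} \mathcal{P}^*(T)^l$, which is exactly $M_l(x,p)$ by \eqref{M_{k,n, m} sum}. The only real obstacle is the bookkeeping: one must verify carefully that derivatives in the $y$-variables and the $z$-variables commute with the formal sums, and that the $t_j$ and $y_j$ for $j\notin V$ genuinely drop out without spurious contributions. Because $F_k$ is a formal power series with non-negative integer coefficients, all of the interchanges of summation and differentiation are justified coefficient-wise, making this essentially a careful symbolic computation rather than an analytic issue.
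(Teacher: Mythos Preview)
Your proposal is correct and follows essentially the same route as the paper: both arguments differentiate in $y$ to pull down the factors $s_i$, observe that summing out $s_j$ for $j\notin V$ eliminates dependence on $t_j$, and then interpret the remaining $\prod_{i\in V} s_i t_i$ weight (equivalently, your $\prod_{i\in V} W(T,t_i)$ followed by the $z$-derivative) as the count of tuples contributing to $M_{l,n,m}$. The paper separates out an explicit reduction from $V\subset[k]$ to $V=[l]$ before doing the combinatorics, whereas you work with general $V$ throughout and use the identity $\sum_t W(T,t)z^t=\sum_{v\in\overline{\mathcal V}(T)} z^{\mathcal P(T_v)}$ as an intermediate step; these are cosmetic differences in bookkeeping rather than substantive ones.
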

\begin{proof} Let $W = [k]-V = \{w_1, \cdots, w_{k-l}\}$. We first see that \begin{eqnarray}
\frac{\partial F_k(\vec{t}|x, p, \vec{1})}{\partial y(V)} &=& \sum_{n \geq 0} \sum_{m \geq 0} \sum_{s_1 \geq 0} \cdots \sum_{s_k \geq 0}|\mathcal{F}_k(n, m, \vec{t}, \vec{s})| x^np^m\prod_{i=1}^l {s_{v_i}} \nonumber\\
&=& \sum_{n \geq 0} \sum_{m \geq 0} \sum_{s_{v_1} \geq 0} \cdots \sum_{s_{v_l}\geq 0} x^np^m\prod_{i=1}^l {s_{v_i}} \sum_{s_{w_1} \geq 0} \cdots \sum_{s_{w_{k-l}} \geq 0} |\mathcal{F}_k(n, m, \vec{t}, \vec{s})| \label{equat 1}\\
&=& \sum_{n \geq 0} \sum_{m \geq 0} \sum_{s_{v_1} \geq 0} \cdots \sum_{s_{v_l}\geq 0}|\mathcal{F}_l(n, m, \vec{t}, \vec{s})| x^np^m  \prod_{i=1}^l {s_{v_i}} \label{equat 2}\\
&=& \frac{\partial F_l(\vec{t}|x, p, \vec{1})}{\partial y(V)} \label{equat 3}
\end{eqnarray}

We go from (\ref{equat 1}) to (\ref{equat 2}) by noticing that when we sum $|\mathcal{F}_k(n, m, \vec{t}, \vec{s})|$ over $s_{w_{1}} \cdots s_{w_{k-l}}$, we lose our dependency on $t_{w_1}, \cdots, t_{w_{k-l}} $. We go from (\ref{equat 2}) to (\ref{equat 3}) by relabeling the indices from $v_1, \cdots, v_l$ to $1,\cdots, l$. Thus to prove the lemma, we need only consider the case when $V = [k]$. We now see that 
\begin{multline}
\left.\frac{\partial}{\partial z([k])}\left(\sum_{t_{v_1} \geq 0} \cdots \sum_{t_{v_l} \geq] 0}\frac{F_k(\vec{t}|x, p, \vec{1})}{\partial y([k])} \prod_{i=1}^lz^{t_{v_i}}_{v_i}\right)\right|_{\vec{z} =\vec{1}}\\
= \sum_{n \geq 0} \sum_{m \geq 0} \sum_{t_1 \geq 0}\sum_{s_1 \geq 0} \cdots \sum_{t_k \geq 0}\sum_{s_k \geq 0}|\mathcal{F}_k(n, m, \vec{t}, \vec{s})| x^np^m\prod_{i=1}^k s_it_i.
\end{multline}

For fixed $\vec{t}, \vec{s}$, we notice that trees in $\mathcal{F}_k(n, m, \vec{t}, \vec{s})$ are precisely the trees from which we can get $(v_1, \cdots, v_k) \in \overline{\mathcal{V}}(T)$ for some $T \in \setofplanetrees_{n,m}$ where $\mathcal{P}(T_{v_i}) = t_i$. The contribution of each tuple to $M_{k,n, m}$ is  $\prod_{i=1}^k t_i$. The number of tuples $(v_1, \cdots, v_k)$ that can be achieved from each tree $T \in \mathcal{F}_k(n, m, \vec{t}, \vec{s})$ is $\prod_{i=1}^k s_i$. Thus the total (weighted) contribution from tuples of the above form is $$|\mathcal{F}_k(n, m, \vec{t}, \vec{s})|x^np^m\prod_{i=1}^k s_it_i.$$ Hence,  summing over $\vec{t}, \vec{s} \inZ^k_{\geq 0}$ and $n,m \inZ_{\geq 0}$, we get $M_k(x,p)$, proving the result.

\end{proof}
\bigskip

Let $\mathcal{H}_k^v(n, m, \vec{t}, \vec{s}) = \mathcal{H}^v(n, m) \cap \mathcal{F}_k(n, m, \vec{t}, \vec{s})$, 
\begin{equation}
H^v_k(\vec{t}|x, \vec{y}|m) = \sum_{n \geq 0} \sum_{s_1 \geq 0} \cdots \sum_{s_k \geq 0}|\mathcal{H}_k^v(n, m, \vec{t}, \vec{s})| x^n\prod_{i=1}^k y_i^{s_i},
\end{equation} 
\begin{equation}
H^v_k(\vec{t}|x, p, \vec{y}) =  \sum_{m \geq 0} H^v_k(\vec{t}|x, \vec{y}|m) p^m
\end{equation} and 
\begin{equation}
J^v_k(x, p) = \left.\frac{\partial}{\partial z([k])}\left(\sum_{t_{1} \geq 0} \cdots \sum_{t_{k} \geq 0}\frac{ H^v_k(\vec{t}|x, p, \vec{1})}{\partial y([k])} \prod_{i=1}^kz^{t_{i}}_{i}\right)\right|_{\vec{z} =\vec{1}}.
\end{equation}

Using a similar argument to Lemma \ref{M_l(x, p) lemma}, we get that for $V = \{v_1, \cdots, v_l\} \subset [k]$ such that $|V| = l \leq k$,
\begin{equation} \label{J^v_l def}
J^v_l(x, p) = \left.\frac{\partial}{\partial z(V)}\left(\sum_{t_{v_1} \geq 0} \cdots \sum_{t_{v_l} \geq 0}\frac{H^v_k(\vec{t}|x, p, \vec{1})}{\partial y(V)} \prod_{i=1}^lz^{t_{v_i}}_{v_i}\right)\right|_{\vec{z} =\vec{1}}.
\end{equation}

Note that \begin{equation} \label{equat 5}
\sum_{v \geq 0} H^v_k(\vec{t}|x, p, \vec{y}) =  F_k(\vec{t}|x, p, \vec{y}) \quad\quad\quad\quad \text{and} \quad\quad\quad\quad \sum_{v \geq 0} J^v_k(x, p) =  M_k(x, p).
\end{equation} 

We define a partition of a set to be a set of disjoint subsets of the original set whose union is the original set. We call these subsets parts. We denote a partition of $S$ into $\lambda$ parts by $(S_i)_\lambda$, where $S_1, \cdots, S_\lambda$ are the parts of the partition. We say a partition of $S$, $(S_i)_\lambda$, refines another partition of $S$, $(S'_i)_\mu$ if for any $S_i$, there is a $S'_j$ such that $S_i \subset  S'_j$. We denote this by $(S_i)_\lambda \subset (S'_i)_\mu$.

Let $\vec{t} \in \Z_{\geq 0}^k$. Notice that $\vec{t}$ induces a partition of $S = [k]$ as follows. Let $(S_i)^{\vec{t}}_\lambda$ be such that the numbers $i, j$ are in the same part if and only if $t_i = t_j$. For a fixed $(S_i)^{\vec{t}}_\lambda$, let $t_i^*=t_j$ such that $j \in S_i$.

\begin{lemma} The following recurrence holds.
\begin{eqnarray} \label{Full F_k recurrence}
F_k(\vec{t}|x, p, \vec{y}) &=& 1 + x\sum_{v\geq 0} p^v F_k(\vec{t}|x, p, \vec{y})H_k^v(\vec{t}|x,p, \vec{y}) \nonumber \\
 && \quad\quad\quad\quad + xF_k(\vec{t}|x, p, \vec{y})\sum_{v\geq 0} \sum_{i = 1}^\lambda \left(\prod_{j \in S_i} y_j - 1\right) \cdot p^{t_i^*} \cdot p^v \cdot H_k^v(\vec{t}|x, \vec{y}|t_i^*), \label{full expansion sum 1}
\end{eqnarray} where the $S_i$ are the parts in $(S_i)^{\vec{t}}_\lambda$.
    
\end{lemma}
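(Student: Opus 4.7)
The plan is to derive this recurrence from the same join decomposition $T = T_1 \ltimes T_2$ used in Lemma \ref{F(x,p) recurrence}, but now tracking the extra statistics $W(T,t_i)$ for $i \in [k]$. The base case $T = T^*$ contributes the $1$; every other tree decomposes uniquely as $T_1 \ltimes T_2$, which accounts for one new edge (hence the factor $x$).

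First I would analyze how each of the three weighted statistics changes under the join. The edge count satisfies $n(T) = n(T_1) + n(T_2) + 1$, and by the additivity of $\mathcal{P}$, $\mathcal{P}(T) = \mathcal{P}(T_1) + \mathcal{P}(T_2) + f(T_2)$, which is why summing over $v \geq 0$ the contribution of $T_2$ carries a factor $p^{v}$ when $f(T_2) = v$. The key new calculation is the $y$-weight: the non-root vertices of $T$ are the non-root vertices of $T_1$, the (former) root of $T_2$ which now sits as a child of the root of $T$, and the non-root vertices of $T_2$. Since the subtree of $T$ rooted at this new child is exactly $T_2$, we obtain
\begin{equation*}
W(T, t_i) = W(T_1, t_i) + W(T_2, t_i) + \mathbbm{1}[\mathcal{P}(T_2) = t_i].
\end{equation*}
Multiplying over $i \in [k]$ introduces an extra factor $\prod_{i : t_i = \mathcal{P}(T_2)} y_i$ compared to a naive product of the $T_1$ and $T_2$ generating functions.

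Next I would organize this extra factor using the partition $(S_i)^{\vec t}_\lambda$. If $\mathcal{P}(T_2)$ does not equal any $t_i$, the extra factor is the empty product, namely $1$, so no correction is needed and the contribution of $T_2$ (with $f(T_2) = v$) to the right-hand side is simply $p^v H_k^v(\vec t | x, p, \vec y)$, giving the first sum in \eqref{full expansion sum 1}. If $\mathcal{P}(T_2) = t_i^*$ for some (unique) $i$, the extra factor is $\prod_{j \in S_i} y_j$, so I would write this as $1 + \bigl(\prod_{j \in S_i} y_j - 1\bigr)$: the $1$ is already accounted for in the first sum, while the correction $\prod_{j \in S_i} y_j - 1$ is weighted by restricting the $T_2$-sum to trees with $\mathcal{P}(T_2) = t_i^*$, whose generating function is exactly $H_k^v(\vec t | x, \vec y | t_i^*)$, carrying an additional $p^{t_i^*}$. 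Summing over $i = 1, \dots, \lambda$ and $v \geq 0$ then yields the second sum in \eqref{full expansion sum 1}.

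Finally I would pull out the common factor $x F_k(\vec t | x, p, \vec y)$ (coming from the new edge together with the $T_1$-sum), combine the two pieces, and add $1$ for $T^*$. The only mildly delicate point is the bookkeeping in the correction term: one has to verify that splitting by the (unique) part $S_i$ containing an index $j$ with $t_j = \mathcal{P}(T_2)$ is well-defined and that non-hit values of $\mathcal{P}(T_2)$ contribute zero correction, which is precisely what the $\lambda$-term sum over parts of $(S_i)^{\vec t}_\lambda$ enforces. No further manipulation is needed.
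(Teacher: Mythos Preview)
Your proposal is correct and follows essentially the same approach as the paper: both use the join decomposition $T = T_1 \ltimes T_2$, track how $W(T,t_i)$ changes (picking up the indicator for $\mathcal{P}(T_2)=t_i$ from the new non-root vertex), and then isolate the correction $\prod_{j\in S_i} y_j - 1$ on the trees with $\mathcal{P}(T_2)=t_i^*$ via $H_k^v(\vec t\,|\,x,\vec y\,|\,t_i^*)$. Your write-up is in fact more explicit than the paper's about the $1 + (\prod_{j\in S_i} y_j - 1)$ split and the role of the $p^{t_i^*}$ factor, but the argument is the same.
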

\begin{proof}
We apply the decomposition of a plane tree into 2 subtrees given by $T = T_1 \ltimes T_2$. By similar argument to Lemma \ref{F(x,p) recurrence}, we properly weight with respect to $n$ and $m$. To properly weight with respect to $\vec{y}$, we notice that the number of non-root vertices $v$ with $\mathcal{P}(T_v) = t$, for some $t$, in $T$ is the sum of the number of such vertices in $T_1$ and $T_2$. Additionally, when $\mathcal{P}(T_2) = t$, we get an extra such vertex (the root of $T_2$). When $\mathcal{P}(T_2) = t^*_i$, we get an extra vertex, $v$, where $\mathcal{P}(T_v) = t_{j} = t^*_i$, where $j \in S_i$. Notice that these are the trees counted by $\sum_{v \geq 0}H_k^v(\vec{t}|x, \vec{y}|t_i^*)$. These trees should get an extra $\prod_{j \in S_i} y_j$ in the weighting. We however notice that the weight with respect to $p$ of such trees is $p^{t_i^*}$. Trees counted by $H_k^v(\vec{t}|x, \vec{y}|t_i^*)$ also get an extra $p^v$ (to properly weight the entire tree with respect to $p$).

\end{proof}

Differentiating both sides of (\ref{Full F_k recurrence}), we see that

\begin{multline} \label{full expansion sum 2}
\frac{\partial F_k(\vec{t}|x, p, \vec{1})}{\partial y([k])}  = x\sum_{V \subset [k]} \frac{\partial F_k(\vec{t}|x, p, \vec{1})}{\partial y([k]-V)} \sum_{v \geq 0} p^v \cdot \frac{\partial H_k^v(\vec{t}|x,p, \vec{1})}{\partial y(V)}\\
+ x\sum_{V \subset [k]} \frac{\partial F_k(\vec{t}|x, p, \vec{1})}{\partial y([k]-V)}  \cdot \left. \frac{\partial}{\partial y(V)}\left(\sum_{v \geq 0}\sum_{i = 1}^\lambda \left(\prod_{j \in S_i} y_j - 1\right) \cdot p^{t_i^*} \cdot p^v \cdot H_k^v(\vec{t}|x, \vec{y}|t_i^*)\right)\right|_{\vec{y} = \vec{1}}.
\end{multline}

For $V = \{v_1, \cdots, v_l\} \subset [k]$, we let $\vec{t}(V) = (t_{v_1}, \cdots, t_{v_{l}})$. Using Lemma \ref{M_l(x, p) lemma}, we simplify 
$$\Phi_k^{(1)} = \frac{\partial}{\partial z([k])}\sum_{\vec{t} \inZ_{\geq 0}^k}\left(\sum_{V \subset [k]} \frac{\partial F_k(\vec{t}|x, p, \vec{1})}{\partial y([k]-V)} \sum_{v \geq 0} p^v \cdot \frac{\partial H_k^v(\vec{t}|x,p, \vec{1})}{\partial y(V)}\right) \prod_{i \in [k]}z_i^{t_i}$$ as follows.
\begin{eqnarray}
\Phi_k^{(1)} &=&  \sum_{V \subset [k]}\frac{\partial}{\partial z(V)}\left(\sum_{\vec{t}(V) \inZ_{\geq 0}^k} \frac{\partial F_k(\vec{t}|x, p, \vec{1})}{\partial y(V)}\prod_{i \in V}z_i^{t_i}\right) \times \nonumber\\
&& \quad\quad\quad \sum_{v \geq 0} p^v \cdot \frac{\partial}{\partial z([k]-V)}\left(\sum_{\vec{t}([k]-V) \inZ_{\geq 0}^k}\frac{\partial H_k^v(\vec{t}|x,p, \vec{1})}{\partial y([k]-V)} \prod_{i \in [k]-V}z_i^{t_i}\right) \\
&=&  \sum_{V \subset [k]}M_{|V|}(x,p) \cdot \sum_{v \geq 0} p^v \cdot J_{k - |V|}^v(x,p)
\end{eqnarray} where we set $z_i$ to 1. To simplify 
\begin{multline*}
    \Phi_k^{(2)} = \frac{\partial}{\partial z([k])}\sum_{\vec{t} \inZ_{\geq 0}^k}\left(\sum_{V \subset [k]}  \frac{\partial F_k(\vec{t}|x, p, \vec{y})}{\partial y([k]-V)} \right. \times 
    \\ \left.\frac{\partial}{\partial y(V)}\left(\sum_{v \geq 0} \sum_{i = 1}^\lambda \left(\prod_{j \in S_i} y_j - 1\right) \cdot p^{t_i^*} \cdot p^v \cdot H_k^v(\vec{t}|x, \vec{y}|t_i^*)\right) \right) \prod_{i \in V}z_i^{t_i},
\end{multline*} we will use Lemma \ref{M_l(x, p) lemma} and the following lemma (whose proof is deferred to the Appendix). 

\begin{lemma}
	\label{lemma expansion}
	Let $\emptyset \neq W = \{w_1, \cdots, w_m\}$ for some $m \inN$. For any $x_{w_1}, \cdots, x_{w_m}$, \begin{equation}
	\label{lemma expansion 1}
	\prod_{i=1}^m x_{w_{i}}  - 1 = \sum_{V = \{v_1, \cdots, v_l\} \subset W} (x_{v_1} - 1) \cdots (x_{v_l} - 1)
	\end{equation}
\end{lemma}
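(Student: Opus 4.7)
The plan is to prove this identity by a substitution that converts the right-hand side into the expansion of a product of binomials. Setting $y_i = x_{w_i} - 1$ for each $i$, the right-hand side becomes
$$\sum_{\emptyset \neq V \subset W} \prod_{v \in V} (x_v - 1) \;=\; \sum_{\emptyset \neq V \subset W} \prod_{v \in V} y_v \;=\; \left(\sum_{V \subset W} \prod_{v \in V} y_v\right) - 1 \;=\; \prod_{i=1}^m (1 + y_{w_i}) - 1,$$
where the final equality is the standard distributive expansion of $\prod_i (1 + y_{w_i})$ indexed over all subsets of $W$ (with the empty subset contributing the empty product $1$). Substituting back $1 + y_{w_i} = x_{w_i}$ recovers exactly $\prod_{i=1}^m x_{w_i} - 1$, the left-hand side of \eqref{lemma expansion 1}.

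If one prefers to avoid the change of variables, the identity can equivalently be proved by induction on $m = |W|$. The base case $m = 1$ is immediate, as both sides reduce to $x_{w_1} - 1$. For the inductive step on $W = \{w_1, \ldots, w_{m+1}\}$, I would partition the nonempty subsets $V \subset W$ according to whether $w_{m+1} \in V$. The nonempty subsets avoiding $w_{m+1}$ contribute, by the inductive hypothesis, exactly $\prod_{i=1}^m x_{w_i} - 1$. The subsets containing $w_{m+1}$ contribute
$$(x_{w_{m+1}} - 1)\sum_{V' \subset \{w_1, \ldots, w_m\}} \prod_{v \in V'} (x_v - 1),$$
where $V'$ now ranges over all subsets including the empty one, so that the inner sum equals $\prod_{i=1}^m x_{w_i}$ by the same expansion principle. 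Combining the two contributions yields $x_{w_{m+1}} \prod_{i=1}^m x_{w_i} - 1$, completing the induction.

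There is no substantive obstacle in this lemma; it is a purely algebraic combinatorial identity, and both approaches essentially amount to recognizing the right-hand side as the subset-expansion of $\prod_{i=1}^m \bigl((x_{w_i} - 1) + 1\bigr)$ with the empty-subset term separated off. The only subtlety worth being explicit about is that the indexing convention $V = \{v_1, \ldots, v_l\} \subset W$ in the statement excludes $V = \emptyset$, which is precisely why the "$-1$" on the left-hand side cancels the empty-subset contribution.
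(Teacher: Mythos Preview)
Your proof is correct. Both the substitution argument and the induction are valid, and your observation about the empty subset being excluded is exactly the point.

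The paper takes a different route: it proves the identity by coefficient comparison. For each nonempty $U = \{u_1,\ldots,u_q\} \subset W$, the paper extracts the coefficient of the monomial $x_{u_1}\cdots x_{u_q}$ on the right-hand side by summing over all $V \supseteq U$, obtaining the alternating sum $\sum_{i=q}^m \binom{m-q}{i-q}(-1)^{i-q}$, which vanishes unless $q=m$; a separate computation shows the constant term is $-1$. Your substitution $y_i = x_{w_i}-1$ is considerably more direct and conceptually cleaner: it recognizes the right-hand side immediately as the subset expansion of $\prod_i(1+y_i)$ minus the empty-subset term, bypassing any binomial-sum computation. The paper's approach does have the minor advantage of being self-contained at the level of coefficients (useful if one later needs to track individual monomials), but for the purpose of establishing the lemma your argument is shorter and just as rigorous.
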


\bigskip

For $V \subset [k]$ and $\vec{t} \in \Z^k_{\geq 0}$, we let $$\Phi(\vec{t}|V) = \frac{\partial}{\partial y(V)}\left(\sum_{v \geq 0}\sum_{i = 1}^\lambda \left(\prod_{j \in S_i} y_j - 1\right) \cdot p^{t_i^*} \cdot p^v \cdot H_k^v(\vec{t}|x, \vec{y}|t_i^*)\right).$$ 
We apply Lemma \ref{lemma expansion} to get
\begin{equation} \label{eq 2}
\Phi(\vec{t}|V) = \sum_{v \geq 0} p^v \cdot \frac{\partial}{\partial y(V)}\left(\sum_{i = 1}^\lambda  \sum_{U = \{u_1, \cdots, u_q\} \subset S_i} (y_{u_1} - 1) \cdots (y_{u_q} - 1) \cdot p^{t_i^*}  \cdot H_k^v(\vec{t}|x, \vec{y}|t_i^*)\right).
\end{equation}

Our goal is to take the sum
$$\sum_{\vec{t}(V) \in \Z_{\geq 0}^{|V|}}\Phi(\vec{t}|V) \prod_{i \in V}z_i^{t_i}.$$
For some $U = \{u_1, \cdots, u_q\}  \subset [k]$, consider $(S'_i)$, the partition of $[k]$ where the elements $u_1, \cdots, u_q$ are in the same part and all other elements are in their own parts. Notice that 
\begin{equation} \label{equat 4}
(y_{u_1} - 1) \cdots (y_{u_q} - 1) \cdot p^{t_i^*}  \cdot H_k^v(\vec{t}|x, \vec{y}|t_i^*)
\end{equation} 
is a term in the bracket of (\ref{eq 2}) if and only if the partition induced by $\vec{t}$ is such that $(S'_i)$ is a refinement of $(S_i)_{\lambda}^{\vec{t}}$, ie. $(S_i) \subset (S_i)_{\lambda}^{\vec{t}}$. 

Assume $V$ is such that $U \subset V$ (otherwise the term we are considering will vanish when we set $y_i$ to 1). Let $V-U = \{j_1, \cdots, j_l\}$. We now consider the sum
\begin{eqnarray}
\Phi(V, U) &=&  \sum_{v \geq 0} p^v \cdot  \frac{\partial }{\partial y({V})} \sum_{t_{j_1}  \geq 0} \cdots \sum_{t_{j_l}  \geq 0}(y_{u_1} - 1) \cdots (y_{u_q} - 1) \times\\ 
&& \quad \quad \quad \quad \sum_{t^*_{i}  \geq 0} H_k^v(\vec{t}|x, \vec{y}|t_i^*) \cdot p^{t^*_{i}} (z_{u_1} \cdots z_{u_q})^{t^*_{i}} \prod_{i = 1}^lz^{t_{j_i}}_{j_i} \nonumber\\
&=&  \sum_{v \geq 0} p^v \cdot \sum_{t_{j_1}  \geq 0} \cdots \sum_{t_{j_l}  \geq 0} \frac{\partial H^v_{k}(\vec{t}|x, p z_{u_1} \cdots z_{u_q},\vec{y})}{\partial y({V-U})}  \prod_{i = 1}^lz^{t_{j_i}}_{j_i}.
\end{eqnarray}
We see that as we sum up $\Phi(\vec{t}|V) \prod_{i \in V}z^{t_{i}}_{i}$ over $\vec{t}$, we actually take the sum above for all $U \subset V$. Thus \begin{eqnarray}
\left.\frac{\partial}{\partial z(V)}\sum_{\vec{t}(V) \inZ_{\geq 0}^{|V|}} \Phi(\vec{t}|V) \prod_{i \in V}z^{t_{i}}_{i}\right|_{\vec{z} = \vec{1}} &=& \left.\sum_{\emptyset \neq U \subset V} \frac{\partial \Phi(V, U)}{\partial z(V)}\right|_{\vec{z} = \vec{1}}\\
&=& \left.\sum_{\emptyset \neq U \subset V} \sum_{v \geq 0} p^v \cdot \frac{\partial J^v_{|V|-|U|}(x, p z_{u_1} \cdots z_{u_q})}{\partial z(U)}\right|_{\vec{z} = \vec{1}}\\
&=& \sum_{\emptyset \neq U \subset V} \sum_{v \geq 0} p^v \cdot \mathcal{D}_p^{|U|} J^v_{|V|-|U|}(x, p)
\end{eqnarray} where for $F$, a function in some variables including $p$, we define the operator $\mathcal{D}_p^m$ recursively as 
\begin{equation} \label{D_p definition}
    \mathcal{D}_p^mF = p \cdot \frac{\partial \mathcal{D}_p^{m-1} F}{\partial p} \quad\quad\quad\quad \text{ and } \quad\quad\quad\quad \mathcal{D}_p^0 F = F.
\end{equation}

Thus, we simplify $\Phi_k^{(2)} $ as follows.
\begin{eqnarray}
\Phi_k^{(2)} &=&  \sum_{V \subset [k]}\frac{\partial}{\partial z([k]-V)}\left(\sum_{\vec{t}([k]-V) \inZ_{\geq 0}^{k-|V|}} \frac{\partial F_k(\vec{t}|x, p, \vec{1})}{\partial y([k]-V)}\prod_{i \in [k]-V}z_i^{t_i}\right) \times \nonumber\\
&& \quad\quad\quad\quad  \frac{\partial}{\partial z(V)}\sum_{\vec{t}(V) \inZ_{\geq 0}^{|V|}} \Phi_V(\vec{t}) \prod_{i \in V}z^{t_{i}}_{i} \\
&=&  \sum_{V \subset [k]}M_{k-|V|}(x,p) \cdot \sum_{\emptyset \neq U \subset V} \sum_{v \geq 0} p^v \cdot \mathcal{D}_p^{|U|} J^v_{|V|-|U|}(x, p)
\end{eqnarray} where we set $z_i$ to 1. 

We will denote $\left.\mathcal{D}_p^{m}\left(F(x,p)\right)\right|_{p=1}$ simply as $\mathcal{D}_p^{|U|}F(x)$.

\begin{theorem} For $k \geq 1$, the following recurrence holds.
\begin{equation}
M_k(x,p) = \sum_{U \subset V \subset [k]}M_{k-|V|}(x,p) \cdot \sum_{v \geq 0} p^v \cdot \mathcal{D}_p^{|U|} J^v_{|V|-|U|}(x, p).
\end{equation}
Furthermore, for $k \geq 1$ and $m \geq 0$, let $$S(k, m) = \{(a, b, c, d) \inZ^4: 0 \leq a \leq b \leq m, 0 \leq c \leq d \leq k\}$$ and $$S'(k, m) = S(k, m) - \{(0,0,0,0), (0,m,0,k)\}.$$ The following recurrence also holds.
\begin{equation}
\mathcal{D}^m_p M_k(x)  = \frac{x}{\sqrt{1-4x}} \cdot \sum_{(a, b, c, d) \in S'(k, m)} \binom{k}{d}\binom{d}{c}\sum_{v \geq 0} v^a \cdot \mathcal{D}^{m-b}_pM_{k-d}(x) \cdot  \mathcal{D}_p^{c + b - a} J^v_{d-c}(x). \label{equation 10} 
\end{equation}
\end{theorem}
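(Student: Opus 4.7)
The plan is to derive both recurrences from the functional equation (\ref{Full F_k recurrence}) for $F_k(\vec{t}|x,p,\vec{y})$, first by extracting $M_k(x,p)$ via the procedure of Lemma \ref{M_l(x, p) lemma} and then by applying $\mathcal{D}_p^m$ at $p = 1$.

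For the first recurrence, I would start from equation (\ref{full expansion sum 2}), which is the $y$-differentiated and $\vec{y} = \vec{1}$-evaluated form of (\ref{Full F_k recurrence}). Multiplying both sides by $\prod_i z_i^{t_i}$, summing over $\vec{t} \in \Z_{\geq 0}^k$, applying $\frac{\partial}{\partial z([k])}$, and setting $\vec{z} = \vec{1}$ converts the left-hand side into $M_k(x,p)$ by Lemma \ref{M_l(x, p) lemma} with $V = [k]$. The right-hand side decomposes exactly into the two sums $\Phi_k^{(1)}$ and $\Phi_k^{(2)}$, whose simplifications are already carried out in the preceding text: $\Phi_k^{(1)}$ yields the $|U| = 0$ contributions and $\Phi_k^{(2)}$ yields the $|U| \geq 1$ contributions. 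The crucial identification is that the alternating-sign expansion from Lemma \ref{lemma expansion}, combined with the substitution $p \mapsto p z_{u_1} \cdots z_{u_q}$ and the setting $\vec{z} = \vec{1}$, produces exactly the operator $\mathcal{D}_p^{|U|}$ acting on $J^v_{|V|-|U|}(x,p)$. Re-indexing $V \mapsto [k]-V$ in the $\Phi_k^{(1)}$ sum and merging the two families yields the first recurrence.

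For the second recurrence, I would apply $\mathcal{D}_p^m$ to both sides of the first recurrence and evaluate at $p = 1$. Since $\mathcal{D}_p = p\partial_p$ is a first-order derivation, the Leibniz rule applied successively to the triple product $M_{k-|V|}(x,p) \cdot p^v \cdot \mathcal{D}_p^{|U|} J^v_{|V|-|U|}(x,p)$ distributes $\mathcal{D}_p^m$ across the three factors with indices $0 \leq a \leq b \leq m$: the integer $a$ counts derivatives landing on $p^v$ (yielding $v^a$ at $p = 1$), and $b - a$ counts those absorbed by the $J^v$-factor (yielding the shifted order $c + b - a$ where $c = |U|$). Writing $d = |V|$ and $c = |U|$ and summing over subsets of each size produces the $\binom{k}{d}\binom{d}{c}$ choices, and the raw expansion is thereby indexed by all of $S(k,m)$. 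Two tuples produce self-referential contributions: $(0,0,0,0)$ (where $V = U = \emptyset$ and no derivative lands on $p^v$ or $J^v$) gives $xF(x) \cdot \mathcal{D}_p^m M_k(x)$, and $(0,m,0,k)$ (where $V = [k]$, $U = \emptyset$, and all $m$ derivatives are absorbed by the $J^v$-factor) likewise gives $xF(x) \cdot \mathcal{D}_p^m M_k(x)$ via $\sum_v J^v_k(x) = M_k(x)$ from (\ref{equat 5}). Moving both to the left side produces the coefficient $1 - 2xF(x) = \sqrt{1-4x}$ (a consequence of the Catalan identity $F = 1 + xF^2$), and dividing through yields the factor $\frac{x}{\sqrt{1-4x}}$ in front of the remaining sum over $S'(k,m)$.

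The main obstacle is the combinatorial bookkeeping in Part 1: in particular, verifying that Lemma \ref{lemma expansion}, together with the substitution $p \mapsto p z_{u_1} \cdots z_{u_q}$, cleanly produces $\mathcal{D}_p^{|U|}$ acting on $J^v_{|V|-|U|}$ without leftover cross-terms. Part 2 is then essentially a routine Leibniz calculation; its only real subtlety is the clean coincidence that the two excluded tuples contribute the same quantity $xF(x) \cdot \mathcal{D}_p^m M_k(x)$, which is what allows a single universal coefficient $\frac{x}{\sqrt{1-4x}}$ to be pulled out.
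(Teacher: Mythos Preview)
Your proposal is correct and follows essentially the same argument as the paper: both derive the first recurrence by processing (\ref{full expansion sum 2}) through Lemma \ref{M_l(x, p) lemma} and invoking the already-computed simplifications of $\Phi_k^{(1)}$ and $\Phi_k^{(2)}$, then obtain the second recurrence by applying $\mathcal{D}_p^m$ via Leibniz, isolating the two self-referential tuples $(0,0,0,0)$ and $(0,m,0,k)$, and using $1 - 2xM_0(x) = \sqrt{1-4x}$. Your identification of how each excluded tuple produces $xM_0(x)\,\mathcal{D}_p^m M_k(x)$ (the second via $\sum_v J^v_k = M_k$) matches the paper's reasoning exactly.
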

\begin{proof}

From $(\ref{full expansion sum 2})$, we see that, for $k \geq 1$,
\begin{eqnarray}
M_k(x,p) &=&  x\Phi_k^{(1)} + x\Phi_k^{(2)} \nonumber\\
&=& \sum_{V \subset [k]}M_{k-|V|}(x,p) \cdot \sum_{\emptyset \neq U \subset V} \sum_{v \geq 0} p^v \cdot \mathcal{D}_p^{|U|} J^v_{|V|-|U|}(x, p) \nonumber\\
&& \quad\quad\quad\quad + \sum_{V \subset [k]} \sum_{v \geq 0} p^v \cdot M_{k - |V|}(x,p)   \cdot J_{|V|}^v(x,p) \nonumber\\
&=& \sum_{U \subset V \subset [k]}M_{k-|V|}(x,p) \cdot \sum_{v \geq 0} p^v \cdot \mathcal{D}_p^{|U|} J^v_{|V|-|U|}(x, p). \nonumber
\end{eqnarray}

Differentiating the above expression, we see that
\begin{eqnarray}
\mathcal{D}^m_p M_k(x,p) &=& x\sum_{U \subset V \subset [k]}\sum_{b=0}^m \sum_{a=0}^b \sum_{v \geq 0} \mathcal{D}^a_p(p^v) \cdot \mathcal{D}^{m-b}_pM_{k-|V|}(x,p) \times \nonumber\\
&& \quad\quad\quad\quad \mathcal{D}_p^{|U| + b - a} J^v_{|V|-|U|}(x, p) \nonumber\\
\mathcal{D}^m_p M_k(x) &=& x\sum_{(a, b, c, d) \in S(k, m)} \binom{k}{d}\binom{d}{c}\sum_{v \geq 0} v^a \cdot \mathcal{D}^{m-b}_pM_{k-d}(x) \cdot  \mathcal{D}_p^{c + b - a} J^v_{d-c}(x) \nonumber\\
\mathcal{D}^m_p M_k(x)(1 - 2xM_0(x)) &=& x\sum_{(a, b, c, d) \in S'(k, m)} \binom{k}{d}\binom{d}{c}\sum_{v \geq 0} v^a \cdot \mathcal{D}^{m-b}_pM_{k-d}(x) \cdot  \mathcal{D}_p^{c + b - a} J^v_{d-c}(x). \nonumber
\end{eqnarray}

Notice that $M_0(x)$ is the generating function counting plane trees by number of edges. Thus $1 - 2xM_0(x) = \sqrt{1-4x}$. Thus, we arrive at the desired result.

\end{proof}

\bigskip

For simple subtree additive properties $\mathcal{P}^*$ where the property in question is not clear, we will denote the generating function for $\sum_{T \in \setofplanetrees_n}\mathcal{P}^*(T)^k$ by $M_{k}(\mathcal{P}^*, x)$. Let $[x^n]\mathcal{D}_p^mM_k(\mathcal{P}^*, x) = M^{(m)}_{k,n}(\mathcal{P}^*)$ (or simply $M^{(m)}_{k,n}$ if there is no ambiguity). Note that $M^{(m)}_{0,n}(\mathcal{P}^*) = \sum_{T \in \setofplanetrees_n} \mathcal{P}(T)^m$ where $\mathcal{P}$ is the additive property from which $\mathcal{P}^*$ is derived.

We now prove the following lemmas from which the main theorem of this section will follows. To do so, we utilize the following lemma (whose proof is deferred to the Appendix).

\begin{lemma}\label{tech lemma 1}
Let $a_1, a_2 \inR$ and $n \inN$ be large. For $\min\{a_1, a_2\} > -1 $,
$$\sum_{\substack{n_1 + n_2 = n}\\n_1, n_2 \geq 1} n_1^{a_1} \cdot n_2^{a_2} = \Theta\left(n^{a_1 + a_2 + 1}\right)$$
and, for $\min\{a_1, a_2\} < -1 $,
$$\sum_{\substack{n_1 + n_2 = n}\\n_1, n_2 \geq 1} n_1^{a_1} \cdot n_2^{a_2} = \Theta\left(n^{\max\{a_1, a_2\}}\right).$$
\end{lemma}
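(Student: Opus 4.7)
The plan is to split into two cases based on the sign of $\min\{a_1,a_2\}+1$ and treat each by a different asymptotic method: a Beta-integral approximation when both exponents exceed $-1$, and an endpoint-concentration argument when at least one is below $-1$.

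For the case $\min\{a_1,a_2\} > -1$, I would compare the sum to the integral
\[
I_n = \int_1^{n-1} t^{a_1}(n-t)^{a_2}\,dt.
\]
The substitution $t = ns$ gives $I_n = n^{a_1+a_2+1}\int_{1/n}^{1-1/n} s^{a_1}(1-s)^{a_2}\,ds$, and since $a_1,a_2 > -1$ the singularities at $0$ and $1$ are integrable, so the inner integral converges to $B(a_1+1,a_2+1) > 0$ as $n\to\infty$. To pass from the integral back to the sum, I would use standard Riemann-sum comparisons on subintervals where the integrand is monotone; this leaves only the two extreme summands $n_1=1$ and $n_1=n-1$ to handle separately, and they contribute $(n-1)^{a_2}$ and $(n-1)^{a_1}$, each $O(n^{\max\{a_1,a_2\}}) = o(n^{a_1+a_2+1})$ under the assumption $\min\{a_1,a_2\}>-1$. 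This yields the required $\Theta(n^{a_1+a_2+1})$.

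For the case $\min\{a_1,a_2\} < -1$, I would split the sum at $n_1=\lfloor n/2\rfloor$. Without loss of generality $a_1 < -1$. In the lower half $n_2 \geq n/2$, so $n_2^{a_2}=\Theta(n^{a_2})$ uniformly in $n_1$, whence
\[
\sum_{n_1=1}^{\lfloor n/2\rfloor} n_1^{a_1}(n-n_1)^{a_2} = \Theta(n^{a_2}) \cdot \sum_{n_1=1}^{\lfloor n/2\rfloor} n_1^{a_1} = \Theta(n^{a_2}),
\]
using that $\sum_{k\ge 1} k^{a_1}$ converges to a positive constant because $a_1<-1$. The upper half splits into subcases: if $a_2<-1$, the symmetric argument gives $\Theta(n^{a_1})$; if $a_2>-1$, then $n_1^{a_1}=\Theta(n^{a_1})$ uniformly while $\sum_{n_2=1}^{\lfloor n/2\rfloor} n_2^{a_2}=\Theta(n^{a_2+1})$, producing $\Theta(n^{a_1+a_2+1})$, which is strictly smaller than $n^{a_2}$ since $a_1<-1$. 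In every subcase the two halves combine to $\Theta(n^{\max\{a_1,a_2\}})$.

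The only mildly delicate point is the Riemann-sum-to-integral comparison in the first case when some $a_i\in(-1,0)$, since the integrand blows up at an endpoint; this is handled by peeling off the two extreme summands as above and bounding the remaining sum by the integral on monotone pieces. Aside from that, the argument is routine case analysis with power-of-$n$ estimates and the elementary fact that $\sum k^a$ converges iff $a<-1$, so I do not anticipate a serious obstacle.
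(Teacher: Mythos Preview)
Your proposal is correct, and for the case $\min\{a_1,a_2\}<-1$ it is essentially the same argument as the paper's: split at $n/2$, factor out the large variable as $\Theta(n^{a_i})$, and use convergence of the $p$-series $\sum k^{a_j}$ for $a_j<-1$. One tiny omission: in your subcase analysis you treat $a_2<-1$ and $a_2>-1$ but not $a_2=-1$; there $\sum_{n_2\le n/2} n_2^{-1}=\Theta(\log n)$, giving an upper-half contribution $\Theta(n^{a_1}\log n)=o(n^{a_2})$, so the conclusion is unaffected.

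The genuine difference is in the case $\min\{a_1,a_2\}>-1$. The paper avoids integrals entirely: for the lower bound it restricts to $n_1\in[n/3,n/2]$ where both factors are $\Theta(n^{a_i})$, and for the upper bound it splits at $n/2$, reduces to twice one half by a ratio argument, and then runs through five subcases according to the signs of $a_1,a_2$, in the delicate ones using a dyadic-style decomposition over ranges $i\in[n/(k+1),n/k]$ to produce a convergent series in $k$. Your Beta-integral route is shorter and more conceptual, yields the leading constant $B(a_1+1,a_2+1)$ for free, and handles all sign configurations uniformly; the paper's argument is more elementary (no improper integrals, no monotone-comparison bookkeeping) but pays for it with casework. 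Either is adequate here since only the $\Theta$ order is used downstream.
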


\noindent 

\begin{lemma} \label{bound M lemma}
	Let $\mathcal{P}$ be a non-negative integer valued additive property with toll function $f$ and let their induced subtree additive property be $\mathcal{P}^*$. Further assume that there exists $\zeta \inN$ such that for all $T \in \setofplanetrees_{\geq 0}$, $f(T) \leq \zeta \inN$. For all $m, k$, the following holds.
	$$M^{(m)}_{k,n} =  \Theta\left(n^{\frac{2m + 3k - 3}{2}} 4^n\right).$$
	
\end{lemma}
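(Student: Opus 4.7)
The plan is to induct on the pair $(k,m)$ in lexicographic order (primary $k$, secondary $m$), using the recurrence~(\ref{equation 10}) for $\mathcal{D}_p^m M_k(x)$. For the base case $k=0$, we have $M_0(x,p) = F_{\mathcal{P}}(x,p)$, and hence $M^{(m)}_{0,n} = \sum_{T\in\setofplanetrees_n} \mathcal{P}(T)^m$, which Lemma~\ref{bound on F(x,p) lemma} identifies as $\Theta(n^{(2m-3)/2}4^n)$, matching the target when $3k=0$. (I would assume $\mathcal{P}$ is nonzero throughout, just as that lemma does; otherwise the conclusion is vacuous.) For the inductive step with $k\geq 1$, the first task is to check that every tuple $(a,b,c,d)\in S'(k,m)$ yields index pairs $(k-d,m-b)$ and $(d-c,c+b-a)$ that are strictly smaller than $(k,m)$ in lex order: $(k-d,m-b)=(k,m)$ forces $(b,d)=(0,0)$, and $(d-c,c+b-a)=(k,m)$ forces $(a,b,c,d)=(0,m,0,k)$, and these are exactly the two tuples excluded from $S'(k,m)$.

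The upper bound then follows from bounding each summand in~(\ref{equation 10}) via the inductive hypothesis. I would use the coefficientwise inequality $\mathcal{D}_p^{c+b-a}J^v_{d-c}(x) \leq \mathcal{D}_p^{c+b-a}M_{d-c}(x)$, which is immediate from~(\ref{equat 5}) and nonnegativity of coefficients. The product $\frac{x}{\sqrt{1-4x}}\cdot\mathcal{D}_p^{m-b}M_{k-d}(x)\cdot\mathcal{D}_p^{c+b-a}J^v_{d-c}(x)$ is then a three-fold convolution of generating functions whose $[x^n]$ asymptotics are already known or given by the inductive hypothesis. Applying a standard extension of Lemma~\ref{tech lemma 1} to three factors (with an $n^{-1/2}4^n$ prefactor coming from $\frac{1}{\sqrt{1-4x}}$), the resulting exponent works out to
\[
\frac{2(m-b)+3(k-d)-3}{2} + \frac{2(c+b-a)+3(d-c)-3}{2} + \frac{3}{2} \;=\; \frac{2m+3k-3}{2} - \frac{c+2a}{2}.
\]
The maximum over $S'(k,m)$ is attained exactly at $a=c=0$, where the exponent hits the target $(2m+3k-3)/2$; every other tuple contributes strictly lower order. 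Summing over the finite set $\{v: 0\le v\le \zeta\}$ and over $S'(k,m)$ yields the desired $O$-bound.

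For the lower bound I would exploit the fact that every coefficient appearing on the right side of~(\ref{equation 10}) is nonnegative, so $M^{(m)}_{k,n}$ dominates the contribution of any single chosen tuple. For $k\geq 1$ and $(k,m)\neq (1,0)$, the tuple $(0,0,0,1)\in S'(k,m)$ gives the summand $M_1(x)\cdot \mathcal{D}_p^m M_{k-1}(x)$ (after using $\sum_v J^v_1 = M_1$); by the inductive hypothesis and the same convolution computation, this has the claimed $\Theta$ order, and the exceptional case $(k,m)=(1,0)$ is handled analogously by the singleton $(0,0,1,1)\in S'(1,0)$. Positivity of these contributions uses the hypothesis that $f$ attains the value $\zeta$ on some tree, which forces the underlying moments to be positive for sufficiently large $n$. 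The main obstacle I anticipate is the convolution bookkeeping when one of the factor exponents is below $-1$, as happens when the Catalan generating function $M_0$, with $[x^n]=\Theta(n^{-3/2}4^n)$, appears as a factor (for instance, in the $(k,m)=(1,0)$ analysis). In such cases the convolution concentrates near small values of that index, so Lemma~\ref{tech lemma 1} needs to be applied after isolating the convergent factor and reducing to a two-fold convolution in the remaining indices---this is where most of the remaining technical work lies.
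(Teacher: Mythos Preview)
Your proposal is correct and follows essentially the same approach as the paper: induction on $(k,m)$ using the recurrence~(\ref{equation 10}), with the base case $k=0$ supplied by Lemma~\ref{bound on F(x,p) lemma}, the upper bound obtained by dominating $J^v$ by $M$ and applying Lemma~\ref{tech lemma 1} to the resulting three-fold convolution, and the lower bound by isolating a single tuple with $a=0$. The paper carries out exactly the case split you anticipate at the end---distinguishing whether $\min\{a_1,a_2\}<-1$ (which forces $c+2a\geq 1$ within $S'(k,m)$ and yields exponent at most $(2m+3k-3)/2$) or $\min\{a_1,a_2\}>-1$ (your displayed computation)---and your choice of the tuples $(0,0,0,1)$ and $(0,0,1,1)$ for the lower bound is precisely what makes the paper's less explicit ``there is $(a,b,c,d)\in S'(k,m)$ with $a=0$'' work.
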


\begin{proof}
We show this result by induction on $k$ and $m$. The base case of $k = 0$ (and any $m$) holds by Lemma \ref{bound on F(x,p) lemma}. We now consider (\ref{equation 10}). Let 
\begin{eqnarray}
\Psi(k,m,a,b,c,d) &=& \frac{x}{\sqrt{1-4x}}\binom{k}{d}\binom{d}{c}\sum_{v \geq 0} v^a \cdot \mathcal{D}^{m-b}_pM_{k-d}(x) \cdot  \mathcal{D}_p^{c + b - a} J^v_{d-c}(x).
\end{eqnarray} We now take the coefficients on both sides of the equation. Let $[x^n]\mathcal{D}_p^mJ^v_k(x) = J^{(m)}_{k,n}(v)$.

\begin{eqnarray}
[x^n]\Psi(k,m,a,b,c,d) &=& \binom{k}{d}\binom{d}{c}\sum_{v \geq 0} v^a  \sum_{\substack{n_1 + n_2 + n_3 = n - 1\\n_1, n_2, n_3 \geq 1}} M^{(m-b)}_{k-d,n_1} \cdot J^{(c + b - a)}_{d-c,n_2}(v) \cdot \Theta\left(n_3^{-\frac{1}{2}}4^{n_3}\right) \nonumber \label{equation 23}
\end{eqnarray}
 where $(a,b,c,d) \in S'(k,m)$. 
 
 Fix $k \geq 1$, $m \geq 0$. Assuming the the theorem holds for all smaller $k$ and any $m$ as well as for equal $k$ and smaller $m$. Notice that the RHS of $(\ref{equation 23})$ depends precisely on terms for which the theorem holds. We now see using Lemma \ref{tech lemma 1} that 
 \begin{eqnarray}
[x^n]\Psi(k,m,a,b,c,d) &\leq& \binom{k}{d}\binom{d}{c} \zeta^a  \sum_{\substack{n_1 + n_2 + n_3 = n - 1\\n_1, n_2, n_3 \geq 1}} M^{(m-b)}_{k-d,n_1} \cdot M^{(c + b - a)}_{d-c,n_2}(v) \cdot O\left(n_3^{-\frac{1}{2}}4^{n_3}\right) \nonumber\\
&\leq&  O\left(4^{n} \sum_{\substack{n_1 + n_2 + n_3 = n - 1\\n_1, n_2, n_3 \geq 1}}  n_1^{a_1} \cdot n_2^{a_2} \cdot n_3^{-\frac{1}{2}} \right)
\end{eqnarray} where $a_1 = \frac{2(m-b) + 3(k-d) - 3}{2}$ and $a_2 = \frac{2(c+b-a) + 3(d-c) - 3}{2}$. Notice that for $(a,b,c,d) \in S'(k,m)$, $a_1, a_2 \geq -\frac{3}{2}$. We now consider the 2 possible cases:

Case 1: $(a,b,c,d) \in S'(k,m)$ such that $d = k, m = b$ or $d = c, c + b = a$. We see that $\min\{a_1, a_2\}  = -\frac{3}{2}$ and $\max\{a_1, a_2\}  = \frac{2m + 3k -c -2a - 3}{2}$. Thus we apply Lemma \ref{tech lemma 1} twice, noting that $\min\{a_1, a_2\}= -\frac{3}{2} < -1$, to get
\begin{eqnarray}
[x^n]\Psi(k,m,a,b,c,d) &\leq& O\left(n^{\frac{2m + 3k -c -2a - 2}{2}} 4^{n}\right).
\end{eqnarray}
Note that we get the most significant upper bound when we minimize $c + 2a$. The minimal $c + 2a$ for which there is $(a,b,c,d) \in S'(k,m)$ for some $k,m$ in this case is when  $c = 1, a = 0$.

Case 2: All other cases. We see that $\min\{a_1, a_2\}  > -1$. Thus we apply Lemma \ref{tech lemma 1} twice to get 
\begin{eqnarray}
[x^n]\Psi(k,m,a,b,c,d) &\leq& O\left(n^{\frac{2m + 3k - c - 2a  - 3}{2}} 4^{n}\right).
\end{eqnarray}
Note that we get the most significant upper bound when we minimize $c + 2a$. The minimal $c + 2a$ for which there is $(a,b,c,d) \in S'(k,m)$ in this case is when  $c = 0, a = 0$. We note that when $k = 1, m = 0$ there is no $(a,b,c,d) \in S'(k,m)$ in this case.

Thus, for all $(a,b,c,d) \in S'(k,m)$, $[x^n]\Psi(k,m,a,b,c,d) \leq  O\left(4^{n} n^{\frac{2m + 3k  - 3}{2}}\right)$. From (\ref{equation 10}), we hence get
\begin{eqnarray}
\mathcal{D}^m_p M_k( x) &=& \sum_{(a, b, c, d) \in S'(k, m)}\Psi(k,m,a,b,c,d)\\
M^{(m)}_{k,n} &\leq & O\left(4^{n} n^{\frac{2m + 3k  - 3}{2}}\right).
\end{eqnarray}

Towards the lower bound, we see when $a = 0$,
\begin{eqnarray}
[x^n]\Psi(k,m,a,b,c,d) &\geq& \binom{k}{d}\binom{d}{c}  \sum_{\substack{n_1 + n_2 + n_3 = n - 1\\n_1, n_2, n_3 \geq 1}} M^{(m-b)}_{k-d,n_1}\cdot M^{(c + b - a)}_{d-c,n_2} \cdot \Theta\left(n_3^{-\frac{1}{2}}4^{n_3}\right) \nonumber\\
&\geq& \Omega\left(4^{n} \sum_{\substack{n_1 + n_2 + n_3 = n - 1\\n_1, n_2, n_3 \geq 1}}  n_1^{a_1} \cdot n_2^{a_2} \cdot n_3^{-\frac{1}{2}} \right)
\end{eqnarray} where $a_1 = \frac{2(m-b) + 3(k-d) - 3}{2}$ and $a_2 = \frac{2(c+b-a) + 3(d-c) - 3}{2}$. We break this into cases exactly as before. We notice that there is $(a,b,c,d) \in S'(k,m)$ where $a = 0$ and $[x^n]\Psi(k,m,a,b,c,d) \geq \Omega\left(4^{n} n^{\frac{2m + 3k  - 3}{2}}\right)$.
Thus, from (\ref{equation 10}), we get
\begin{eqnarray}
M^{(m)}_{k,n}(\mathcal{P}^*) &\geq & \Omega\left(4^{n} n^{\frac{2m + 3k  - 3}{2}}\right).
\end{eqnarray} The result follows by induction. 

\end{proof}

\bigskip
\begin{observation} \label{bound on psi observation}
In the proof of Lemma \ref{bound M lemma}, for any $k,m \geq 0$ and $(a,b,c,d) \in S'(k,m)$, when $a \geq 1$, 
$$[x^n]\Psi(k,m,a,b,c,d) \leq O\left(n^{\frac{2m + 3k -4}{2}} 4^{n}\right).$$
\end{observation}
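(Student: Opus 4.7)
The plan is to revisit the case analysis already carried out in the proof of Lemma \ref{bound M lemma} and simply trace through what the bounds become when we restrict to $a \geq 1$. Recall that in that proof, for each $(a,b,c,d) \in S'(k,m)$ we obtained
$$[x^n]\Psi(k,m,a,b,c,d) \leq O\!\left(4^n \sum_{\substack{n_1+n_2+n_3 = n-1\\ n_1,n_2,n_3 \geq 1}} n_1^{a_1} n_2^{a_2} n_3^{-1/2}\right)$$
with $a_1 = \frac{2(m-b)+3(k-d)-3}{2}$ and $a_2 = \frac{2(c+b-a)+3(d-c)-3}{2}$, and the bound depended on $c + 2a$ through the identity $\max\{a_1,a_2\} \leq (2m+3k-c-2a-3)/2$ (equality exactly in Case 1).

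First I would handle Case 1 (where $d=k, b=m$ or $d=c, a=c+b$, so $\min\{a_1,a_2\} = -3/2$). The proof of Lemma \ref{bound M lemma} already established the bound
$$[x^n]\Psi(k,m,a,b,c,d) \leq O\!\left(n^{\frac{2m+3k-c-2a-2}{2}} 4^n\right)$$
in this case via two applications of Lemma \ref{tech lemma 1}. Since we are assuming $a \geq 1$ and $c \geq 0$ always holds, we have $c + 2a \geq 2$, and thus the exponent of $n$ is at most $(2m+3k-4)/2$, giving the claimed bound.

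Next I would handle Case 2 (all other $(a,b,c,d) \in S'(k,m)$), where the proof of Lemma \ref{bound M lemma} gave the sharper estimate
$$[x^n]\Psi(k,m,a,b,c,d) \leq O\!\left(n^{\frac{2m+3k-c-2a-3}{2}} 4^n\right).$$
Again using $c + 2a \geq 2$ when $a \geq 1$, the exponent is at most $(2m+3k-5)/2 \leq (2m+3k-4)/2$, so the claimed bound also holds.

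There is no real obstacle here; the observation is essentially an arithmetic consequence of the inequalities already established in the proof of Lemma \ref{bound M lemma}. The substantive content of that proof was that the worst exponent arose precisely when $c + 2a$ was minimized, and this minimum (namely $c=1,a=0$ in Case 1 and $c=0,a=0$ in Case 2) always had $a=0$. Consequently, ruling out $a=0$ must strictly improve the exponent, and the improved exponent is uniformly $(2m+3k-4)/2$ across both cases. The only task is to combine the two case bounds into a single statement, which is immediate.
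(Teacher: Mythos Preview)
Your proposal is correct and takes essentially the same approach as the paper: the observation is stated there without separate proof precisely because it is an immediate arithmetic consequence of the two case bounds already established in Lemma \ref{bound M lemma}, and your argument spells out exactly that computation.
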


\bigskip
\begin{lemma} \label{c^m+k lemma}
	Let $\mathcal{P}_1$ and $\mathcal{P}_2$ be non-negative integer valued additive properties with toll functions $f_1$ and $f_2$, respectively. Let the induced subtree additive properties of $\mathcal{P}_1$ and $\mathcal{P}_2$ be $\mathcal{P}^*_1$ and $\mathcal{P}^*_2$, respectively. Further assume that there exists $\zeta \inN$ such that for all $T \in \setofplanetrees_{\geq 0}$, $f_1(T) \leq \zeta \inN$ and $f_2(T) \leq \zeta \inN$.  Assume for all $m, n \geq 0$, $$\sum_{T \in \setofplanetrees_n} \mathcal{P}_1(T)^m =  \mu^m \cdot \sum_{T \in \setofplanetrees_n}  \mathcal{P}_2(T)^m + O\left(n^{\frac{2m-4}{2}}4^n\right)$$ where $\mu$ is a constant. 
	It then holds that for all $n, m, k \geq 0$,
	$$M^{(m)}_{k,n}(\mathcal{P}_1^*) = \mu^{k+m} \cdot M^{(m)}_{k,n}(\mathcal{P}_2^*) + O\left(n^{\frac{2m + 3k-4}{2}}4^n\right).$$
\end{lemma}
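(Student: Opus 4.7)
The plan is to proceed by strong induction on $m + k$. The base case is $k = 0$ for any $m$: since $M^{(m)}_{0,n}(\mathcal{P}^*) = \sum_{T \in \setofplanetrees_n} \mathcal{P}(T)^m$, the hypothesis gives
$$M^{(m)}_{0,n}(\mathcal{P}^*_1) = \mu^m M^{(m)}_{0,n}(\mathcal{P}^*_2) + O\!\left(n^{(2m-4)/2} 4^n\right),$$
matching the claim since $\mu^{m+0} = \mu^m$ and $(2m + 3\cdot 0 - 4)/2 = (2m-4)/2$.

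For the inductive step, I would apply the recurrence (\ref{equation 10}) to both $\mathcal{P}^*_1$ and $\mathcal{P}^*_2$ and compare term-by-term over $(a,b,c,d) \in S'(k,m)$. When $a \geq 1$, Observation \ref{bound on psi observation} already gives $[x^n]\Psi(k,m,a,b,c,d) = O(n^{(2m+3k-4)/2} 4^n)$ for either property, so these terms contribute within the target error. When $a = 0$, the identity $\sum_{v \geq 0} J^v_{d-c}(x,p) = M_{d-c}(x,p)$ from (\ref{equat 5}) collapses the $v$-sum, producing
$$[x^n]\Psi(k,m,0,b,c,d) = \binom{k}{d}\binom{d}{c} [x^n]\!\left(\frac{x}{\sqrt{1-4x}} \cdot \mathcal{D}_p^{m-b} M_{k-d}(x) \cdot \mathcal{D}_p^{c+b} M_{d-c}(x)\right),$$
a three-fold convolution of $M^{(m-b)}_{k-d,\cdot}$, $M^{(c+b)}_{d-c,\cdot}$, and the Catalan-like coefficients of $\frac{x}{\sqrt{1-4x}}$. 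Because $S'(k,m)$ excludes exactly $(0,0,0,0)$ and $(0,m,0,k)$, each factor carries a strictly smaller total index than $m+k$, so the inductive hypothesis applies to both. The product of the two main terms then multiplies by $\mu^{(m-b)+(k-d)} \cdot \mu^{(c+b)+(d-c)} = \mu^{m+k}$, recovering $\mu^{m+k}$ times the corresponding $\mathcal{P}^*_2$-contribution to the recurrence.

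What remains is to bound the cross contributions (main $\times$ error, error $\times$ main, and error $\times$ error) arising from expanding the two-factor product. By Lemma \ref{bound M lemma}, the inductive error exponent is $1/2$ smaller than the main exponent for each factor; repeating the two-step application of Lemma \ref{tech lemma 1} used in the proof of Lemma \ref{bound M lemma} then shows every such cross contribution is $O(n^{(2m+3k-4)/2} 4^n)$. Summing all $a=0$ terms and adding the $a \geq 1$ contributions yields the desired identity. The main obstacle is the careful boundary bookkeeping for the convolution exponents, particularly when $M^{(0)}_{0,\cdot}$ appears (its $n^{-3/2}$ decay falls into the $\min < -1$ regime of Lemma \ref{tech lemma 1}), but this is handled by exactly the same case split already appearing in the proof of Lemma \ref{bound M lemma}.
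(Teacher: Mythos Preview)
Your overall approach matches the paper's: split the recurrence \eqref{equation 10} into $a=0$ and $a\geq 1$ pieces, use Observation \ref{bound on psi observation} for $a\geq 1$, collapse the $a=0$ sum via \eqref{equat 5}, apply the inductive hypothesis to each factor, and control the cross terms with the convolution estimates from Lemma \ref{tech lemma 1}. The paper does exactly this.

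However, your induction scheme on $m+k$ does not close. For the $a=0$ term with indices $(0,m,c,k)$ and $c\geq 1$ (which lies in $S'(k,m)$ for any $k\geq 1$), the second factor is $M^{(c+m)}_{k-c}$, whose total index is $(c+m)+(k-c)=m+k$, \emph{not} strictly smaller. So your claim that ``each factor carries a strictly smaller total index than $m+k$'' is false, and the induction is circular at these boundary points. The exclusion of $(0,m,0,k)$ only rules out $c=0$; all $c\in\{1,\dots,k\}$ remain.

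The paper avoids this by inducting lexicographically on $(k,m)$: assume the result for all smaller $k$ (any $m$), and for the same $k$ with smaller $m$. Then for the first factor $M^{(m-b)}_{k-d}$: either $d\geq 1$ (smaller $k$) or $d=0$, forcing $c=0$ and $b\geq 1$ (same $k$, smaller $m$). For the second factor $M^{(c+b)}_{d-c}$: either $d-c<k$, or $d=k$, $c=0$, in which case exclusion of $(0,m,0,k)$ forces $b<m$ (same $k$, smaller $m$). So both factors are strictly earlier in this order. Replacing your induction on $m+k$ with this lexicographic order fixes the gap; the rest of your argument then goes through as written.
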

	
\begin{proof}
	We prove the result by induction on $k$ and $m$. The base case of $k = 0$ (and any $m$) is true by assumption. Fix $k \geq 1$, $m \geq 0$. Assuming the the theorem holds for all smaller $k$ and any $m$ as well as for equal $k$ and smaller $m$. We now apply Lemma \ref{bound M lemma} and Observation \ref{bound on psi observation} regarding $\Psi(k,m,a,b,c,d)$ to get 
	
	\begin{eqnarray}
	M^{(m)}_{k,n}(\mathcal{P}_1^*) &=& \sum_{(a, b, c, d) \in S'(k, m)} \binom{k}{d}\binom{d}{c}\sum_{v \geq 0} v^a  \sum_{n_1 + n_2 + n_3 = n - 1} M^{(m-b)}_{k-d,n_1}(\mathcal{P}_1^*) \cdot J^{(c + b - a)}_{d-c,n_2}(\mathcal{P}_1^*, v) \cdot \binom{2n_3}{n_3}\nonumber\\
	&=& \mu^{m+k}\sum_{\substack{(a, b, c, d) \in S'(k, m)\\ a =0}} \binom{k}{d}\binom{d}{c} \sum_{n_1 + n_2 + n_3 = n - 1} M^{(m-b)}_{k-d,n_1}(\mathcal{P}_2^*) \cdot M^{(c + b - a)}_{d-c,n_2}(\mathcal{P}_2^*) \cdot \binom{2n_3}{n_3}\nonumber \\
	&&\quad\quad\quad + \sum_{\substack{(a, b, c, d) \in S'(k, m)\\ a \geq 1}} \Psi(k,m,a,b,c,d) + O\left(n^{\frac{2m + 3k -4}{2}} 4^{n}\right)\\
	&=& \mu^{m+k} \cdot M^{(m)}_{k,n}(\mathcal{P}_2^*) + O\left(n^{\frac{2m + 3k -4}{2}} 4^{n}\right)
	\end{eqnarray}
	
	Thus the result holds for all $k, m \inN$ by induction.\\
	
\end{proof}

\begin{lemma} \label{ssap are uniquely detrmined by moments}
    Let $\mathcal{P}$ be an non-negative integer valued additive properties of plane trees with toll function $f$. Let the subtree additive property induced by $\mathcal{P}$ be $\mathcal{P}^*$. Further assume that there exist $\zeta \inN$ such that for any $T \in \setofplanetrees_{\geq 0}$, $f(T) \leq  \zeta$. The limiting distribution of $$\frac{\mathcal{P}^*(\setofplanetrees_n)}{\sqrt{n^3}}$$ is unique determined by its moments. Specifically, it satisfies the Carleman's condition.
\end{lemma}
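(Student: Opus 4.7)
The plan is to dominate $\mathcal{P}^*$ pointwise by a constant multiple of the path length $\mathcal{P}^{PL}$ and then transfer the Carleman bound from Theorem~\ref{contactdistance - thm}, whose moments are already known to grow slowly enough.

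First I would show that $\mathcal{P}(T)\le\zeta\cdot n(T)$ for every $T\in\setofplanetrees_{\ge 0}$ by induction on $n(T)$, using that the reduced toll $(f,0)$ forces $\mathcal{P}(T^*)=0$, and that (\ref{eq additive prop def}) then gives $\mathcal{P}(T)=\sum_{i=1}^d (f(T_{v_i})+\mathcal{P}(T_{v_i}))\le\zeta\bigl(d+\sum_i n(T_{v_i})\bigr)=\zeta\cdot n(T)$. Next, swapping the order of summation produces the identity
$$\sum_{v\in\overline{\mathcal{V}}(T)}|\mathcal{V}(T_v)|=\sum_{u\in\mathcal{V}(T)}\#\{v\in\overline{\mathcal{V}}(T):v\text{ is an ancestor of }u\}=\mathcal{P}^{PL}(T),$$
since the number of non-root ancestors of $u$ equals $\mathrm{depth}(u)$. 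Subtracting one for each of the $n(T)$ non-root vertices gives $\sum_{v\in\overline{\mathcal{V}}(T)}n(T_v)=\mathcal{P}^{PL}(T)-n(T)$, and combining with the linear bound on $\mathcal{P}$ yields the pointwise domination
$$\mathcal{P}^*(T)=\sum_{v\in\overline{\mathcal{V}}(T)}\mathcal{P}(T_v)\le\zeta\sum_{v\in\overline{\mathcal{V}}(T)}n(T_v)\le\zeta\cdot\mathcal{P}^{PL}(T).$$

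Taking $k$-th moments under the uniform measure on $\setofplanetrees_n$ then gives
$$\E\!\left[\left(\frac{\mathcal{P}^*(\setofplanetrees_n)}{\sqrt{n^3}}\right)^{k}\right]\le\zeta^k\cdot\E\!\left[\left(\frac{\mathcal{P}^{PL}(\setofplanetrees_n)}{\sqrt{n^3}}\right)^{k}\right].$$
By Theorem~\ref{contactdistance - thm} the right-hand side converges to $(\zeta\sqrt{2})^k\,\E[X^k]$ for $X=\int_0^1 e(t)\,dt$, whose moments satisfy $\E[X^{2k}]\sim\tfrac{12k}{\sqrt{2}}(k/(6e))^{k}$ as $k\to\infty$. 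Hence any subsequential limit moment $m_{2k}^*$ of $\mathcal{P}^*(\setofplanetrees_n)/\sqrt{n^3}$ obeys $(m_{2k}^*)^{1/(2k)}=O(\sqrt{k})$, so
$$\sum_{k\ge 1}(m_{2k}^*)^{-1/(2k)}\ge c\sum_{k\ge 1}k^{-1/2}=\infty,$$
which is exactly Carleman's condition (Theorem~\ref{Carleman's condition}).

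The only nontrivial ingredient is the subtree-size/path-length identity; the pointwise domination and the application of Carleman's condition are then routine, and the requisite Airy moment growth is already supplied by Theorem~\ref{contactdistance - thm}, so I do not expect any substantive obstacle.
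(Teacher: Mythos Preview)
Your proposal is correct and follows essentially the same approach as the paper: dominate $\mathcal{P}^*$ pointwise by $\zeta\cdot\mathcal{P}^{PL}$ (the paper phrases this as $\mathcal{P}\le\mathcal{P}_1$ for the additive property $\mathcal{P}_1$ with constant toll $\zeta$, whose induced subtree additive property is $\zeta\cdot\mathcal{P}^{PL}$), then invoke the Airy moment asymptotics from Theorem~\ref{contactdistance - thm} to verify Carleman's condition. Your explicit induction for $\mathcal{P}(T)\le\zeta\,n(T)$ and the ancestor-counting identity $\sum_{v\in\overline{\mathcal{V}}(T)}|\mathcal{V}(T_v)|=\mathcal{P}^{PL}(T)$ are just unpacked versions of what the paper cites from Example~\ref{Example number of leaves and edges} and the definition of $\mathcal{P}^{PL}$.
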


\begin{proof} Let $\mathcal{P}_1$ be the additive property with toll function $f$ where $f(T) = \zeta$ for all $T \in \setofplanetrees_{\geq 0}$. Let $\mathcal{P}^*_1$ be the subtree additive property derived from $\mathcal{P}_1$. We see that $\mathcal{P}(T) \leq \mathcal{P}_1(T)$, and hence, $\mathcal{P}^*(T) \leq \mathcal{P}_1^*(T)$ for all $T \in \setofplanetrees_{\geq 0}$. From Example  \ref{Example number of leaves and edges}, $\mathcal{P}_1(T) = \zeta \cdot \mathcal{P}^v(T)$, thus $\mathcal{P}_1^*(T) = \zeta \cdot \mathcal{P}^{PL}(T)$. Applying Theorem \ref{contactdistance - thm}, we see that 
$$\lim_{n \rightarrow \infty}\E\left[\left(\frac{\mathcal{P}^*(\setofplanetrees_{n})}{\sqrt{2n^3}} \right)^k \right] \leq \lim_{n \rightarrow \infty}\E\left[\left(\frac{\zeta \cdot \mathcal{P}^{PL}(\setofplanetrees_{n})}{\sqrt{2n^3}} \right)^k \right] \: \sim \: \frac{6k}{\sqrt{2}}\left(\frac{k}{12e}\right)^{\frac{k}{2}} \cdot \zeta^k$$ as $k \rightarrow \infty$. Thus applying the Carleman Condition (Theorem \ref{Carleman's condition}), we get the desired result.

\end{proof}

\begin{proof}[Proof of Theorem \ref{mainSimpleAdditveTheorem1}]
    We recall that for all $n,k \geq 0$ and $\mathcal{P}^* \in \{\mathcal{P}_1^*, \mathcal{P}_2^*\}$, $M^{(0)}_{k,n}(\mathcal{P}^*) = \sum_{T \in \setofplanetrees_n} \mathcal{P}^*(T)^k$, hence $\E[\mathcal{P}^*(\setofplanetrees_n)^k] = \frac{M^{(0)}_{k,n}(\mathcal{P}^*)}{C_n}$. We apply the assumption of the Theorem \ref{mainSimpleAdditveTheorem1}, Lemma \ref{bound M lemma} and Lemma \ref{c^m+k lemma} to achieve 
    \begin{eqnarray}
    \E\left[\left(\frac{\mathcal{P}_1^*(\setofplanetrees_n)}{\sqrt{n^3}}\right)^k\right] &=& \mu^k \cdot \E\left[\left(\frac{\mathcal{P}_2^*(\setofplanetrees_n)}{\sqrt{n^3}}\right)^k\right] + O\left(\frac{1}{\sqrt{n}}\right).
    \end{eqnarray}
    
    We now apply Lemma \ref{ssap are uniquely detrmined by moments} to see that the limiting distribution of the properties are unique determined by their moments. Thus, we arrive at the desired result.
    
\end{proof}

\bigskip

\begin{corollary} \label{corollary P^LRD and P^IRD dist}
The total leaf to root distance of a random plane tree on $n$ edges, $\mathcal{P}^{LR}(\setofplanetrees_n)$, and the total internal node to root distance of a random plane tree on $n$ edges, $\mathcal{P}^{IR}(\setofplanetrees_n)$ is asymptotically Airy distributed, up to a scaling factor. Specifically, as $n \rightarrow \infty$, $$2\cdot \frac{\mathcal{P}^{LR}(\setofplanetrees_n)}{\sqrt{2n^3}} \quad \overset{d}{\longleftrightarrow} \quad 4 \cdot \frac{\mathcal{P}^{IR}(\setofplanetrees_n)}{\sqrt{2n^3}} \quad \overset{d}{\longleftrightarrow} \quad \frac{\mathcal{P}^{PL}(\setofplanetrees_n)}{\sqrt{2n^3}}.$$

\end{corollary}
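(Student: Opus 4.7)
The plan is to apply Theorem~\ref{mainSimpleAdditveTheorem1} twice: once with $\mathcal{P}_1 = \mathcal{P}^{d_0}$ and $\mathcal{P}_2 = \mathcal{P}^e$ to handle $\mathcal{P}^{LR}$, and once with $\mathcal{P}_1 = \mathcal{P}^{d_1}$ and $\mathcal{P}_2 = \mathcal{P}^e$ to handle $\mathcal{P}^{IR}$. In both cases the toll functions are bounded by $1$, so the uniform-boundedness hypothesis is immediate. Setting $\mathcal{P}_L^* \coloneqq \sum_v \mathcal{P}^{d_0}(T_v)$, $\mathcal{P}_I^* \coloneqq \sum_v \mathcal{P}^{d_1}(T_v)$, and $\mathcal{P}_e^* \coloneqq \sum_v \mathcal{P}^e(T_v)$ for the induced simple subtree additive properties, the paper's examples give $\mathcal{P}^{LR}(T) = \mathcal{P}_L^*(T) + \mathcal{P}^{d_0}(T)$, $\mathcal{P}^{IR}(T) = \mathcal{P}_I^*(T) + \mathcal{P}^{d_1}(T)$, and $\mathcal{P}^{PL}(T) = \mathcal{P}_e^*(T) + n$; each target quantity therefore differs from the corresponding simple SAP by an $O(n)$ term that is $o(\sqrt{n^3})$ and hence asymptotically negligible in distribution.

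The substantive step is verifying the moment identity~\eqref{main condition on theorem sec 2} with $\mu = 1/2$ (leaves) and $\mu = 1/4$ (internal nodes). Since $\mathcal{P}^e(T) = n$ on $\setofplanetrees_n$, this reduces to
\begin{equation*}
\sum_{T \in \setofplanetrees_n} \mathcal{P}^{d_0}(T)^m = \left(\tfrac{n}{2}\right)^m C_n + O\!\left(n^{m-2} 4^n\right), \quad \sum_{T \in \setofplanetrees_n} \mathcal{P}^{d_1}(T)^m = \left(\tfrac{n}{4}\right)^m C_n + O\!\left(n^{m-2} 4^n\right),
\end{equation*}
which I propose to obtain via singularity analysis of $G(x,a,1)$ and $G(x,1,b)$ from Corollary~\ref{eval G func}, using that $[x^n]\mathcal{D}_a^m G(x,a,1)|_{a=1} = \sum_T \mathcal{P}^{d_0}(T)^m$ and $[x^n]\mathcal{D}_b^m G(x,1,b)|_{b=1} = \sum_T \mathcal{P}^{d_1}(T)^m$. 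By Lemma~\ref{dom sig lemma} the dominant singularities occur at $x = 1/(1+\sqrt{a})^2$ and $x = 1/(3+b)$, both equal to $1/4$ at $a=b=1$; combining the factorization $(1+(1-a)x)^2 - 4x = (1-\rho(a)x)(1-\overline{\rho}(a)x)$ derived in the proof of Lemma~\ref{dom sig lemma} with the expansion $\rho(a,1) = 4 + 2(a-1) + O((a-1)^2)$ shows that the leading singular contribution to $\mathcal{D}_a^m G(x,a,1)|_{a=1}$ is proportional to $x^{m-1}(1-4x)^{1/2-m}$ with a constant realizing the ratio $(1/2)^m$ (coming from $\partial_a\rho/\rho = 2/4$ at $(a,b) = (1,1)$) upon extraction via Corollary~\ref{analytic combo corollary 1}; the parallel analysis in $b$ uses $\partial_b\rho/\rho = 1/4$ to produce $(1/4)^m$. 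Subdominant singular terms contribute only $O(n^{m-5/2}4^n) \subset O(n^{m-2}4^n)$.

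Once~\eqref{main condition on theorem sec 2} is verified, Theorem~\ref{mainSimpleAdditveTheorem1} yields $\mathcal{P}_L^*(\setofplanetrees_n)/\sqrt{n^3} \overset{d}{\longleftrightarrow} (1/2) \mathcal{P}_e^*(\setofplanetrees_n)/\sqrt{n^3}$ and $\mathcal{P}_I^*(\setofplanetrees_n)/\sqrt{n^3} \overset{d}{\longleftrightarrow} (1/4) \mathcal{P}_e^*(\setofplanetrees_n)/\sqrt{n^3}$; absorbing the $O(n)$ corrections converts these into $\mathcal{P}^{LR}/\sqrt{n^3} \overset{d}{\longleftrightarrow} (1/2)\mathcal{P}^{PL}/\sqrt{n^3}$ and $\mathcal{P}^{IR}/\sqrt{n^3} \overset{d}{\longleftrightarrow} (1/4)\mathcal{P}^{PL}/\sqrt{n^3}$. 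Invoking Theorem~\ref{contactdistance - thm} to identify the weak limit of $\mathcal{P}^{PL}/\sqrt{2n^3}$ with $\int_0^1 e(t)\,dt$, and tracking the $\sqrt{2}$ in the normalization, then produces the three-way equivalence $2\mathcal{P}^{LR}/\sqrt{2n^3} \overset{d}{\longleftrightarrow} 4\mathcal{P}^{IR}/\sqrt{2n^3} \overset{d}{\longleftrightarrow} \mathcal{P}^{PL}/\sqrt{2n^3}$ claimed in the statement.

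The main obstacle is the moment-estimate step. While the leading constants $(1/2)^m$ and $(1/4)^m$ are transparent from the location of the dominant singularity and its first-order perturbation in the parameter, controlling the error uniformly at the precision $O(n^{m-2}4^n)$ demanded by~\eqref{main condition on theorem sec 2} requires a careful joint expansion of both the analytic prefactor $\sqrt{1-\overline{\rho}(a)x}/(2x)$ (noting $\overline{\rho}(1,1) = 0$, so this simplifies at the critical point to $-1/(2x)$) and the successive $(a-1)^k$ contributions to $\sqrt{1-\rho(a)x}$ around $(x,a) = (1/4,1)$, amounting to an iterated application of the Transfer Theorem tracking both the algebraic and singular parts order by order.
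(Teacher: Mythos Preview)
Your proposal is correct and follows essentially the same route as the paper: both verify hypothesis~\eqref{main condition on theorem sec 2} of Theorem~\ref{mainSimpleAdditveTheorem1} by differentiating the explicit formulas $G(x,a,1)$ and $G(x,1,b)$ from Corollary~\ref{eval G func} in the auxiliary parameter and reading off the leading singular behaviour, then invoke Theorem~\ref{contactdistance - thm}; the paper compares against $\mathcal{P}^v$ rather than your $\mathcal{P}^e$, but since these differ by $1$ on $\setofplanetrees_n$ this is immaterial. Your closing worry about the error precision is overstated: the $m$th $p$-derivative of $-\sqrt{(1+(1-p)x)^2-4x}/(2x)$ at $p=1$ has leading singular part a constant times $(1-4x)^{1/2-m}$ with remainder $O((1-4x)^{1-m})$, so a single application of the Transfer Theorem already yields the required $O(n^{m-2}4^n)$ bound without any iterated expansion.
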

\begin{proof}
Let $F(x, p)$ be the generating function for plane trees weighted by number of edges and vertices. Let $F_{d_0}(x, p)$ be the generating function for plane trees weighted by number of edges and leaves. Let $F_{d_1}(x, p)$ be the generating function for plane trees weighted by number of edges and internal nodes. From Corollary \ref{eval G func},
$$F_{d_0}(x, p) = \frac{1 + (1-p)x - \sqrt{(1+(1-p)x)^2 - 4x}}{2x}$$ 
and
$$F_{d_1}(x, p) = \frac{1 + (1-p)x - \sqrt{(1+(1-p)x)^2 - 4x(1 - (p-1)x)}}{2x}.$$
Thus, we see that $$[x^n]\frac{\partial F_{d_0}(x, 1)}{\partial p^m} = \frac{1}{2^m} \cdot  [x^n]\frac{\partial F(x, 1)}{\partial p^m}  + O\left((1- 4x)^{\frac{2-2m}{2}}\right)$$ 
and 
$$[x^n]\frac{\partial F_{d_1}(x, 1)}{\partial p^m} = \frac{1}{4^m} \cdot  [x^n]\frac{\partial F(x, 1)}{\partial p^m} + O\left((1- 4x)^{\frac{2-2m}{2}}\right).$$ 
Thus $$\sum_{T \in \setofplanetrees} \mathcal{P}^{d_0}(T)^m = \frac{1}{2^m} \cdot \sum_{T \in \setofplanetrees} \mathcal{P}^{v}(T)^m + O\left(n^{\frac{2m-4}{2}}4^n\right)$$ and $$\sum_{T \in \setofplanetrees} \mathcal{P}^{d_1}(T)^m = \frac{1}{4^m} \cdot \sum_{T \in \setofplanetrees} \mathcal{P}^{v}(T)^m + O\left(n^{\frac{2m-4}{2}}4^n\right).$$
We now apply Theorem \ref{mainSimpleAdditveTheorem1} to get that, as $n \rightarrow \infty$,
$$2 \cdot \frac{\mathcal{P}^{LR}(\setofplanetrees_n)}{\sqrt{2n^3}} \quad \overset{d}{\longleftrightarrow} \quad 4 \cdot \frac{\mathcal{P}^{IR}(\setofplanetrees_n)}{\sqrt{2n^3}} \quad \overset{d}{\longleftrightarrow} \quad \frac{\mathcal{P}^{PL}(\setofplanetrees_n)}{\sqrt{2n^3}}.$$  
From Theorem \ref{contactdistance - thm}, we know that $\frac{\mathcal{P}^{PL}(\setofplanetrees_n)}{\sqrt{2n^3}}$ converges weakly to an Airy random variable, thus the same holds for $2 \cdot \frac{\mathcal{P}^{LR}(\setofplanetrees_n)}{\sqrt{2n^3}}$ and $4 \cdot \frac{\mathcal{P}^{IR}(\setofplanetrees_n)}{\sqrt{2n^3}}$.

\end{proof}

\subsection{The Distribution of Classes of Subtree Additive Properties under NNTM}
\label{Dist of Subtree Additive Properties NNTM}

Let $f: \setofplanetrees_{\geq 0} \times \setofplanetrees_{\geq 0} \rightarrow \R$ be a polynomial in the number of edges, leaves and  internal nodes of its input trees. Hence, $f(T', T) \in \R[t, \lvl, \lvi][n, \gvl, \gvi]$ where $t, \lvl, \lvi$ represents the number of edges, leaves and internal nodes of $T'$ and $n, \gvl, \gvi$ represents the number of edges, leaves and internal nodes of $T$.

We now consider $\mathcal{P}^f$, the subtree additive property given by 
\begin{equation} \label{f tree def}
    \mathcal{P}^{f}(T)=  \sum_{v \in \overline{\mathcal{V}}(T)} f(n(T_v), d_0(T_v), d_1(T_v), n(T), d_0(T), d_1(T)) = \sum_{v \in \overline{\mathcal{V}}(T)} f(T_v, T) 
\end{equation} where, for $v \in \overline{\mathcal{V}}(T)$, the root vertex in $T_v$ is allowed to count as a leaf or internal node. 

Let $f$ be  a polynomial over $\R$ in the variables $x_1, \cdots, x_n$. We can write $f$ in the form of $\sum_{j=1}^L w_j \cdot v_j$ where $w_j \inR$ and $v_j$ is a monic monomial in the variables such that the monomial are all distinct. We will call this the reduced form of $f$. It should be clear that $f$ can be written uniquely in this form. For any such $v_j$ and variables $x_1, \cdots, x_l$, let $\Delta(v_j|x_1, \cdots, x_l)$ be the degree of the monomial $v_j$ if we consider all variables other than $x_1, \cdots, x_l$ to be constant. For a vector of variables $\vec{x} = (x_1, \cdots, x_l)$, let $\Delta(v_j|\vec{x}) = \Delta(v_j|x_1, \cdots, x_l)$. We also define $\Delta(f|x_1, \cdots, x_l) = \max_{1 \leq j \leq L}\Delta(v_j|x_1, \cdots, x_l)$. \\

Fix parameters $\alpha,\beta, \gamma \inR$. Let
$$M_{k,n}(f, \alpha,\beta, \gamma) = \sum_{T \in \setofplanetrees_ n} \mathcal{P}^f(T)^ke^{-E(T)} = \E\left[\mathcal{P}^f_{(\alpha, \beta, \gamma)}(\setofplanetrees_n)^k\right] \cdot \normConst_{(n,\alpha,\beta,\gamma)}$$ and $$M_{k}(f, \alpha,\beta, \gamma)(x) = \sum_{n \geq 0}M_{k,n}(f, \alpha,\beta, \gamma)x^n.$$\\

We provide the following theorem that specifies the asymptotic form of $M_{k,n}(f, \alpha,\beta, 0)$.\\

\begin{theorem} 
	\label{mainTheorem01}
	Fix $k \inN$, $\alpha,\beta \inR$ and $f  \inR[t, \lvl, \lvi][n, \gvl, \gvi]$. Let $\delta = \Delta(f| t, n,  \lvl, \gvl, \lvi, \gvi)$. Let $V_k(f) = \frac{(2\delta+ 1)k -1}{2}$ if for every monomial of $f$, $u$, such that $\Delta(u| t, n,  \lvl, \gvl, \lvi, \gvi) = \delta$, $\Delta(u| t, \lvl, \lvi) > 0$  and $V_k(f) = \frac{ 2(\delta+ 1)k - 1}{2}$ otherwise. We have that
	
	$$M_{k}(f, \alpha,\beta, 0)(x) =  \frac{W(\alpha, \beta, f)}{\left(1-\rho x\right)^{V_k(f)}} + O\left(\frac{1}{\left(1-\rho x\right)^{V_k(f) - \frac{1}{2}}}\right),$$ where $\rho = e^{-\alpha} + e^{-\beta} + 2e^{-\frac{\alpha}{2}}$ and $W(\alpha, \beta, f)$ is a constant that depends on $f, \alpha, \beta$.
	
	Furthermore, assume every monomial of $f$, $u$, such that $\Delta(u| t, n,  \lvl, \gvl, \lvi, \gvi) = \delta$ is such that  $\Delta(u| t, n) = \delta_n$, $\Delta(u| \lvl, \gvl) = \delta_{d_0}$ and $\Delta(u| \lvi, \gvi) = \delta_{d_1}$ where $\delta_n$, $\delta_{d_0}$ and $\delta_{d_1}$ are constants that depend only on $f$. We have that	
	$$W(\alpha, \beta, f) = Q_k(f, \delta_{d_0}, \delta_{d_1}, \alpha,\beta) \cdot P_{k}(f)$$ 
	where $Q_k(f, \delta_{d_0}, \delta_{d_1}, \alpha,\beta)$ depends on $\delta_{d_0}, \delta_{d_1}$ and $V_k(f)$, and $P_{k}(f)$ is a constant that depends on $f$ and is independent of $\alpha$ and $\beta$. 
	
\end{theorem}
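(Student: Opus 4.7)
My plan is to begin by reducing to the case where $f$ is a single monomial. Writing $f = \sum_j w_j v_j$ in reduced form, multilinearity of $\mathcal{P}^f(T)^k$ in $f$ gives
\begin{equation*}
M_k(f,\alpha,\beta,0)(x) = \sum_{(j_1,\ldots,j_k)} w_{j_1}\cdots w_{j_k}\; \widetilde{M}(v_{j_1},\ldots,v_{j_k};\,x),
\end{equation*}
where each $\widetilde{M}$ is the corresponding mixed-moment generating function. Since the statement concerns only the top singular term at $x = 1/\rho$ together with an $O$-error, it suffices to establish the analogous asymptotic for each $\widetilde{M}$; the leading contribution will come from those $k$-tuples consisting entirely of maximum-degree monomials of $f$.

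For a single monomial $v = t^{a_1}\lvl^{a_2}\lvi^{a_3}n^{b_1}\gvl^{b_2}\gvi^{b_3}$, I would factor $\mathcal{P}^v(T) = g_v(T) \sum_{w \in \overline{\mathcal{V}}(T)} h_v(T_w)$, where $g_v(T) = n(T)^{b_1}d_0(T)^{b_2}d_1(T)^{b_3}$ depends only on the whole-tree statistics and $h_v(T_w) = n(T_w)^{a_1}d_0(T_w)^{a_2}d_1(T_w)^{a_3}$ only on the subtree statistics. Following the strategy of Section \ref{Distributions of Simple Subtree Additive Properties} (in particular the construction of $F_k(\vec{t}\,|\,x,p,\vec{y})$), I would introduce a multivariate generating function $\mathcal{F}_k(x,a,b;\vec{y},\vec{z},\vec{s})$ that, for $a = e^{-\alpha}, b = e^{-\beta}$, sums $x^{n(T)}a^{d_0(T)}b^{d_1(T)} \sum_{(w_1,\ldots,w_k) \in \overline{\mathcal{V}}(T)^k} \prod_{i=1}^k y_i^{n(T_{w_i})}z_i^{d_0(T_{w_i})}s_i^{d_1(T_{w_i})}$ over $T \in \setofplanetrees_{\geq 0}$. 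A recurrence from the $T = T_1 \ltimes T_2$ decomposition -- splitting on which of $T_1, T_2$ contains each marked vertex -- expresses $\mathcal{F}_k$ rationally in $G(x,a,b)$, $G^*(x,a,b)$, and partially specialized analogues. Applying the derivative-and-evaluate operators corresponding to the monomial powers $y_i^{a_1}z_i^{a_2}s_i^{a_3}$ on the marking variables extracts the $h_v(T_{w_i})$ factors, and further $x, a, b$ derivatives accumulate the $g_v(T)^k$ factor, yielding $\widetilde{M}(v,\ldots,v;x)$. Mixed tuples are handled analogously.

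With the generating function in hand, the singularity analysis at $x = 1/\rho$ follows the Transfer Theorem (Theorem \ref{transfer lemma}). By Corollary \ref{eval G func} and Lemma \ref{dom sig lemma}, both $G$ and $G^*$ have dominant singularity at $1/\rho$ of square-root type, so every mixed derivative has singular part of the form $c\,(1-\rho x)^{-(2j-1)/2}$. Assuming $a_1+a_2+a_3 \geq 1$ (the generic case), each marking contributes singular order $a_1+a_2+a_3+1/2$ to $\widetilde{M}$, and the $g_v(T)^k$ factor contributes an additional $k(b_1+b_2+b_3)$; summing over the $k$ markings and subtracting the $1/2$ from the Catalan baseline yields $V_k(f) = (2\delta+1)k/2 - 1/2$. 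In the degenerate case where every top-degree monomial satisfies $a_1+a_2+a_3 = 0$, the sum $\sum_w h_v(T_w)$ collapses to $n(T)$ times a constant and the property reduces to a polynomial of degree $k(\delta+1)$ in the whole-tree statistics; a direct computation via $x, a, b$ derivatives of $G$ then gives $V_k(f) = (\delta+1)k - 1/2$.

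The splitting $W(\alpha,\beta,f) = Q_k(f,\delta_{d_0},\delta_{d_1},\alpha,\beta) \cdot P_k(f)$ falls out of inspecting the leading singular coefficient: the $\alpha, \beta$-dependence enters only through the evaluation $a = e^{-\alpha}, b = e^{-\beta}$ of the non-singular factors of $G, G^*$ at $x = 1/\rho$, together with the multiplicative prefactors $a^{\delta_{d_0}}b^{\delta_{d_1}}$ arising from the $g_v^k$ derivatives (since by hypothesis all top-degree monomials share the same $\delta_{d_0}, \delta_{d_1}$); these factors combine into $Q_k$, leaving the $\alpha, \beta$-independent square-root and derivative combinatorial constants as $P_k$. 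The main obstacle I anticipate is the bookkeeping in the singularity analysis -- tracking which derivatives of $\mathcal{F}_k$ contribute at the top singular order and which are subdominant -- particularly at the boundary $a_1+a_2+a_3 = 0$ where the dominant singularity order itself changes; confirming rigorously that the various non-leading contributions are genuinely absorbed into the $O((1-\rho x)^{-(V_k(f) - 1/2)})$ error term will require careful case analysis.
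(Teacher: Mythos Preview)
Your approach is valid in outline but takes a genuinely different route from the paper's, and the part you flag as ``bookkeeping'' is where the paper invests its real work via a different idea.

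The paper does \emph{not} set up a recurrence for a marked generating function and then solve it. Instead it introduces a direct combinatorial decomposition: to a pair $(T,\vec v)\in\setoftuples$ it associates a \emph{skeleton tree} $\skele(T,\vec v)\in\setofplanetrees_\lambda$ (obtained by contracting every edge not lying above some $v_i$) together with a composition $C\in\setofcomp^\lambda$ recording multiplicities. The preimage $\phi^{-1}(T_s,C)$ is then reconstructed by attaching independent ``local trees'' $T_i^L$ at each vertex of $T_s$ and choosing $m_i$ leaves in each to glue the children onto. This yields (Lemma~\ref{M generating function expression lemma}) an \emph{explicit finite sum}
\[
M_k(f,\alpha,\beta,0)(x)=\sum_{(T_s,C)\in\setoftreecomppairs}\binom{k}{C}\sum_j w_j\prod_{i=0}^{\lambda}R_{i,j}(T_s,x,e^{-\alpha},e^{-\beta}),
\]
where each $R_{i,j}$ is an explicit $\mathcal D$-derivative of $G$ or $G^*/x$. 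Since each factor has singular order $(2\sigma_{i,j}-1)/2$ in $\Psi$ (Lemma~\ref{differentation lemma 1}), the singular order of each summand is read off immediately, and locating the dominant term becomes a finite combinatorial optimisation over skeleton shapes (Lemmas~\ref{when sigma_i,j = 0 lemma} and~\ref{most significan term is profuct of maximal monomials}). The two cases in the theorem correspond to whether the optimum is attained at all $T_s\in\setofplanetrees_k$ or only at the bush.

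Your recurrence via $T=T_1\ltimes T_2$ is essentially the Section~\ref{Distributions of Simple Subtree Additive Properties} methodology transplanted to this setting. It can be made to work, but your assertion that the recurrence ``expresses $\mathcal F_k$ rationally in $G,G^*$ and partially specialised analogues'' hides the real content: unwinding that recurrence (splitting the $k$ markings over $T_1$, $T_2$, and the root of $T_2$, then iterating) \emph{is} the skeleton-tree sum. Without making that explicit, your heuristic ``each marking contributes singular order $a_1+a_2+a_3+1/2$'' is exactly the statement that needs proof, and verifying that no cross-terms in the recursion produce a larger singular order requires precisely the kind of term-by-term analysis the skeleton decomposition makes transparent. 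The factorisation $W=Q_k\cdot P_k$ likewise falls out cleanly from the paper's approach because the $(\alpha,\beta)$-dependence sits entirely in the evaluated prefactors $A,B,X,\Psi,Z',Z_i$ at $x=1/\rho$, uniformly across all skeleton shapes; extracting this from a recursive solution would be less direct.
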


\bigskip

We now describe a construction that will allow us to prove the above result. Fix $k \inN$. Let $\setofcomp^p$ be the set of compositions of $k$ into $p$ parts, where a composition of an integer $k$ into $p$ parts is a $p$-tuple $(c_1, \cdots, c_p)$ such that $c_i \inN$ and $\sum_{i=1}^p c_i = k$. Let $$\setoftuples = \bigcup_{T \in \setofplanetrees_{\geq 1}} \{T\} \times \overline{\mathcal{V}}(T)^k \quad\quad\quad \text{and} \quad\quad\quad \setoftreecomppairs = \bigcup_{1 \leq i \leq k} \setofplanetrees_i \times \setofcomp^i.$$  

Let $(T, \vec{v}) \in \setoftuples$. We will define the \emph{skeleton tree} of this tuple to be the tree formed by contracting the parent edge of all vertices except those in $\vec{v}$. We will denote the skeleton tree of the tuple as $\skele(T, \vec{v})$. We define the composition of the tuple $(T, \vec{v})$ by $\comp(T, \vec{v}) = (c^*_1, \cdots, c^*_\lambda)$, where $c^*_i$ is the number of times the $i$th non-root vertex in the pre-ordering of $\skele(T, \vec{v})$ appears in $\vec{v}$. Notice that $\comp(T, \vec{v}) \in \setofcomp^\lambda$ and $\skele(T, \vec{v}) \in \setofplanetrees_\lambda$ where $\lambda \leq k$ is the number distinct vertices in $\vec{v}$.

We define $\phi:\setoftuples \rightarrow \setoftreecomppairs$ as $\phi(T, \vec{v}) = (\skele(T, \vec{v}), \comp(T, \vec{v}))$. (See the top of Figure \ref{fig:phiexamplefunction}.) We will denote the pre-image of $(T_s, C) \in \setoftreecomppairs$ under $\phi$ as $\phi^{-1}(T_s, C) \subset \setoftuples$. We will now show a construction for the elements of $\phi^{-1}(T_s, C)$ that will allow us to count the elements of $\phi^{-1}(T_s, C)$.

\newcommand{\convexpath}[2]{
[   
    create hullnodes/.code={
        \global\edef\namelist{#1}
        \foreach [count=\counter] \nodename in \namelist {
            \global\edef\numberofnodes{\counter}
            \node at (\nodename) [draw=none,name=hullnode\counter] {};
        }
        \node at (hullnode\numberofnodes) [name=hullnode0,draw=none] {};
        \pgfmathtruncatemacro\lastnumber{\numberofnodes+1}
        \node at (hullnode1) [name=hullnode\lastnumber,draw=none] {};
    },
    create hullnodes
]
($(hullnode1)!#2!-90:(hullnode0)$)
\foreach [
    evaluate=\currentnode as \previousnode using \currentnode-1,
    evaluate=\currentnode as \nextnode using \currentnode+1
    ] \currentnode in {1,...,\numberofnodes} {
-- ($(hullnode\currentnode)!#2!-90:(hullnode\previousnode)$)
  let \p1 = ($(hullnode\currentnode)!#2!-90:(hullnode\previousnode) - (hullnode\currentnode)$),
    \n1 = {atan2(\y1,\x1)},
    \p2 = ($(hullnode\currentnode)!#2!90:(hullnode\nextnode) - (hullnode\currentnode)$),
    \n2 = {atan2(\y2,\x2)},
    \n{delta} = {-Mod(\n1-\n2,360)}
  in 
    {arc [start angle=\n1, delta angle=\n{delta}, radius=#2]}
}
-- cycle
}
\begin{figure}
	\centering
    \begin{tikzpicture}	
	
	\begin{scope}[shift={(3,0)}]
	\begin{scope}[shift={(0,0)}]	
	
	\node (Te) at (0, -4)  [circle,fill=white,inner sep=0pt]{\footnotesize
		$\vec{v} = (v_2, v_3, v_4, v_1, v_2, v_5, v_2, v_3, v_5)$};
	
	\node (r) at (0, 0) [circle,fill,inner sep=1.5pt]{};
	\node (v11) at (-1.5,-1)  [circle,fill,inner sep=1.5pt, label=left:\tiny $v_1$]{};
	\node (v12) at (0,-1)  [circle,fill,inner sep=1.5pt]{};
	\node (v13) at (1.5,-1)  [circle,fill,inner sep=1.5pt]{};
	\node (v21) at (-2,-2)  [circle,fill,inner sep=1.5pt]{};
	\node (v22) at (-1,-2)  [circle,fill,inner sep=1.5pt]{};
	\node (v23) at (0.5,-2)  [circle,fill,inner sep=1.5pt]{};
	\node (v24) at (1.5,-2)  [circle,fill,inner sep=1.5pt, label=left:\tiny $v_4$]{};
	\node (v25) at (2.5,-2)  [circle,fill,inner sep=1.5pt, label=left:\tiny $v_5$]{};
	\node (v31) at (-2.5,-3)  [circle,fill,inner sep=1.5pt, label=left:\tiny $v_2$]{};
	\node (v32) at (-1.5,-3)  [circle,fill,inner sep=1.5pt, label=left:\tiny $v_3$]{};
	\node (v33) at (1.5,-3)  [circle,fill,inner sep=1.5pt]{};

	\draw (r) -- (v11)
	(r) -- (v12) 
	(r) -- (v13)
	(v11) -- (v21)
	(v11) -- (v22)
	(v13) -- (v23)
	(v13) -- (v24)
	(v13) -- (v25)
	(v21) -- (v31)
	(v21) -- (v32)
	(v24) -- (v33);
	
	\end{scope}

	\node (A) at (3.5,-2.1) [circle,fill=white,inner sep=0pt]{};
	\node (B) at (5.5,-2.1)  [circle,fill=white,inner sep=0pt]{};
	\draw[->] (A) -- (B) node [midway, above, fill=white,inner sep=0pt] {\small $\phi$};

	\begin{scope}[shift={(8,-1)}]
	
	\node (C) at (0, -3)  [circle,fill=white,inner sep=0.1pt]{\footnotesize $\comp(T, \vec{v}) = (1,3,2,1,2)$};	
	
	\node (r) at (0, 0) [circle,fill,inner sep=1.5pt, label=left:\tiny $v_0$]{};
	\node (v'11) at (-1,-1)  [circle,fill,inner sep=1.5pt, label=left:\tiny $v_1$]{};
	\node (v'12) at (0,-1)  [circle,fill,inner sep=1.5pt, label=left:\tiny $v_4$]{};
	\node (v'13) at (1,-1)  [circle,fill,inner sep=1.5pt, label=left:\tiny $v_5$]{};
	\node (v'21) at (-1.5,-2)  [circle,fill,inner sep=1.5pt, label=left:\tiny $v_2$]{};
	\node (v'22) at (-0.5,-2)  [circle,fill,inner sep=1.5pt, label=left:\tiny $v_3$]{};

	\draw (r) -- (v'11) 
	(r) -- (v'12)
	(r) -- (v'13)
	(v'11) -- (v'21)
	(v'11) -- (v'22);
	
	\end{scope} 
	
	\end{scope}
	
	\begin{scope}[shift={(0,-6)}]
	
	\node (a) at (0, 0)  [circle,fill=white,inner sep=0.1pt]{\footnotesize $T^L_0$};
	
	\begin{scope}[shift={(0,-.5)}]
	\node (r) at (0, 0) [circle,fill=blue,inner sep=1.5pt]{};
	\node (v11) at (-1,-1)  [circle,fill=red,inner sep=1.5pt]{};
	\node (v12) at (0,-1)  [circle,fill,inner sep=1.5pt]{};
	\node (v13) at (1,-1)  [circle,fill,inner sep=1.5pt]{};
	\node (v23) at (0,-2)  [circle,fill,inner sep=1.5pt]{};
	\node (v24) at (1,-2)  [circle,fill=red,inner sep=1.5pt]{};
	\node (v25) at (2,-2)  [circle,fill=red,inner sep=1.5pt]{};
	
	\draw (r) -- (v11)
	(r) -- (v12) 
	(r) -- (v13)
	(v13) -- (v23)
	(v13) -- (v24)
	(v13) -- (v25);	
	\end{scope}
	
	\node (b) at (4, 0)  [circle,fill=white,inner sep=0.1pt]{\footnotesize $T^L_1$};
	
	\begin{scope}[shift={(4,-.5)}]
	\node (v11) at (0,0)  [circle,fill=blue,inner sep=1.5pt]{};
	\node (v21) at (-0.5,-1)  [circle,fill,inner sep=1.5pt]{};
	\node (v22) at (0.5,-1)  [circle,fill,inner sep=1.5pt]{};
	\node (v31) at (-1,-2)  [circle,fill=red,inner sep=1.5pt]{};
	\node (v32) at (0,-2)  [circle,fill=red,inner sep=1.5pt]{};	
	
	\draw (v11) -- (v21)
	(v11) -- (v22)
	(v21) -- (v31)
	(v21) -- (v32);
	\end{scope}
	
	\node (c) at (5.5, 0)  [circle,fill=white,inner sep=0.1pt]{\footnotesize $T^L_4$};
	
	\begin{scope}[shift={(5.5,-.5)}]
	\node (v11) at (0,0)  [circle,fill=blue,inner sep=1.5pt]{};
	\node (v21) at (0,-1)  [circle,fill,inner sep=1.5pt]{};
	
	\draw (v11) -- (v21);
	\end{scope}
	
	\node (d) at (7, 0)  [circle,fill=white,inner sep=0.1pt]{\footnotesize $T^L_2, T^L_3, T^L_5$};
	\begin{scope}[shift={(7,-.5)}]
	\node (v11) at (0,0)  [circle,fill=blue,inner sep=1.5pt]{};
	\end{scope}

	\begin{scope}[shift={(12,0.5)}]
	
	\node (r) at (0, 0) [circle,fill=blue,inner sep=1.5pt]{};
	\node (v11) at (-1.5,-1)  [circle,fill=blue,inner sep=1.5pt]{};
	\node (v12) at (0,-1)  [circle,fill,inner sep=1.5pt]{};
	\node (v13) at (1.5,-1)  [circle,fill,inner sep=1.5pt]{};
	\node (v21) at (-2,-2)  [circle,fill,inner sep=1.5pt]{};
	\node (v22) at (-1,-2)  [circle,fill,inner sep=1.5pt]{};
	\node (v23) at (0.5,-2)  [circle,fill,inner sep=1.5pt]{};
	\node (v24) at (1.5,-2)  [circle,fill=blue,inner sep=1.5pt]{};
	\node (v25) at (2.5,-2)  [circle,fill=blue,inner sep=1.5pt]{};
	\node (v31) at (-2.5,-3)  [circle,fill=blue,inner sep=1.5pt]{};
	\node (v32) at (-1.5,-3)  [circle,fill=blue,inner sep=1.5pt]{};
	\node (v33) at (1.5,-3)  [circle,fill,inner sep=1.5pt]{};
	
	\draw[red] \convexpath{v33,v24}{0.26cm};
	\draw[red] \convexpath{v11,v22,v32,v31}{0.26cm};
	\draw[red] \convexpath{r,v13,v25,v33,v31,v11}{0.45cm};
	\draw[red] (-2.5,-3) circle (.18cm);
	\draw[red] (-1.5,-3) circle (.18cm);
	\draw[red] (2.5,-2) circle (.18cm);

	\draw (r) -- (v11)
	(r) -- (v12) 
	(r) -- (v13)
	(v11) -- (v21)
	(v11) -- (v22)
	(v13) -- (v23)
	(v13) -- (v24)
	(v13) -- (v25)
	(v21) -- (v31)
	(v21) -- (v32)
	(v24) -- (v33);
	\end{scope}
	
	\end{scope}
	
	\end{tikzpicture}
	\caption{Illustration of the map $\phi$ and the construction of elements in $\phi^{-1}(T_s, C)$. We consider $k = 9$. The top left tree is $T \in \setofplanetrees_{11}$ and the top right tree is $T_s = \skele(T, \vec{v})$. To achieve $(T, \vec{v})$ from the construction we describe, we select local trees $T^L_i$ as in the bottom left trees. The red vertices are the leaf vertices we must choose. The bottom rightmost tree is $T$ formed by the construction. The resulting fused trees $T^F_i$ are outlined in red. The blue vertices are the roots of the local trees.} 
	\label{fig:phiexamplefunction}
\end{figure}

Let $(T_s, C) \in \setoftreecomppairs$ such that $T_s \in \setofplanetrees_\lambda$. We label each vertex in $T_s$. We assign 0 to the root vertex and assign $i$ to the $i$th non-root vertex in the pre-ordering of $T_s$. From this point, we will refer to the vertices of $T_s$ by their assigned number. Let $m_i$ be the number of children of the $i$th vertex of $T_s$.

Each element of $\phi^{-1}(T_s, C)$ may be constructed as follows. To each vertex in $T_s$, we shall assign 2 trees. We shall call these trees the local tree and the fused tree assigned to the vertex. We now proceed recursively. Consider a vertex $v$, labelled $i$, in $T_s$ with children to all of which a local and fused tree has been assigned. We select a tree with $t_i$ edges, $\lvl_i$ leaves, $\lvi_i$ internal nodes and root degree $\lvr_i$. This tree will be the local tree assigned to the vertex, denoted $T^L_i$. 

We select $m_i$ leaves in $T^L_i$. We then identify these leaves with the root vertices of the fused trees assigned to the children of $v$. We denote these root vertices $v_1, \cdots, v_{m_i}$, where $v_i$ is the root vertex assigned to the fused tree of the $i$th child of $v$ encountered in a pre-order traversal. We identify each root vertex to a unique leaf in such a way that $v_i$ is encountered before $v_{i+1}$ is a pre-order traversal of the new tree formed. This new tree will be the fused tree assigned to $v$, denoted $T^F_i$. We continue this process until all the vertices in $T_s$ have a fused and local tree assigned to them. (See the bottom of Figure \ref{fig:phiexamplefunction}.)

The elements of $\phi^{-1}(T_s, C)$ this construction will correspond to, $(T, \vec{v})$, will have $T = T_0^F$. The vertices in $\vec{v}$ will be root vertices of each $T^L_i$ for $i \neq 0$. The number of times each vertex appears in $\vec{v}$ is determined by $C = (c^*_1, \cdots, c^*_\lambda)$, where the root of $T^L_i$ appears $c^*_i$ times. We are now free to choose any arrangement of these vertices into a tuple. There are $\frac{k!}{c^*_1!\cdots c^*_\lambda!}$ such choices, which we will denote as $\binom{k}{C}$. Notice that every tuple formed by this process of building up a tree and arranging special vertices from the tree is unique.\\

\begin{observation}\label{observation about sumtree props 1}
	Let $S_i \subset \{0, \cdots, \lambda\}$ be the set indices $j$, such that the vertex $i$ of $T_s$ is an ancestor of vertex $j$. The number of edges in $T^F_i$ is $\hat{t}_i= \sum_{j \in S_i} t_j$. The number of vertices in $T^F_i$ which are leaves in $T^F_0$ is $\hat{\lvl}_i =  \sum_{j \in S_i} (\lvl_j - m_j)$ if we declare that $T^L_j$ has 1 leaf if $T^L_j = T^*$ and vertex $j$ is a leaf vertex of $T_s$. The number of vertices in $T^F_i$ which are internal nodes in $T^F_0$ is $\hat{\lvi}_i = \sum_{j \in S_i} \lvi_j$ if we declare that for all $j \neq 0$, $T^L_j$ has an extra leaf if it has root degree 1.
	
	The adjustments are to account for the fact that the root of a proper sub-tree of a larger tree may be an internal node or a leaf relative to the larger tree but not relative to itself.\\ 
\end{observation}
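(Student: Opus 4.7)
My plan is to prove the observation by strong induction on the structure of $T_s$, establishing all three formulas for vertex $i$ simultaneously given that they hold for every descendant of $i$ in $T_s$. The base case is when $i$ is a leaf of $T_s$, so $m_i = 0$, $S_i = \{i\}$, and $T^F_i = T^L_i$; the identities then reduce to direct counts on a single local tree, inspecting the relevant corner cases. The inductive step handles a non-leaf $i$ of $T_s$ with children $c_1, \dots, c_{m_i}$, using the disjoint decomposition $S_i = \{i\} \sqcup \bigsqcup_\ell S_{c_\ell}$.

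The structural remark underpinning the leaf and internal-node counts is that for any vertex $v \in T^F_i$, the set of children of $v$ in $T^F_0$ coincides with the set of children of $v$ in $T^F_i$, because the construction places all descendants of $v$ inside $T^F_i$. Consequently the down degree of $v$ in $T^F_0$ is intrinsic to $T^F_i$, and whether $v$ is a leaf (respectively internal node) of $T^F_0$ is determined by its children within $T^F_i$ together with whether $v$ is the root of $T^F_0$.

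For the edge count, every identification in the construction merges a selected leaf of $T^L_j$ with the root of a child's fused tree without creating or destroying an edge, so $\hat{t}_i = t_i + \sum_\ell \hat{t}_{c_\ell}$; the inductive hypothesis combined with the decomposition of $S_i$ yields $\hat{t}_i = \sum_{j \in S_i} t_j$. For the leaf count, I attribute each vertex of $T^F_i$ to a unique local tree via the convention that the root of $T^L_j$ for $j \ne 0$ (which is identified with the selected leaf of $T^L_{\mathrm{parent}(j)}$) is credited to $T^L_j$. Under this attribution, non-selected leaves of $T^L_j$ contribute $\lvl_j - m_j$ leaves of $T^F_0$, non-root vertices of down degree $\geq 2$ contribute nothing, and the root of $T^L_j$ is a leaf of $T^F_0$ precisely when $j \ne 0$ and its degree is $0$, i.e.\ $T^L_j = T^*$ (which forces $j$ to be a leaf of $T_s$ since $m_j = 0$). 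This is the only corner case, and it is exactly compensated by declaring $\lvl_j = 1$ in that situation; summing and invoking the inductive hypothesis then gives $\hat\lvl_i = \sum_{j \in S_i}(\lvl_j - m_j)$.

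For the internal-node count, the same attribution shows that the $\lvi_j$ non-root, down-degree-$1$ vertices of $T^L_j$ are all internal nodes of $T^F_0$, remaining vertices of $T^L_j$ (non-selected leaves and higher-degree non-roots) contribute nothing, and the root of $T^L_j$ becomes an internal node of $T^F_0$ exactly when $j \ne 0$ and $\lvr_j = 1$. Matching this count to $\hat\lvi_i = \sum_{j \in S_i} \lvi_j$ therefore requires crediting each such $T^L_j$ with one additional internal node whenever $j \ne 0$ and $\lvr_j = 1$; I read the observation's parenthetical (``has an extra leaf if it has root degree $1$'') as intending ``has an extra internal node,'' which is precisely the adjustment that makes the identity hold, as a small numerical check (e.g., $T_s$ a single edge with $T^L_0$ a path of length $2$ and $T^L_1$ a single edge) confirms. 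The main bookkeeping obstacle throughout is avoiding double-counting of vertices across distinct $j \in S_i$; the convention of attributing each identified vertex pair to the child-side local tree resolves this cleanly, after which the inductive step for both $\hat\lvl_i$ and $\hat\lvi_i$ follows by summing the per-$T^L_j$ contributions and invoking the inductive hypothesis on each $T^F_{c_\ell}$.
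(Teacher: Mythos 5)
Your proof is correct and amounts to a careful write-up of exactly the reasoning the paper leaves implicit: the paper states this as an Observation with only the one-sentence remark about root-vertex adjustments, and your attribution of each identified vertex to the child-side local tree, together with the corner-case analysis at the roots of the $T^L_j$, is precisely what justifies the declared conventions (which the paper encodes in the definitions of $\mathcal{G}'$ and $\mathcal{G}''$). You are also right that ``has an extra leaf if it has root degree 1'' is a typo for ``has an extra internal node,'' as confirmed by the definition of $\mathcal{G}'_n$ in the surrounding text.
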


Recall the definition of $\mathcal{G}_n(d_0, d_1, r)$ in Section \ref{Gen function count section}. Let $\mathcal{G}'_n(d_0, d_1, r)$ be defined similarly to $\mathcal{G}_n(d_0, d_1, r)$ but where we declare that root vertices with down degree 1 are internal nodes. Let $\mathcal{G}''_n(d_0, d_1, r)$ be defined similarly to $\mathcal{G}'_n(d_0, d_1, r)$ but where we also declare that the tree on 0 edges has 1 leaf. Let $L(T_s) \subset \{1, \cdots, \lambda\}$ be the set of indices greater than 0 corresponding to leaf vertices in $T_s$. Let $\overline{L}(T_s) \subset \{1, \cdots, \lambda\}$ be the set of indices greater than 0 corresponding to non-leaf vertices in $T_s$.

Let $\vec{t} = (t_0, \cdots, t_{\lambda})$, $\vec{\lvl} = (\lvl_0, \cdots, \lvl_{\lambda})$, $\vec{\lvi} = (\lvi_0, \cdots, \lvi_{\lambda})$ and $\vec{\lvr} = (\lvr_0, \cdots, \lvr_{\lambda})$. Let $\mathcal{H}(T_s, C, \vec{\lvr},\vec{t},\vec{\lvl}, \vec{\lvi})$ be the set of all elements in $\phi^{-1}(T_s, C)$ with fixed $t_i, \lvl_i, \lvi_i, \lvr_i$ given by $\vec{t}, \vec{\lvl}, \vec{\lvi}, \vec{\lvr}$. From the above construction, after making the adjustments specified in Observation \ref{observation about sumtree props 1}, we see that

\begin{eqnarray}
|\mathcal{H}(T_s, C, \vec{\lvr},\vec{t},\vec{\lvl}, \vec{\lvi})| &=& \binom{k}{C}\cdot \left[\binom{\lvl_0}{m_0} \cdot |\mathcal{G}_{t_0}(\lvl_0, \lvi_0, \lvr_0)|\right] \cdot \left[\prod_{i\in \overline{L}(T_s)}  \binom{\lvl_i}{m_i} \cdot |\mathcal{G}'_{t_i}(\lvl_i, \lvi_i, \lvr_i)|\right] \times \nonumber \\
&& \quad\quad\quad \left[\prod_{i \in L(T_s)}  \binom{\lvl_i}{m_i} \cdot |\mathcal{G}''_{t_i}(\lvl_i, \lvi_i, \lvr_i)|\right]
\end{eqnarray} where $\binom{\lvl_i}{m_i}$ counts the number of ways to pick the leaves which will be identified with the root vertices of the fused trees assigned to the children of vertex $i$. Notice that when $\lvl_i < m_i$, $\binom{\lvl_i}{m_i}$ forces the entire term to 0 thus we do not require any extra restriction on the properties of the trees chosen.

We now let $x_i, a_i, b_i$ be variables that count the number of edges in $T_i^L$, the number of vertices in $T_i^L$ which are leaves in $T_0^F$ and the number of vertices in $T_i^L$ which are internal nodes in $T_0^F$, respectively. Also, let $c$ weigh the root degree of $T_0^F$. Let $\vec{x} = (x_0, \cdots, x_\lambda)$, $\vec{a} = (a_0, \cdots, a_\lambda)$ and $\vec{b} = (b_0, \cdots, b_\lambda)$. Let 
\begin{equation}\label{R def 1}
R'_{i}(T_s, t_i, \lvl_i, \lvi_i, \lvr_i) = x_i^{t_i} a_i^{\lvl_i - m_i} b_i^{\lvi_i} c^{\lvr_i}\cdot\binom{\lvl_i}{m_i} \cdot |\mathcal{G}_{t_i}(\lvl_i, \lvi_i, \lvr_i)|
\end{equation}
for $i = 0$, 
\begin{equation}\label{R def 2}
R'_{i}(T_s, t_i, \lvl_i, \lvi_i, \lvr_i) = x_i^{t_i} a_i^{\lvl_i - m_i} b_i^{\lvi_i} \cdot\binom{\lvl_i}{m_i} \cdot |\mathcal{G}'_{t_i}(\lvl_i, \lvi_i, \lvr_i)|
\end{equation} 
for $i \in \overline{L}(T_s)$, and 
\begin{equation}\label{R def 3}
R'_{i}(T_s, t_i, \lvl_i, \lvi_i, \lvr_i) = x_i^{t_i} a_i^{\lvl_i - m_i} b_i^{\lvi_i} \cdot\binom{\lvl_i}{m_i} \cdot |\mathcal{G}''_{t_i}(\lvl_i, \lvi_i, \lvr_i)|
\end{equation} 
for $i \in L(T_s)$. We now let 
\begin{equation}\label{R_i def}
R_i(T_s, x_i, a_i, b_i, c) =  \sum_{t_i, \lvl_i, \lvi_i, \lvr_i \geq 0} R'_{i}(T_s, t_i, \lvl_i, \lvi_i, \lvr_i).
\end{equation}

For convenience, we make the following definitions.
\begin{eqnarray}
\Psi(x, a, b) &=& [1 + (2 - a -b)x]^2  - 4x[1 - (b-1)x]\\
X(x, a, b) &=& - \frac{\partial \Psi(x, a, b)}{\partial x}\\
A(x, a, b) &=& - \frac{\partial \Psi(x, a, b)}{\partial a}\\
B(x, a, b) &=& -  \frac{\partial \Psi(x, a, b)}{\partial b}
\end{eqnarray}

 For conciseness, we let $\Psi_i = \Psi(x_i, a_i, b_i)$, $X_i = X(x_i, a_i, b_i)$, $ A_i = A(x_i, a_i, b_i)$ and $B_i =  B(x_i, a_i, b_i)$. We will simply write $\Psi, X, A, B$ to refer to $\Psi(x, a, b)$, $X(x, a, b)$, $A(x, a, b)$ and $B(x, a, b)$, respectively.
 
 Recall Theorems \ref{G recurrence} and \ref{eval G func} and the definitions in Section \ref{Gen function count section}. From (\ref{R_i def}), we see that 

\begin{equation} \label{R_i i = 0}
R_i(T_s, x_i, a_i, b_i, c) = \frac{1}{m_i!} \cdot \frac{\partial G(x_0,a_0,b_0, c)}{\partial a_0^{m_0}}
\end{equation} for $i = 0$, 

\begin{equation}
R_i(T_s, x_i, a_i, b_i, c) = \frac{1}{m_i!} \cdot \frac{\partial \left(G(x_i,a_i,b_i) + (b_i - 1)G^*(x_i,a_i,b_i)\right)}{\partial a_i^{m_i}} 
\end{equation} for $i \in \overline{L}(T_s)$, and

\begin{equation}
R_i(T_s, x_i, a_i, b_i, c) = \frac{1}{m_i!} \cdot \frac{\partial \left(G(x_i,a_i,b_i) + (b_i - 1)G^*(x_i,a_i,b_i) + a_i - 1\right)}{\partial a_i^{m_i}} 
\end{equation} for $i \in \overline{L}(T_s)$, where the difference between the cases is to account for whether or not trees with root degrees 0 and 1 as having an extra leaf and internal node, respectively. Note that 
$$G(x_i,a_i,b_i) + (b_i - 1)G^*(x_i,a_i,b_i) + a_i - 1 = \frac{G^*(x_i,a_i,b_i)}{x_i}. $$
Recall the definition of the operator $\mathcal{D}_p^m$ in (\ref{D_p definition}).

\begin{lemma} \label{differentation lemma 1}
    Fix $(T_s, C) \in \setoftreecomppairs$ for $k \geq 1$. Let $u_x, u_a, u_b, \inZ_{\geq 0}$ and $u = u_x + u_a + u_b$. Let $\mathbbm{1}_n = -1$ for $n = 0$ and $\mathbbm{1}_n = 1$ otherwise. Let $\mathbbm{1}'_n = 1$ for $n = 0$ and $\mathbbm{1}'_n = 0$ otherwise. We have that, for $0 \leq i \leq \lambda$,
    \begin{multline}
    \mathcal{D}^{u_x}_{x_i}\mathcal{D}^{u_a}_{a_i}\mathcal{D}^{u_b}_{b_i}\left[a_i^{m_i}R_i(T_s, x_i, a_i, b_i, 1)\right] = \mathbbm{1}'_{u+m_i} \cdot Z_i' + \mathbbm{1}_{u+m_i} \cdot  Z_i(u + m_i) \cdot (x_iX_i)^{u_x} \cdot (a_iA_i)^{u_a + m_i} \times\\ 
    (b_iB_i)^{u_b} \cdot \Psi_i^{\frac{1- 2(u+m_i)  }{2}} + O\left(p_i \cdot \Psi_i^{\frac{2- 2(u+m_i)}{2}} \right),
    \end{multline}
    where, $Z_0(n) = \frac{(2n - 3)!!}{2^{n} \cdot m_0!} \cdot \frac{1}{2x_0}$, for $i > 0$,  $Z_i(n) = \frac{(2n - 3)!!}{2^n \cdot m_i!} \cdot \frac{1}{2x_i(1-(b_i-1)x_i)}$, for $i \in \overline{L}$, $Z'_i = \frac{1 + (a_i - b_i)x_i}{2x_i(1-(b_i-1)x_i)}$  and $p_i$ is a sum of quotients of polynomial whose denominators are the products of powers of $x_i$ and $1-(b_i-1)x_i$.
    
\end{lemma}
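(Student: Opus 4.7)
The plan is to decompose each $R_i$ into a regular polynomial part plus a constant-in-$a_i$ multiple of $\sqrt{\Psi_i}$, then compute the leading singular contribution of the derivative operators and show all remaining pieces fit into the stated error envelope.

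Substituting the formulas for $G$ and $G^*$ from Corollary~\ref{eval G func} into the three definitions of $R_i$ preceding the lemma, one verifies in each case ($i=0$, $i\in\overline{L}(T_s)$, $i\in L(T_s)$) that the function inside $\partial_{a_i}^{m_i}$ has the form $P_i(x_i,a_i,b_i)-\Lambda_i(x_i,b_i)\sqrt{\Psi_i}$, where $P_i$ is at most linear in $a_i$ with coefficients rational in $x_i,b_i$, and $\Lambda_i$ is independent of $a_i$ with $\Lambda_0=1/(2x_0)$ and $\Lambda_i=1/(2x_i(1-(b_i-1)x_i))$ for $i>0$. The constant $Z'_i$ in the statement is precisely the value of $P_i$ at $m_i=0$. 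Since $\Lambda_i$ does not depend on $a_i$, we obtain the clean splitting
\[
m_i!\,R_i \;=\; \partial_{a_i}^{m_i}P_i \;-\; \Lambda_i\,\partial_{a_i}^{m_i}\sqrt{\Psi_i}.
\]

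For the singular contribution, I use the operator identity $a^m\partial_a^m = \mathcal{D}_a(\mathcal{D}_a-1)\cdots(\mathcal{D}_a-m+1)$, so that $a_i^{m_i}\partial_{a_i}^{m_i}\sqrt{\Psi_i}$ agrees with $\mathcal{D}_{a_i}^{m_i}\sqrt{\Psi_i}$ modulo strictly less singular terms. Writing $n=u+m_i$ and iterating the chain rule (noting $\partial_{x_i}\Psi_i=-X_i$, $\partial_{a_i}\Psi_i=-A_i$, $\partial_{b_i}\Psi_i=-B_i$) on $\Psi_i^{1/2}$, the leading singular part of $\mathcal{D}_{x_i}^{u_x}\mathcal{D}_{a_i}^{u_a+m_i}\mathcal{D}_{b_i}^{u_b}\sqrt{\Psi_i}$ equals
\[
-\frac{(2n-3)!!}{2^n}\,(x_iX_i)^{u_x}(a_iA_i)^{u_a+m_i}(b_iB_i)^{u_b}\,\Psi_i^{(1-2n)/2},
\]
since the falling product $\tfrac{1}{2}(\tfrac{1}{2}-1)\cdots(\tfrac{1}{2}-n+1)$ combines with the $(-1)^n$ from the derivative signs to yield $-(2n-3)!!/2^n$. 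Multiplying by $-\Lambda_i/m_i!$ and recognising the telescoping $(2m_i-3)!!\cdot(2m_i-1)(2m_i+1)\cdots(2n-3)=(2n-3)!!$ gives exactly $Z_i(n)\,(x_iX_i)^{u_x}(a_iA_i)^{u_a+m_i}(b_iB_i)^{u_b}\Psi_i^{(1-2n)/2}$, matching the $\mathbbm{1}_n\cdot Z_i(n)\cdots$ leading term for $n\geq 1$.

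It remains to handle the regular part and the error. The piece $a_i^{m_i}\partial_{a_i}^{m_i}P_i/m_i!$ is regular at $\Psi_i=0$ and vanishes for $m_i\geq 2$ (since $P_i$ is linear in $a_i$); further $\mathcal{D}$-derivatives preserve regularity. Hence this piece contributes exactly $P_i=Z'_i$ when $u+m_i=0$ (the $\mathbbm{1}'_{u+m_i}Z'_i$ term) and is otherwise $O(1)\subset O(\Psi_i^{(2-2n)/2})$ for $n\geq 1$. The remaining discarded contributions — derivatives landing on the analytic factors $\Lambda_i$ or $(a_iA_i)^{m_i}$, the lower-order operators $(\mathcal{D}_{a_i}-j)$ with $j\geq 1$, and subleading half-integer $\Psi_i$-powers in $\partial_{a_i}^{m_i}\sqrt{\Psi_i}$ — all carry at least one extra factor of $\Psi_i^{1/2}$, so they lie in $O(\Psi_i^{(3-2n)/2})\subset O(\Psi_i^{(2-2n)/2})$. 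Since the only denominators ever introduced by $G$, $G^*$, and their derivatives are powers of $x_i$ and of $1-(b_i-1)x_i$, the rational function $p_i$ has the claimed form. The main obstacle is the careful sign bookkeeping across the repeated differentiation (every $\partial_*\Psi$ picks up a $-1$) and verifying the double-factorial telescoping; once the decomposition in the first paragraph is in hand, the rest is a routine chain-rule computation.
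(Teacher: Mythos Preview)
Your proposal is correct and follows essentially the same approach as the paper, which simply states that the result ``follows by differentiating $G(x,a,b)$, $\frac{G^*(x,a,b)}{x}$ and $\frac{G^*(x,a,b)}{x} + 1 - a$.'' You have supplied the details the paper omits: the explicit decomposition $P_i-\Lambda_i\sqrt{\Psi_i}$, the reduction of $a_i^{m_i}\partial_{a_i}^{m_i}$ to $\mathcal{D}_{a_i}^{m_i}$ via the falling-factorial identity, the chain-rule extraction of the leading $\Psi_i$-power, and the verification that all neglected pieces sit inside the stated error; the telescoping remark is slightly redundant given your operator-identity route, but harmless.
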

\begin{proof} Follows by differentiating $G(x,a,b)$, $\frac{G^*(x,a,b)}{x}$ and $\frac{G^*(x,a,b)}{x} + 1 - a$. We note that for $i \not\in \overline{L}$, we omit stating $Z'_i$ since since $u + m_i > 0$.

\end{proof}

\bigskip

Fix $\alpha, \beta \inR$ and $f \inR[t, \lvl, \lvi][n, \gvl, \gvi]$. For $(T_s, C) \in \setoftreecomppairs$ such that $T_s \in \setofplanetrees_\lambda$ and $C = (c_1^*, \cdots, c_\lambda^*)$, let 
\begin{equation}\label{def f_k}
f_k(T_s, C) = \prod_{i=1}^{\lambda} f(\hat{t}_i, \hat{\lvl}_i, \hat{\lvi}_i, \hat{t}_0, \hat{\lvl}_0, \hat{\lvi}_0)^{c_i^*} 
\end{equation} 
such that 
\begin{equation}
\label{H sum}
f_k(T_s, C) = \sum_{j=1}^L w_j \cdot v_j \quad\quad \text{where} \quad\quad v_j =  \prod_{i=0}^\lambda  t_i^{v_{n}(i,j)} \cdot \lvl_i^{v_{d_0}(i,j)} \cdot \lvi_i^{v_{d_1}(i,j)}
\end{equation} where $w_j \inR$ and $v_{n}(i,j)$, $v_{d_0}(i,j)$, $v_{d_1}(i,j) \inZ_{\geq 0}$ for all $0 \leq i \leq \lambda$ and $1 \leq j \leq L$. Furthermore, let $\sigma_{i,j} = v_{n}(i,j) + v_{d_0}(i,j) + v_{d_1}(i,j) + m_i$ and $\{\sigma_{i,j}\}_{> 0} = \{0 \leq i \leq \lambda : \sigma_{i,j} > 0\}$. Finally, let \begin{equation} \label {R_i^j def 1}
R_{i,j}(T_s, x_i, a_i, b_i) = \frac{1}{a^{m_i}} \cdot \mathcal{D}^{v_{n}(i,j)}_{x_i}\mathcal{D}^{v_{d_0}(i,j)}_{a_i}\mathcal{D}^{v_{d_1}(i,j)}_{b_i}\left[a^{m_i}R_i(T_s, x_i, a_i, b_i, 1)\right]
\end{equation} 
for $0 \leq i \leq \lambda$.

\begin{lemma} \label{M generating function expression lemma}
For $f_k(T_s, C) = \sum_{j=1}^L w_j \cdot v_j$ as in (\ref{H sum}), the following equation holds.
\begin{multline}
M_{k}(f, \alpha,\beta, 0)(x) = \sum_{(T_s, C) \in \setoftreecomppairs} \binom{k}{C}\sum_{j=1}^L w_j \cdot  A^{\lambda}\cdot   (e^{-\alpha}A)^{ \sum_{i=0}^{\lambda}v_{d_0}({i,j})}\cdot (e^{-\beta}B)^{\sum_{i=0}^{\lambda} v_{d_1}({i,j})} \times \\
(xX)^{\sum_{i=0}^{\lambda} v_{n}({i,j})} \cdot \Psi^{\frac{|\{\sigma_{i,j}\}_{> 0}| - 2\sum_{i=0}^{\lambda}\sigma_{i,j}}{2}} \cdot (Z')^{1 + \lambda - |\{\sigma_{i,j}\}_{> 0}|} \cdot \left(\prod_{i \in \{\sigma_{i,j}\}_{> 0}} Z_i(\sigma_{i,j})\right)\\
 + O\left(p(x) \cdot \Psi^{\frac{1 + |\{\sigma_{i,j}\}_{> 0}| - 2\sum_{i=0}^{\lambda}\sigma_{i,j}}{2}}\right),
\end{multline} where $Z' = \frac{1 + (e^{-\alpha} - e^{-\beta})x}{2x(1-(e^{-\beta}-1)x)}$, for $i  = 0$, $Z_i(n) = \frac{(2n - 3)!!}{2^{n} \cdot m_i!} \cdot \frac{1}{2x}$, for $i > 0$,  $Z_i(n) = \frac{(2n - 3)!!}{2^n \cdot m_i!} \cdot \frac{1}{2x(1-(e^{-\beta}-1)x)}$ and $p(x)$ is a sum of quotients of polynomial whose denominators are the products of powers of $x$ and $1-(e^{-\beta}-1)x$.

\end{lemma}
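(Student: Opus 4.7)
The plan is to start from the definition $M_{k,n}(f,\alpha,\beta,0) = \sum_{T \in \setofplanetrees_n} e^{-\alpha d_0(T) - \beta d_1(T)} \mathcal{P}^f(T)^k$ and expand $\mathcal{P}^f(T)^k = \sum_{\vec v \in \overline{\mathcal{V}}(T)^k} \prod_{i=1}^k f(T_{v_i}, T)$, then partition the set of pairs $(T,\vec v) \in \setoftuples$ by the map $\phi$. For each $(T_s, C) \in \setoftreecomppairs$ with $T_s \in \setofplanetrees_\lambda$, the construction preceding Lemma \ref{differentation lemma 1} bijectively parameterizes the fiber $\phi^{-1}(T_s, C)$ by a tuple of local trees $(T_0^L, \ldots, T_\lambda^L)$. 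Along this fiber $\prod_{i=1}^k f(T_{v_i},T)$ depends only on the fused sizes $(\hat t_i, \hat{\lvl}_i, \hat{\lvi}_i)$ and equals $f_k(T_s, C)$ from (\ref{def f_k}), while the $\binom{k}{C}$ orderings of $\vec v$ contribute the stated multinomial. The energy factor distributes multiplicatively across the local trees because $d_0(T) = \hat{\lvl}_0 = \sum_i(\lvl_i - m_i)$ and similarly for $d_1$, with the adjustments of Observation \ref{observation about sumtree props 1}. This is precisely what $R_i$ as defined by (\ref{R_i def}) (specialized at $c=1$, $x_i=x$, $a_i = e^{-\alpha}$, $b_i = e^{-\beta}$) encodes.

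The next step is to install the polynomial weighting $f_k(T_s,C)$. By Observation \ref{observation about sumtree props 1} each $\hat t_i, \hat{\lvl}_i, \hat{\lvi}_i$ is a sum of the corresponding local quantities over descendants, so expanding produces the reduced form $f_k(T_s,C) = \sum_{j=1}^L w_j v_j$ with $v_j = \prod_i t_i^{v_n(i,j)} \lvl_i^{v_{d_0}(i,j)} \lvi_i^{v_{d_1}(i,j)}$. Using the identity $\mathcal{D}_x^m(x^n) = n^m x^n$ (and analogues in $a$ and $b$), the effect of multiplying the summand by $t_i^{v_n(i,j)} \lvl_i^{v_{d_0}(i,j)} \lvi_i^{v_{d_1}(i,j)}$ before summing over local trees is realized by applying $\mathcal{D}_{x_i}^{v_n(i,j)} \mathcal{D}_{a_i}^{v_{d_0}(i,j)} \mathcal{D}_{b_i}^{v_{d_1}(i,j)}$ to the generating function; the prefactor $a_i^{m_i}$ (and post-division) in the definition (\ref{R_i^j def 1}) of $R_{i,j}$ converts $R_i$'s tracking of $\lvl_i - m_i$ into tracking of $\lvl_i$. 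Collecting these steps yields the exact identity
$$M_k(f,\alpha,\beta,0)(x) = \sum_{(T_s,C) \in \setoftreecomppairs} \binom{k}{C} \sum_{j=1}^L w_j \prod_{i=0}^\lambda \left. R_{i,j}(T_s, x_i, a_i, b_i) \right|_{x_i=x,\,a_i=e^{-\alpha},\,b_i=e^{-\beta}}.$$

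The final step is to substitute the asymptotic expansion of each $R_{i,j}$ given by Lemma \ref{differentation lemma 1}. For $i$ with $\sigma_{i,j} > 0$ that lemma contributes a leading term $Z_i(\sigma_{i,j}) (x_iX_i)^{v_n(i,j)} (a_iA_i)^{v_{d_0}(i,j)+m_i} (b_iB_i)^{v_{d_1}(i,j)} \Psi_i^{(1-2\sigma_{i,j})/2}$ plus an error of a higher half-power of $\Psi_i$, while $\sigma_{i,j}=0$ produces the regular term $Z'_i$. After specialization, the $e^{\alpha m_i}$ from the $a_i^{m_i}$ denominator of $R_{i,j}$ exactly cancels the $e^{-\alpha m_i}$ piece of $(a_iA_i)^{m_i}$, leaving $A^{m_i}$ untouched; taking the product over $i$ and using $\sum_i m_i = \lambda$, the $A$-factors collect to $A^\lambda (e^{-\alpha}A)^{\sum_i v_{d_0}(i,j)}$, and the $X$- and $B$-factors collect to $(xX)^{\sum_i v_n(i,j)}$ and $(e^{-\beta}B)^{\sum_i v_{d_1}(i,j)}$ respectively. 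The $\Psi$-exponents add to $(|\{\sigma_{i,j}\}_{>0}| - 2\sum_i \sigma_{i,j})/2$, and the regular vertices contribute exactly $1 + \lambda - |\{\sigma_{i,j}\}_{>0}|$ factors of $Z'$, giving the claimed leading term.

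The main technical obstacle is controlling the error uniformly across the product of $\lambda+1$ expansions. The single-factor error times the leading parts of the others drops exactly one half-power of $\Psi$, as Lemma \ref{differentation lemma 1} guarantees, and any product that takes the error from two or more factors is strictly smaller, so the combined error is $O(p(x) \Psi^{(1 + |\{\sigma_{i,j}\}_{>0}| - 2\sum_i \sigma_{i,j})/2})$. The denominator polynomials $p_i$ from Lemma \ref{differentation lemma 1}, specialized at $a_i=e^{-\alpha}$ and $b_i=e^{-\beta}$, become rational functions of $x$ whose only possible poles at $x=0$ or $x=1/(e^{-\beta}-1)$ lie outside the disk of radius $1/\rho$ by the argument of Lemma \ref{dom sig lemma}; these can be absorbed into the single rational function $p(x)$ in the statement, completing the proof.
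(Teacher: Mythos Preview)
Your proposal is correct and follows essentially the same route as the paper: expand $\mathcal{P}^f(T)^k$ over tuples, fiber over $\phi$, identify the fiber sum with $\binom{k}{C}\sum_j w_j\prod_i R_{i,j}$ via the $\mathcal{D}$-operator trick, and then multiply out the single-factor expansions from Lemma~\ref{differentation lemma 1}, using $\sum_i m_i=\lambda$ to collect the $A$-powers. The error bookkeeping and the treatment of the $\sigma_{i,j}=0$ factors via $Z'_i$ match the paper's argument.

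One small slip: in your final sentence you assert that the possible poles of $p(x)$ at $x=0$ and $x=1/(e^{-\beta}-1)$ ``lie outside the disk of radius $1/\rho$.'' The point $x=0$ is of course inside that disk, so this claim is false as written. Fortunately the lemma as stated does not require any analyticity of $p(x)$; it only asserts the form of its denominators, which follows immediately from the $p_i$ in Lemma~\ref{differentation lemma 1} after specialization. So simply drop the ``outside the disk'' clause and the argument stands.
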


\begin{proof}
We observe that
\begin{eqnarray}
M_{k,n}(f, \alpha,\beta, 0) &=& \sum_{T \in \setofplanetrees_ n} \left(\sum_{v \in \overline{\mathcal{V}}(T)} f(T_v, T)\right)^k e^{-E(T)}\nonumber\\
&=& \sum_{T\in \setofplanetrees_n} \left(\sum_{\vec{v} \in \overline{\mathcal{V}}(T)^k} \prod_{i=1}^k f(T_{v_i}, T)\right)e^{-E(T)} \nonumber\\
\label{setupsum1}&=& \sum_{(T_s, C) \in \setoftreecomppairs} \left(\sum_{\substack{(T, \vec{v}) \in \phi^{-1}(T_s, C)\\ T \in \setofplanetrees_n}} e^{-E(T)}\prod_{i=1}^k f(T_{v_i}, T) \right), 
\label{eq M_k,n expansion 1}
\end{eqnarray} where $\vec{v} = (v_1, \cdots, v_k)$. 

We now notice that for any $(T, \vec{v}) \in \phi^{-1}(T_s, C)$ formed by the construction we previously described, with fixed $\vec{t}$, $\vec{\lvl}$, $\vec{\lvi}$ and $\vec{\lvr}$, $\prod_{i=1}^k f(T_{v_i}, T) = f_k(T_s, C)$. We have also shown that the number of such $(T, \vec{v})$ is $|\mathcal{H}(T_s, C,\vec{t},\vec{\lvl}, \vec{\lvi}, \vec{\lvr})|$. Hence, following from (\ref{eq M_k,n expansion 1}), we achieve
\begin{equation} \label{eq 21}
\sum_{\substack{(T, \overline{e}) \in \phi^{-1}(T_s, C)\\ T \in \setofplanetrees_n}} e^{-E(T)}\prod_{i=1}^k f(T_{v_i}, T)  = \sum_{\sum_{i=0}^\lambda t_i = n}\sum_{\vec{\lvl}, \vec{\lvi},\vec{\lvr}  \inZ_{\geq 0}^{\lambda+1}}e^{-E(\hat{\lvl}_0, \hat{\lvi}_0)}|\mathcal{H}(T_s, C,\vec{t},\vec{\lvl}, \vec{\lvi}, \vec{\lvr})|f_k(T_s, C)
\end{equation} 
where $\hat{t}_i, \hat{\lvl}_i, \hat{\lvi}_i$ are defined as in Observation \ref{observation about sumtree props 1} and $E(\hat{\lvl}_0, \hat{\lvi}_0) = \alpha \hat{\lvl}_0 +\beta  \hat{\lvi}_0$. Recall the definitions in (\ref{H sum}) and (\ref{R_i^j def 1}). We now show that

\begin{eqnarray}
M_{k}(f, \alpha,\beta, 0)(x) &=& \sum_{(T_s, C) \in \setoftreecomppairs}\sum_{n \geq 0} x^n \sum_{\substack{(T, \vec{v}) \in \phi^{-1}(T_s, C)\\ T \in \setofplanetrees_n}} e^{-E(T)}\prod_{i=1}^k\mathcal{P}(T_{v_i}, T) \nonumber\\
&=& \sum_{(T_s, C) \in \setoftreecomppairs} \binom{k}{C}\sum_{j=1}^L w_j \prod_{i=0}^\lambda R_{i,j}(T_s, x, e^{-\alpha}, e^{-\beta}). \label{All together using R_i^j}
\end{eqnarray}

We notice that to get $R_{i,j}(T_s, x_i, a_i, b_i)$, we first multiply by $a^{m_i}$ to get the power of $a_i$ to be $p_i$.
The application of the differentiation operator $\mathcal{D}$ then creates the factor of $t_i^{v_{n}(i,j)} \cdot \lvl_i^{v_{d_0}(i,j)} \cdot \lvi_i^{v_{d_1}(i,j)}$ (without changing the power of $x_i, a_i, b_i$). We then divide by $a^{m_i}$ to get the power of $a_i$ to be $p_i - m_i$. Taking the product of all these terms, we now notice that if we set all the $x_i$ to $x$, $a_i$ to $a$ and $b_i$ to $b$, $x$, $a$ and $b$ now weight the number of edges, leaves and internal nodes in $T$ for $(T, \vec{v}) \in \phi^{-1}(T_s, C)$. Finally, we set $a$ to $e^{-\alpha}$ and $b$ to $e^{-\beta}$ to weight by $e^{-E(T)}$. We set $c$ to 1 as, for $\gamma = 0$, we are not weighting by the root degree.\\

By Lemma \ref{differentation lemma 1}, we get an expression for $R_{i,j}(T_s, x, e^{-\alpha}, e^{-\beta})$. We consider when $\sigma_{i,j} = 0$ for some $i$. We notice that the most significant term of $R_{i,j}$ with respect to $\Psi$ has order $\frac{1}{2}$. Thus we get a higher order term in $\prod_{i=0}^\lambda R_{i,j}(T_s, x, e^{-\alpha}, e^{-\beta})$ by taking the term independent of $\Psi$ for $i$ such that $\sigma_{i,j} = 0$. Furthermore, note that any $i$ such that  $\sigma_{i,j} = 0$ must correspond to a leaf vertex of $T_s$ since otherwise $\sigma_{i,j} \geq m_i \geq 1$. Thus $0 \in  \{\sigma_{i,j}\}_{> 0}$ since the root of $T_s$ cannot be a leaf (since $T_s \in \setofplanetrees_{\geq 1}$).

\begin{multline}\label{sum R_i^j sigma = 0}
\prod_{i=0}^\lambda R_{i,j}(T_s, x, e^{-\alpha}, e^{-\beta}) = A^{\sum_{i=0}^{\lambda} m_i} \cdot (e^{-\alpha}A)^{\sum_{i=0}^{\lambda}v_{d_0}({i,j})} \cdot (e^{-\beta}B)^{\sum_{i=0}^{\lambda} v_{d_1}({i,j})} \cdot (xX)^{\sum_{i=0}^{\lambda} v_{n}({i,j})} \times \\
\Psi^{\frac{1 + \lambda - m - 2\sum_{i=0}^{\lambda}\sigma_{i,j}}{2}} \cdot (Z')^m \cdot \left(\prod_{i \in \{\sigma_{i,j}\}_{> 0}} Z_i(\sigma_{i,j}) \right) 
+ O\left(p(x) \cdot \Psi^{\frac{2 + \lambda-m - 2\sum_{i=0}^{\lambda}\sigma_{i,j}}{2}}\right) 
\end{multline} where $p(x)$ is a sum of quotients of polynomials whose denominators are the products of powers of $x$ and $1-(e^{-\beta}-1)x$ and $m$ is the number of $i$ such that $\sigma_{i,j} = 0$ and is thus equal to $1  + \lambda - |\{\sigma_{i,j}\}_{> 0}|$.

To conclude the proof, we see that $\sum_{i=0}^\lambda m_i = \lambda$ since this is the sum of the number of children of all vertices of a tree on $\lambda$ edges.

\end{proof}

\bigskip


Notice that every monomial of $f^k$ is of the form $\prod_{i=1}^k u_i$ where $u_i$ is a monomial in $f$. To go from $f^k$ to $f_k$, in each of the monomials $\prod_{i=1}^k u_i$, we set $t, \lvl, \lvi$ to $\hat{t}_j, \hat{\lvl}_j, \hat{\lvi}_j$ and $n, \gvl, \gvi$ to $\hat{t}_0, \hat{\lvl}_0, \hat{\lvi}_0$, respectively, in $u_i$ from some $j$ for each $i$. After making this change, we can expand each of the $\hat{t}_j$, $\hat{\lvl}_j$, $\hat{\lvi}_j$ to express $\prod_{i=1}^k u_i$ as the sum of monomials in $t_i$, $\lvl_i$, $\lvi_i$. Notice that for every monomial formed in this way, $v_j$, $\Delta\left(v_j|\vec{t}\right) = \Delta\left.\left(\prod_{i=1}^k u_i\right|t, n\right)$, $\Delta\left(v_j|\vec{\lvi}\right) = \Delta\left.\left(\prod_{i=1}^k u_i\right|\lvi, \gvi\right)$ and $\Delta\left(v_j|\vec{\lvl}\right) \leq \Delta\left.\left(\prod_{i=1}^k u_i\right|\lvl, \gvi\right)$. We note that the inequality in the last relation occurs because $\hat{\lvl}_j = \sum_{i \in S_j}p_i - m_i$ may have a constant term. We however also note that the inequality is tight. \\

Let $\delta = \Delta(f|t, \lvl, \lvi, n, \gvl, \gvi)$. Towards simplifying the expression in Lemma \ref{M generating function expression lemma}, we establish the following lemmas.\\

\begin{lemma} \label{product of f observation}
	Fix $k \inN$. Consider $f^k = \sum_{j=1}^{L_k} w'_j \cdot v'_j$ in reduced form where $v'_j$ is a monomial. For any $1 \leq j \leq L_k$, the inequality
	
	$$\Delta(v'_j|t, \lvl, \lvi, n, \gvl, \gvi) = \Delta(v'_j| t, n) + \Delta(v'_j| \lvl, \gvl) + \Delta(v'_j| \lvi, \gvi) \leq \delta k$$ holds and is tight.
\end{lemma}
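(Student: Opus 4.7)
The plan is to prove the three assertions (the equality, the inequality $\le \delta k$, and the tightness claim) independently; all three are short once the right structural observations are made.

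First, for the equality $\Delta(v'_j\mid t,\lvl,\lvi,n,\gvl,\gvi) = \Delta(v'_j\mid t,n) + \Delta(v'_j\mid \lvl,\gvl) + \Delta(v'_j\mid \lvi,\gvi)$, I would simply observe that the six variables $t,\lvl,\lvi,n,\gvl,\gvi$ are pairwise distinct and the three sets $\{t,n\}$, $\{\lvl,\gvl\}$, $\{\lvi,\gvi\}$ are disjoint. Any monic monomial in these variables factors uniquely as a product of monic monomials, one in each of the three disjoint variable sets, and the total degree is additive across this factorization. So the equality is immediate from the definition of $\Delta$.

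Second, for the bound $\Delta(v'_j\mid t,\lvl,\lvi,n,\gvl,\gvi)\le \delta k$, I would invoke the standard fact that total degree is subadditive under multiplication: since $f$ has total degree $\delta$, the polynomial $f^k$ has total degree at most $\delta k$. In particular, writing $f^k$ in reduced form $\sum_{j=1}^{L_k} w'_j v'_j$, every monomial $v'_j$ that survives with nonzero coefficient $w'_j$ has total degree at most $\delta k$.

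Third, for tightness I would isolate the top-degree homogeneous component. Let $f_{\mathrm{top}}$ denote the sum of all terms of $f$ of total degree exactly $\delta$ (this is nonzero by the definition of $\delta$), and write $f = f_{\mathrm{top}} + g$ with $\deg(g) < \delta$. Expanding $f^k$ by the binomial formula, the part of $f^k$ that is homogeneous of degree exactly $\delta k$ is precisely $(f_{\mathrm{top}})^k$. Since $\R[t,\lvl,\lvi,n,\gvl,\gvi]$ is an integral domain and $f_{\mathrm{top}}\ne 0$, we have $(f_{\mathrm{top}})^k \ne 0$, so at least one monomial $v'_{j^*}$ of total degree exactly $\delta k$ appears in the reduced form of $f^k$ with nonzero coefficient. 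This exhibits tightness. The only subtle point is ruling out cancellation of the top-degree part; the integral-domain argument is what handles it, and I expect this to be the only step requiring any care, since degree-subadditivity and the disjointness of variable sets are essentially immediate.
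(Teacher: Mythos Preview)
Your proposal is correct and follows essentially the same approach as the paper: the upper bound is immediate from degree subadditivity, and tightness is obtained by isolating the top-degree homogeneous part of $f$ and invoking that the polynomial ring is an integral domain so that $(f_{\mathrm{top}})^k\neq 0$. Your treatment is slightly more explicit than the paper's, which omits the equality of the three partial degrees as obvious and simply states the bound ``should be clear,'' but the substantive tightness argument is identical.
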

\begin{proof}
The fact that $\delta k$ is an upper bound for the degree of monomials in the reduced form of $f^k$ should be clear. Every monomial of degree $\delta k$, must be achieved as the product of monomials of degree $\delta$ in $f$. Let $f = \sum_{j=1}^L v_j$ in reduced form such that the $v_{j_1} \cdots v_{j_m}$ are all the monomials on $f$ of degree $\delta k$. Since $\R[t, \lvl, \lvi][n, \gvl, \gvi]$ is an integral domain, $(w_{j_1}\cdot v_{j_1}+ \cdots + w_{j_m}\cdot v_{j_m})^k \neq 0$ since $w_{j_1}\cdot v_{j_1}+ \cdots + w_{j_m}\cdot v_{j_m} \neq 0$. Thus, in reduced form, there is at least one monomial of $f^k$ of degree $\delta k$.

\end{proof}

Recall $f_k = \sum_{j=1}^L w_j \cdot v_j$. Since every monomial in $f_k$, $v_j$, is formed from a monomial $f^k$ in the way we previously described, $\Delta(v_j|\vec{t}, \vec{\lvl},\vec{\lvi}) \leq \delta k$ and this inequality is tight. Clearly $v_n(i,j) = \Delta(v_j|t_i)$, thus $\sum_{i=0}^\lambda v_n(i,j) = \Delta(v_j|\vec{t})$. Similarly, $\sum_{i=0}^\lambda v_{d_0}(i,j) = \Delta(v_j|\vec{\lvl})$ and $\sum_{i=0}^\lambda v_{d_1}(i,j) = \Delta(v_j|\vec{\lvi})$. Thus, we achieve 
\begin{equation} \label{sigma_i,j bound}
\sum_{i=0}^\lambda \sigma_{i,j} = \sum_{i=0}^\lambda  v_n(i,j) + v_{d_0}(i,j) + v_{d_1}(i,j) + m_i \leq \delta k + \lambda
\end{equation} and this inequality is tight.

We will call $v_j$, a mononial of $f_k$, such that $\sum_{i=0}^\lambda\sigma_{i,j} = \delta k + \lambda$ a maximal monomial of $f_k$. We also call $u$, a monomial in $f$, such that $\Delta(u|t, \lvl, \lvi, n, \gvl, \gvi) = \delta$ a maximal monomial of $f$. We now consider, for a maximal monomial $v_j$, when $\sigma_{i,j} = 0$ for some $0 \leq i \leq \lambda$. \\

\begin{lemma} \label{when sigma_i,j = 0 lemma}
There exist a maximal monomial $v_j$ where $\sigma_{i,j} = 0$ for some $0 \leq i \leq \lambda$ if and only if, in the reduced form, $f$ has a maximal monomial, $u$, such that $\Delta(u|t, \lvl, \lvi) = 0$. Furthermore, when $f$ has a maximal monomial, $u$, such that $\Delta(u|t, \lvl, \lvi) = 0$, there exist $j$ where $\sigma_{i,j} = 0$ where vertex $i$ is a leaf of $T_s$.
\end{lemma}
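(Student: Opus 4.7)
The plan begins with two preliminary observations that shape the argument. First, $\sigma_{i,j} = v_{n}(i,j) + v_{d_0}(i,j) + v_{d_1}(i,j) + m_i \geq m_i$, so $\sigma_{i,j} = 0$ forces $m_i = 0$: vertex $i$ is a leaf of $T_s$. Since $\lambda \geq 1$ forces the root to have at least one child, the root is never a leaf, and any such $i$ satisfies $1 \leq i \leq \lambda$. This already reduces the ``furthermore'' clause to a consequence of mere existence. Second, for any leaf $i$ of $T_s$, $S_i = \{i\}$ and $m_i = 0$, so $\hat{t}_i = t_i$, $\hat{\lvl}_i = \lvl_i$, $\hat{\lvi}_i = \lvi_i$; the local variables of leaf $i$ appear undisguised in factor $i$ of the product defining $f_k$, and do not appear in $\hat{t}_j, \hat{\lvl}_j, \hat{\lvi}_j$ for any other $j \geq 1$ (since $i$ is a descendant only of itself among non-root vertices).

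For the reverse direction, suppose $v_j$ is maximal with $\sigma_{i^*,j} = 0$, so $i^*$ is a leaf. Because $v_j$ has total degree $\delta k$ in $\vec{t},\vec{\lvl},\vec{\lvi}$ and each substituted monomial of $f$ contributes at most $\delta$ to this total, every term contributing to the coefficient of $v_j$ must choose a maximal monomial of $f$ in each of the $k$ copies of $f$ inside $f_k$. Factor $i^*$ contains $c_{i^*}^* \geq 1$ of these copies. If any chosen maximal $u$ in those copies had $\Delta(u|t,\lvl,\lvi) > 0$, then under $(t,\lvl,\lvi) \mapsto (t_{i^*},\lvl_{i^*},\lvi_{i^*})$, $u$ would contribute a positive power of some $t_{i^*}, \lvl_{i^*}, \lvi_{i^*}$; no other substituted monomial can reduce that exponent, contradicting $v_n(i^*,j) = v_{d_0}(i^*,j) = v_{d_1}(i^*,j) = 0$. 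Hence $f$ has a maximal monomial $u$ with $\Delta(u|t,\lvl,\lvi) = 0$.

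For the forward direction, fix a maximal monomial $u = w \cdot n^{a}\gvl^{b}\gvi^{c}$ with $w \neq 0$ and $a+b+c = \delta$, and pick any leaf $i_0$ of $T_s$ (which exists since $\lambda \geq 1$). The strategy is to exhibit the specific monomial
$$v_j \;:=\; t_0^{ak}\,\lvl_0^{bk}\,\lvi_0^{ck}$$
of total degree $\delta k$ and show its coefficient in $f_k$ is nonzero. Its maximality is immediate and $\sigma_{i_0,j}=0$ since $i_0\neq 0$ forces the exponents of $t_{i_0},\lvl_{i_0},\lvi_{i_0}$ to vanish and $m_{i_0}=0$. Picking $u$ in each of the $k$ copies of $f$ in $f_k$ produces the summand $w^k \cdot \hat{t}_0^{ak}\hat{\lvl}_0^{bk}\hat{\lvi}_0^{ck}$; the top-degree part of this (the only part of relevance, since the target $v_j$ is maximal) is $w^k(\sum_{j} t_j)^{ak}(\sum_{j}\lvl_j)^{bk}(\sum_{j}\lvi_j)^{ck}$, from which the coefficient of $t_0^{ak}\lvl_0^{bk}\lvi_0^{ck}$ is exactly $w^k$.

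The main obstacle is ruling out cancellation from other choices. Any maximal monomial $u''$ of $f$ with $\Delta(u''|t,\lvl,\lvi) > 0$, picked in any factor $i \geq 1$, introduces a positive power of some $t_j,\lvl_j,\lvi_j$ with $j \geq 1$ (through $\hat{t}_i,\hat{\lvl}_i,\hat{\lvi}_i$, which are supported on $S_i \subseteq \{1,\ldots,\lambda\}$); this is incompatible with the root-supported target $v_j$. Any other maximal $u'$ with $\Delta(u'|t,\lvl,\lvi) = 0$ has the form $w'\cdot n^{a'}\gvl^{b'}\gvi^{c'}$ with $(a',b',c') \neq (a,b,c)$; using $u'$ in some copies and $u$ in the rest gives a top-degree part whose total $t$-, $\lvl$-, or $\lvi$-degree differs from $ak$, $bk$, or $ck$, so the contribution lands on a monomial different from our $v_j$. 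Therefore the coefficient of $v_j$ in $f_k$ is exactly $w^k \neq 0$, completing the proof.
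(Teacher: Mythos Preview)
Your approach mirrors the paper's: both directions rest on the observation that $\sigma_{i,j}=0$ forces $m_i=0$, so $i$ is a leaf of $T_s$, and that for a leaf $i$ the substitutions give $\hat t_i=t_i$, $\hat\lvl_i=\lvl_i$, $\hat\lvi_i=\lvi_i$. Your reverse direction is correct and essentially identical to the paper's argument.

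There is a genuine gap in your forward-direction cancellation argument. You target $v_j=t_0^{ak}\lvl_0^{bk}\lvi_0^{ck}$ and assert that its coefficient in $f_k$ is \emph{exactly} $w^k$. Your justification (``using $u'$ in some copies and $u$ in the rest gives a top-degree part whose total $t$-, $\lvl$-, or $\lvi$-degree differs'') only rules out mixing $u$ with \emph{one} other global-only maximal monomial. It does not handle mixes of three or more. For instance, if the maximal monomials of $f$ with $\Delta(\cdot\,|\,t,\lvl,\lvi)=0$ are $n\gvl$, $n^2$, $\gvl^2$ and you choose $u=n\gvl$, then for even $k$ the selection of $k/2$ copies of $n^2$ and $k/2$ copies of $\gvl^2$ also has total $\vec t$-degree $k$ and total $\vec\lvl$-degree $k$, so it contributes to the same target $v_j$; the coefficient is then not $w^k$.

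The repair is cheap. Either pick $u$ so that $(a,b,c)$ is lexicographically extremal among the global-only maximal monomials of $f$ (then no nontrivial mix can match all three degree totals), or argue abstractly: let $g(n,\gvl,\gvi)$ be the sum of those monomials; since $g\neq 0$ in an integral domain, $g^k\neq 0$, and any nonzero monomial $n^A\gvl^B\gvi^C$ of $g^k$ yields, by your own top-degree extraction, a maximal $t_0^A\lvl_0^B\lvi_0^C$ in $f_k$ with nonzero coefficient and $\sigma_{i_0,j}=0$ for every leaf $i_0$. (The paper's proof, for what it is worth, simply asserts that $t_0^{kl_n}\lvl_0^{kl_{d_0}}\lvi_0^{kl_{d_1}}$ appears and does not address cancellation at all; you were right to see that something needed checking.)
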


\begin{proof}
For each vertex $i$ corresponding to a non-leaf of $T_s$, $m_i \geq 1$, hence $\sigma_{i,j} \geq 1$. Let vertex $i$ be leaf vertex of $T_s$. 

Assume that every maximal monomial in the the reduced form of $f$ is such that $\Delta(u|t, \lvl, \lvi) \geq 1$. We have seen that $v_j$ is derived from a monomial of $f^k$, $\prod_{i=1}^k u_i$, where $u_i$ must be maximal monomial of $f$. To go from $\prod_{l=1}^k u_l$ to monomials in $f_k$, in at least one of $u_l$, $t, \lvl, \lvi$ are set to $\hat{t}_i = \sum_{l \in S_i} t_l = t_i$, $\hat{\lvl}_i = \sum_{l \in S_i} \lvl_l - m_l = \lvl_i$ and $\hat{\lvi}_i = \sum_{l \in S_i} \lvi_l = \lvi_i$, respectively. Assume (WLOG) $\Delta(u|t) \geq 1$. Thus $t_i$ divides every monomial of $f_k$ from $\prod_{i=1}^k u_i$, including $v_j$. Thus $\sigma_{i,j} \geq v_{n}(i,j) \geq 1$.

Assume, in the reduced form, $f$ has a maximal monomial, $u$, such that $\Delta(u|t, \lvl, \lvi) = 0$, thus $u(n,\gvl,\gvi) = n^{l_n}\gvl^{l_{d_0}}\gvi^{l_{d_1}}$ for some $l_n, l_{d_0}, l_{d_1}$ such that $l_n + l_{d_0} + l_{d_1} = \delta$. Consider $\prod_{l=1}^k u_l$, where $u_l = u$ for all $l$. 

Setting $n, \gvl, \gvi$ to $\hat{t}_0,\hat{\lvl}_0, \hat{\lvi}_0$, respectively, in $\prod_{l=1}^k u_l$ and expanding the term, we get 
$$\prod_{l=1}^k u_l = \hat{t}_0^{kl_n}\hat{\lvl}_0^{kl_{d_0}}\hat{\lvi}_0^{kl_{d_1}} = \left(\sum_{i=0}^\lambda t_i\right)^{kl_n}\left(\sum_{i=0}^\lambda \lvl_i\right)^{kl_{d_0}}\left(\sum_{i=0}^\lambda \lvi_i - m_i\right)^{kl_{d_1}} = t_0^{kl_{n}}\lvl_0^{kl_{d_0}}\lvi_0^{kl_{d_1}} + \cdots.$$ 
Thus $t_0^{kl_{n}}\lvl_0^{kl_{d_0}}\lvi_0^{kl_{d_1}}$ is maximal monomial of $f_k$ with $v_{n}(i,j) = v_{d_0}(i,j) = v_{d_1}(i,j) = 0$. Thus $\sigma_{i,j} = 0$. Notice that this holds whenever vertex $i$ is a leaf vertex of $T_s$.

\end{proof}

From Lemma \ref{M generating function expression lemma}, to achieve the most significant with respect to $\Psi$ in $M_k(f, \alpha, \beta, 0)(x)$, we must minimize $|\{\sigma_{i,j}\}_{> 0}|$. When $f$ does not have a maximal monomial, $u$, such that $\Delta(u|t, \lvl, \lvi) = 0$, $|\{\sigma_{i,j}\}_{> 0}| = \lambda + 1$. When $f$ has a maximal monomial, $u$, such that $\Delta(u|t, \lvl, \lvi) = 0$, $|\{\sigma_{i,j}\}_{> 0}|$ is at least the number of non-leaf vertices in $T_s$ (and this is tight). Thus, taking $T_s$ to be the tree where every non-root vertex is a leaf (the bush), we get $|\{\sigma_{i,j}\}_{> 0}| = 1$.

Finally, we achive the following lemma that tells us that in all cases, the term of $M_k(f, \alpha, \beta, 0)(x)$ from non-maximal monomial of $f_k$ are insignificant with respect to $\Psi$.\\

\begin{lemma} \label{most significan term is profuct of maximal monomials}
	The most significant terms in the sum for $M_k(f, \alpha, \beta, 0)(x)$ in Lemma \ref{M generating function expression lemma} must be achieve from maximal monomial of $f_k$ and $T_s \in T_k$.
\end{lemma}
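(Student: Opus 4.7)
The plan is to compare the exponent of $\Psi$ across all summands in Lemma \ref{M generating function expression lemma}. Writing $s := \sum_{i=0}^\lambda \sigma_{i,j}$ and $m := |\{\sigma_{i,j}\}_{>0}|$, each summand carries a factor $\Psi^{(m - 2s)/2}$. By Lemma \ref{dom sig lemma}, $\Psi(x, e^{-\alpha}, e^{-\beta})$ has its unique dominant singularity at $x = 1/\rho$, while the remaining factors ($A$, $B$, $X$, $Z_i$, $Z'$, and the $p(x)$ error term) are analytic on a punctured neighborhood of $x = 1/\rho$; hence the dominant contribution at $x = 1/\rho$ comes from summands that maximize $2s - m$. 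I will show this maximum is attained precisely when $v_j$ is a maximal monomial of $f_k$ and $T_s \in T_k$, with $T_k$ defined case-by-case below.

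For maximal $v_j$, (\ref{sigma_i,j bound}) gives $s = \delta k + \lambda$, so $2s - m = 2\delta k + 2\lambda - m$. Lemma \ref{when sigma_i,j = 0 lemma} controls $m$. In Case 1 (no maximal monomial $u$ of $f$ satisfies $\Delta(u|t, \lvl, \lvi) = 0$) every $\sigma_{i,j} \geq 1$, so $m = \lambda + 1$ and $2s - m = 2\delta k + \lambda - 1$ is maximized at $\lambda = k$, yielding $(2\delta + 1)k - 1 = V_k(f)$ for every $T_s \in \setofplanetrees_k$ (with $C = (1, \ldots, 1)$); this collection constitutes $T_k$ in Case 1. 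In Case 2 there exists a maximal $v_j$ with $\sigma_{i,j} = 0$ at every leaf of $T_s$, while $\sigma_{0,j}, \sigma_{i,j} \geq 1$ for the root and every non-root non-leaf vertex (forced by $m_0, m_i \geq 1$); so $m$ is minimized when $T_s$ has no non-root non-leaf vertex, i.e. $T_s$ is a bush, giving $m = 1$. Maximizing $2\delta k + 2\lambda - 1$ over $\lambda$ then yields $2(\delta + 1)k - 1 = V_k(f)$ at the $k$-edge bush with $C = (1, \ldots, 1)$; this bush is $T_k$ in Case 2.

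It remains to show $2s - m < V_k(f)$ for every non-maximal $v_j$. In Case 2, the naive bound $s \leq \delta k + \lambda - 1$ alone suffices, giving $2s - m \leq 2\delta k + 2\lambda - 3 < 2(\delta + 1)k - 1$. Case 1 is more delicate because a non-maximal $v_j$ could potentially have much smaller $m$, compensating for the lost $s$. Let $r$ be the number of indices $i \geq 1$ with $\sigma_{i,j} = 0$. Each such $i$ must be a leaf of $T_s$, and the factor $f(\hat t_i, \hat\lvl_i, \hat\lvi_i, \hat t_0, \hat\lvl_0, \hat\lvi_0)^{c_i^*}$ at that leaf must contribute only terms independent of $t_i, \lvl_i, \lvi_i$; in Case 1 this forces every one of the $c_i^*$ copies of $f$ to select a monomial involving only $n, \gvl, \gvi$, which must be non-maximal and hence of degree at most $\delta - 1$. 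A careful degree count then gives $\deg(v_j) \leq k\delta - r$, so $s \leq k\delta - r + \lambda$ and
\[
2s - m \leq 2(k\delta - r + \lambda) - (\lambda + 1 - r) = 2k\delta + \lambda - r - 1 < V_k(f)
\]
for $r \geq 1$; the residual subcase $r = 0$ uses $m = \lambda + 1$ together with $s \leq \delta k + \lambda - 1$ to get $2s - m \leq 2\delta k + \lambda - 3 < V_k(f)$. The main obstacle is exactly this degree bookkeeping in the Case 1, $r \geq 1$ subcase: one must verify uniformly across $(T_s, C)$ and across all choices of monomials at each factor that the degree contribution from the $r$ constrained leaves is really capped at $c_i^*(\delta - 1)$ each, which is where the Case 1 hypothesis ruling out maximal $(n, \gvl, \gvi)$-only monomials of $f$ is essential.
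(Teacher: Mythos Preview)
Your proposal is correct and follows essentially the same strategy as the paper's proof: both compare the $\Psi$-exponent $\frac{m-2s}{2}$ across summands, use (\ref{sigma_i,j bound}) and Lemma~\ref{when sigma_i,j = 0 lemma} to identify the optimum for maximal monomials, and then in Case~1 exploit the fact that each leaf $i$ with $\sigma_{i,j}=0$ forces the $c_i^*$ copies of $f$ there to select non-maximal monomials, costing at least one unit of degree. The only cosmetic difference is that you parametrize by $r$ (the number of leaves with $\sigma_{i,j}=0$) whereas the paper parametrizes by $s$ (the number of non-maximal factors $u_l$ in a representation $\prod_l u_l$ of $v_j$); since each such leaf contributes at least one non-maximal factor, $r\le s$ and the two bounds are equivalent.
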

\begin{proof} 
	When $f$ has a maximal monomial, $u$, such that $\Delta(u|t, \lvl, \lvi) = 0$, By Lemma \ref{when sigma_i,j = 0 lemma}, the maximum order with respect to $\Psi$ of a term in $M_k(f, \alpha, \beta, 0)(x)$ is $\frac{1 - 2(\delta k + k)}{2}$ (when $T_s$ is the bush on $k$ edges).
	
	Assume $f$ has no maximal monomial, $u$, such that $\Delta(u|t, \lvl, \lvi) = 0$. Notice that the order with respect to $\Psi$ of a term in $M_k(f, \alpha, \beta, 0)(x)$ from a maximal monomial of $f_k$ is $\frac{1 - 2\delta k - k}{2}$ (when $T_s \in T_k$). To possibly get a non-maximal monomial of $f_k$ which leads to a term in $M_k(f, \alpha, \beta, 0)(x)$ of higher order with respect to $\Psi$, we must minimize $|\{\sigma_{i,j}\}_{> 0}|$. 
	
	By similar argument to the latter part of the proof of Lemma \ref{when sigma_i,j = 0 lemma}, any monomial, $v_j$, of $f_k$ achieved from a monomial of $f^k$, $\prod_{i=1}^ku_i$, where each of the $u_i$ are maximal monomials of $f$, has $\sigma_{i, j} \geq 1$ for all $i$. Thus, we need only consider monomial, $v_j$, of $f_k$ where achieved from a monomial of $f^k$, $\prod_{i=1}^ku_i$, where at least one of the $u_i$ are maximal monomials of $f$.

	Consider any monomial, $v_j$, of $f_k$ achieved from a monomial of $f^k$, $\prod_{l=1}^ku_l$, where each of the $u_l$ are monomials of $f$ and at least one of which is non-maximal. Let $s \geq 1$ be the number of non-maximal $u_l$. By similar argument the former part of the proof of Lemma \ref{when sigma_i,j = 0 lemma}, the number of $i$ where $\sigma_{i, j} = 0$ is at most $s$. Clearly $\Delta(u_l|n,t, \lvl, \gvl, \lvi, \gvi) \leq \delta - 1$ for all non maximal $u_l$. Thus we see that $\sum_{i=0}^\lambda \sigma_{i,j} \leq \delta(k - s)  + (\delta-1)s + \lambda = \delta k  - s + \lambda$. Considering the order with respect to $\Psi$ of the term in $M_k(f, \alpha, \beta, 0)(x)$ from $v_j$, we see that
	\begin{eqnarray}
	\frac{|\{\sigma_{i,j}\}_{> 0}| - 2\sum_{i=0}^{\lambda}\sigma_{i,j}}{2} &\geq& \frac{1  + s - 2\delta k- \lambda}{2} > \frac{1 - 2\delta k - \lambda}{2}.
	\end{eqnarray} We notice that to minimize $\frac{1 - 2\delta k - \lambda}{2}$, we must maximize $\lambda \leq k$, which is achieved when $T_s \in \setofplanetrees_k$ proving the result.
	
\end{proof}

We now prove the main theorem using the above results.

\begin{proof}[Proof of Theorem \ref{mainTheorem01}]
From Lemma \ref{M generating function expression lemma}, we have an expression for $M_{k}(f, \alpha, \beta, 0)(x)$. We assume (WLOG) that the maximal monomials of $f_k$ are $v_1, \cdots , v_{L_{\text{max}}}$. We now break the proof into 2 cases.\\

\noindent Case 1: All the maximal monomial of $f$, $u$, are such that $\Delta(u|t, \lvl, \lvi) > 0$. By Lemma  \ref{when sigma_i,j = 0 lemma}, for every maximal monomial of $f_k$, $v_k$, $\sigma_{i,j} \geq 1$. By Lemma \ref{most significan term is profuct of maximal monomials}, to get the most significant term with respect to $\Psi$ we need only consider the maximal monomials of $f_k$ when $T_s \in \setofplanetrees_k$. For $(T_s, C) \in \setoftreecomppairs$, when $T_s \in \setofplanetrees_k$, $C = (1, 1, \cdots, 1)$. We now see that 
\begin{eqnarray}
M_{k}(f, \alpha,\beta, 0)(x) &=& \sum_{(T_s, C) \in \setoftreecomppairs}  \sum_{j=1}^L w_j \cdot \binom{k}{C} \prod_{i=0}^\lambda R_{i,j}(x, e^{-\alpha}, e^{-\beta})\\
&=& \sum_{\substack{T_s \in \setofplanetrees_k}}  W_{T_s}(f, x, a, b) \Psi^{\frac{1 - (2\delta + 1)k}{2}} + O\left(\Psi^{\frac{2 - (2\delta + 1)k}{2}}\right)\\
&=& W_f(x, \alpha, \beta) \Psi^{\frac{1- (2\delta + 1)k}{2}} + O\left(\Psi^{\frac{2- (2\delta + 1)k}{2}}\right)
\end{eqnarray} where, for $T_s \in \setofplanetrees_k$,
\begin{multline}
W_{T_s}(f, x, a, b) =  k!\sum_{j=1}^{L_{\text{max}}} w_j \cdot  A^{k}   \left(e^{-\alpha}A\right)^{ \sum_{i=0}^{\lambda}v_{d_0}({i,j})}\left(e^{-\beta}B\right)^{\sum_{i=0}^{\lambda} v_{d_1}({i,j})} \times\\
(xX)^{\sum_{i=0}^{\lambda} v_{n}({i,j})} \cdot  \left(\prod_{0 \leq i \leq k} Z_i(\sigma_{i,j}) \right)
\end{multline} and 
$W_{f}(x, a, b) = \sum_{T_s \in \setofplanetrees} W_{T_s}(f, x, a, b)$.

We now assume further that for any maximal monomial $u$ of $f$, $\Delta(u| t, n) = \delta_n$, $\Delta(u| \lvl, \gvl) = \delta_{d_0}$ and $\Delta(u| \lvi, \gvi) = \delta_{d_1}$ where $\delta_n$, $\delta_{d_0}$ and  $\delta_{d_1}$ are constants that depend only on $f$ and not $u$. Thus for any maximal monomial of $f_k$, $v_j$, we see that $\sum_{i=0}^\lambda v_n(i,j) = \Delta(v_j| \vec{t}) = \delta_nk$. Similarly, $\sum_{i=0}^\lambda v_{d_0}(i,j) = \delta_{d_0}k$ and $\sum_{i=0}^\lambda v_{d_1}(i,j) = \delta_{d_1}k$. Thus, 
\begin{multline}
W_{f}(x, a, b) =  \frac{A^{k}   \left(e^{-\alpha}A\right)^{\delta_{d_0}k}\left(e^{-\beta}B\right)^{\delta_{d_1}k}\left(xX\right)^{\delta_{n}k}}{x^{k + 1}(1 - (e^{-\beta}-1)x)^k}  \cdot \frac{k!}{2^{(2 + \delta)k + 1}} \times\\
\sum_{T_s \in \setofplanetrees}\sum_{j=1}^{L_{\text{max}}} w_j    \left(\prod_{{0 \leq i \leq k}} \frac{(2\sigma_{i,j} - 3)!!}{ m_i!} \right)
\end{multline}

\noindent Case 2: There exists a maximal monomial of $f$, $u$, are such that $\Delta(u|t, \lvl, \lvi) = 0$. Combining Lemma  \ref{when sigma_i,j = 0 lemma} and Lemma \ref{most significan term is profuct of maximal monomials}, to get the most significant term with respect to $\Psi$ we need only consider the maximal monomials of $f_k$ when $T_s$ is the bush on $k$ edges. Thus $C = (1, 1 , \cdots, 1)$. Assume $v_1, \cdots , v_{L_{\text{max}}}$ are the maximal monomials of $f_k$ where $\sigma_{i,j} = 0$ for all $i \neq 0$. We now see that
\begin{eqnarray}
M_{k}(f, \alpha,\beta, 0)(x) &=& \sum_{(T_s, C) \in \setoftreecomppairs}  \sum_{j=1}^L w_j \cdot \binom{k}{C} \prod_{i=0}^\lambda   R_{i,j}(x, e^{-\alpha}, e^{-\beta})\\
&=& W_f(x, \alpha, \beta) \Psi^{\frac{1- (2\delta + 2)k}{2}} + O\left(\Psi^{\frac{2- (2\delta + 2)k}{2}}\right)
\end{eqnarray} where
\begin{multline}
W_{f}(x, a, b) =  k!\sum_{j=1}^{L_{\text{max}}} w_j \cdot  A^{k}   \left(e^{-\alpha}A\right)^{ \sum_{i=0}^{\lambda}v_{d_0}({i,j})}\left(e^{-\beta}B\right)^{\sum_{i=0}^{\lambda} v_{d_1}({i,j})} \times\\
(xX)^{\sum_{i=0}^{\lambda} v_{n}({i,j})} \cdot  (Z')^k \cdot  Z_0.
\end{multline}

We now assume further that for any maximal monomial $u$ of $f$, $\Delta(u| t, n) = \delta_n$, $\Delta(u| \lvl, \gvl) = \delta_{d_0}$ and $\Delta(u| \lvi, \gvi) = \delta_{d_1}$ where $\delta_n$, $\delta_{d_0}$ and $\delta_{d_1}$ are constants that depend only on $f$ and not $u$. Thus for any maximal monomial of $f_k$, $v_j$, we see that $\sum_{i=0}^\lambda v_n(i,j) = \Delta(v_j| \vec{t}) = \lambda\delta_n$. Similarly, $\sum_{i=0}^\lambda v_{d_0}(i,j) = \lambda\delta_{d_0}$ and $\sum_{i=0}^\lambda v_{d_1}(i,j) = \lambda\delta_{d_1}$. Thus, 
\begin{multline}
W_{f}(x, a, b) = \frac{A^{k}   \left(e^{-\alpha}A\right)^{\delta_{d_0}k}\left(e^{-\beta}B\right)^{\delta_{d_1}k}\left(xX\right)^{\delta_{n}k}}{x}  \cdot \left(\frac{1 + (e^{-\alpha} - e^{-\beta})x}{x(1-(e^{-\beta}-1)x)}\right)^k \times\\  
\frac{(2(1+ \delta)k - 3)!!}{2^{(2+ \delta)k + 1}} \cdot\sum_{j=1}^{L_{\text{max}}} w_j.
\end{multline}

Let $\rho = e^{-\alpha} + e^{-\beta} + 2e^{-\frac{\alpha}{2}}$ and $\overline{\rho} = e^{-\alpha} + e^{-\beta} - 2e^{-\frac{\alpha}{2}}$. Notice that $M_{k}(f, \alpha,\beta, 0)(x)$ is the sum of products of derivatives of $G(x, e^{-\alpha}, e^{-\beta})$ and $G^*(x, e^{-\alpha}, e^{-\beta})$. Thus, by Lemma \ref{dom sig lemma}, the dominant singularity in $M_{k}(f, \alpha,\beta, 0)(z)$  occurs at $z = \frac{1}{\rho}$. Thus
\begin{eqnarray} \label{todo ref}
M_{k}(f, \alpha,\beta, 0)(z) &=& \frac{W_f(z, \alpha, \beta)}{(1 - \overline{\rho}z)^{V_k(f)}} \cdot (1 - \rho z)^{-V_k(f)} + O\left(p(x) \cdot (1 - \rho z)^{\frac{1}{2}-V_k(f)}\right)
\end{eqnarray} where $V_k(f) = \frac{(2\delta+ 1)k -1}{2}$ if for every monomial of $f$, $u$, such that $\Delta(u| t, n,  \lvl, \gvl, \lvi, \gvi) = \delta$, $\Delta(u| t, \lvl, \lvi) > 0$  and $V_k(f) = \frac{ 2(\delta+ 1)k - 1}{2}$ otherwise. Notice that $p(x)$ and $\frac{W_f(x, \alpha, \beta)}{(1 - \overline{\rho}x)^{V_k(f)}}$ are analytic in the disk $R = \left\{z \inC: |z| \leq \frac{1}{\rho}\right\}$. Thus by Taylor's Theorem, 
\begin{eqnarray}
M_{k}(f, \alpha,\beta, 0)(z) &=& \frac{W_f\left(\frac{1}{\rho}, \alpha, \beta \right)}{\left(1 - \frac{\overline{\rho}}{\rho}\right)^{V_k(f)}} \cdot (1 - \rho z)^{-V_k(f)} + O\left((1 - \rho z)^{\frac{1}{2}-V_k(f)}\right)
\end{eqnarray}

This completes the proof of the theorem.
\end{proof}

\bigskip

\begin{remark} \label{remark of Q_k} 
	Specifically, when $V_k(f) = \frac{(2\delta+ 1)k -1}{2}$,
    \begin{equation}
        Q_k(f, \delta_{d_0}, \delta_{d_1}, \alpha,\beta) = \sqrt{\rho e^{-\frac{\alpha}{2}}} \cdot \left(\frac{(e^{-\frac{\alpha}{2}}+1)^{\delta_{d_0} - 1} \cdot e^{-\frac{\alpha}{4}(2 \delta_{d_0}-1)} \cdot e^{-\beta \delta_{d_1}}}{\sqrt{\rho^{2\delta_{d_0} + 2\delta_{d_1} - 1}}}\right)^{k}
    \end{equation} 
	and, when $V_k(f) = \frac{2(\delta + 1)k -1}{2}$,
	\begin{equation}
	    Q_k(f, \delta_{d_0}, \delta_{d_1}, \alpha,\beta) = \sqrt{\rho e^{-\frac{\alpha}{2}}} \cdot \left(\frac{(e^{-\frac{\alpha}{2}}+1)^{\delta_{d_0}} \cdot e^{-\frac{\alpha \delta_{d_0}}{2}} \cdot e^{-\beta \delta_{d_1}}}{\rho^{\delta_{d_0} + \delta_{d_1}}}\right)^{k}.
	\end{equation}
\end{remark}

\bigskip

\begin{corollary} Fix $\alpha,\beta \inR$ and $f  \inR[t, \lvl, \lvi][n, \gvl, \gvi]$. Let $\delta = \Delta(f| t, n,  \lvl, \gvl, \lvi, \gvi)$. Let $V'(f) = \frac{2\delta+ 1}{2}$ if for every monomial of $f$, $u$, such that $\Delta(u| t, n,  \lvl, \gvl, \lvi, \gvi) = \delta$, $\Delta(u| t, \lvl, \lvi) > 0$  and $V'(f) = \frac{ 2(\delta+ 1)}{2}$ otherwise. There is a unique distribution with $k$th moment given by $$\lim_{n \rightarrow \infty}\E\left[\left(\frac{\mathcal{P}^f_{(\alpha, \beta, 0)}(\setofplanetrees_n)}{n^{V'(f)}}\right)^k\right].$$

\end{corollary}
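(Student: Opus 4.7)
The plan is to combine the explicit singularity analysis of Theorem \ref{mainTheorem01} with the transfer result of Corollary \ref{analytic combo corollary 1} and the normalizing-constant asymptotic of Corollary \ref{asymp normalizing const lemma} to write down the limiting moments in closed form, and then to verify Carleman's condition (Theorem \ref{Carleman's condition}) to conclude uniqueness.

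First, I would apply Corollary \ref{analytic combo corollary 1} to the decomposition $M_k(f, \alpha, \beta, 0)(x) = W(\alpha,\beta,f)(1 - \rho x)^{-V_k(f)} + O\bigl((1-\rho x)^{-V_k(f)+1/2}\bigr)$ from Theorem \ref{mainTheorem01}, which gives
\begin{equation*}
M_{k,n}(f,\alpha,\beta,0) = \frac{W(\alpha,\beta,f)}{\Gamma(V_k(f))} \rho^n n^{V_k(f)-1} + O\bigl(\rho^n n^{V_k(f)-3/2}\bigr).
\end{equation*}
Dividing by $\normConst_{(n,\alpha,\beta,0)} \sim \frac{\sqrt{e^{-\alpha/2}\rho}}{2\sqrt{\pi}}\,\rho^n n^{-3/2}$ from Corollary \ref{asymp normalizing const lemma} cancels the $\rho^n$ factor and shifts the exponent of $n$ by $3/2$. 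A direct check of the two cases of the definition shows $V_k(f) = V'(f) \cdot k - 1/2$, so $V_k(f) + 1/2 = V'(f) k$ matches the normalization in the statement exactly, and the limit
\begin{equation*}
m_k = \lim_{n\to\infty}\E\!\left[\!\left(\frac{\mathcal{P}^f_{(\alpha,\beta,0)}(\setofplanetrees_n)}{n^{V'(f)}}\right)^{\!k}\,\right] = \frac{2\sqrt{\pi}\,W(\alpha,\beta,f)}{\sqrt{e^{-\alpha/2}\rho}\,\Gamma(V_k(f))}
\end{equation*}
exists and is finite for each $k$.

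Next, I would verify Carleman's condition, i.e., that $\sum_{k \geq 1} m_{2k}^{-1/(2k)} = \infty$. By Remark \ref{remark of Q_k}, $W(\alpha,\beta,f) = Q_k \cdot P_k(f)$ where $Q_k$ has growth at most exponential in $k$, and $P_k(f)$ arises as a finite combinatorial sum over skeleton trees $T_s \in \setofplanetrees_{\leq k}$ and monomials of $f_k$ involving products of double factorials $(2\sigma_{i,j} - 3)!!$ constrained by $\sum_{i} \sigma_{i,j} \leq (\delta+1)k$. Stirling's formula applied both to these products and to $\Gamma(V_k(f)) \sim (V'(f) k /e)^{V'(f)k}$ in the denominator (note $V'(f) \geq 1$ always, since when $V'(f) = (2\delta+1)/2$ the hypothesis forces $\delta \geq 1$) should yield a bound $m_k = O\bigl((ck)^{k/2}\bigr)$. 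This gives $m_{2k}^{-1/(2k)} = \Omega(k^{-1/2})$, so the divergent series $\sum k^{-1/2}$ supplies Carleman's condition, establishing uniqueness.

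The main obstacle is the combinatorial growth bound on $P_k(f)$: the crude bound $(2\sigma-3)!! \leq (2(\delta+1)k)!$ is too loose and would overwhelm the gamma function in the denominator, so one must instead exploit that the $k+1$ quantities $\sigma_{i,j}$ summing to at most $(\delta+1)k$ cannot all be large simultaneously. A reassuring sanity check is the path length, which corresponds to $f = t$, hence $V'(f) = 3/2$; Theorem \ref{contactdistance - thm} independently confirms the $(ck)^{k/2}$ growth in this case and validates that Carleman's condition does hold (if only barely). An alternative route, in case the combinatorial bookkeeping becomes unwieldy, is to use the deterministic bound $|\mathcal{P}^f(T)| \leq C \cdot n(T)^{\delta+1}$ together with the observation $\delta + 1 - V'(f) \leq 1/2$ to compare against polynomial moments of $\mathcal{P}^{PL}$, though this requires more care to pass from the finite-$n$ bound to the limiting moments $m_k$.
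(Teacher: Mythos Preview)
Your extraction of the limiting moments via Theorem \ref{mainTheorem01}, Corollary \ref{analytic combo corollary 1}, and Corollary \ref{asymp normalizing const lemma} is correct and indeed gives the existence part. The difference from the paper is entirely in how Carleman's condition is verified.

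The paper does not attempt any combinatorial estimate on $P_k(f)$. Instead it bounds the random variable itself pointwise by something whose limiting moments are already known to satisfy Carleman, and it does this by the same two-case split that defines $V'(f)$. Using $\lvl,\lvi\le t$ and $\gvl,\gvi\le n$ one has $|f|\le\omega\sum_i t^{a_i}n^{\delta_i-a_i}$ with $\omega=\sum_i|w_i|$. In the case $V'(f)=\delta+1$ one takes $a_i=0$ throughout, so $\mathcal{P}^f(T)\le\omega\,n^{\delta+1}$ deterministically and the normalized variable is bounded by $\omega$; Carleman is then trivial. In the case $V'(f)=\delta+\tfrac12$ the hypothesis that every maximal monomial has positive local degree is exactly what lets one keep one factor of $t$, giving $|f|\le\omega\,t\,n^{\delta-1}$ and hence $\mathcal{P}^f(T)\le\omega\,n^{\delta-1}\mathcal{P}^{f''}(T)$ with $f''=t$. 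Dividing by $n^{V'(f)}$ reduces the $k$th moment to $\omega^k$ times the $k$th moment of $\mathcal{P}^{f''}_{(\alpha,\beta,0)}/n^{3/2}$; the paper then uses Theorem \ref{mainTheorem01} and Remark \ref{remark of Q_k} only for this single $f''$ to transfer to $(0,0,0)$, and finally invokes Lemma \ref{ssap are uniquely detrmined by moments} (since $\mathcal{P}^{f''}$ is simple subtree additive with toll identically $1$).

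Your ``alternative route'' is the paper's argument in embryo, but as you state it the bound $|\mathcal{P}^f(T)|\le C\,n^{\delta+1}$ is fatal in the first case: there $\delta+1-V'(f)=\tfrac12$, so the normalized variable is only bounded by $Cn^{1/2}$ and you get no moment control at all. The missing ingredient is precisely that one factor of $t$, which converts the crude polynomial bound into a comparison with the path length. Once you make that refinement, the combinatorial analysis of $P_k(f)$ becomes unnecessary.
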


\begin{proof}
We first note that $V_k(f) + \frac{1}{2} = V'(f)k$. Let $$f = \sum_{i = 1}^L w_i \cdot t^{a_{i,1}}\lvl^{a_{i,2}} \lvi^{a_{i,3}} n^{a_{i,4}} \gvl^{a_{i,5}} \gvi^{a_{i,6}}$$ where $w_i \inR$, $a_{i,j} \inZ_{\geq 0}$. We notice that for any tree, the number of leaves and the number of internal node is at most the number of edges in the tree. We thus see that for $t, \lvl, \lvi, n, \gvl, \gvi \inZ_{\geq 0}$ where $\lvl, \lvi \leq t$ and $\gvl, \gvi \leq n$,
\begin{eqnarray}
f &\leq& \sum_{i = 1}^L |w_i| \cdot t^{a_{i,1}}\lvl^{a_{i,2}} \lvi^{a_{i,3}} n^{a_{i,4}} \gvl^{a_{i,5}} \gvi^{a_{i,6}}\\
&\leq& \sum_{i = 1}^L |w_i| \cdot t^{\sum_{j = 1}^3 a_{i,j}} \cdot n^{\sum_{j = 4}^6 a_{i,j}}.
\end{eqnarray}

We now break the proof into 2 cases as we did for the proof of Theorem \ref{mainTheorem01}.\\

\noindent Case 1: All the maximal monomial of $f$, $u$, are such that $\Delta(u|t, \lvl, \lvi) > 0$.  For $t \leq n$,
\begin{equation}
f \leq \sum_{i = 1}^L |w_i| \cdot t \cdot n^{\sum_{j = 1}^6 a_{i,j} - 1} \leq \sum_{i = 1}^L |w_i| \cdot t \cdot n^{\delta - 1}.
\end{equation}
Let $f' = \omega \cdot t \cdot n^{\delta - 1}$ and $f'' = t$ where $\omega = \sum_{i = 1}^L |w_i|$. We thus see that $$\mathcal{P}^{f}(T) \leq \mathcal{P}^{f'}(T) = \omega \cdot n(T)^{\delta - 1}\sum_{v \in \overline{\mathcal{V}}}   n(T_v) = \omega \cdot n(T)^{\delta - 1} \cdot \mathcal{P}^{f''}(T) $$ for all $T \in \setofplanetrees_{\geq 0} $. Thus $$\E\left[\left(\frac{\mathcal{P}^f_{(\alpha, \beta, 0)}(\setofplanetrees_n)}{n^{V'(f)}}\right)^k\right] \leq \E\left[\left(\frac{\omega \cdot n^{\delta - 1} \cdot \mathcal{P}^{f''}_{(\alpha, \beta, 0)}(\setofplanetrees_n)}{n^{\frac{ 2\delta+ 1}{2}}}\right)^k\right] = \omega^k \cdot \E\left[\left(\frac{\mathcal{P}^{f''}_{(\alpha, \beta, 0)}(\setofplanetrees_n)}{\sqrt{n^3}}\right)^k\right].$$

Let $\rho = e^{-\alpha} + e^{-\beta} + 2e^{-\frac{\alpha}{2}}$ and $\overline{\rho} = e^{-\alpha} + e^{-\beta} - 2e^{-\frac{\alpha}{2}}$. By the defintion of $M_{k,n}(f'', \alpha, \beta, 0)$, $$\E\left[\left(\frac{\mathcal{P}^{f''}_{(\alpha, \beta, \gamma)}(\setofplanetrees_n)}{\sqrt{n^3}}\right)^k\right] = \frac{M_{k,n}(f'', \alpha, \beta, 0)}{\normConst_{(n,\alpha,\beta,0)} \cdot \sqrt{n^{3k}}}.$$ 

We now apply Theorem \ref{transfer lemma}, Theorem \ref{mainTheorem01}, and Corollary \ref{asymp normalizing const lemma}, to see that 
$$M_k = \lim_{n \rightarrow \infty}\E\left[\left(\frac{\mathcal{P}^{f''}_{(\alpha, \beta, 0)}(\setofplanetrees_n)}{\sqrt{n^3}}\right)^k\right] = \frac{W_f\left(\frac{1}{\rho}, \alpha, \beta \right)}{\left(1 - \frac{\overline{\rho}}{\rho}\right)^{\frac{3k - 1}{2}}}  \cdot \frac{2\sqrt{\pi}}{\sqrt{e^{-\frac{\alpha}{2}} \rho } \cdot \Gamma\left(\frac{3k - 1}{2}\right)}.$$

Applying Remark \ref{remark of Q_k}, we see that $$ \lim_{n \rightarrow \infty}\E\left[\left(\frac{\mathcal{P}^{f''}_{(\alpha, \beta, 0)}(\setofplanetrees_n)}{\sqrt{n^3}}\right)^k\right] = c_0 \cdot c_1^k \cdot \lim_{n \rightarrow \infty}\E\left[\left(\frac{\mathcal{P}^{f''}_{(0,0, 0)}(\setofplanetrees_n)}{\sqrt{n^3}}\right)^k\right]$$ where $c_0, c_1$ are constants. Note that $\mathcal{P}^{f''}$ is a simple subtree additive property with bounded toll function. Thus by \ref{ssap are uniquely detrmined by moments}, the moments of $\frac{\mathcal{P}^{f''}_{(0,0, 0)}(\setofplanetrees_n)}{\sqrt{n^3}}$ satisfy Carleman's condition (Theorem \ref{Carleman's condition}). This thus implies that the moments of $\frac{\mathcal{P}^f_{(\alpha, \beta, 0)}(\setofplanetrees_n)}{n^{V'(f)}}$ also satisfy Carleman's condition. Thus we get the desired result.\\

\noindent Case 2: There exists a maximal monomial of $f$, $u$, are such that $\Delta(u|t, \lvl, \lvi) = 0$. For $t \leq n$,
\begin{equation}
f \leq \sum_{i = 1}^L |w_i| \cdot n^{\sum_{j = 1}^6 a_{i,j}} \leq \sum_{i = 1}^L |w_i| \cdot n^{\delta}.
\end{equation}
Let $f' = \omega \cdot  n^{\delta}$ and $f'' = 1$ where $\omega = \sum_{i = 1}^L |w_i|$. We note that $\mathcal{P}^{f''}_{(\alpha, \beta, 0)}(T) = n$ for all $T \in \setofplanetrees_n$ (since this counted the number of subtrees in the tree). We thus see that $$\mathcal{P}^{f}(T) \leq \mathcal{P}^{f'}(T) = \omega \cdot n(T)^{\delta}\sum_{v \in \overline{\mathcal{V}}}   1 = \omega \cdot n(T)^{\delta} \cdot \mathcal{P}^{f''}(T) $$ for all $T \in \setofplanetrees_{\geq 0} $. Thus \begin{equation}\E\left[\left(\frac{\mathcal{P}^f_{(\alpha, \beta, 0)}(\setofplanetrees_n)}{n^{V'(f)}}\right)^k\right] \leq \E\left[\left(\frac{\omega \cdot n^{\delta} \cdot \mathcal{P}^{f''}_{(\alpha, \beta, 0)}(\setofplanetrees_n)}{n^{\frac{ 2(\delta+ 1)}{2}}}\right)^k\right] = \omega^k \cdot \E\left[\left(\frac{\mathcal{P}^{f''}_{(\alpha, \beta, 0)}(\setofplanetrees_n)}{n}\right)^k\right] = \omega^k.
\end{equation}

We now apply the Carleman's condition to get the desired result in this case.

\end{proof}


Fix $h \inN$. Let $\setofplanetrees^h_ n$ for $h \inN$ be the set of plane trees on $n$ edges with root degree at most $h$. We define $$M_{k,n}(\left. f, \alpha,\beta, \gamma\right| h) = \sum_{T \in \setofplanetrees^h_n} \mathcal{P}^f(T)^ke^{-E(T)}$$ and
$$M_{k}(\left. f, \alpha,\beta, \gamma\right| h)(x) = \sum_{n \geq 0} M_{k,n}(\left. f, \alpha,\beta, \gamma\right| h) x^n.$$

\begin{theorem} 
	\label{mainTheorem02}
	Fix $k \inZ_{\geq 0}$ and $\alpha,\beta, \gamma \inR$. Let $f  \inR[t, \lvl, \lvi][n, \gvl, \gvi]$. Let $\delta = \Delta(f| t, n,  \lvl, \gvl, \lvi, \gvi)$. Let $V_k(f) = \frac{(2\delta+ 1)k -1}{2}$ if for every monomial of $f$, $u$, such that $\Delta(u| t, n,  \lvl, \gvl, \lvi, \gvi) = \delta$, $\Delta(u| t, \lvl, \lvi) > 0$  and $V_k(f) = \frac{ 2(\delta+ 1)k - 1}{2}$ otherwise. We have
	$$M_{k}(\left. f, \alpha,\beta, \gamma\right| h)(x) =  \frac{J_h(\alpha,\beta, \gamma) \cdot W(\alpha, \beta, f)}{\left(1-\rho x\right)^{V_k(f)}} + O\left(\frac{1}{\left(1-\rho x\right)^{V_k(f) - \frac{1}{2}}}\right)$$ where $\rho = e^{-\alpha} + e^{-\beta} + 2e^{-\frac{\alpha}{2}}$, $J_h(\alpha,\beta, \gamma)$ is a constant independent of $f$ and $W(\alpha, \beta, f)$ is the same as in Theorem \ref{mainTheorem01}.

\end{theorem}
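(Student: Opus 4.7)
The plan is to follow the proof of Theorem \ref{mainTheorem01} step-for-step, making a single change at the root vertex of the skeleton tree in the decomposition $\phi^{-1}(T_s, C)$. Because trees in $\setofplanetrees^h_n$ are constrained to have root degree at most $h$, the local tree $T_0^L$ attached at the root of the skeleton must itself have root degree at most $h$, while the local trees $T_i^L$ for $i \geq 1$ remain unconstrained. Equivalently, the full generating series $G(x_0, a_0, b_0, c)$ that appeared in $R_0$ via (\ref{R_i i = 0}) and (\ref{primary recurrence G with root}) must be replaced by the truncated series
\begin{equation*}
G_{\leq h}(x_0, a_0, b_0, c) = \sum_{r=0}^h c^r G^*(x_0, a_0, b_0)^r,
\end{equation*}
and $c$ must be specialized to $e^{-\gamma}$ rather than $1$ so that the root degree contributes the weight $e^{-\gamma r}$. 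The identity (\ref{All together using R_i^j}) then carries over with the modified $R_0^{\leq h}$ in place of $R_0$.

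The only step sensitive to this replacement is the singular analysis at the root, which is a variant of Lemma \ref{differentation lemma 1}. By Lemma \ref{dom sig lemma}, $G^*$ has a square-root singularity at $x_0 = 1/\rho$, so any mixed derivative of total order $n \geq 1$ applied to $G^*$ gives a leading term of order $\Psi_0^{(1 - 2n)/2}$. Applying the Fa\`a di Bruno formula to a smooth function $F(G^*)$, a mixed derivative of total order $n$ indexed by a partition $\pi$ of the $n$ differentiation variables contributes at order $\Psi_0^{|\pi|/2 - n}$, which is most singular when $|\pi| = 1$ (all derivatives landing on a single factor of $G^*$), with prefactor $F'(G^*)$. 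For $F = G_{\leq h}$ the prefactor is $\sum_{r=1}^h r c^r (G^*)^{r-1}$; for $F = G = 1/(1-G^*)$ (the unbounded, $\gamma = 0$ case of Theorem \ref{mainTheorem01}, where $c = 1$) it is $1/(1-G^*)^2$. Evaluating both at $x_0 = 1/\rho$, $a_0 = e^{-\alpha}$, $b_0 = e^{-\beta}$, $c = e^{-\gamma}$ yields the multiplicative ratio
\begin{equation*}
J_h(\alpha, \beta, \gamma) = \bigl(1 - G^*(1/\rho, e^{-\alpha}, e^{-\beta})\bigr)^2 \sum_{r=1}^h r\, e^{-r\gamma}\bigl(G^*(1/\rho, e^{-\alpha}, e^{-\beta})\bigr)^{r-1},
\end{equation*}
manifestly a constant depending on $\alpha, \beta, \gamma, h$ but independent of $f$, $k$, $T_s$, and $m_0$.

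All remaining arguments — the singular-order bookkeeping in Lemma \ref{M generating function expression lemma}, the maximality considerations in Lemmas \ref{product of f observation}, \ref{when sigma_i,j = 0 lemma}, and \ref{most significan term is profuct of maximal monomials}, and the final case split at the end of the proof of Theorem \ref{mainTheorem01} — depend only on the singular orders of the $R_i$ and on the shape of the error terms, not on the precise multiplicative constant at the root. Since $R_0^{\leq h}$ differs from $R_0$ only by $J_h(\alpha, \beta, \gamma)$ at leading order and has the same singular order and error shape, every term contributing to the leading singular behaviour of $M_k(f, \alpha, \beta, \gamma \mid h)(x)$ picks up the identical factor $J_h(\alpha, \beta, \gamma)$, and a final application of Corollary \ref{analytic combo corollary 1} as in Theorem \ref{mainTheorem01} delivers the stated asymptotic. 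The main technical hurdle will be checking that the sub-leading multi-block Fa\`a di Bruno terms from expanding the root factor fit inside the stated $O\bigl((1-\rho x)^{1/2 - V_k(f)}\bigr)$ error uniformly in $f$; this is straightforward because $G_{\leq h}$ is a polynomial of bounded degree $h$ in $G^*$, so only finitely many partition types arise and each contributes a prefactor analytic at $x_0 = 1/\rho$ with singular factor at least $\sqrt{\Psi_0}$ less singular than the leading term.
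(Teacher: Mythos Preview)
Your proposal is correct and follows essentially the same route as the paper's proof: both restrict the root local tree $T_0^L$ to have root degree at most $h$, replace the generating function $G(x_0,a_0,b_0,c)$ at $i=0$ by the truncated $\sum_{r=0}^h (cG^*)^r$ with $c=e^{-\gamma}$, and observe that the leading singular term of the differentiated root factor changes only by a multiplicative constant independent of $f$, $k$, $T_s$, and $m_0$. The paper obtains this constant by directly expanding $G_h$ in powers of $\sqrt{\Psi}$ and reading off the coefficient of $\Psi^{1/2}$, whereas you use Fa\`a di Bruno on $F(G^*)$ and pick out the single-block partition; these are two packagings of the same computation, and the error control is identical in both.
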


\bigskip

\begin{proof}
This proof proceeds very similarly to the proof of Theorem \ref{mainTheorem01}. Thus we will only describe how the proof of this theorem differs from the latter proof.

Restricting the construction in the latter theorem to tree of root degree at most $h$, from $(\ref{R_i def})$, when $i = 0$, $$R_{i,h}(T_s, x_i, a_i, b_i, c) = \frac{1}{m_i!} \cdot \frac{\partial G_h(x_0,a_0,b_0, c)}{\partial a_0^{m_0}}$$ where $G_h(x_0,a_0,b_0, c)$ is the generating function for tree with root degree at most $h$ where the tree are weighted by number of edges, leaves, internal nodes and root degree. Adapting Theorem \ref{G recurrence}, we see that 
\begin{eqnarray}
G_h(x_0,a_0,b_0, c) &=& \sum_{r=0}^h[cG^*(x_0,a_0,b_0)]^r \nonumber\\
&=& \sum_{r=0}^h c^r \left[\left(\frac{1 + (a_0-b_0)x_0}{2(1-(b_0-1)x_0)}\right)^r - r\left(\frac{1 + (a_0-b_0)x_0}{2(1-(b_0-1)x_0)}\right)^{r-1} \Psi^{\frac{1}{2}}\right] + O(\Psi).
\end{eqnarray} Recall that $m_0 \geq 1$ (since for $k \geq 0$, the root of a tree with k edges has degree at least 1). Thus for, $i = 0$,

\begin{eqnarray}
R_{i,h}(T_s, x_i, a_i, b_i, c) &=& R_{i}(T_s, x_i, a_i, b_i, c) \cdot W_h(x_i, a_i, b_i, c) + O\left(\Psi^{\frac{2 - 2m_i}{2}}\right)
\end{eqnarray} where $W_h(x_i, a_i, b_i, c) = \sum_{r=0}^h c^r  \cdot r \cdot 2x_i\left(\frac{1 + (a_i-b_i)x_i}{2(1-(b_i-1)x_i)}\right)^{r-1}$ and $R_{i}(T_s, x_i, a_i, b_i, c)$ is as in (\ref{R_i i = 0}). Thus, we adapt Theorem \ref{differentation lemma 1} to get that for $i = 0$,
    \begin{eqnarray}
    \mathcal{D}^{u_x}_{x_i}\mathcal{D}^{u_a}_{a_i}\mathcal{D}^{u_b}_{b_i}\left[a_i^{m_i}R_{i,h}(T_s, x_i, a_i, b_i, c_i)\right] &=& W_h(x_i, a_i, b_i, c) \cdot \mathcal{D}^{u_x}_{x_i}\mathcal{D}^{u_a}_{a_i}\mathcal{D}^{u_b}_{b_i}\left[a_i^{m_i}R_i(T_s, x_i, a_i, b_i, c_i)\right]\nonumber \\
    && \quad \quad \quad \quad + O\left(p_i(x_i) \cdot \Psi_i^{\frac{2- 2(u+m_i)}{2}} \right),
    \end{eqnarray} where $u_x, u_a, u_b, \inZ_{\geq 0}$ and $u = u_x + u_a + u_b$.
    
From this point, it should be clear that 
\begin{equation}
    M_{k}(\left. f, \alpha,\beta, \gamma \right| h )(x) = W_h(x, e^{-\alpha}, e^{-\beta}, e^{-\gamma}) \cdot M_{k}(f, \alpha,\beta, \gamma )(x) + O\left((1 - \rho x)^{\frac{1}{2}-V_k(f)} \right).
\end{equation}

We see that $W_h(z, e^{-\alpha}, e^{-\beta}, e^{-\gamma})$ is analytic in the disk $R = \left\{z \inC: |z| \leq \frac{1}{\rho}\right\}$ (since $W_h(x, e^{-\alpha}, e^{-\beta}, e^{-\gamma}) $ is a finite sum). Thus 
\begin{equation}
    M_{k}(\left. f, \alpha,\beta, \gamma \right| h )(x) = W_h\left(\frac{1}{\rho}, e^{-\alpha}, e^{-\beta}, e^{-\gamma}\right) \cdot M_{k}(f, \alpha,\beta, \gamma )(x) + O\left((1 - \rho x)^{\frac{1}{2}-V_k(f)} \right).
\end{equation} which is the desired result.

\end{proof}

\bigskip

Let $\mathcal{P}^f_{(\alpha, \beta, \gamma)}(\setofplanetrees^h_n)$ be defined similarly to $\mathcal{P}^f_{(\alpha, \beta, \gamma)}(\setofplanetrees_n)$ conditioned on the tree chosen having root degree at most $h$. From the above theorem, we can conclude the following.

\begin{theorem} \label{maintheorem3}
Let $\alpha,\beta, \gamma \inR$ and $f  \inR[t, \lvl, \lvi][n, \gvl, \gvi]$. Assume every monomial of $f$, $u$, such that $\Delta(u| t, n,  \lvl, \gvl, \lvi, \gvi) = \delta$ is such that  $\Delta(u| t, n) = \delta_n$, $\Delta(u| \lvl, \gvl) = \delta_{d_0}$ and $\Delta(u| \lvi, \gvi) = \delta_{d_1}$ where $\delta_n$, $\delta_{d_0}$ and $\delta_{d_1}$ are constants that depend only on $f$. Let $V'(f) = \frac{2\delta+ 1}{2}$ if for every monomial of $f$, $u$, such that $\Delta(u| t, n,  \lvl, \gvl, \lvi, \gvi) = \delta$, $\Delta(u| t, \lvl, \lvi) > 0$  and $V'(f) = \delta+ 1$ otherwise. We have

$$\frac{\mathcal{P}^f_{(\alpha, \beta, \gamma)}(\setofplanetrees^h_ n)}{n^{V'(f)}} \overset{d}{\longleftrightarrow}  \frac{\mathcal{P}^f_{(\alpha, \beta, 0)}(\setofplanetrees_ n)}{n^{V'(f)}}  \overset{d}{\longleftrightarrow} \frac{Q(f, \alpha, \beta) \cdot \mathcal{P}^f_{(0,0,0)}(\setofplanetrees_ n)}{n^{V'(f)}} $$ 
where, for $V'(f) = \frac{2\delta+ 1}{2}$, $$Q(f, \alpha, \beta) = 2^{\delta_{d_0} + 2\delta_{d_1}} \cdot \frac{ (e^{-\frac{\alpha}{2}}+1)^{\delta_{d_0} - 1} \cdot e^{-\frac{\alpha}{4}(2 \delta_{d_0}-1)} \cdot e^{-\beta \delta_{d_1}}}{\sqrt{\rho^{2\delta_{d_0} + 2\delta_{d_1} - 1}}}$$
and, for $V'(f) = \delta+ 1$, $$Q(f, \alpha, \beta) = 2^{\delta_{d_0} + 2\delta_{d_1}} \cdot \frac{(e^{-\frac{\alpha}{2}}+1)^{\delta_{d_0}} \cdot e^{-\frac{\alpha \delta_{d_0}}{2}} \cdot e^{-\beta \delta_{d_1}}}{\rho^{\delta_{d_0} + \delta_{d_1}}}.$$
	
\end{theorem}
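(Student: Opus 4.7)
The plan is to compute the limits of the moments of each of the three rescaled random variables in the statement, verify that the three limits coincide (after inserting the scalar $Q(f,\alpha,\beta)^k$), and then invoke Carleman's condition to upgrade moment convergence to convergence in distribution. First I will compare the middle and right-hand sides. Writing
$$\E\left[\left(\frac{\mathcal{P}^f_{(\alpha,\beta,0)}(\setofplanetrees_n)}{n^{V'(f)}}\right)^k\right] = \frac{M_{k,n}(f,\alpha,\beta,0)}{n^{V'(f)k}\cdot \normConst_{(n,\alpha,\beta,0)}},$$
I will apply Theorem \ref{mainTheorem01} together with Corollary \ref{analytic combo corollary 1} to extract the asymptotics of the numerator, and use Corollary \ref{asymp normalizing const lemma} for the denominator. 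Since $V_k(f) = V'(f)k - \tfrac{1}{2}$ in both cases of the definition, the powers of $n$ cancel after dividing by $n^{V'(f)k}$, and the limit simplifies to
$$\lim_{n\to\infty}\E\left[\left(\frac{\mathcal{P}^f_{(\alpha,\beta,0)}(\setofplanetrees_n)}{n^{V'(f)}}\right)^k\right] = \frac{2\sqrt{\pi}\,W(\alpha,\beta,f)}{\Gamma(V_k(f))\sqrt{e^{-\alpha/2}\rho}}.$$

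Next I will form the ratio of this limit to its value at $(\alpha,\beta)=(0,0)$. Because $W(\alpha,\beta,f)=Q_k(f,\delta_{d_0},\delta_{d_1},\alpha,\beta)\cdot P_k(f)$ with $P_k(f)$ independent of $\alpha,\beta$, the factors $P_k(f)$ and $\Gamma(V_k(f))$ cancel in the ratio. Substituting the explicit formula for $Q_k$ from Remark \ref{remark of Q_k}, the leading $\sqrt{\rho\,e^{-\alpha/2}}$ in $Q_k$ cancels the $\sqrt{e^{-\alpha/2}\rho}$ contributed by the normalizing constant, in both numerator and denominator, so only the $k$th-power factor survives. Using $\rho = 4$ and $e^{0}=1$ at $(0,0)$, a direct arithmetic check (carried out separately for each of the two cases of $V'(f)$) shows the surviving quotient equals exactly $Q(f,\alpha,\beta)^k$, which establishes equivalence in moments between the middle and right-hand random variables.

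For the left-hand side, I will repeat the calculation using Theorem \ref{mainTheorem02} in place of Theorem \ref{mainTheorem01}, with the $h$-restricted normalizing constant $M_{0,n}(f,\alpha,\beta,\gamma\mid h)$ as the denominator. The crucial observation is that the prefactor $J_h(\alpha,\beta,\gamma)$ is independent of $k$ and so appears identically in both numerator and denominator of the ratio $M_{k,n}(f,\alpha,\beta,\gamma\mid h)/M_{0,n}(f,\alpha,\beta,\gamma\mid h)$; it therefore cancels. The resulting limiting moments coincide with those already computed for $\mathcal{P}^f_{(\alpha,\beta,0)}(\setofplanetrees_n)/n^{V'(f)}$, which proves the first equivalence.

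Finally, to upgrade moment convergence to convergence in distribution, I will invoke Carleman's condition (Theorem \ref{Carleman's condition}): the corollary appearing between Theorems \ref{mainTheorem01} and \ref{mainTheorem02} already verifies that the limiting distribution of $\mathcal{P}^f_{(\alpha,\beta,0)}(\setofplanetrees_n)/n^{V'(f)}$ is uniquely determined by its moments, and the other two limits differ from this one only by a positive scalar factor $Q(f,\alpha,\beta)$ or by passage to the $h$-bounded ensemble (whose moments agree), so Carleman's condition is preserved. The main obstacle I foresee is purely computational, namely carefully bookkeeping the factors of $\sqrt{\rho}$, $e^{-\alpha/2}$, and the powers of $2$ when matching the ratio of $Q_k$ expressions from Remark \ref{remark of Q_k} against the target constant $Q(f,\alpha,\beta)^k$ in each of the two cases for $V'(f)$.
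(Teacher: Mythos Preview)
Your proposal is correct and follows essentially the same route as the paper's proof: extract moment asymptotics from Theorems \ref{mainTheorem01} and \ref{mainTheorem02}, use the factorization $W(\alpha,\beta,f)=Q_k(\cdot)\,P_k(f)$ so that $P_k(f)$ and $\Gamma(V_k(f))$ cancel in the ratio, observe that $J_h(\alpha,\beta,\gamma)$ cancels between numerator and denominator in the bounded-root-degree case, and conclude via Carleman's condition. The only cosmetic difference is that the paper handles the normalizing constant by applying Theorem \ref{mainTheorem01} to the polynomial $f=1$ (so that $M_{k,n}(1,\alpha,\beta,0)=n^k\normConst_{(n,\alpha,\beta,0)}$ and both numerator and denominator live in the same framework), whereas you invoke Corollary \ref{asymp normalizing const lemma} directly; the two computations are equivalent and your explicit appeal to the Carleman-condition corollary is, if anything, slightly more careful than the paper's.
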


\bigskip

\begin{proof}

We notice that $M_{k,n}(1,\alpha, \beta, \gamma) = n^k\normConst_{(n, \alpha, \beta, \gamma)}$. For a random variable $X$, let $\text{Mom}_k(X)$ the $k$th moment of $X$. Fixed $n \inZ_{\geq 0}$.  By definition, $$\text{Mom}_k(\mathcal{P}^f_{(\alpha, \beta, 0)}(\setofplanetrees_ n)) = \frac{M_{k,n}(f,\alpha, \beta, 0)}{\normConst_{(n, \alpha, \beta, 0)}} =  \frac{n^k \cdot M_{k,n}(f,\alpha, \beta, 0)}{M_{k,n}(1,\alpha, \beta, 0)}$$

We now apply Theorem \ref{mainTheorem01} and use the standard expression for the asymptotic coefficients of the Taylor series expansion of $(1-x)^k$ for $k \inC$. We use the Transfer Theorem from \cite{flajolet2009analytic}, to get $$M_{k,n}(f, \alpha, \beta, 0) = \frac{Q_k(f, \delta_{d_0}, \delta_{d_1}, \alpha,\beta) \cdot P_{k}(f) \cdot n^{V_k(f)-1} \cdot (e^{-\alpha} + e^{-\beta}+ 2e^{-\frac{\alpha}{2}})^n}{\Gamma(V_k(f))}$$ where we only consider the most significant terms.

Notice that $V_k(1) = \frac{2k-1}{2}$ and every maximal monomial of $1$ is independent of $t, \lvl, \lvi$. Thus 
$$\text{Mom}_k(\mathcal{P}^f_{(\alpha, \beta, 0)}(\setofplanetrees_ n)) =  \frac{Q_k(f, \delta_{d_0}, \delta_{d_1}, \alpha,\beta) \cdot P_{k}(f) \cdot \Gamma(V_k(1))}{Q_k(1, 0,0, \alpha,\beta) \cdot P_{k}(1) \cdot \Gamma(V_k(f)) } \cdot n^{V_k(f)+ \frac{1}{2}} + O\left(n^{V_k(f)}\right)$$

Notice that $V'(f)k = V_k(f)+ \frac{1}{2}$. Thus 

$$\lim_{n\rightarrow \infty}\text{Mom}_k\left(\frac{\mathcal{P}^f_{(\alpha, \beta, 0)}(\setofplanetrees_ n)}{n^{V'(f)}}\right) =  \frac{Q_k(f, \delta_{d_0}, \delta_{d_1}, \alpha,\beta) \cdot P_{k}(f) \cdot \Gamma(V_k(1))}{Q_k(1, 0,0, \alpha,\beta) \cdot P_{k}(1) \cdot \Gamma(V_k(f)) }.$$ 

We now see that 
$$\text{Mom}_k(\mathcal{P}^f_{(\alpha, \beta, \gamma)}(\setofplanetrees_n^h)) = \frac{n^k \cdot M_{k,n}(f,\alpha, \beta, \gamma|h)}{M_{k,n}(1,\alpha, \beta, \gamma|h)}$$ 

Using Theorem \ref{mainTheorem02} and nearly identical computation to above, we see that 

$$\lim_{n\rightarrow \infty}\text{Mom}_k\left(\frac{\mathcal{P}^f_{(\alpha, \beta, \gamma)}(\setofplanetrees^h_ n)}{n^{V'(f)}}\right) =  \frac{Q_k(f, \delta_{d_0}, \delta_{d_1}, \alpha,\beta) \cdot P_{k}(f) \cdot \Gamma(V_k(1))}{Q_k(1, 0,0, \alpha,\beta) \cdot P_{k}(1) \cdot \Gamma(V_k(f)) }.$$ 

Finally, we set that \begin{eqnarray*} 
\lim_{n\rightarrow \infty}\text{Mom}_k\left(\frac{\mathcal{P}^f_{(\alpha, \beta, 0)}(\setofplanetrees_ n)}{n^{V'(f)}}\right) &=& \frac{Q_k(f, \delta_{d_0}, \delta_{d_1}, \alpha,\beta) \cdot P_{k}(f) \cdot \Gamma(V_k(1))}{Q_k(1, 0,0, \alpha,\beta) \cdot P_{k}(1) \cdot \Gamma(V_k(f))}\\
&=& \frac{Q_k(f, \delta_{d_0}, \delta_{d_1}, \alpha,\beta) \cdot Q_k(1, 0,0,0,0)}{Q_k(f, \delta_{d_0}, \delta_{d_1}, 0,0) \cdot Q_k(1, 0,0, \alpha,\beta)} \cdot \lim_{n\rightarrow \infty}\text{Mom}_k\left(\frac{\mathcal{P}^f_{(0, 0, 0)}(\setofplanetrees_ n)}{n^{V'(f)}}\right)\\
&=&  \lim_{n\rightarrow \infty}\text{Mom}_k\left(Q(f, \alpha, \beta) \cdot \frac{\mathcal{P}^f_{(0, 0, 0)}(\setofplanetrees_ n)}{n^{V'(f)}}\right)
\end{eqnarray*} where $Q(f, \alpha, \beta) = \sqrt[k]{\frac{Q_k(1, 0,0,0,0)}{Q_k(f, \delta_{d_0}, \delta_{d_1}, 0,0)} \cdot \frac{Q_k(f, \delta_{d_0}, \delta_{d_1}, \alpha,\beta)}{Q_k(1, 0,0, \alpha,\beta)}}$ is a constant independent of $k$.

\end{proof}

\bigskip

\begin{corollary} \label{full dist corr CD}
Let $\alpha,\beta, \gamma \inR$. For any $h \inN$, we have  

$$\frac{\mathcal{P}^{PL}_{(\alpha, \beta, \gamma)}(\setofplanetrees^h_ n)}{\sqrt{2n^3}} \quad \overset{d}{\longleftrightarrow}  \quad  \frac{\mathcal{P}^{PL}_{(\alpha, \beta, 0)}(\setofplanetrees_ n) }{\sqrt{2n^3}} \quad \overset{d}{\to} \quad \frac{\sqrt{\rho}}{ (e^{-\frac{\alpha}{2}}+1) \cdot e^{-\frac{\alpha}{4}}} \int_0^1e(t) \: dt$$ where $e(t)$ is a normalized Brownian excursion on $[0, 1]$.
\end{corollary}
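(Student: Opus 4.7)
The plan is to recognize the path length as a subtree additive property of the form $\mathcal{P}^f$ covered by Theorem \ref{maintheorem3}, and then substitute the constants. From the example in Section \ref{sec:examples}, $\mathcal{P}^{PL}(T) = \sum_{v \in \overline{\mathcal{V}}(T)} \mathcal{P}^v(T_v)$, and since $\mathcal{P}^v(T_v) = n(T_v) + 1$, this is exactly $\mathcal{P}^f(T)$ for the polynomial $f(t, \lvl, \lvi, n, \gvl, \gvi) = t + 1$. Thus the machinery of the previous subsection applies directly, and the only real work is identifying the right invariants of $f$ and plugging them in.

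Next, I would compute the parameters entering Theorem \ref{maintheorem3}. The polynomial $f = t + 1$ has total degree $\delta = \Delta(f|t,n,\lvl,\gvl,\lvi,\gvi) = 1$, with the unique maximal monomial $u = t$. Since $\Delta(t|t, \lvl, \lvi) = 1 > 0$, we fall into the first case of Theorem \ref{maintheorem3}, giving $V'(f) = \frac{2\delta+1}{2} = \frac{3}{2}$ and hence $n^{V'(f)} = \sqrt{n^3}$. Reading the exponents of $u = t$ off directly, $\delta_n = 1$, $\delta_{d_0} = 0$, $\delta_{d_1} = 0$. Substituting into the explicit formula for $Q(f,\alpha,\beta)$ in the first case yields
\[
Q(f,\alpha,\beta) = 2^{0} \cdot \frac{(e^{-\alpha/2}+1)^{-1} \cdot e^{\alpha/4} \cdot 1}{\sqrt{\rho^{-1}}} = \frac{\sqrt{\rho}}{(e^{-\alpha/2}+1)\cdot e^{-\alpha/4}},
\]
exactly the constant appearing in the statement.

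Finally, applying Theorem \ref{maintheorem3} to $\mathcal{P}^f = \mathcal{P}^{PL}$ (and dividing both scalings by the harmless constant $\sqrt{2}$ to convert $\sqrt{n^3}$ to $\sqrt{2n^3}$) gives the two equivalences in distribution in the displayed chain. Theorem \ref{contactdistance - thm} then identifies $\mathcal{P}^{PL}_{(0,0,0)}(\setofplanetrees_n)/\sqrt{2n^3}$ asymptotically with $\int_0^1 e(t)\,dt$, so multiplying by $Q(f,\alpha,\beta)$ produces the stated limit.

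The main obstacle is administrative rather than conceptual: one must verify that the auxiliary monomial $1$ in $f = t + 1$ does not violate the uniformity hypothesis of Theorem \ref{maintheorem3} requiring every degree-$\delta$ monomial to share the same triple $(\delta_n, \delta_{d_0}, \delta_{d_1})$. This is vacuous here, since $1$ has degree $0 < \delta = 1$ and is therefore not among the maximal monomials; only $u = t$ is, and the hypothesis holds trivially. Once this check is in place, every remaining step is mechanical substitution into the theorems already proved.
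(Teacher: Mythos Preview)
Your proposal is correct and follows exactly the paper's approach: the paper's own proof is the one-liner ``We take $f = t+1$ then apply Theorem \ref{maintheorem3} and Theorem \ref{contactdistance - thm},'' and you have simply unpacked that by verifying $\delta = 1$, $V'(f) = \tfrac{3}{2}$, $(\delta_n,\delta_{d_0},\delta_{d_1}) = (1,0,0)$, and substituting into the case-one formula for $Q(f,\alpha,\beta)$. Your final remark that the constant term $1$ is not maximal and so the uniformity hypothesis on maximal monomials is vacuously satisfied is the right sanity check.
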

\begin{proof}
	We take $f = t+1$ then apply Theorem $\ref{maintheorem3}$ and Theorem \ref{contactdistance - thm}.
	
\end{proof}

\bigskip

\begin{corollary}
	Let $\alpha,\beta, \gamma \inR$. For any $h \inN$, we have  
	
	\begin{multline*}
	    \frac{\mathcal{P}^{WI}_{(\alpha, \beta, \gamma)}(\setofplanetrees^h_ n)}{\sqrt{2n^5}} \quad \overset{d}{\longleftrightarrow}  \quad  \frac{\mathcal{P}^{WI}_{(\alpha, \beta, 0)}(\setofplanetrees_ n) }{\sqrt{2n^5}}\\
	    \overset{d}{\to} \quad \frac{\sqrt{\rho}}{ (e^{-\frac{\alpha}{2}}+1) \cdot e^{-\frac{\alpha}{4}}} \int \int_{0 < s < t < 1} (e(s) + e(t) - 2\min_{s \leq u \leq t}e(u)) \: ds \: dt
	\end{multline*}
	where $e(t)$ is a normalized Brownian excursion on $[0, 1]$.
\end{corollary}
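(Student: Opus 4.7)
The plan is to apply Theorem \ref{maintheorem3} directly, exactly as was done for the path length in Corollary \ref{full dist corr CD}, using the observation that the Wiener index is already expressed as a subtree additive property $\mathcal{P}^f$ for an explicit polynomial $f$. From the Wiener index example in Section \ref{sec:examples}, we have
$$\mathcal{P}^{WI}(T) = \sum_{v \in \overline{\mathcal{V}}(T)} \mathcal{P}^v(T_v)\bigl(\mathcal{P}^v(T) - \mathcal{P}^v(T_v)\bigr),$$
so with $t, n$ denoting edge counts in $T_v$ and $T$ respectively (and noting $\mathcal{P}^v(T_v) = t+1$, $\mathcal{P}^v(T) = n+1$), we can take $f(t, \lvl, \lvi, n, \gvl, \gvi) = (t+1)(n-t) = tn + n - t^2 - t \in \R[t, \lvl, \lvi][n, \gvl, \gvi]$.

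Next, I would verify the hypotheses of Theorem \ref{maintheorem3}. The total degree is $\delta = \Delta(f | t, n, \lvl, \gvl, \lvi, \gvi) = 2$, and the maximal monomials are $tn$ and $-t^2$. For both of these, $\Delta(u | t, \lvl, \lvi) > 0$ (each contains $t$), so we are in the first case of Theorem \ref{maintheorem3} with $V'(f) = \frac{2\delta+1}{2} = \frac{5}{2}$. Moreover, each maximal monomial satisfies $\Delta(u | t, n) = 2$, $\Delta(u | \lvl, \gvl) = 0$, and $\Delta(u | \lvi, \gvi) = 0$, so $\delta_n = 2$ and $\delta_{d_0} = \delta_{d_1} = 0$ are well-defined constants depending only on $f$. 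Substituting $\delta_{d_0} = \delta_{d_1} = 0$ into the first formula for $Q(f, \alpha, \beta)$ gives
$$Q(f, \alpha, \beta) = \frac{(e^{-\alpha/2}+1)^{-1} \cdot e^{\alpha/4}}{\sqrt{\rho^{-1}}} = \frac{\sqrt{\rho}}{(e^{-\alpha/2}+1) \cdot e^{-\alpha/4}}.$$

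Applying Theorem \ref{maintheorem3} then yields
$$\frac{\mathcal{P}^{WI}_{(\alpha,\beta,\gamma)}(\setofplanetrees^h_n)}{\sqrt{n^5}} \overset{d}{\longleftrightarrow} \frac{\mathcal{P}^{WI}_{(\alpha,\beta,0)}(\setofplanetrees_n)}{\sqrt{n^5}} \overset{d}{\longleftrightarrow} \frac{\sqrt{\rho}}{(e^{-\alpha/2}+1) \cdot e^{-\alpha/4}} \cdot \frac{\mathcal{P}^{WI}_{(0,0,0)}(\setofplanetrees_n)}{\sqrt{n^5}}.$$
Dividing through by $\sqrt{2}$ and invoking Theorem \ref{ladderdistance - thm} to identify the weak limit of $\mathcal{P}^{WI}(\setofplanetrees_n)/\sqrt{2n^5}$ with the double-integral Brownian excursion functional completes the proof. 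Since all the technical work lies upstream in Theorem \ref{maintheorem3}, there is no significant obstacle here; the only thing to be careful about is the bookkeeping that $\delta_n, \delta_{d_0}, \delta_{d_1}$ are indeed constant across the maximal monomials (so that the clean form of $Q(f,\alpha,\beta)$ from Theorem \ref{maintheorem3} applies) and that the bounded-root-degree equivalence in Theorem \ref{maintheorem3} lets us insert $\setofplanetrees^h_n$ with the arbitrary parameter $\gamma$ on the left.
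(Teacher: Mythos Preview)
Your proposal is correct and follows exactly the paper's approach: choose $f=(t+1)(n-t)$, verify the hypotheses of Theorem~\ref{maintheorem3} (with $\delta_{d_0}=\delta_{d_1}=0$ and $V'(f)=5/2$), compute $Q(f,\alpha,\beta)$, and then invoke Theorem~\ref{ladderdistance - thm} for the uniform limit. Your careful bookkeeping that the two maximal monomials $tn$ and $-t^2$ share the same $(\delta_n,\delta_{d_0},\delta_{d_1})$ is precisely the check needed, and your evaluation of $Q(f,\alpha,\beta)$ is correct.
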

\begin{proof}
	We take $f = (t+1)(n - t)$ then apply Theorem $\ref{maintheorem3}$ and Theorem \ref{ladderdistance - thm}.
	
\end{proof}

\bigskip

\begin{corollary} \label{full dist corr LRD/IRD}
Let $\alpha,\beta, \gamma \inR$. For any $h \inN$, we have  

$$\frac{\mathcal{P}^{LR}_{(\alpha, \beta, \gamma)}(\setofplanetrees^h_ n)}{\sqrt{2n^3}} \quad \overset{d}{\longleftrightarrow}  \quad   \frac{\mathcal{P}^{LR}_{(\alpha, \beta, 0)}(\setofplanetrees_ n) }{\sqrt{2n^3}} \quad \overset{d}{\to}  \quad \frac{e^{-\frac{\alpha}{4}}}{\sqrt{\rho}} \int_0^1e(t) \: dt$$ 
and 
$$\frac{\mathcal{P}^{IR}_{(\alpha, \beta, \gamma)}(\setofplanetrees^h_ n)}{\sqrt{2n^3}} \quad \overset{d}{\longleftrightarrow}  \quad  \frac{\mathcal{P}^{IR}_{(\alpha, \beta, 0)}(\setofplanetrees_ n) }{\sqrt{2n^3}} \quad \overset{d}{\to}  \quad \frac{ e^{-\beta}}{\sqrt{\rho} \cdot e^{-\frac{\alpha}{4}} \cdot (e^{-\frac{\alpha}{2}}+1)} \int_0^1e(t) \: dt$$
where $e(t)$ is a normalized Brownian excursion on $[0, 1]$.
\end{corollary}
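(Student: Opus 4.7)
The plan is to mirror the proofs of the two preceding corollaries (on $\mathcal{P}^{PL}$ and $\mathcal{P}^{WI}$): identify a polynomial $f \in \R[t,\lvl,\lvi][n,\gvl,\gvi]$ such that the property in question is exactly $\mathcal{P}^f$ as defined in \eqref{f tree def}, and then apply Theorem \ref{maintheorem3} together with Corollary \ref{corollary P^LRD and P^IRD dist}. For the total leaf-to-root distance, I would take $f = \lvl$; for the total internal-node-to-root distance, $f = \lvi$. The first quick check is that these choices realize the desired properties. Under the convention in \eqref{f tree def} that the root vertex of $T_v$ is allowed to count as a leaf or internal node, when $v$ is a leaf of $T$ the subtree $T_v = T^*$ contributes one additional leaf in $T_v$, and similarly when $v$ is an internal node of $T$ it contributes one additional internal node in $T_v$. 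Summed over $v \in \overline{\mathcal{V}}(T)$, these corrections account exactly for the extra $\mathcal{P}^{d_0}(T)$ and $\mathcal{P}^{d_1}(T)$ appearing in the decompositions given in the example for $\mathcal{P}^{LR}$ and $\mathcal{P}^{IR}$, so $\mathcal{P}^{LR} = \mathcal{P}^{\lvl}$ and $\mathcal{P}^{IR} = \mathcal{P}^{\lvi}$.

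Next, I would read off the parameters that feed into Theorem \ref{maintheorem3}. For $f = \lvl$, the unique maximal monomial is $\lvl$ itself, so $\delta = 1$ with $\Delta(\lvl \mid t,\lvl,\lvi) = 1 > 0$, giving $V'(f) = \tfrac{3}{2}$ and $(\delta_n, \delta_{d_0}, \delta_{d_1}) = (0,1,0)$. Plugging these into the first case of the formula for $Q(f,\alpha,\beta)$ in Theorem \ref{maintheorem3} yields
\[
Q(\lvl, \alpha, \beta) \;=\; 2 \cdot \frac{(e^{-\alpha/2}+1)^{0} \cdot e^{-\alpha/4} \cdot 1}{\sqrt{\rho}} \;=\; \frac{2 e^{-\alpha/4}}{\sqrt{\rho}},
\]
where $\rho = e^{-\alpha} + e^{-\beta} + 2 e^{-\alpha/2}$. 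Analogously for $f = \lvi$ I get $(\delta_n, \delta_{d_0}, \delta_{d_1}) = (0,0,1)$ and
\[
Q(\lvi, \alpha, \beta) \;=\; 4 \cdot \frac{(e^{-\alpha/2}+1)^{-1} \cdot e^{\alpha/4} \cdot e^{-\beta}}{\sqrt{\rho}} \;=\; \frac{4 e^{-\beta}}{\sqrt{\rho}\,\cdot e^{-\alpha/4}\,\cdot (e^{-\alpha/2}+1)}.
\]

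Finally, Theorem \ref{maintheorem3} supplies the two chains of distributional equivalences $\mathcal{P}^{LR}_{(\alpha,\beta,\gamma)}(\setofplanetrees^h_n)/n^{3/2} \overset{d}{\longleftrightarrow} Q(\lvl,\alpha,\beta) \cdot \mathcal{P}^{LR}_{(0,0,0)}(\setofplanetrees_n)/n^{3/2}$ and the analogous statement for $\mathcal{P}^{IR}$. I would then invoke Corollary \ref{corollary P^LRD and P^IRD dist}, which gives the uniformly-weighted limits $\mathcal{P}^{LR}(\setofplanetrees_n)/\sqrt{2n^3} \overset{d}{\to} \tfrac{1}{2}\int_0^1 e(t)\,dt$ and $\mathcal{P}^{IR}(\setofplanetrees_n)/\sqrt{2n^3} \overset{d}{\to} \tfrac{1}{4}\int_0^1 e(t)\,dt$. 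Multiplying by $Q(\lvl,\alpha,\beta)/\sqrt{2}$ and $Q(\lvi,\alpha,\beta)/\sqrt{2}$ respectively (and noting that the $\tfrac{1}{2}$ and $\tfrac{1}{4}$ factors collapse with the $2$ and $4$ in the $Q$'s) produces the stated scalings $e^{-\alpha/4}/\sqrt{\rho}$ and $e^{-\beta}/[\sqrt{\rho}\cdot e^{-\alpha/4}\cdot(e^{-\alpha/2}+1)]$.

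I do not anticipate any significant obstacle: the proof is a mechanical specialization of Theorem \ref{maintheorem3}, exactly paralleling Corollaries \ref{full dist corr CD} and the one for $\mathcal{P}^{WI}$. The only point that deserves care is the bookkeeping of the root-counting convention in \eqref{f tree def} when identifying $\mathcal{P}^{LR}$ with $\mathcal{P}^{\lvl}$ and $\mathcal{P}^{IR}$ with $\mathcal{P}^{\lvi}$; once that is checked, the rest is algebra that also serves as a consistency check against the formulas in Remark \ref{remark of Q_k}.
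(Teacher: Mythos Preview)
Your proposal is correct and follows exactly the same approach as the paper's proof: take $f=\lvl$ for $\mathcal{P}^{LR}$ and $f=\lvi$ for $\mathcal{P}^{IR}$, then apply Theorem~\ref{maintheorem3} together with Corollary~\ref{corollary P^LRD and P^IRD dist}. Your explicit computation of $Q(f,\alpha,\beta)$ and the resulting constants is accurate and in fact more detailed than the paper's own terse proof.
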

\begin{proof}
	For $\mathcal{P}^{LR}$, we take $f = \lvl$.  For $\mathcal{P}^{IR}$, we take $f = \lvi$. We then apply Theorem $\ref{maintheorem3}$ and Corollary \ref{corollary P^LRD and P^IRD dist}.
	
\end{proof}

\bigskip

\section{Other Results} 
\subsection{Counting trees by leaves, internal nodes and root degree}
\label{Counting trees - enumerations}


In the study of the Nearest Neighbour Thermodynamic Model, the partition function, $\normConst_{(n,\alpha,\beta,\gamma)}$, is of general interest. Thus, we provide enumeration results to assist with the computation this quantity. We define $\treecount_n(m,k,r)$ to be the number of trees on $n$ vertices with $m$ internal nodes, $k$ leaves and root degree $r$. We will omit any of the above parameters in $\treecount_n(m,k,r)$ to denote the number of trees summed over the omitted parameters. 

It is well-known that $\treecount_n(k)$ is given by the Narayana numbers \cite{riordan1968combinatorial, narayana1955treillis}. Hence,

\begin{theorem} For $n \geq k$,
	\label{count trees by leaves} 
	$$\treecount_n(k) = \frac{1}{n} \binom{n}{k} \binom{n}{k-1}.$$ 
\end{theorem}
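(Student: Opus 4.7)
The plan is to apply the Lagrange inversion formula to a specialization of the generating function $G(x,a,b)$ from Corollary \ref{eval G func}. Specifically, I would work with the bivariate generating function $G(x,a,1)$, whose coefficient $[x^n a^k]G(x,a,1)$ enumerates plane trees by edges and by number of leaves; identifying this with $\treecount_n(k)$ is a matter of unwinding the definitions in Section \ref{Gen function count section} and matching the indexing convention.

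First I would specialize the recurrence (\ref{primary recurrence G}) at $b = 1$: the $(b-1)xG\,G^*$ term drops out, leaving
$$G(x,a,1) = 1 + x\,G(x,a,1)^2 + (a-1)x\,G(x,a,1).$$
Substituting $u = G(x,a,1) - 1$ turns this into $u = x(1+u)(a+u)$, which is the canonical Lagrange form $u = x\phi(u)$ with $\phi(u) = (1+u)(a+u)$. Lagrange inversion then gives
$$[x^n]\,u \;=\; \frac{1}{n}\,[u^{n-1}]\,(1+u)^n(a+u)^n.$$
Expanding both binomials and collecting, the coefficient of $u^{n-1}a^{k}$ in $(1+u)^n(a+u)^n$ forces the matching $j = n-k$ and $i = k-1$ in the double sum, and therefore
$$[x^n a^k]\,u \;=\; \frac{1}{n}\binom{n}{k-1}\binom{n}{k},$$
which is the desired Narayana expression. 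Since $u$ and $G(x,a,1)$ differ only by a constant in $x$, the coefficient identity transfers directly to $G(x,a,1)$ and hence to $\treecount_n(k)$.

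There is no real obstacle: the computation is routine Lagrange inversion, and the only thing to keep straight is the bookkeeping between ``tree on $n$ vertices'' in the statement and the exponent of $x$ in the edge-indexed generating function $G(x,a,1)$, so that the power of $x$ extracted matches the subscript on $\treecount$. An alternative, more combinatorial route would be the Cycle Lemma applied to the standard Dyck-path (peaks-counting) bijection, but the analytic route is shorter and reuses machinery the paper has already assembled in Section \ref{Gen function count section}.
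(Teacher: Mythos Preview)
Your argument is correct. The substitution $u = G(x,a,1) - 1$ indeed reduces (\ref{primary recurrence G}) at $b=1$ to $u = x(1+u)(a+u)$, and Lagrange inversion followed by the binomial expansion yields exactly $\frac{1}{n}\binom{n}{k-1}\binom{n}{k}$ for the coefficient of $x^n a^k$; since $G(x,a,1)$ and $u$ agree in all positive powers of $x$, the transfer to $\treecount_n(k)$ is immediate.

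As for comparison with the paper: there is nothing to compare, because the paper does not prove this statement at all. It simply records the result as well known and cites the classical references for the Narayana numbers. Your proposal therefore supplies strictly more than the paper does --- a self-contained derivation that, moreover, reuses the generating-function recurrence already established in Section~\ref{Gen function count section}. The alternative you mention (Cycle Lemma via Dyck paths and peaks) is closer in spirit to the combinatorial machinery the paper deploys later in Section~\ref{Counting trees - enumerations} for the refined counts $\treecount_n(m,k,r)$, but for the plain Narayana count your analytic route is cleaner and better integrated with the paper's existing tools.

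One small caution on the bookkeeping you flag: the paper's prose defines $\treecount_n(m,k,r)$ as counting trees on ``$n$ vertices,'' but the formulas throughout Section~\ref{Counting trees - enumerations} (and the match with $[x^n]G(x,a,1)$) make clear that $n$ is in fact the number of edges, consistent with the convention for $\setofplanetrees_n$. Your identification $[x^n a^k]G(x,a,1) = \treecount_n(k)$ is the right one.
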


From the work of Dershowitz and Zaks \cite{dershowitz1980enumerations}, we also know that

\begin{theorem} For $n \geq r$,
	\label{count trees by root} 
	$$\treecount_n(r) = \frac{r}{n} \binom{2n-1-r}{n-1}.$$ 
\end{theorem}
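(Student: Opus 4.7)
The plan is to use the generating-function machinery developed in Section~\ref{Gen function count section}. Setting $a=b=1$ in (\ref{primary recurrence G with root}) gives $G(x,1,1,c) = \frac{1}{1 - c\,G^*(x,1,1)}$, from which the generating function counting plane trees of root degree exactly $r$, weighted by number of edges, is $[c^r]G(x,1,1,c) = G^*(x,1,1)^r$. A short computation using (\ref{primary recurrence G*}) (or directly from (\ref{G^* equation})) shows $G^*(x,1,1) = x\,G(x,1,1)$, so
$$\treecount_n(r) \;=\; [x^n]\bigl(xG(x,1,1)\bigr)^r \;=\; [x^{n-r}]G(x,1,1)^r,$$
with $n$ understood as the number of edges (as it must be to match the Narayana formula in Theorem~\ref{count trees by leaves}). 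Combinatorially this is just the sequence decomposition: a tree of root degree $r$ is an ordered $r$-tuple of root-degree-$1$ trees, glued at the root.

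The main step is then to extract $[x^{n-r}]G(x,1,1)^r$ via Lagrange inversion. Since $G(x,1,1) = 1 + xG(x,1,1)^2$ by (\ref{primary recurrence G}), the substitution $y = G(x,1,1) - 1$ gives $y = x(1+y)^2$, i.e.\ $y = x\phi(y)$ with $\phi(y) = (1+y)^2$. Applying Lagrange inversion to $H(y)=(1+y)^r$,
$$[x^{n-r}](1+y)^r \;=\; \frac{1}{n-r}\,[y^{n-r-1}]\,r(1+y)^{r-1}\phi(y)^{n-r} \;=\; \frac{r}{n-r}\binom{2n-r-1}{n-r-1}.$$
A routine factorial rearrangement gives
$$\frac{r}{n-r}\binom{2n-r-1}{n-r-1} \;=\; \frac{r\,(2n-r-1)!}{n!\,(n-r)!} \;=\; \frac{r}{n}\binom{2n-1-r}{n-1},$$
which is the claimed formula.

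The argument is essentially mechanical once one has the sequence-of-subtrees decomposition and the Catalan functional equation for $G(x,1,1)$. The only mild ``obstacle'' is choosing a convenient form of Lagrange inversion so that the final binomial identity comes out cleanly; the shift $y = G-1$ makes $\phi$ a pure power and keeps the computation to one line. As an alternative I would mention but not pursue, one can bypass Lagrange inversion entirely by appealing to the cycle lemma of Dvoretzky--Motzkin applied to plane forests of $r$ trees on $n-r$ edges, which gives the forest count $\frac{r}{2n-r}\binom{2n-r}{n}$ directly; this expression again simplifies to $\frac{r}{n}\binom{2n-1-r}{n-1}$, recovering the theorem.
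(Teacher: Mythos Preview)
Your proof is correct. The paper does not actually prove this theorem: it simply cites it as a known result of Dershowitz and Zaks. So there is no ``paper's proof'' to compare against directly.

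That said, it is worth noting the contrast in style. Your argument uses the generating-function framework of Section~\ref{Gen function count section} together with Lagrange inversion, which is clean and fits well with the analytic side of the paper. The alternative you mention at the end --- the cycle-lemma/forest argument --- is in fact the approach of Dershowitz and Zaks, and it is also the method the paper uses in Section~\ref{Counting trees - enumerations} to prove the more refined count $\treecount_n(m,k,r)$ (Theorem~\ref{count trees by all params}). So while your Lagrange-inversion proof is perfectly valid and arguably shorter for this particular statement, the cycle-lemma route is closer in spirit both to the cited source and to the surrounding enumerative section.

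One small remark: your Lagrange inversion step requires $n-r\geq 1$. The boundary case $n=r$ (the star on $n$ edges) is trivial --- $[x^0]G(x,1,1)^r = 1$ and $\frac{r}{n}\binom{n-1}{n-1}=1$ --- but it would be tidier to say so explicitly.
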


From the work of Donaghey and Shapiro \cite{donaghey1977motzkin}, we know that the number of tree on $n$ edges with no internal nodes is given by the $(n-1)$th Motzkin number. Hence, we have the following result.
\begin{theorem} For $n > m$,
	\label{count trees by intenral} 
	$$\treecount_n(m) = \binom{n-1}{m}M_{n-m-1},$$ where $$M_n = \sum_{k=0}^{\left\lfloor \frac{n}{2} \right\rfloor} \binom{n}{2k}C_k$$ are the Motzkin numbers.
\end{theorem}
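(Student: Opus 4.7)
The plan is to set up a bijection between plane trees on $n$ edges with $m$ internal nodes and pairs consisting of (i) a plane tree on $n-m$ edges with no internal nodes, and (ii) a composition of $m$ into $n-m$ non-negative parts. The Donaghey--Shapiro count cited in the excerpt just above enumerates (i) by the Motzkin number $M_{n-m-1}$, and a standard stars-and-bars argument enumerates (ii) by $\binom{m+(n-m)-1}{m} = \binom{n-1}{m}$. Multiplying these two counts yields the stated formula, so the work reduces to producing and checking the bijection.

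The bijection is contraction/subdivision. Given a plane tree $T$ with $n$ edges and $m$ internal nodes, every internal node $v$ is a non-root vertex with a unique child $w$, so the edge from $v$'s parent to $v$ and the edge from $v$ to $w$ may be identified into a single edge. Applying this identification at every internal node of $T$ simultaneously produces a plane tree $T^\star$ on $n-m$ edges; by construction every non-root vertex of $T^\star$ has down degree $0$ or at least $2$, so $T^\star$ is exactly a tree of the kind counted by $M_{n-m-1}$. Label the edges of $T^\star$ in pre-order from the root (leftmost child first) as $e_1,\dots,e_{n-m}$, and let $c_i$ denote the number of internal nodes of $T$ that were contracted onto $e_i$. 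Since each internal node of $T$ lies on exactly one maximal contracted path, $(c_1,\dots,c_{n-m}) \in \Z_{\geq 0}^{n-m}$ with $\sum_i c_i = m$.

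The inverse is subdivision: from a reduced tree $T^\star$ on $n-m$ edges together with a composition $(c_1,\dots,c_{n-m})$ of $m$ into $n-m$ non-negative parts, subdivide the edge $e_i$ (in the same pre-order indexing) by inserting $c_i$ new vertices along it. Each inserted vertex is non-root and has exactly one child, hence is an internal node; this produces a plane tree on $(n-m)+m=n$ edges with exactly $m$ internal nodes. The two operations are clearly mutually inverse, which yields
\[
\treecount_n(m) \;=\; \binom{n-1}{m} \cdot M_{n-m-1}.
\]

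The only subtlety is fixing a canonical ordering of the edges of $T^\star$ so that the composition $(c_1,\dots,c_{n-m})$ is well-defined; pre-order traversal handles this cleanly and is preserved under both contraction and subdivision. As a sanity check, $m = n-1$ gives $\binom{n-1}{n-1}M_0 = 1$, matching the fact that the unique tree on $n$ edges with $n-1$ internal nodes is the linear chain, and $m = 0$ recovers the Donaghey--Shapiro count $M_{n-1}$ directly.
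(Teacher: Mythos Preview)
Your proposal is correct and takes essentially the same approach as the paper: the paper cites Donaghey--Shapiro for the $m=0$ case and then (explicitly in the proof of Theorem~\ref{count trees by all params}) observes that ``adding internal nodes is now just a matter of deciding how many to put under each edge,'' which is precisely your contraction/subdivision bijection together with the stars-and-bars count $\binom{n-1}{m}$.
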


The main result of this section is the closed form expression for $\treecount_n(m,k,r)$ given in the following theorem.

\begin{theorem}
	\label{count trees by all params} For $n - m > k > r, $
	$$\treecount_n(m,k,r) = \frac{r}{n} \binom{n}{k+m} \binom{k+m}{k}\binom{k-r-1}{n-m-k-1},$$ and for $n - m =  k = r$,
	$$\treecount_n(m,k,r) = \binom{n-1}{m}.$$
\end{theorem}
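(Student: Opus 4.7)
The plan is to classify non-root vertices by down-degree and then apply the cycle-lemma form of Raney's theorem for ordered plane forests.

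Fix a tree counted by $\treecount_n(m,k,r)$ (with $n$ the number of edges, so $n+1$ vertices overall). Let $b = n - k - m$ denote the number of \emph{branching} non-root vertices, i.e., those of down-degree $\geq 2$. Deleting the root turns the tree into an ordered plane forest of $r$ trees on $n$ vertices, and the down-degree multiset of this forest must consist of $k$ zeros, $m$ ones, and further multiplicities $(n_2, n_3, \ldots)$ satisfying $\sum_{i \geq 2} n_i = b$ and $\sum_{i \geq 2} i\, n_i = n - r - m$ (the latter forced by the fact that the forest has $n - r$ edges).

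By Raney's theorem applied to Lukasiewicz codes, the number of ordered plane forests of $r$ trees on $n$ vertices with a prescribed down-degree multiset $\{n_0, n_1, n_2, \ldots\}$ is $\frac{r}{n}\binom{n}{n_0, n_1, n_2, \ldots}$. Plugging in $n_0 = k$, $n_1 = m$ and using the factorization
\[
\binom{n}{k, m, n_2, n_3, \ldots} = \binom{n}{k+m}\binom{k+m}{k}\binom{b}{n_2, n_3, \ldots},
\]
I would sum over the admissible $(n_2, n_3, \ldots)$. The residual sum $\sum \binom{b}{n_2, n_3, \ldots}$ counts ordered $b$-tuples of integers $\geq 2$ summing to $n - r - m$; the shift $y_j = x_j - 2$ converts this to compositions of $(n - r - m) - 2b = k - r$ into $b$ non-negative parts, producing $\binom{k - r - 1}{b - 1} = \binom{k - r - 1}{n - m - k - 1}$. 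Assembling the factors yields the main formula.

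In the boundary case $n - m = k = r$ we have $b = 0$, so the forest has no branching vertices and thus decomposes as a disjoint union of $r$ rooted paths, each terminating in a leaf. The path lengths form a composition of $n$ into $r$ positive parts, giving $\binom{n-1}{r-1} = \binom{n-1}{m}$ trees (using $n = m + r$). The main obstacle is pinning down the correct form of the cycle lemma so that the prefactor $r/n$ emerges (rather than the more familiar $1/n$ or $1/(n+1)$ that appears for unrooted or single-rooted counts); once that is in hand, the remaining manipulations are routine stars-and-bars.
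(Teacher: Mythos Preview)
Your argument is correct; the only blemish is an arithmetic slip where you write $(n-r-m)-2b=k-r$ when the value is actually $k-r-b$, but your final binomial $\binom{k-r-1}{b-1}$ is right regardless since $(k-r-b)+(b-1)=k-r-1$.

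Your route differs substantially from the paper's. The paper first strips out the internal nodes, applies the Dershowitz--Zaks involution $\psi$ to trade the parameters $(r,k)$ for leftmost-path length and leaves-with-left-sibling counts, passes to balanced parenthesis sequences avoiding the pattern ``$)()$'', performs a four-case analysis of cyclic permutation types, invokes the Cycle Lemma, and only then reinserts the internal nodes via a $\binom{n-1}{m}$ factor. You instead delete the root, encode the resulting ordered forest by its Lukasiewicz word, and apply Raney's theorem directly to the full down-degree multiset $(k,m,n_2,n_3,\ldots)$, so that the $r/n$ prefactor falls out immediately and only a single stars-and-bars sum over the branching degrees remains. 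Your approach is shorter, treats all three parameters simultaneously rather than in stages, and generalizes more readily to other degree constraints; the paper's approach is more self-contained (it proves the ratio form of the Cycle Lemma it needs) and isolates the no-internal-node count $\frac{r}{n}\binom{n}{k}\binom{k-r-1}{n-k-1}$ as an intermediate result of some independent interest.
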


Before proving this theorem, we first observe the following useful lemmas.

\begin{lemma}
	\label{psi translation lemma}
	The number of trees with root degree $r$, $m$ internal nodes and $k$ leaves is equal to the number of trees with leftmost path with $r$ edge, $m$ leaves with a left sibling and $n+1-k$ leaves in total.
	
\end{lemma}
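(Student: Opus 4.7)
The plan is to prove this via an explicit involutive bijection $\phi \colon \setofplanetrees_{\geq 0} \to \setofplanetrees_{\geq 0}$, defined recursively by $\phi(T^*) = T^*$ and
\begin{equation*}
\phi(T_1 \ltimes T_2) = \phi(T_2) \ltimes \phi(T_1).
\end{equation*}
Since every tree in $\setofplanetrees_{\geq 1}$ has a unique decomposition $T = T_1 \ltimes T_2$, and the recursion descends to trees with fewer edges, $\phi$ is well-defined; an easy induction on $n(T)$ gives $\phi \circ \phi = \mathrm{id}$, so $\phi$ restricts to a bijection of $\setofplanetrees_n$ with itself for each $n$.

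Writing $\ell(T)$ for leftmost path length, $L(T)$ for total number of leaves, and $s(T)$ for the number of leaves having a left sibling, I would then verify the three statistic correspondences by induction on $n(T)$. For root degree versus leftmost path length: $r(T_1 \ltimes T_2) = r(T_1) + 1$ and $\ell(\phi(T_2) \ltimes \phi(T_1)) = 1 + \ell(\phi(T_1))$, so the inductive hypothesis yields $\ell(\phi(T)) = r(T)$. For internal nodes versus leaves-with-left-sibling: $d_1(T_1 \ltimes T_2) = d_1(T_1) + d_1(T_2) + \mathbbm{1}[r(T_2)=1]$; dually, in $\phi(T_2) \ltimes \phi(T_1)$ the former leftmost child of the root of $\phi(T_2)$ acquires a left sibling (the root of $\phi(T_1)$) and thereby becomes a new leaf-with-left-sibling precisely when it was already a leaf of $\phi(T_2)$, equivalently when $\ell(\phi(T_2)) = 1$, equivalently when $r(T_2) = 1$ by the preceding step. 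For total leaves versus non-leaves in the source, combine the compatible recurrences $d_0(T_1 \ltimes T_2) = d_0(T_1) + d_0(T_2) + \mathbbm{1}[T_2 = T^*]$ and $L(T'_1 \ltimes T'_2) = L(T'_1) + L(T'_2) + \mathbbm{1}[T'_2 = T^*]$, and use that $\phi(T) = T^*$ iff $T = T^*$.

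The main obstacle is the delicate accounting of the $T^*$-indicator terms, which is exactly what produces the $+1$ in ``$n+1-k$ leaves in total'' rather than $n-k$. The identity $L(\phi(T)) = n(T) + 1 - d_0(T)$ fails at $T = T^*$ itself (where $L(T^*) = 0$ but $n(T^*) + 1 - d_0(T^*) = 1$), so the statement of the lemma is understood as implicitly requiring $n \geq 1$. In the inductive step, a four-case analysis on whether $T_1, T_2$ equal $T^*$ shows that the $\mathbbm{1}[\phi(T_1) = T^*]$ contribution to $L$ and the $\mathbbm{1}[T_2 = T^*]$ contribution to $d_0$ align correctly through the swap. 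Once the three statistic correspondences are established, the lemma follows immediately from the bijectivity of $\phi$ on $\setofplanetrees_n$.
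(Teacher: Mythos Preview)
Your proposal is correct and uses exactly the same bijection as the paper (which attributes it to Dershowitz and Zaks and calls it $\psi$), defined recursively by swapping the two factors in the $\ltimes$-decomposition. The only difference is stylistic: the paper establishes the three statistic correspondences by a descriptive/pictorial argument tracking how leaf edges, parent/child edge pairs at internal nodes, and root edges transform under $\psi$, whereas you verify the same correspondences by explicit induction on $n(T)$ using the recurrences for $r$, $d_1$, $d_0$, $\ell$, $s$, and $L$; your four-case analysis for the leaf count (handling the $T^*$ indicator mismatch that produces the ``$+1$'') is a nice explicit substitute for the paper's observation that leaf edges of $T$ become rightmost child edges of $\psi(T)$, which number $n+1$ minus the leaf count of $\psi(T)$.
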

\begin{proof}
	We briefly describe the following bijection from $\setofplanetrees_n$ to $\setofplanetrees_n$ given by Dershowitz and Zaks \cite{dershowitz1980enumerations}, which we will denote as $\psi$.  We describe $\psi$ recursively. For $T = T_1 \ltimes T_2 \in \setofplanetrees_{\geq 1}$, $\psi(T) = \psi(T_2) \ltimes \psi(T_1)$ and $\psi(T^*) = T^*$. (See top left of Figure \ref{fig:treebijectionimage} for illustration.) 
	
	Fix $T \in \setofplanetrees_n$. We denote a leaf edge to be the edge incident to a leaf. Notice that leaf edges in $T$ become edges without a right sibling in $\psi(T)$ (the rightmost edge under each vertex). Notice that the number of rightmost edges in a tree is the number of vertices with at least 1 child, hence the number of non-leaf vertices. 
	To describe the effect of $\psi$ on internal node, for an internal node, $v$, of $T$, we denote its parent edge and child edge by $p(v)$ and $c(v)$. Since we assume the root cannot be internal, a vertex is an internal node if and only if it has a parent edge and a child edge which has no siblings. Notice that the edge $c(v)$ and $p(v)$ in $T$ becomes a leaf in $\psi(T)$ and a left sibling of that leaf respectively. Furthermore, every leaf and left sibling pair induce an internal node. Finally notice that the root degree of $T$ is the length of the path from the root to the leaf reached by moving down leftmost edges. (See Figure \ref{fig:treebijectionimage} for illustration.)
	
\end{proof}

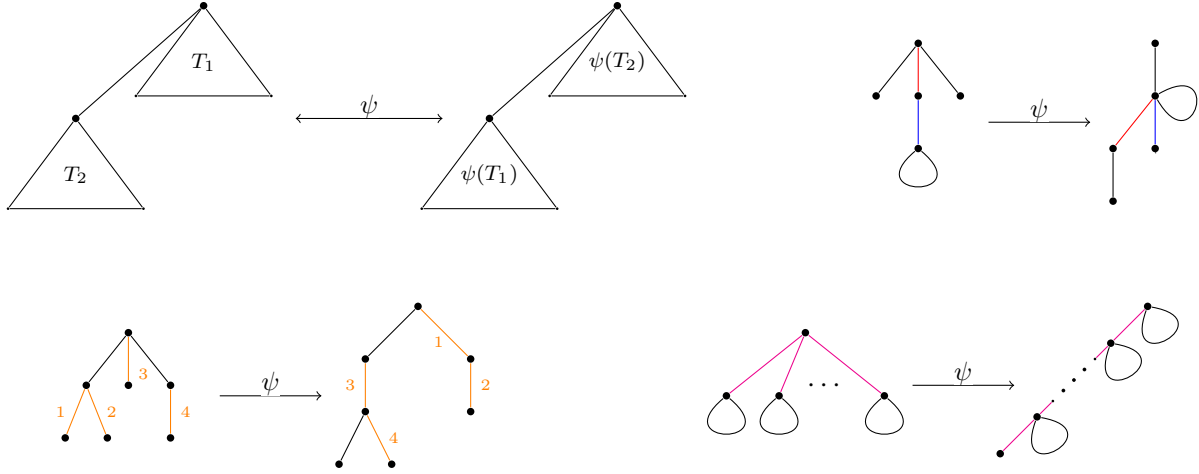
\begin{figure}
	\centering
    \begin{tikzpicture}	
	
	\begin{scope}[shift={(0,0)}]
	\begin{scope}[shift={(0,0)}]	
	
	\node (t11) at (0, 0) [circle,fill,inner sep=1pt]{};
	\node (t12) at (-0.9,-1.2)  [circle,fill,inner sep=0pt]{};
	\node (t13) at (0.9,-1.2)  [circle,fill,inner sep=0pt]{};
	
	\node (t21) at (-1.7, -1.5) [circle,fill,inner sep=1pt]{};
	\node (t22) at (-2.6,-2.7)  [circle,fill,inner sep=0pt]{};
	\node (t23) at (-0.8,-2.7)  [circle,fill,inner sep=0pt]{};

	\draw (t11) -- (t12) -- (t13) -- (t11);
	\draw (t21) -- (t22) -- (t23) -- (t21);
	\draw (t21) -- (t11);
	
	\node (l1) at (0, -0.75)  [circle,fill=white,inner sep=0.1pt]{\scriptsize $T_1$};
	\node (l1) at (-1.7, -2.25)  [circle,fill=white,inner sep=0.1pt]{\scriptsize $T_2$};
	
	\end{scope}
	\begin{scope}[shift={(5.5,0)}]	
	
	\node (t11) at (0, 0) [circle,fill,inner sep=1pt]{};
	\node (t12) at (-0.9,-1.2)  [circle,fill,inner sep=0pt]{};
	\node (t13) at (0.9,-1.2)  [circle,fill,inner sep=0pt]{};
	
	\node (t21) at (-1.7, -1.5) [circle,fill,inner sep=1pt]{};
	\node (t22) at (-2.6,-2.7)  [circle,fill,inner sep=0pt]{};
	\node (t23) at (-0.8,-2.7)  [circle,fill,inner sep=0pt]{};

	\draw (t11) -- (t12) -- (t13) -- (t11);
	\draw (t21) -- (t22) -- (t23) -- (t21);
	\draw (t21) -- (t11);
	
	\node (l1) at (0, -0.75)  [circle,fill=white,inner sep=0.1pt]{\scriptsize $\psi(T_2)$};
	\node (l1) at (-1.7, -2.25)  [circle,fill=white,inner sep=0.1pt]{\scriptsize $\psi(T_1)$};
	
	\end{scope}

	\node (A1) at (1.2, -1.5) [circle,fill=white,inner sep=0pt]{};
	\node (A2) at (3.2,-1.5)  [circle,fill=white,inner sep=0pt]{};
	\draw[<->] (A1) -- (A2) node [midway, above, fill=white,inner sep=0pt] {\small $\psi$};;
	\end{scope}
	
	\begin{scope}[shift={(9.5,-.5)}, scale=0.7]
	\begin{scope}[shift={(0,0)}]	
	
	\node (r) at (0, 0) [circle,fill,inner sep=1pt]{};
	\node (v1) at (-0.8,-1)  [circle,fill,inner sep=1pt]{};
	\node (v2) at (0,-1)  [circle,fill,inner sep=1pt]{};
	\node (v3) at (0.8,-1)  [circle,fill,inner sep=1pt]{};
	\node (v4) at (0,-2)  [circle,fill,inner sep=1pt]{};
	
	\draw (r) -- (v1)
	    (r) -- (v3);
	\draw[red] (r) -- (v2);
	\draw[blue] (v4) -- (v2);
	    
	\draw (v4) to[out=220, in=-40, distance=40] (v4);
	
	\end{scope}
	\begin{scope}[shift={(4.5,0)}]	
	
	\node (r) at (0, 0) [circle,fill,inner sep=1pt]{};
	\node (v1) at (0,-1)  [circle,fill,inner  sep=1pt]{};
	\node (v2) at (-0.8,-2)  [circle,fill,inner sep=1pt]{};
	\node (v3) at (0,-2)  [circle,fill,inner sep=1pt]{};
	\node (v4) at (-0.8,-3)  [circle,fill,inner sep=1pt]{};
	
	\draw (r) -- (v1);
	\draw[red] (v1) -- (v2);
	\draw (v3) -- (v3)
	    (v2) -- (v4);
	\draw[blue] (v1) -- (v3);
	    
	\draw (v1) to[out=300, in=30, distance=40] (v1);
	
	\end{scope}
	
	\node (A1) at (1.3, -1.5) [circle,fill=white,inner sep=0pt]{};
	\node (A2) at (3.3,-1.5)  [circle,fill=white,inner sep=0pt]{};
	\draw[->] (A1) -- (A2)node [midway, above, fill=white,inner sep=0pt] {\small $\psi$};
	\end{scope}
	
	\begin{scope}[shift={(-1, -4)}, scale=0.7]
	\begin{scope}[shift={(0,-0.5)}]	
	
	\node (r) at (0, 0) [circle,fill,inner sep=1pt]{};
	\node (v1) at (-0.8,-1)  [circle,fill,inner sep=1pt]{};
	\node (v2) at (0,-1)  [circle,fill,inner sep=1pt]{};
	\node (v3) at (0.8,-1)  [circle,fill,inner sep=1pt]{};
	\node (v4) at (-0.4,-2)  [circle,fill,inner sep=1pt]{};
	\node (v5) at (-1.2,-2)  [circle,fill,inner sep=1pt]{};
	\node (v6) at (0.8,-2)  [circle,fill,inner sep=1pt]{};
	
	\draw (r) -- (v1)
	(r) -- (v3);
	\draw[orange] (r) -- node[midway, below right] {\tiny{3}}(v2)
	(v1) -- node[midway, right] {\tiny{2}}(v4)
	(v1) -- node[midway, left] {\tiny{1}}(v5)
	(v3) -- node[midway, right] {\tiny{4}}(v6);
	
	\end{scope}
	\begin{scope}[shift={(5.5,0)}]	
	
	\node (r) at (0, 0) [circle,fill,inner sep=1pt]{};
	\node (v1) at (-1,-1)  [circle,fill,inner sep=1pt]{};
	\node (v2) at (1,-1)  [circle,fill,inner sep=1pt]{};
	\node (v3) at (-1,-2)  [circle,fill,inner sep=1pt]{};
	\node (v4) at (1,-2)  [circle,fill,inner sep=1pt]{};
	\node (v5) at (-1.5,-3)  [circle,fill,inner sep=1pt]{};
	\node (v6) at (-0.5,-3)  [circle,fill,inner sep=1pt]{};
	
	\draw (r) -- (v1)
	(v3) -- (v5);
	\draw[orange] (r) -- node[midway, left, pos=.7] {\tiny{1}} (v2)
	(v2) -- node[midway, right] {\tiny{2}} (v4)
	(v3) -- node[midway, right] {\tiny{4}}(v6)
	(v1) -- node[midway, left] {\tiny{3}}(v3);
	
	\end{scope}

	\node (A1) at (1.7, -1.7) [circle,fill=white,inner sep=0pt]{};
	\node (A2) at (3.7,-1.7)  [circle,fill=white,inner sep=0pt]{};
	\draw[->] (A1) -- (A2)node [midway, above, fill=white,inner sep=0pt] {\small $\psi$};
	\end{scope}
	
	\begin{scope}[shift={(8,-4)}, scale=0.7]
	\begin{scope}[shift={(0,-0.5)}]	
	
	\node (r) at (0, 0) [circle,fill,inner sep=1pt]{};
	\node (v1) at (-1.5,-1.2)  [circle,fill,inner sep=1pt]{};
	\node (v2) at (-0.5,-1.2)  [circle,fill,inner sep=1pt]{};
	\node (vn) at (1.5,-1.2)  [circle,fill,inner sep=1pt]{};
	
	\draw[magenta] (r) -- (v1)
	    (r) -- (v2)
	    (r) -- (vn);
	    
	\draw (v1) to[out=220, in=-40, distance=40] (v1);
	\draw (v2) to[out=220, in=-40, distance=40] (v2);
	\draw (vn) to[out=220, in=-40, distance=40] (vn);
	
	\node (l1) at (0.4,-1)  [circle,fill=white,inner sep=0.1pt]{\normalsize $\cdots$};
	
	\end{scope}
	\begin{scope}[shift={(6.5,0)}]	
	
	\node (r) at (0, 0) [circle,fill,inner sep=1pt]{};
	\node (v1) at (-.7,-.7)  [circle,fill,inner sep=1pt]{};
	\node (v-0) at (-1,-1)  [circle,fill,inner sep=0pt]{};
	\node (v-1) at (-1.8,-1.8)  [circle,fill,inner sep=0pt]{};
	\node (vn) at (-2.1,-2.1)  [circle,fill,inner sep=1pt]{};
	\node (vn1) at (-2.8,-2.8)  [circle,fill,inner sep=1pt]{};
	
	\node (k0) at (-1.2,-1.2)  [circle,fill,inner sep=0.5pt]{};
	\node (k1) at (-1.4,-1.4)  [circle,fill,inner sep=0.5pt]{};
	\node (k2) at (-1.6,-1.6)  [circle,fill,inner sep=0.5pt]{};
	
	\draw[magenta] (r) -- (v1) -- (v-0)
	    (v-1) -- (vn) -- (vn1);
	    
	\draw (r) to[out=250, in=-10, distance=40] (r);
	\draw (v1) to[out=250, in=-10, distance=40] (v1);
	\draw (vn) to[out=250, in=-10, distance=40] (vn);
	
	\end{scope}
	
	\node (A1) at (2, -1.5) [circle,fill=white,inner sep=0pt]{};
	\node (A2) at (4,-1.5)  [circle,fill=white,inner sep=0pt]{};
	\draw[->] (A1) -- (A2)node [midway, above, fill=white,inner sep=0pt] {\small $\psi$};
	\end{scope}

	\end{tikzpicture}
	\caption{Illustrations of the effects of map $\psi$ from plane trees to plane trees. The top left diagram illustrates the recurrence of $\psi$. The top right image illustrates what happens to internal nodes under $\psi$. The red and blue egdes are the parent and child edges of the internal node (and their new position under $\psi$). The bottom left image illustrates what happens to leaves under $\psi$. The numbered orange edges are leaves (and their new position under $\psi$). The bottom right image illustrates what happens to the root edges under $\psi$. The purple edges are root edges (and their new position under $\psi$).} 
	\label{fig:treebijectionimage}
\end{figure}
\begin{lemma}
	\label{corrNumTree}
	The number of trees with root degree $r$, no internal nodes and $k$ leaves is equal to the number of balanced parenthesis sequences with an initial run of $r$ opening parenthesis with $n+1 -k$ occurrences of $'()'$ with the restriction that no $'()'$ is preceded by a closing parenthesis.
\end{lemma}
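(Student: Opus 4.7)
The plan is to compose Lemma~\ref{psi translation lemma} with the classical depth-first-search bijection $\chi$ between plane trees on $n$ edges and balanced parenthesis sequences of length $2n$, translating each of the three tree-theoretic conditions produced by $\psi$ into a parenthesis-theoretic condition.

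First I would apply Lemma~\ref{psi translation lemma} with $m=0$: this identifies the count in question with the number of plane trees $T'$ on $n$ edges whose leftmost path has length $r$, none of whose leaves has a left sibling, and whose total number of leaves is $n+1-k$. Then I would apply $\chi$, which emits ``$($'' for each descent along an edge and ``$)$'' for each ascent during the left-to-right depth-first traversal. Under $\chi$, the first $r$ descents of $T'$ (along the leftmost path) become an initial run of exactly $r$ opening parentheses, followed by the ``$)$'' ascending out of the leaf at the bottom of the leftmost path; each leaf $v$ of $T'$ is entered and then immediately exited, so leaves biject with occurrences of the substring ``$()$'' in $\chi(T')$, yielding $n+1-k$ occurrences; and the character immediately preceding the ``$($'' of $v$'s ``$()$'' pattern encodes the previous DFS action, namely the ``$)$'' ascending out of $v$'s left sibling's subtree if $v$ has a left sibling, and otherwise the ``$($'' of $v$'s parent edge (or nothing, when $v$ sits at the top of the leftmost path). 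Hence ``no leaf of $T'$ has a left sibling'' translates exactly to ``no ``$()$'' in $\chi(T')$ is preceded by a closing parenthesis'', and the three tree conditions precisely match the three sequence conditions in the statement.

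The only step that requires any verification is the third translation, a short case analysis of what the DFS emits just before entering a leaf; once this is checked, the corollary follows immediately from the composition of the two bijections.
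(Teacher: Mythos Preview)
Your proposal is correct and follows essentially the same approach as the paper: both proofs compose Lemma~\ref{psi translation lemma} (with $m=0$) with the standard DFS bijection to balanced parenthesis sequences, then translate each of the three tree conditions (leftmost-path length, leaf count, no leaf with a left sibling) into the corresponding parenthesis condition (initial run of $r$ opens, number of ``$()$'' substrings, no ``$()$'' preceded by ``$)$''). The paper just states these translations more tersely and in the reverse order, applying the DFS dictionary first and invoking Lemma~\ref{psi translation lemma} at the end.
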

\begin{proof}
	We consider the standard bijection from plane trees to balances parenthesis sequences, denoted $\psi_2$. Fix $T \in \setofplanetrees_n$. The length of the leftmost path of $T$ is the length of the initial run of opening parenthesis of $\psi_2(T)$. A leaf in $T$ corresponds to the sub-sequence $'()'$ in $\psi_2(T)$.	A leaf of $T$ with a left sibling corresponds to the sub-sequence $')()'$ in $\psi_2(T)$. Thus for $T$ to have no leaves with left sibling, we must avoid the sub-sequence $')()'$. Applying Lemma \ref{psi translation lemma}, we achieve the result.
	
\end{proof}
 
 \begin{lemma}[Cycle Lemma \cite{dershowitz1982cycle}]
 	For any sequence $p_0 p_1\cdots p_{m+n-1}$ of
 	$m$ open parentheses and $n$ close parentheses, where $m > n$, there exist exactly $m - n$ cyclic permutations
 	$$p_i p_{i+1}\cdots p_{m+n-1}p_0\cdots p_{i-1}$$
 	such that the number of opening parenthesis is always greater than the number of closing parenthesis. We will say these sequences are dominating.
 \end{lemma}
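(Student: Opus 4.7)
The plan is to prove the Cycle Lemma by induction on $N = m + n$ after encoding each opening parenthesis as $+1$ and each closing parenthesis as $-1$. Let $a_0, a_1, \ldots, a_{N-1}$ be the resulting $\pm 1$ sequence, and set $T_\ell = a_0 + \cdots + a_{\ell - 1}$, so $T_N = m - n = d > 0$. Extend $T$ periodically by declaring $T_{j+N} = T_j + d$. The cyclic rotation starting at index $i$ is dominating exactly when $T_{i+k} > T_i$ for every $k = 1, 2, \ldots, N$, so the lemma reduces to counting the indices $i \in \{0, \ldots, N-1\}$ with this property.

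The base cases are straightforward: if all $a_\ell = +1$ then $n = 0$, $d = N$, and every rotation is trivially dominating (producing the required count $d$); and if $N = 1$ the only rotation of $(+1)$ is dominating. For the inductive step with $n \geq 1$ and both signs present, I would locate a cyclically adjacent pair of indices $i, i+1$ with $a_i = +1$ and $a_{i+1} = -1$ (such a pair must exist by a minimal-block argument) and delete it to obtain a shortened sequence of length $N-2$, still with sum $d$ but containing $m-1$ opens and $n-1$ closes. By the inductive hypothesis the shortened sequence has exactly $d$ dominating rotations.

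The crux is then a length-preserving bijection between dominating starts of the shortened sequence and dominating starts of the original. First, I would check that neither removed index can be a dominating start of the original: the rotation at $i+1$ opens with $-1$ (so the first shifted partial sum is $-1$), and the rotation at $i$ opens with the word $+1, -1$ (so the second shifted partial sum is $0$). For any other start $j \notin \{i, i+1\}$, the deleted pair appears in the interior of the rotated word, and since $a_i + a_{i+1} = 0$ the identity $T_{k+2} = T_k$ at the pair implies that the partial sums of the rotated shortened sequence coincide with those of the rotated original at every surviving position, while the two omitted partial sums $T_{k+1} = T_k + 1$ and $T_{k+2} = T_k$ are automatically positive whenever $T_k > 0$. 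Hence positivity transfers in both directions and the two sets of dominating starts biject.

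The main obstacle I anticipate is handling the case where the deleted pair straddles the cyclic boundary, e.g.\ $i = N-1$ so that $a_{N-1}, a_0$ are the $+1, -1$ pair, which complicates the index bookkeeping for the bijection and the linear ordering of partial sums. I would address this by first applying a single cyclic rotation that moves the chosen pair into the interior; because cyclic shifts permute dominating starts among themselves, this does not change the count, and after this reduction the partial-sum comparison is transparent and the induction closes cleanly.
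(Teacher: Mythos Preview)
Your inductive argument is correct: the key observation that neither deleted index $i$ nor $i{+}1$ can be a dominating start (the rotation at $i{+}1$ begins with $-1$, and the rotation at $i$ has second partial sum $0$), together with the partial-sum matching for every other start $j$, gives the required bijection, and your pre-rotation trick disposes of the boundary case cleanly. The paper itself does not prove the Cycle Lemma---it is quoted from Dershowitz and Zaks and used as a black box in the subsequent corollary---so there is no proof in the paper to compare yours against.
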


We adapt the Cycle Lemma to aid the computation that will following.
\begin{corollary} \label{ratio cycle lemma}
	For any sequence $p = p_0 p_1\cdots p_{m+n-1}$ of
	$m$ open parentheses and $n$ close parentheses, where $m > n$, among all distinct cyclic permutation of $p$, the ratio between the number of dominating sequences and total number of such permutations is $\frac{m-n}{m+n}$.
\end{corollary}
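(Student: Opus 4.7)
The plan is to deduce the corollary directly from the Cycle Lemma by quotienting out by the symmetries of $p$ under cyclic shifts. The only subtlety is that the statement of the Cycle Lemma counts cyclic shifts of $p$ as starting positions (so $m+n$ of them with multiplicity), while Corollary~\ref{ratio cycle lemma} counts distinct sequences. Bridging these two counts is the whole content of the proof.

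More precisely, I would proceed as follows. First consider the list $L$ of all $m+n$ cyclic shifts of $p$, indexed by starting position, so $|L| = m+n$ counted with multiplicity. By the Cycle Lemma, exactly $m-n$ elements of this list are dominating sequences. Next, consider the action of $\mathbb{Z}/(m+n)\mathbb{Z}$ on $L$ by cyclic shifts: the orbit of $p$ is precisely the list of cyclic shifts of $p$, and by orbit-stabilizer the number of distinct sequences appearing in $L$ is $d = (m+n)/s$, where $s$ is the order of the stabilizer of $p$. In particular, each distinct cyclic shift of $p$ appears exactly $(m+n)/d = s$ times in $L$.

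The key observation (which requires essentially no work) is that whether a sequence in $L$ is dominating depends only on the sequence itself, not on the starting index in the original labelling. Hence if $N$ denotes the number of distinct dominating cyclic permutations, the $m-n$ dominating entries of $L$ partition into $N$ groups of $s = (m+n)/d$ equal sequences, giving $N \cdot (m+n)/d = m - n$. Rearranging yields
\[
\frac{N}{d} \;=\; \frac{m-n}{m+n},
\]
which is exactly the claimed ratio.

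I do not expect a real obstacle here; the only thing to be careful about is the bookkeeping of ``cyclic permutation with multiplicity'' versus ``distinct cyclic permutation,'' and articulating clearly that the dominating property is invariant under relabelling of starting indices. The argument then reduces to the orbit-stabilizer theorem applied to the cyclic group action, combined with the original Cycle Lemma quoted in the paper.
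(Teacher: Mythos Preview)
Your proposal is correct and takes essentially the same approach as the paper: both arguments observe that each distinct cyclic permutation of $p$ appears with the same multiplicity among the $m+n$ cyclic shifts, so the ratio $(m-n)/(m+n)$ from the Cycle Lemma passes to the quotient. The paper establishes this periodicity by hand (showing $p(i)=p(j)$ iff $i\equiv j$ modulo the minimal period $k$), whereas you invoke orbit--stabilizer, but the content is identical.
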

\begin{proof}
    We will denote the permutation $p_i p_{i+1}\cdots p_{m+n-1}p_0\cdots p_{i-1}$ by $p(i)$ (where $i$ is modulo $m+n$). Let $k \leq m + n$ be smallest natural number such that $p(0) = p(k)$. Notice that $p(0) = p(sk)$ for all $s \inZ$.
	
	We now show that for any $i, j \inZ$, $p(i) = p(j)$ if and only if $i \equiv j \mod k$. It should be clear that $p(i) = p(i + sk)$ for all $s,i \inZ$, since $p(0) = p(sk)$ and shifting $p(0)$ and $p(sk)$ by $i$, we get $p(i)$ and $p(i+sk)$, respectively. Alternatively, if we assume $i \not\equiv j \mod k$ (thus $i = j + sk + r$ where $0 <r < k$) and $p(i) = p(j)$, we see that $p(i) = p(j) = p(i + sk + r) = p(i + r)$. Shifting $p(i)$ and $p(i + r)$ by $-i$, we get that $p(0) = p(r)$, a contraction to the minimality of $k$.
	
	Since $p(0) = p(m+n)$, $k$ divides $m+n$. We thus see that the set of distinct cyclic permutations of $p$ is $P = \{ p(i): 0 \leq i < k\} $ and each such permutation occurs $l = \frac{m+n}{k}$ times as a cyclic permutation of $p$. Let $q$ be the number of element of $P$ that are dominating sequences. By the Cycle Lemma, $\frac{m-n}{m+n} = \frac{lq}{lk} = \frac{q}{k}$, proving the result.
	
\end{proof}

\bigskip

\textit{Proof of Theorem \ref{count trees by all params}.} 
We count the number of trees with root degree $r$, no internal nodes and $k$ leaves using the bijection in Corollary \ref{corrNumTree}. Let $l = n+1 -k$. The beginning of all such balanced parenthesis sequences is fixed as a run of $r$ opening parenthesis followed by a closing parenthesis. So we need to count the number of valid suffixes with $n-r$ opening parenthesis, $n-1$ closing parenthesis and $l-1$ occurrences of $'(()'$ and no other occurrences of $'()'$ or symmetrically, the number of valid prefixes with $n-1$ opening parenthesis, $n-r$ closing parenthesis and $l-1$ occurrences of $'())'$ and no other occurrences of $'()'$. 

We count this via the cycle lemma. We count the number of dominating sequence with $n$ opening parenthesis, $n-r$ closing parenthesis and $k'-1$ occurrences of $'())'$ and no other occurrences of $'()'$. We first count all possible cyclic permutations of all such sequences: 

\begin{enumerate}
	\item \textit{Starting with $'('$ and ending in $')'$.} 
	
	All such sequence are achieved by partitioning the $n$ opening parenthesis into $l-1$ (ordered) parts of size 1 or more, denotes $x_1, \cdots, x_{l-1}$ and partitioning the $n-r$ closing parenthesis into $l-1$ parts of size 2 or more (since we want to guarantee that every $'()'$ is proceeded by a closing parenthesis), denotes $y_1, \cdots, y_{l-1}$. The formed sequence is then $x_1y_1 \cdots x_{l-1}y_{l-1}$. Thus the number of sequences is $\binom{n-1}{l-2} \binom{n-r-l}{l-2}$.
	
	\item \textit{Starting with $'('$ and ending in $'('$.} 
	
	Similarly to (1) with the extra step of adding another run of opening parenthesis of size at least 1 at the end of the sequence. These are sequences of the form $x_1y_1 \cdots x_{l-1}y_{l-1}x_{l}$. The number of such sequences is $\binom{n-1}{l-1} \binom{n-r-l}{l-2}$.
	
	\item \textit{Starting with $')'$ and ending in $'('$.} 
	
	We notice that we lose an occurrence of $'()'$ since it must be formed by the last opening parenthesis and the first closing parenthesis (in a cyclic shift). These sequences are similar to the sequences in (1) shifted so they are of the form $y_{l-1}x_1y_1 \cdots x_{l-1}$. Thus the number of such sequences is $\binom{n-1}{l-2} \binom{n-r-l}{l-2}$. 
	
	\item \textit{Starting with $')'$ and ending in $')'$.}
	
	 These are sequences of the form $y_{0}x_1y_1 \cdots x_{l-1}y_{l-1}$. Since, we are counting cyclic permutations of dominating sequences of the desired type, the restriction of each run of closing parenthesis sequences being of size 2 or more is loosen for the first and last runs (since in any cyclic permutation that would lead to a dominating sequences, these 2 runs will become 1 run). Thus the number of sequences is $\binom{n-1}{l-2} \binom{n-r-l+1}{l-1}$.
	
\end{enumerate}

Notice that we counted all the \emph{distinct} cyclic permutations of the desired dominating sequences. Using Corollary \ref{ratio cycle lemma}, for any sequence counted above, in the set of its distinct cyclic permutations, $\frac{r}{2n-r}$ of members of the set are valid dominating sequences. Thus the number of tree with $n$ edges, root degree of $r$ and $k$ leaves and no internal nodes is the sum of the terms derived for each of the above cases scaled by the term $\frac{r}{2n-r}$. After simplifying and setting $l = n+1-k$, we get this to be

$$\frac{r}{n}\binom{n}{k}\binom{k-r-1}{n-k-1}.$$ 

Adding internal nodes is now just a matter of deciding how many to put under each edge. Thus for each tree on $n-m$ edges with root degree of $r$ and $k$ leaves and no internal nodes, there are $\binom{n-1}{m}$ trees with $n$ edges with root degree of $r$ and $k$ leaves and $m$ internal nodes giving us the desired expression (after simplification).\\

\begin{corollary}
\label{count trees by leaves and internal} For $n - m > k $, 
$$\treecount_n(m,k) = \frac{1}{n} \binom{n}{k+m} \binom{k+m}{k}\binom{k}{n-m-k+1},$$ and for $n - m =  k $,
$$\treecount_n(m,k) = \binom{n-1}{m}.$$
\end{corollary}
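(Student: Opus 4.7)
The plan is to deduce Corollary~\ref{count trees by leaves and internal} from Theorem~\ref{count trees by all params} by summing the three-parameter formula over the root degree $r$.

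First I would handle the edge case $n-m=k$: every subtree rooted at a child of the root contains at least one leaf, so with only $k$ leaves available the root is forced to have $r=k$ and every child of the root must itself be a leaf. Hence $\treecount_n(m,k) = \treecount_n(m,k,k) = \binom{n-1}{m}$ by the second formula of Theorem~\ref{count trees by all params}.

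For the principal case $n-m>k$, I would first note that the root degree satisfies $1\le r\le k-1$: the lower bound because any tree with at least one edge has positive root degree, and the upper bound because $r=k$ would force every child of the root to be a leaf and thus $n-m=k$, which is excluded. Summing the first formula of Theorem~\ref{count trees by all params} over $r$ then gives
$$\treecount_n(m,k) = \frac{1}{n}\binom{n}{k+m}\binom{k+m}{k}\sum_{r=1}^{k-1} r\,\binom{k-r-1}{N},$$
where $N=n-m-k-1$, and the corollary reduces to the single binomial identity
$$S \;:=\; \sum_{r=1}^{k-1} r\,\binom{k-r-1}{N} \;=\; \binom{k}{N+2}.$$

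To establish the identity $S=\binom{k}{N+2}$ I would reindex by $j=k-r-1$, split $(k-1-j)=(k-1)-j$, and evaluate each piece using the hockey-stick identity $\sum_{j=N}^{k-2}\binom{j}{N}=\binom{k-1}{N+1}$ together with the rewriting $j\binom{j}{N}=(N+1)\binom{j+1}{N+1}-\binom{j}{N}$. Collecting terms yields $S = k\binom{k-1}{N+1}-(N+1)\binom{k}{N+2}$, and using the absorption identity $k\binom{k-1}{N+1}=(N+2)\binom{k}{N+2}$ this collapses to $\binom{k}{N+2}=\binom{k}{n-m-k+1}$, which is precisely the factor claimed. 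The main obstacle is the binomial identity itself, but it is entirely routine once the structural observations about the range of $r$ are in place; no sophisticated tools beyond Pascal's rule and hockey-stick are needed.
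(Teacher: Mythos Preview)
Your proposal is correct and follows the same overall approach as the paper: sum the formula of Theorem~\ref{count trees by all params} over the root degree $r$ and reduce to the identity $\sum_r r\binom{k-1-r}{N}=\binom{k}{N+2}$. The only difference is in how that identity is established---the paper proves it via a short generating-function computation rather than your hockey-stick/absorption manipulation---and you are in fact more explicit than the paper about the edge case $n-m=k$.
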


\begin{proof}
	Let $p, q \inN$. We now evaluate the following sum.
	\begin{eqnarray}
	\sum_{q \geq 0}\sum_{r \geq 0}r\binom{q-r}{p} x^q &=& x\sum_{r \geq 0}rx^{r-1}\sum_{q \geq 0}\binom{q-r}{p}x^{q-r}\\
	&=&  \frac{x^{p+1}}{(1-x)^{p+3}} = \sum_{q \geq 0} \binom{q+1}{p+2} x^{q}.
	\end{eqnarray}
	
	Thus we get $$\sum_{r \geq 0}r\binom{q-r}{p} = \binom{q+1}{p+2}.$$ We now set $q = k-1$ and $p = n - m - k -1$ to achieve the desired result.\\
	
\end{proof}

\begin{corollary}
	\label{count trees by leaves and root} For $n > k > r$, 
	$$\treecount_n(k, r) = \frac{r}{n} \binom{n}{k} \binom{n-r-1}{n-k-1},$$ and for $n =  k = r$,
	$$\treecount_n(k,r) = 1.$$
\end{corollary}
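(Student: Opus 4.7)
The plan is to derive $\treecount_n(k,r)$ by summing Theorem \ref{count trees by all params} over the number of internal nodes $m$, and then collapsing the resulting sum using a binomial rearrangement together with Vandermonde's convolution.

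First I would handle the generic case $n > k > r$. Here every tree counted by $\treecount_n(k,r)$ has $n - m > k$ (we cannot have $n - m = k$ unless $k = r$), so
\begin{equation*}
\treecount_n(k,r) \;=\; \sum_{m = 0}^{n-k-1} \treecount_n(m,k,r) \;=\; \frac{r}{n} \sum_{m=0}^{n-k-1} \binom{n}{k+m} \binom{k+m}{k} \binom{k-r-1}{n-m-k-1}.
\end{equation*}
I would then apply the trinomial rearrangement $\binom{n}{k+m}\binom{k+m}{k} = \binom{n}{k}\binom{n-k}{m}$ to pull $\binom{n}{k}$ out of the sum, leaving
\begin{equation*}
\treecount_n(k,r) \;=\; \frac{r}{n} \binom{n}{k} \sum_{m=0}^{n-k-1} \binom{n-k}{m} \binom{k-r-1}{(n-k-1)-m}.
\end{equation*}
Vandermonde's convolution then collapses the inner sum to $\binom{(n-k)+(k-r-1)}{n-k-1} = \binom{n-r-1}{n-k-1}$, yielding the claimed formula.

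For the boundary case $n = k = r$, the tree is forced to be the star on $n$ edges: the root has $r = n$ children, all of which must be leaves, so there is exactly one such tree and $\treecount_n(k,r) = 1$. The intermediate case $k = r < n$ (which I should also check, since the corollary's statement covers $n > k > r$ and $n = k = r$ but implicitly needs $r < k$ whenever $n > k$) would come from the $n - m = k$ branch of Theorem \ref{count trees by all params} with $m = n - k$, giving $\binom{n-1}{n-k}$, which also matches $\frac{r}{n}\binom{n}{k}\binom{n-r-1}{n-k-1} = \frac{k}{n}\binom{n}{k}$ after simplification.

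I do not expect any substantive obstacle: the computation is a two-line application of a standard binomial identity plus Vandermonde. The only bookkeeping to get right is the summation range and the verification that the $n-m = k = r$ boundary term from Theorem \ref{count trees by all params} is consistent with the Vandermonde formula, so that one closed form covers all admissible parameter regimes.
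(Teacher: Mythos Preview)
Your proposal is correct and essentially identical to the paper's argument: both sum Theorem~\ref{count trees by all params} over $m$, apply the same rearrangement $\binom{n}{k+m}\binom{k+m}{k}=\binom{n}{k}\binom{n-k}{m}$, and then collapse the remaining sum via Vandermonde (the paper derives Vandermonde through a short generating-function computation rather than citing it by name, but the content is the same). Your explicit check of the $k=r<n$ boundary is a small addition not present in the paper.
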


\begin{proof}
	Let $p, q \inN$. We now evaluate the following sum.
	\begin{eqnarray}
	\sum_{m \geq 0}\sum_{p\geq 0}\binom{n-k}{m}\binom{q}{p-m} x^p &=& \sum_{m \geq 0}\binom{n-k}{m}x^m\sum_{p\geq m}\binom{q}{p-m} x^{p-m}\\
	&=& (1+x)^{n-k+q} = \sum_{p \geq 0} \binom{n-k+q}{p} x^{p}.
	\end{eqnarray}
	
	Thus we get $$\sum_{m \geq 0}\frac{r}{n} \binom{n}{k+m} \binom{k+m}{k}\binom{q}{p-m} = \frac{r}{n} \binom{n}{k} \sum_{m \geq 0}\binom{n-k}{m}\binom{q}{p-m} = \frac{r}{n} \binom{n}{k} \binom{n-k+q}{p} .$$ 
	We now set $q = k-r-1$ and $p = n-k-1$ to achieve the desired result.\\
	
\end{proof}






\section{Discussion}
\label{sec:discussion}
We have characterized the asymptotic behavior of many plane tree properties under a probability distribution where the weight of each tree depends on its number of leaves, number of internal nodes, and root degree.
We have shown that, in the case where the root degree is bounded, the distribution of any subtree additive property under parameters \((\alpha , \beta , \gamma) \) is simply constant multiple of the distribution of that same property under the parameter set \((0, 0, 0)\), where the constant depends on the property in question and the parameters \((\alpha , \beta , \gamma )\).
The probability distribution and tree properties studied were inspired by questions from molecular biology, specifically RNA secondary structure.
Of interest in the biological context, we have shown that the asymptotic distributions of total contact distance, total ladder distance, total leaf to root distance, and total internal node to root distance depend on the parameters \((\alpha , \beta , \gamma )\) in a relatively simple way. 
The fact that these asymptotic distributions can vary only up to a constant multiplier suggests limitations in the Nearest Neighbor Thermodynamic Model.
The explicit form of the scaling constant given in Theorem \ref{maintheorem3} may enable further insights about the exact role of the parameters \(\alpha \) and \(\beta \).

The results are also of independent mathematical interest, as they allow for the examination of a large set of plane tree properties under many natural probability distributions, and show that the behavior of these distributions when changing the values of \((\alpha , \beta , \gamma )\) is actually quite simple.
In particular, this means that, in order to understand the asymptotic distribution of a plane tree property where the plane trees are weighted according to a specific \((\alpha , \beta , \gamma )\), it is sufficient to understand the behavior of the property under parameters \((0, 0, 0)\).
Since the \((0, 0, 0)\) case can often be determined through combinatorial techniques (or is already known), these theorems may be useful in studying combinatorial properties of plane trees under different probability distributions.

We conclude with a few open questions.
\begin{enumerate}
\item In Theorem \ref{mainSimpleAdditveTheorem1},  we characterize the asymptotic behavior of simple subtree additive properties with bounded toll functions. What (if anything) can be proved about the asymptotic behavior simple subtree additive properties with an unbounded toll function? 
\item What is the asymptotic behavior of the distributions of subtree additive properties if we remove the restriction that the root degree of the tree must be bounded? Is a generalization of Theorem \ref{maintheorem3} possible?
\item The \emph{maximum ladder distance} of a plane tree is the length of the longest path in the tree, a.k.a. its diameter. Maximum ladder distance is not a subtree additive property, but it is of interest in the molecular biology context. What is the asymptotic distribution of maximum ladder distance? The framework constructed here does not seem to address properties of this type, and we suspect an entirely different approach would be necessary to answer this question.
\end{enumerate}

\section{Acknowledgements}
The authors would like to thank Prasad Tetali for his mentorship and guidance throughout the preparation of this manuscript. The authors would like to thank Christine Heitsch for introducing them to the plane tree model of RNA secondary structure and proposing some of the questions addressed here. The authors would also like to thank Luc Devroye for consultation and for suggesting valuable references.

Kirkpatrick would like to acknowledge support from the National Science Foundation Graduate Research Fellowship Program under Grant No. DGE-1650044.
Onyeze would like to acknowledge support from the Georgia Institute of Technology President's Undergraduate Research Awards program.

\bibliographystyle{plain}
\bibliography{References}

\section*{Appendix}
\begin{proof}[Proof of Lemma \ref{lemma expansion}]
	Let $\emptyset \neq U = \{u_1, \cdots, u_q\} \subset W$. We now expand the RHS of (\ref{lemma expansion 1}) to extract the coefficient of $x_{u_1} \cdots x_{u_q}$. We need $V \subset W$ in equation (\ref{lemma expansion 1}) to contain $U$ to get such a term. Let $|V| = n$ and $V-U = \{h_1, \cdots, h_{n - q}\}$. We then expand the term of the form $(x_{u_1} - 1) \cdots (x_{u_q} - 1) \cdot  (x_{h_1} - 1) \cdots (x_{h_{n - q}} - 1)$. Expanding this term, we see that the coefficient of $x_{u_1} \cdots x_{u_q}$ is $(-1)^{n - q}$. There are $\binom{m-q}{n - q}$ ways to pick $V$ with $|V|=n$. Thus the total coefficient of $x_{u_1} \cdots x_{u_q}$ in the RHS of (\ref{lemma expansion 1}) is \begin{equation}
	\sum_{i=q}^m \binom{m-q}{i - q} (-1)^{i - q} = \sum_{i=0}^{m-q} \binom{m-q}{i} (-1)^{i} = 
	\left\{\begin{array}{ll}
	0 & 1 \leq q < m \\
	1 & q = m \\
	\end{array} \right. .
	\end{equation}
	We compute the constant coefficient similarly and get it to be 
	\begin{equation}
	\sum_{q=1}^m \binom{m}{q} (-1)^{q} = -1,
	\end{equation}	
	proving the result.
	
\end{proof}

\begin{proof}[Proof of Lemma \ref{tech lemma 1}]
	Assume $a_1 \geq a_2$. We see the lower bounds as follows. For $a_2 < -1$, 
	\begin{eqnarray}
	\sum_{\substack{n_1 + n_2 = n\\n_1, n_2 \geq 1}}n_1^{a_1} \cdot n_2^{a_2} &=& \sum_{i=1}^{n-1} i^{a_1} \cdot (n-i)^{a_2} \geq \left(n-1\right)^{a_1} \geq c_0 \cdot n^{a_1}.
	\end{eqnarray}
	For $a_2 > -1$,	
	\begin{equation}
	\sum_{\substack{n_1 + n_2 = n\\n_1, n_2 \geq 1}}n_1^{a_1} \cdot n_2^{a_2} \geq \sum_{i=\frac{n}{3}}^{\frac{n}{2}} i^{a_1} \cdot (n-i)^{a_2} \geq \sum_{i=\frac{n}{3}}^{\frac{n}{2}} b_1^{a_1} \cdot b_2^{a_2} = c_0 \cdot n^{a_1 + a_2 + 1},
	\end{equation} where $b_1 = \frac{n}{3}$ for $a_1 > 0$, $b_1 = \frac{n}{2}$ for $a_1 > 0$, $b_2 = \frac{2n}{3}$ for $a_2 > 0$ and $b_2 = \frac{n}{2}$ for $a_1 > 0$.\\
	
	To prove the upper bound, we will use the fact that $\sum_{k=1}^{\infty}\frac{1}{k^{p}}$ converges for $p > 1$. We see that
	\begin{equation*}
	\sum_{\substack{n_1 + n_2 = n\\n_1, n_2 \geq 1}}n_1^{a_1} \cdot n_2^{a_2} \leq \sum_{i=1}^{\frac{n}{2}} i^{a_2} \cdot (n-i)^{a_1} + \sum_{i=\frac{n}{2}}^{n-1} i^{a_2} \cdot (n-i)^{a_1} = \sum_{i=1}^{\frac{n}{2}} i^{a_2} \cdot (n-i)^{a_1} + \sum_{i=1}^{\frac{n}{2}} i^{a_1} \cdot (n-i)^{a_2}.
	\end{equation*}
	Notice that the ratio of 2 terms from the latter sum is $$\frac{i^{a_2} \cdot (n-i)^{a_1}}{(n-i)^{a_2} \cdot i^{a_1}} = \left(\frac{n-i}{i}\right)^{a_1-a_2} \geq 1.$$ 
	Thus, we achieve the upper bound,
	\begin{eqnarray}
	\sum_{\substack{n_1 + n_2 = n\\n_1, n_2 \geq 1}}n_1^{a_1} \cdot n_2^{a_2} &\leq& 2\sum_{i=1}^{\frac{n}{2}} i^{a_2} \cdot (n-i)^{a_1} \leq 2\sum_{i=2}^{n-1}\sum_{\frac{n}{k+1}}^{\frac{n}{k}} i^{a_2} \cdot (n-i)^{a_1}.
	\end{eqnarray}
	
	\noindent For $a_2 \geq 0$ and $a_1 \geq 0$,
	\begin{eqnarray}
	\sum_{\substack{n_1 + n_2 = n\\n_1, n_2 \geq 1}}n_1^{a_1} \cdot n_2^{a_2} &\leq& \sum_{i=1}^{n} n^{a_1 + a_2} =  n^{a_1 +a_2+1}.
	\end{eqnarray}
	
	\noindent For $-1 < a_2 < 0$ and $a_1 \geq 0$,
	\begin{eqnarray}
	\sum_{\substack{n_1 + n_2 = n\\n_1, n_2 \geq 1}}n_1^{a_1} \cdot n_2^{a_2} &\leq&  2\sum_{k=2}^{n-1}\sum_{\frac{n}{k+1}}^{\frac{n}{k}} \left(\frac{n}{k+1}\right)^{a_2} \cdot \left(n-\frac{n}{k+1}\right)^{a_1} \\
	&\leq& n^{a_1 + a_2+1}\left(2\sum_{k=2}^{n-1} \left(\frac{1}{k+1}\right)^{a_2}  \left(\frac{1}{k} - \frac{1}{k+1}\right)\right)\\
	&\leq& n^{a_1 + a_2+1}\left(2\sum_{k=1}^{\infty} \frac{1}{k^{2 + a_2}}\right) = c_0 \cdot n^{a_1 + a_2+1}.
	\end{eqnarray}
	
	\noindent For $-1 < a_2 < 0$ and $a_1 < 0$,
	\begin{eqnarray}
	\sum_{\substack{n_1 + n_2 = n\\n_1, n_2 \geq 1}}n_1^{a_1} \cdot n_2^{a_2} &\leq& 2\sum_{i=2}^{n-1}\sum_{\frac{n}{k+1}}^{\frac{n}{k}} \left(\frac{n}{k+1}\right)^{a_2} \cdot \left(n-\frac{n}{k} \right)^{a_1} \\
	&\leq& n^{a_1 + a_2+1}\left(2\cdot \left(\frac{1}{2} \right)^{a_1}\sum_{i=4}^{n-1} \left(\frac{1}{k+1}\right)^{a_2} \left(\frac{1}{k} - \frac{1}{k+1}\right)\right)\\
	&\leq& n^{a_1 + a_2+1}\left(2^{1 - a_1}\sum_{i=1}^{\infty} \frac{1}{k^{2 + a_2}}\right) = c_0 \cdot n^{a_1 + a_2+1}.
	\end{eqnarray}
	
	\noindent For $a_2 < -1$ and $a_1 \geq 0$,
	\begin{eqnarray}
	\sum_{\substack{n_1 + n_2 = n\\n_1, n_2 \geq 1}}n_1^{a_1} \cdot n_2^{a_2} &\leq& n^{a_1} \sum_{i=1}^{\infty} i^{a_2} = c_0 \cdot n^{a_1}.
	\end{eqnarray}
	
	\noindent For $a_2 < -1$ and $a_1 < 0$,
	\begin{eqnarray}
	\sum_{\substack{n_1 + n_2 = n\\n_1, n_2 \geq 1}}n_1^{a_1} \cdot n_2^{a_2} &\leq& 2\sum_{i=1}^{\frac{n}{2}}i^{a_2} \cdot \left(n-i \right)^{a_1} \leq n^{a_1}\left(2^{1 - a_1}\sum_{i=1}^{\infty}i^{a_2}\right) = c_0 \cdot n^{a_1}.
	\end{eqnarray}

\end{proof}
\end{document}